\theoremstyle{definition}
\newtheorem{theorem}{Theorem}[section]
\newtheorem{lemma}[theorem]{Lemma}
\newtheorem{definition}[theorem]{Definition}
\newtheorem{example}[theorem]{Example}
\newtheorem{prop}[theorem]{Proposition}
\newtheorem{cor}[theorem]{Corollary}
\theoremstyle{remark}
\newtheorem{remark}[theorem]{Remark}
\numberwithin{equation}{section}
\newcommand{\am}{\mathrm{am} }
\newcommand{\cn}{\mathrm{cn}}
\newcommand{\dn}{\mathrm{dn}}
\newcommand{\sn}{\mathrm{sn}}
\newcommand{\sech}{\mathrm{sech}}
\newcommand{\N}{\ensuremath{\mathbb{N}}}
\newcommand{\Z}{\ensuremath{\mathbb{Z}}}
\newcommand{\Sph}{\ensuremath{\mathbb{S}^1}}
\newcommand{\Hyp}{\ensuremath{\mathbb{H}^2}}
\newcommand{\R}{\ensuremath{\mathbb{R}}}
\newcommand{\E}{\ensuremath{\mathcal{E}}}
\newcommand*\diff{\mathop{}\!\mathrm{d}}
\begin{document}
\title[On the Convergence of the Elastic Flow in the Hyperbolic Plane]
{On the Convergence of the Elastic Flow\\in the Hyperbolic Plane}

\author{Marius M\"uller}
\address{Universit\"at Ulm, Helmholtzstra\ss{}e 18, 89081 Ulm, Germany}
\email{marius.mueller@uni-ulm.de}

\author{Adrian Spener}
\address{Universit\"at Ulm, Helmholtzstra\ss{}e 18, 89081 Ulm, Germany}
\email{adrian.spener@uni-ulm.de}

\subjclass[2000]{
{53C44 (primary), 49K30, 65K10 (secondary)%
}}
\date{\today}


\keywords{Elastic Flow, Reilly Inequality, Energy Threshold, {Classification of Elastica}}

\begin{abstract}
We examine the $L^2$-gradient flow of Euler's elastic energy for closed curves in hyperbolic space and prove convergence to the global minimizer for initial curves with elastic energy bounded by 16. We show the sharpness of this bound by constructing a class of curves {whose lengths} blow up in infinite time. The convergence results follow from a constrained sharp Reilly-type inequality.
\end{abstract}

\maketitle


\section{Introduction}
\subsection{History and Context}
Our object of study  -- Euler's elastic energy -- measures the bending of a curve in some Riemannian manifold. Since Euler's characterization of its critical points in Euclidean space in 1744 it was widely studied and led to a variety of mathematical methods, see for instance \cite{HistoryElasticity}. For a smooth curve $\gamma\colon  I \rightarrow M$ in a Riemannian manifold $M$ it is defined by
\[\mathcal{E}(\gamma) := \int_\gamma \kappa^2 \diff s,
 \]
where $\kappa$ denotes the curvature and $\diff s$ denotes the integration with respect to the arclength parameter of $\gamma$ in $M$. Its $L^2$-gradient flow \eqref{eq:Flow} is called \emph{elastic flow}. {Critical points of $\mathcal{E}$ are called \emph{free elastica}}.

In \cite{Polden, Koiso,DKS,MR2911840,DS17} long time existence of the elastic flow in Euclidean  (i.e. $M = \R^d$, $d \geq 2$) and hyperbolic (i.e. $M = \mathbb{H}^2$) space was shown, but convergence results are only established under length penalization, i.e. for the $L^2$-gradient flow of $\mathcal{E}_\lambda(\gamma) := \mathcal{E}(\gamma) + \lambda \mathcal{L}(\gamma)$ for some $\lambda > 0 $, where $\mathcal{L}(\gamma)$ denotes the length of $\gamma$. Efforts were made to understand the behavior of the penalized flow as $\lambda \rightarrow 0 $. In \cite{MR1432203} it is shown that for energies satisfying a Palais-Smale condition, the convergence behavior is preserved provided there are no `migrating critical points', which is suggested by \cite{Steinberg} {for the elastic flow in the hyperbolic half plane $M = \mathbb{H}^2$}. However, as \cite{MR1432203} points out, the Palais-Smale condition fails to hold true in $\mathbb{H}^2$. This article 
answers the open problem posed by Linn\'er in \cite[Section 1.13]{MR1432203}, namely whether a positive penalization $\lambda > 0$ is necessary for the convergence of the gradient flow in the hyperbolic plane. The answer we give is the following: For full convergence of the flow length penalization \emph{is} necessary, whereas small initial energies still lead to convergent evolutions without length penalization.

Our particular interest of closed curves in $\mathbb{H}^2$ is due to the close connection to the Willmore energy of surfaces of revolution, namely 
\begin{equation}
 \label{eq:WillmoreElastic}
 \mathcal{E}(\gamma) = \frac{2}{\pi} \int_{S(\gamma)} H^2\diff A,
\end{equation}
where $S(\gamma)$ denotes the toroidal surface that arises from revolving $\gamma$ {about} the $x$-axis, see \cite{LangerSinger2} or \cite[Theorem 4.1]{DS18}. Moreover, {free elastica} in hyperbolic space define Willmore surfaces of revolution which were extensively investigated for instance in \cite{LiYau,MR2480063,MR2729304,Eichmann,2017arXiv170502177M}. In \cite{LangerSinger2} the relation between the Willmore energy and the elastic energy was used to show that the global minimum of the Willmore energy of all surfaces of revolution is attained at the {stereographic projection of the} Clifford torus. This torus can be obtained by the revolution of (any rescaled and translated version of){\begin{equation}
 \label{eq:CliffordElastica} 
 \tau(t) = \begin{pmatrix}
              0\\1
             \end{pmatrix}
             + \frac{1}{\sqrt{2}}
             \begin{pmatrix}
              \cos t\\\sin t
             \end{pmatrix}
\end{equation}}%
around the $x$-axis. This is the reason we call the  curve in \eqref{eq:CliffordElastica} \textit{Clifford elastica} in the following. Note that it is the global minimum of the elastic energy of closed curves in the hyperbolic plane. Consequences of the results of this paper for the Willmore flow of tori in Euclidean three-space will be the content of {future research}.
{
}

The aforementioned articles together with \cite{LangerSinger,EichmannPhD} lay the methodological groundwork for our approach. We however work directly with the unpenalized flow, examining evolution and asymptotic behavior of the length. 
A large part of this examination will be an explicit parametrization and a close examination of constrained elastic curves in the hyperbolic plane. Previously found parametrizations, for instance in \cite{LangerSinger,Heller,2017arXiv170502177M}, either apply only for free elastica or are too general for our purposes.

\subsection{Overview and Main Results}

In the following we present the two main results of this paper: {The first one is the optimal Reilly-type inequality for curves with small energy in Theorem \ref{thm:main1} and its consequence, the convergence of the elastic flow for initial values with small energy in Theorem  \ref{thm:main2}. The second one is the existence of non-converging evolutions of initial values with higher energy in Theorem \ref{thm:nonconvergence}}.

First of all note that smooth long-time existence of the elastic flow
\begin{equation}
  \label{eq:Flow}
 \left\{ \begin{array}{rll}\partial_t f &= - \nabla_{\!L^2}\E_\lambda(f) , & \mbox{ on }\mathbb{S}^1 \times (0,\infty), \\
f(\cdot, 0) &= f_0, & \mbox{ on } \mathbb{S}^1.
\end{array}\right. 
 \end{equation}
has already been proved for $\lambda \geq 0$, see {\cite[Theorem 1.1 (i)]{DS17}} for each $f_0 \in C^\infty(\mathbb{S}^1,\mathbb{H}^2)$. Another striking insight that \cite{DS17} reveals is that each solution $f \in C^\infty( \mathbb{S}^1 \times [0, \infty) ; \mathbb{H}^2)$ with uniformly-in-time bounded hyperbolic length already subconverges up to isometries of $\mathbb{H}^2$ and reparametrization. For details see Theorem \ref{thm:LTE}. For the penalized flow with $\lambda > 0$ the length is naturally bounded, since it is obviously controlled by the energy $\E_\lambda$. For $\lambda = 0$ however one has to answer the 
\textbf{main question}: 
\begin{center}
Do evolutions by unpenalized elastic flow in $\mathbb{H}^2$ have uniformly bounded hyperbolic length? 
\end{center}

To get a flavor for the question let us remind the reader of the necessity of length penalization in Euclidean space. If we start the flow with an initial curve that is a circle of radius $r_0$, then the solution of the elastic flow with $\lambda = 0$ is given by a circle of radius $r(t) = (r_0^4 + 2t)^\frac{1}{4}$, whose length is unbounded as $t \to \infty$. 
In $\mathbb{H}^2$ one does not expect the same behavior as in the Euclidean case. 
This is mainly due to the fact that \emph{scaling}, i.e. $z \mapsto \theta z$ for some $\theta > 0$, is an isometry in the hyperbolic plane. In particular streching up the curve does not decrease the energy as it would do in the Euclidean case. 

In \cite{DS18} the evolution of circles under the flow \eqref{eq:Flow} was studied: It was shown that the unpenalized flow converges for each initial datum $f_0$ that parametrizes a circle. The limit is always an isometric image of \eqref{eq:CliffordElastica}. In the sphere it is unknown whether length penalization is necessary, but for $\lambda > 0$ convergence is shown in \cite[Theorem 1.1 (ii)]{Sphera}.
 
Since the gradient flow structure yields a natural bound on the elastic energy $\mathcal{E}$, an idea would be to bound the quotient of elastic energy and length from below, to ensure that the length remains bounded along the flow as well. 
 Such a bound is called \textit{Reilly inequality} as it was first obtained in \cite{Reilly} for Euclidean hypersurfaces. A stronger version of the inequality also holds true in hypersurfaces in hyperbolic space \cite[Theoreme 1]{ElSoufi1992}, but in general not for curves (see \cite[Fig. 8]{LangerSinger}). Our first result shows that such an inequality holds below a certain energy level, which we have also shown to be sharp. {A similar result for open curves was obtained in \cite[Section 5]{Eichmann}.} 
 
\begin{theorem}[A Reilly-Type Inequality]\label{thm:main1} Let $\delta> 0 $. Then there exists $c_\delta > 0$ such that 
\begin{equation}\label{eq:2.1}
\inf\left\lbrace \frac{\mathcal{E}(\gamma)}{\mathcal{L}(\gamma)} : \gamma \in C^\infty( \mathbb{S}^1, \mathbb{H}^2 ), \gamma \; \textrm{immersed} , \; \mathcal{E}(\gamma) \leq 16 - \delta \right\rbrace \geq c_\delta .
\end{equation}
Moreover,
\begin{equation}\label{eq:2.2}
\inf\left\lbrace \frac{\mathcal{E}(\gamma)}{\mathcal{L}(\gamma)} : \gamma \in C^\infty( \mathbb{S}^1, \mathbb{H}^2 ), \gamma \; \textrm{immersed} , \; \mathcal{E}(\gamma) \leq 16 + \delta  \right\rbrace = 0  .
\end{equation}
\end{theorem}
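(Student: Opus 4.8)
The plan is to establish the two parts of Theorem \ref{thm:main1} separately, since they are of opposite flavor: \eqref{eq:2.1} is a positive lower bound below the energy threshold $16$, while \eqref{eq:2.2} is the sharpness statement exhibiting curves with energy only slightly above $16$ whose length-to-energy ratio degenerates. For \eqref{eq:2.1} I would argue by contradiction: suppose there is a sequence of immersed closed curves $\gamma_n$ with $\mathcal{E}(\gamma_n) \le 16 - \delta$ and $\mathcal{E}(\gamma_n)/\mathcal{L}(\gamma_n) \to 0$, i.e. $\mathcal{L}(\gamma_n) \to \infty$ while $\mathcal{E}(\gamma_n)$ stays bounded. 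After rescaling (an isometry of $\Hyp$) one may normalize, say, $\mathcal{L}(\gamma_n) = 1$ or pin down a point and tangent, and one wants a compactness statement: curves with bounded elastic energy and bounded length subconverge (weakly in $W^{2,2}$, strongly in $C^1$) to a limit curve. The degeneration $\mathcal{L}(\gamma_n)\to\infty$ forces the rescaled curves to converge, after suitable conformal rescaling, to a curve in the \emph{Euclidean} plane (the hyperbolic metric looks flat at small scales, which is exactly what a long curve in $\Hyp$ with bounded curvature integral sees) — and by the lower semicontinuity of the elastic energy and the scaling behavior one should get a closed Euclidean elastic-energy bound of at most $16 - \delta$. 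But in the Euclidean plane the elastic energy of a closed curve is bounded below by $4\pi^2$... no — the relevant obstruction is different: I expect the key rigidity to be that a closed curve in $\Hyp$ with $\mathcal{E}(\gamma) < 16$ cannot have arbitrarily large length, and the number $16$ arises precisely because the borderline configurations are curves that wrap up and look locally like two copies of a hyperbolic geodesic joined smoothly, each ``half'' contributing $8$ to the energy (a hyperbolic geodesic, or rather a curve converging to one, is the degenerating object). Making this heuristic precise is the main obstacle.

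More concretely, I would try to use the explicit classification and parametrization of constrained elastic curves in $\Hyp$ that the paper advertises it will develop. The infimum in \eqref{eq:2.1} over all immersed closed curves with $\mathcal{E}(\gamma)\le 16-\delta$, if not attained, is approached by a minimizing sequence whose length blows up; a direct-method argument (lower semicontinuity of $\mathcal{E}$, $C^1$-compactness modulo the hyperbolic isometry group after fixing scale) shows that either the infimum is attained by an honest closed curve — in which case it is positive since $\mathcal{E}>0$ and $\mathcal{L}<\infty$ for any genuine immersed closed curve — or the minimizing sequence degenerates. In the degenerate case one has to identify the limit object and compute that its ``energy'' is at least $16$, contradicting the constraint $\le 16 - \delta$. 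I expect the limit object to be (a piece of, or doubled version of) the Clifford elastica's antipode, or a geodesic line of $\Hyp$ traversed back and forth; the value $16 = 2 \cdot 8$ should come from the elastic energy of the ``figure-eight''-type or ``doubled-geodesic''-type limiting elastica. Quantifying the deficit — showing that if $\mathcal{L}(\gamma)$ is very large and $\mathcal{E}(\gamma)$ is bounded then $\mathcal{E}(\gamma) \ge 16 - o(1)$ — is where the hard analysis sits, and is presumably handled via the explicit elastica parametrizations (elliptic functions) together with a careful asymptotic expansion of $\mathcal{E}$ as the modulus of the elliptic function tends to its degenerate value.

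For \eqref{eq:2.2} the plan is constructive: exhibit an explicit one-parameter family $\gamma_\eps$ of smooth immersed closed curves in $\Hyp$ with $\mathcal{E}(\gamma_\eps) \to 16$ from above (so $\mathcal{E}(\gamma_\eps) \le 16 + \delta$ for $\eps$ small) and $\mathcal{L}(\gamma_\eps) \to \infty$, whence $\mathcal{E}(\gamma_\eps)/\mathcal{L}(\gamma_\eps) \to 0$. Natural candidates are the members of the constrained-elastica family (the ones the abstract says have lengths blowing up in infinite time under the flow), or a more hands-on ansatz: take a long segment of a hyperbolic geodesic and close it up with a controlled ``cap'' of bounded energy at each end; the two caps contribute a fixed amount approaching $16$ in total, while the nearly-geodesic middle part contributes negligible energy but arbitrarily large length. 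The work here is (a) writing down the family explicitly enough to compute $\mathcal{E}$ and $\mathcal{L}$, ideally in the half-plane or disk model where hyperbolic geodesics and their curvature are transparent, and (b) checking smoothness and closedness of the gluing and that the total energy really does converge to $16$ and not to something strictly larger. The threshold value $16$ will emerge from this computation, and consistency with the lower bound in \eqref{eq:2.1} is what makes the bound sharp. The main obstacle overall is the quantitative energy-deficit estimate underlying \eqref{eq:2.1}; once the extremal/degenerate configurations are pinned down via the explicit parametrization, \eqref{eq:2.2} should follow by direct computation.
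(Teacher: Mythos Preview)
Your overall strategy for \eqref{eq:2.1} has a genuine gap, and your approach to \eqref{eq:2.2} is different from the paper's.

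For \eqref{eq:2.1}: the sentence ``after rescaling (an isometry of $\Hyp$) one may normalize, say, $\mathcal{L}(\gamma_n)=1$'' is wrong --- dilations are isometries of $\Hyp$, so they preserve hyperbolic length; you cannot normalize $\mathcal{L}$ this way. More importantly, your dichotomy ``either the infimum is attained, or the minimizing sequence degenerates'' is the place where the argument stalls, because you then try to analyze the degenerate case directly and identify a limit object of energy $\geq 16$. The paper avoids this entirely by a different reduction: for each fixed $\overline{\gamma}$, it minimizes $\mathcal{E}$ over the class $\{\gamma:\mathcal{L}(\gamma)=\mathcal{L}(\overline{\gamma}),\ \mathcal{E}(\gamma)<16-\varepsilon\}$ with the initial point pinned. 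With the length held fixed, the direct method succeeds unconditionally (the length bound gives uniform $C^0$ bounds on $\gamma_2$, then weak $W^{2,2}$ compactness and lower semicontinuity of $\mathcal{E}$), and the minimizer is a $\lambda$-constrained elastica by Lagrange multipliers. This reduces \eqref{eq:2.1} to the same inequality \emph{for closed constrained elastica only}.

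That reduced problem is then handled by the explicit classification: nonsimple curves have $\mathcal{E}\geq 16$ by Li--Yau, so only circular and simple orbitlike elastica remain below $16-\varepsilon$. For circles $|\kappa|>1$ gives $\mathcal{E}\geq\mathcal{L}$. For simple orbitlike elastica one has $n\geq 2$ (four-vertex theorem) and an explicit formula $\mathcal{E}/\mathcal{L}=\kappa_0^2\,E(p)/K(p)$; a compactness argument on the \emph{parameters} $(\lambda,C,p)$ --- not on the curves --- then shows this cannot go to zero under the energy constraint, since $p\to 1$ would force $\lambda\to 0$, $C\to 0$, and hence $\mathcal{E}\to 16$. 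Your intuition that the threshold configurations are ``doubled geodesic'' type is morally right, but the actual argument never passes to a geometric limit.

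For \eqref{eq:2.2}: your gluing ansatz is plausible but not what the paper does, and controlling the cap energy so that the total tends to exactly $16$ would require care. The paper instead produces, for each small $\lambda>0$, a closed wavelike $\lambda$-constrained elastica (a ``$\lambda$-figure-eight'') with vanishing Euclidean total curvature; an explicit elliptic-integral computation shows these have $\mathcal{E}\to 16$ as $\lambda\to 0$. Since their total curvature is zero, their evolution under the \emph{free} elastic flow cannot subconverge (no free elastica has $T=0$), hence $\mathcal{L}(f_t)\to\infty$ along the flow while $\mathcal{E}(f_t)\leq 16+\varepsilon$, which gives \eqref{eq:2.2}. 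So the sharpness is obtained not by a static family but by running the flow from carefully chosen initial data.
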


The proof of Theorem \ref{thm:main1} is given in Section \ref{sec:MainResultProofs}. For the first part we apply the direct method in Section \ref{sec:Ratio} to show that the infimum of the elastic energy over curves with fixed length is attained at a constrained elastica, i.e. a critical point of $\mathcal{E} + \lambda \mathcal{L}$ for some $\lambda \in \R$. A detailed analysis of all possible constrained elastica in Section \ref{sec:explicit} and Section \ref{sec:closing} shows \eqref{eq:2.1}. {To show the second part we construct curves of large length with energy arbitrarily close to $16$ in Sections \ref{sec:Inv} and \ref{sec:Opt}}.

Note that we can replace $\mathcal{C}^\infty$ in Theorem \ref{thm:main1} by $W^{2,2}$, as we show this in Theorem \ref{thm:reilly} for \eqref{eq:2.1}, and this replacement is trivial for assertion \eqref{eq:2.2}. 
{As an application of Theorem  \ref{thm:main1} we can show the convergence of the elastic flow for initial values below the energy level of 16.}

\begin{theorem}[Convergence of the Unpenalized Elastic Flow]
\label{thm:main2}
Let $f_0$ be a smooth immersed {closed} curve with $\mathcal{E}(f_0) \leq 16$, and denote its evolution by the {unpenalized} elastic flow by $(f_t)_{t \geq 0}$. Then $\mathcal{L}(f_t)$ is bounded on $[0,\infty)$ and $f_t$ converges to the Clifford elastica \eqref{eq:CliffordElastica} 
{in the sense of Remark \ref{rem:subconvergenceExplained1} (1).}
\end{theorem}

 {From \eqref{eq:WillmoreElastic} it follows that elastic curves with elastic energy $16$ or less correspond to surfaces of revolution with Willmore energy $8 \pi$ or less. Hence the convergence result can be matched to the energy bound given in \cite{MR2119722}, where the authors show long time existence and convergence of the $L^2$-Gradient flow of the Willmore energy provided that the initial surface is an immersion of $\mathbb{S}^2$ (i.e. has genus 1) and has Willmore energy below $8\pi$.
The content of this paper differs from these results in two ways: Firstly, the Willmore flow of a surface of revolution and the $\mathbb{H}^2$-elastic flow of the profile curve differ by a factor, see \cite[Theorem 4.1]{DS18}. Secondly, since we consider the rotation of closed curves, the obtained surface of revolution is of different topology than $\mathbb{S}^2$, namely it has genus $0$.

{As already mentioned, the energy threshold in Theorem \ref{thm:main2} is optimal, which is discussed in the following theorem.}

\begin{theorem}[Nonconvergence of the Unpenalized Elastic Flow]
 \label{thm:nonconvergence}
For all $\varepsilon > 0 $ there exist a smooth initial curve $f_0$ with $16 < \E(f_0) < 16 + \varepsilon$ such that for its evolution $f_t$ by the {unpenalized} elastic flow we find that $\mathcal{L}(f_t)$ is unbounded. In particular the solution does not converge as $t \to \infty$.
\end{theorem}

The proofs are given in Section \ref{sec:MainResultProofs}.
As we mentioned before all circular evolutions converge, but their initial energy can be arbitrarily large, in particular larger than 16 (see \cite[Lemma 3.1]{DS18}), so evolutions of high initial energy do not necessarily have to be divergent. In Sections \ref{sec:Inv} and \ref{sec:Opt} we however identify a class of initial curves whose flow never converges, namely curves of \emph{vanishing total curvature}. The reason that these evolutions cannot converge is that the total curvature is a flow invariant and there is no free elastica of vanishing total curvature, hence there is no critical point available to converge to. We shall discuss this flow invariant in Section \ref{sec:Inv}. 


\section{Explicit Parametrization of Elastica in the Hyperbolic Plane}
\label{sec:explicit}
In the following, we shall give an explicit parametrization of elastica in the hyperbolic plane, which we will then use throughout the rest of the paper. We will adapt many concepts from \cite{LangerSinger} most of which we will state for the reader's convenience. Here, our manifold of interest is the hyperbolic plane $\mathbb{H}^2 = \R \times (0,\infty)$ equipped with the usual metric tensor $g(x,y) = \frac{1}{y^2} \operatorname{id}$ (c.f. \cite[Subsection 2.1]{DS17}).
\subsection{The Elastica Equation}

Let $M$ be a smooth manifold. We denote by $\mathcal{V}(M)$ the set of all \emph{smooth vector fields} on $M$.
If $(M,g)$ is a Riemannian manifold with Levi-Civita connection~$\nabla$ and $c\colon  I \rightarrow M$ be an immersed curve  with velocity vector $c' \colon  I \rightarrow M$ then we denote by $T \colon  I \rightarrow M$ the \emph{unit tangential field} defined by 
$T(t) := \sqrt{g_{c(t) } (c'(t) , c'(t) )}^{-1}  c'(t).$ 
We define the \emph{curvature vector field} $\overrightarrow{\kappa}[c] :=  \overrightarrow{\kappa} :I \rightarrow M$ {
locally by} $\overrightarrow{\kappa}(t) := \nabla_T T$.

\begin{example}\label{ex:curvhyp}
Let $\gamma \in C^\infty((0,1); \mathbb{H}^2)$. Then $\overrightarrow{\kappa}[\gamma]$ is a vector field along $\gamma$. Therefore, if $\iota \colon  \mathbb{H}^2 \hookrightarrow \mathbb{R}^2$ denotes the canonical inclusion then $\overrightarrow{\kappa}[\gamma]$ can also be seen as a vector field in $\mathbb{R}^2$ along $\iota \circ \gamma = (\gamma_1, \gamma_2) $. The formula for $\overrightarrow{\kappa}$ reads
\begin{equation}\label{eq:curvhyper}
\overrightarrow{\kappa}[\gamma] = \begin{pmatrix}
\partial_s^2 \gamma_1 - \frac{2}{\gamma_2} \partial_s \gamma_1 \partial_s \gamma_2 \\ 
\partial_s^2 \gamma_2 + \frac{1}{\gamma_2} ( ( \partial_s \gamma_1 )^2 - ( \partial_s \gamma_2)^2) 
\end{pmatrix},
\end{equation}
where $\partial_s = \frac{\partial_x \gamma}{|\partial_x \gamma|_{\mathbb{H}^2} }= \frac{\gamma_2}{\sqrt{\gamma_1'^2 + \gamma_2'^2}} \partial_x \gamma$, see \cite[(12)]{DS17}.
\end{example}

As $\mathbb{H}^2$ is diffeomorphic to the upper half plane with an orthogonality-preserving diffeomorphism, each smoothly immersed curve $c\colon I \rightarrow \mathbb{H}^2$ has a smooth normal vector field $N$ along $c$ such that 
\[ g_{c(t)} (T(t), N(t)) = 0 \quad\text{ and }\quad g_{c(t)} (N(t), N(t) ) =1 \text{ for each }t \in I.\]
The vector field $N$ is unique up to a sign. Note in particular that $\{T(t) ,N(t) \}$ forms an orthonormal basis of $T_{c(t)}\mathbb{H}^2$. We will now fix a choice of $N$ in the following


\begin{remark}\label{rem:4.4}
The normal field $N$ becomes unique when we prescribe that $ e^{i \frac{\pi}{2} } \iota \circ T = \iota \circ N $, where $\iota \colon \mathbb{H}^2 \hookrightarrow \mathbb{C}$ is the canonical inclusion. We will do this in the following and write $N = iT$ as shorthand notation, which makes actually sense when we look at the curve `with Euclidean eyes'. 
\end{remark}

For a smooth immersed curve $c\colon I \rightarrow \mathbb{H}^2$ one can not only define the curvature vector field $\overrightarrow{\kappa}$ but also the \emph{scalar curvature} $\kappa\colon I \rightarrow \mathbb{R}$  to be the unique function such that 
$\overrightarrow{\kappa} = \kappa N$, {see \cite[Section 4.4]{Willmore}}. Note that we sometimes write $\kappa[c]$ to emphasize the dependency of the curve $c$, see e.g. Proposition \ref{prop:charcurve}.

\begin{definition}[Elastic Energy]
Let $(M,g)$ be a Riemannian manifold, $\lambda \in \R$, and $c\colon I \rightarrow M$ be a smooth immersion. We define 
\begin{equation*}
\mathcal{E}_\lambda(c) := \int_{c(I)} (\kappa^2 + \lambda) \diff s  = \int_I (\kappa^2(t) + \lambda)\sqrt{g_{c(t)} (c'(t) ,c'(t) ) } \diff t.
\end{equation*} 
\end{definition}
\begin{definition}

We define 
\begin{equation*}
W^{2,2} ( \mathbb{S}^1, \mathbb{R}^2 ) := \{ \gamma \in W^{2,2}((0,1), \mathbb{R}^2) \colon \gamma(1) = \gamma(0) , \gamma'(1) = \gamma'(0) \}, 
\end{equation*}
where the point evaluations denote the evaluations of the representatives in $C^1([0,1])$. For each $z \in \mathbb{H}^2$ we define the set
\begin{equation*}
W^{2,2}_z ( \mathbb{S}^1, \mathbb{H}^2) := \{ \gamma \in W^{2,2}(\mathbb{S}^1, \mathbb{R}^2) \colon  \gamma(0) = z ,\gamma_2 > 0 \} .
\end{equation*}
and 
\begin{equation*}
W^{2,2}(\mathbb{S}^1, \mathbb{H}^2) := \bigcup_{z \in \mathbb{H}^2} W^{2,2}_z ( \mathbb{S}^1, \mathbb{H}^2). 
\end{equation*}
\end{definition}

\begin{remark}
Note that $\mathcal{E}_\lambda$ is well-defined also on $W^{2,2}( \mathbb{S}^1, \mathbb{H}^2)$ by using \eqref{eq:curvhyper} to make sense of $\kappa^2$.  However, note that the way we define it, there is no obvious metric on these sets without using a Nash embedding in the sense of \cite[Theorem 3]{Nash}.
 This definition of the Sobolev space might appear strange at first sight, but has the advantage that we can use the complex structure of $\R^2 \cong \mathbb C$. 

\end{remark}

The critical points of $\mathcal{E}_\lambda$ are called \emph{elastica} and satisfy the following Euler-Lagrange equation (see \cite[(1.3)]{LangerSinger}). 

\begin{definition}[Elastica in $\mathbb{H}^2$] \label{def:elastica}
A curve $\gamma \in C^\infty((0,L), \mathbb{H}^2)$ is called \emph{elastica} or \emph{elastic curve} in $\mathbb{H}^2$ if it is parametrized with hyperbolic arclength and satisfies 
\begin{equation}\label{eq:elasticaeq}
2 \kappa[\gamma]'' + \kappa[\gamma]^3 - (\lambda + 2)  \kappa[\gamma] = 0 
\end{equation}
for some $\lambda \in \mathbb{R}$. If $\lambda = 0 $ the curve is called \emph{free elastica}, otherwise it is called $\lambda$-\emph{constrained elastica} or just elastica.
\end{definition}

\begin{prop}[{\cite[page 6-7]{LangerSinger}}] 
Let $\gamma \in C^\infty((0,L), \mathbb{H}^2)$ be an elastic curve with curvature $\kappa = \kappa[\gamma]$ that is parametrized with hyperbolic arclength. Then 
there exists a constant $C \in \mathbb{R}$ such that 
\begin{equation}\label{eq:onceint}
\kappa'^2 + \frac{1}{4} \kappa^4 - \frac{\lambda + 2}{2} \kappa^2 = C
\end{equation}
and $u := \kappa^2$ is a nonnegative solution of
\begin{equation}\label{eq:squaredelas}
u'^2 + u^3 - (2 \lambda + 4) u^2 - 4Cu  = 0.  
\end{equation}
\end{prop}

The elastica equation is solved explicitly in the following proposition, which is a major part of \cite{Steinberg} and has been obtained before in \cite{LangerSinger}.  A detailed proof is included in Appendix \ref{app:A} for the reader's convenience. 

\begin{prop}[Integration of the Elastica Equation {\cite{Steinberg,LangerSinger}}]\label{prop:integr} 
Let $\lambda \in \mathbb{R}$ be given. Then, every nonnegative solution $u = \kappa^2$ of \eqref{eq:squaredelas} is global and attains a global maximum $\kappa_0^2 := \sup_{x \in \mathbb{R}} u(x)$. Therefore, all nonnegative solutions of \eqref{eq:squaredelas} are translations of solutions with the following initial conditions $u(0) = \kappa_0^2 $ and $u'(0) = 0 $. Then, for $\kappa_0^2 < \lambda +2 $ there exist no elastica, and the other cases are exhaustively classified by the following four cases
\begin{enumerate}
\item (Circular Elastica) $\kappa_0^2 = \lambda + 2, \; C< 0$ and $u(s) = 2 + \lambda$. 
\item (Orbitlike Elastica) $ \kappa_0^2 \in ( \lambda + 2, 2 \lambda +4), \; C< 0$ and $u(s) = \kappa_0^2 \dn^2(rs,p),$ where $r = \frac{1}{2} \sqrt{\frac{2\lambda + 4}{2-p^2}}$ and $p \in (0,1)$ is such that $\kappa_0^2 = \frac{2\lambda + 4}{2-p^2}$, more explicitly $p^2 = \frac{2 \sqrt{(2+\lambda)^2 + 4C}}{2 + \lambda + \sqrt{(2+\lambda)^2 + 4C}}$.
\item (Asymptotically Geodesic Elastica) $\kappa_0^2 = 2\lambda + 4, \; C= 0$ and $u(s) = \kappa_0^2 \sech^2(rs)$, where $r = \frac{1}{2}\sqrt{2\lambda + 4}$
\item (Wavelike Elastica) $\kappa_0^2 > 2 \lambda + 4, \; C> 0 $ and $u(s) = \kappa_0^2 \cn(rs,p)$ where $r = \frac{1}{2} \sqrt{\frac{2\lambda + 4}{2p^2-1}}$ and $p \in ( \frac{1}{\sqrt{2}} , 1) $ is such that  $\kappa_0^2 = \frac{(2\lambda + 4) p^2}{2p^2-1}$, more explicitly $p^2 = \frac{2 + \lambda + \sqrt{(2+ \lambda)^2 + 4C}}{2 \sqrt{(2+\lambda)^2 + 4C}}$.
\end{enumerate}

\end{prop}


We want to derive an explicit parametrization for elastic curves. For this, we have to prescribe initial data. In Proposition \ref{prop:integr} we fixed the curvature and its derivative at $s=0$. Initial data for $\gamma(0)$ and $\gamma'(0)$ can be chosen in a computationally convenient way. This choice has to be made in a way that elastic curves with any initial data can be retrieved. One would hope that the retrieving process only involves isometries, since then the curvature changes only up to a sign.

In $\mathbb{R}^2$, Euclidean motions define isometries and { for each $p \in \mathbb{R}^2$ and $v \in T_p\mathbb{R}^2$ there exists} a Euclidean motion $\Phi$ such that $\Phi(p) = (0,0) $ and $\Phi(v) = (|v|,0)$. This means that each elastic curve with initial value $p$ and initial tangent vector $v$ is(Euclidean) isometric to an elastic curve starting at the origin with a horizontal tangent line. We shall prove a similar result for $\mathbb{H}^2$, inspired by \cite{EichmannMaster}. For this note that $\Phi : \mathbb{H}^2 \rightarrow \mathbb{H}^2$ is an isometry of $\mathbb{H}^2$ if and only if there exist $a,b,c,d \in \mathbb{R}$ such that $ad-bc = 1$ and 
\begin{equation*}
\iota \circ \Phi \circ \iota^{-1} (z) = \frac{az+b}{cz+d},
\end{equation*}
where $\iota : \mathbb{H}^2 \hookrightarrow \mathbb{C}$ denotes the canonical inclusion.

%

\begin{lemma}[Reduction of the Initial Value Problem]\label{lem:inired}
Let $z \in \mathbb{H}^2$ and $v \in T_z \mathbb{H}^2$ such that $g_z(v,v) = 1$. Then for each $y > 0$ there exists an isometry $\Phi$ of $\mathbb{H}^2$ such that $\iota (\Phi(z))  = iy$ and $ d (\iota \circ \Phi)_z(v) = y $.
\end{lemma}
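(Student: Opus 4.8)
The plan is to construct $\Phi$ explicitly as a composition of two elementary hyperbolic isometries, using the M\"obius-transformation description from the previous proposition. First I would normalize the base point: writing $\iota(z) = x_0 + i y_0$ with $y_0 > 0$, the map $z \mapsto \frac{1}{\sqrt{y_0}}(z - x_0)$ is an orientation-preserving M\"obius transformation fixing the real line (it has the form $\frac{az+b}{cz+d}$ with $a = y_0^{-1/2}$, $b = -x_0 y_0^{-1/2}$, $c=0$, $d = y_0^{1/2}$, so $ad - bc = 1$), hence an isometry $\Phi_1$ of $\mathbb{H}^2$ with $\iota(\Phi_1(z)) = i$. So without loss of generality we may assume $\iota(z) = i$.

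Next, with $z$ moved to $i$, the unit tangent vector $v$ (in the hyperbolic metric $g_i = \mathrm{id}$ at the point $i$) is a Euclidean unit vector at $i$, i.e. $e^{i\theta}$ for some angle $\theta$ in the ``Euclidean eyes'' picture of Remark \ref{rem:4.4}. The rotations of $\mathbb{H}^2$ fixing $i$ are exactly the M\"obius transformations $z \mapsto \frac{(\cos\phi) z + \sin\phi}{-(\sin\phi) z + \cos\phi}$ (these fix $i$ and have determinant $1$); their differential at $i$ acts on $T_i\mathbb{H}^2 \cong \mathbb{C}$ as multiplication by $e^{-2i\phi}$ — this is the standard computation of the derivative of a M\"obius map at a fixed point, and one should record the factor carefully. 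Choosing $\phi$ appropriately rotates $v$ to point in the positive real direction, giving an isometry $\Phi_2$ with $\iota(\Phi_2(i)) = i$ and $d(\iota\circ\Phi_2)_i(v) = 1$ (the derivative of an isometry of $\mathbb{H}^2$ fixing $i$ preserves the Euclidean length at $i$, so the resulting vector really has Euclidean length $1$). Finally, to land at height $y$ instead of $1$, apply the dilation $z \mapsto yz$, which is the isometry $\Phi_3$ corresponding to $a = \sqrt{y}$, $d = 1/\sqrt{y}$, $b=c=0$; its differential at $i$ multiplies Euclidean vectors at $i$ by $y$ (since $d(z\mapsto yz) = y\cdot\mathrm{id}$ as a Euclidean linear map), so $\iota(\Phi_3(i)) = iy$ and $d(\iota\circ\Phi_3)_i(\text{unit vector}) $ has Euclidean length $y$. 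Setting $\Phi := \Phi_3 \circ \Phi_2 \circ \Phi_1$ and composing the differentials via the chain rule yields $\iota(\Phi(z)) = iy$ and $d(\iota\circ\Phi)_z(v) = y$, as desired.

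The only genuinely delicate point is bookkeeping the differentials: one must check that for a M\"obius transformation $w = \frac{aw+b}{cw+d}$ fixing a point $p$, the derivative $dw/dz|_p = (cp+d)^{-2}$, evaluate this at $p = i$ for each building block, and verify that the \emph{Euclidean} length of the image vector comes out as claimed (the hyperbolic metric at $i$ being the Euclidean one, and the isometry property guaranteeing consistency at the endpoint once the height is fixed). Everything else is the routine observation that each listed map has real coefficients with $ad - bc = 1$, hence is an isometry by the preceding proposition. I would present the three maps, state their effect on base point and tangent vector, and conclude by composition; the main obstacle is simply keeping the normalization constants straight rather than any conceptual difficulty.
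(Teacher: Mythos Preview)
Your proposal is correct. The approach, however, is genuinely different from the paper's. The paper does \emph{not} decompose $\Phi$ into elementary isometries; instead it first translates $z$ to the imaginary axis $z = ir$ and then writes down directly the four algebraic conditions on the M\"obius coefficients $a,b,c,d$ (namely $ad-bc=1$, $\Phi'(z)v = y$, $\mathrm{Re}\,\Phi(z)=0$, $\mathrm{Im}\,\Phi(z)=y$), shows that the last is redundant, solves the derivative condition for $c,d$ via a complex square root of $v/y$, and then solves the remaining two linear equations for $a,b$ by checking that the relevant $2\times 2$ determinant is nonzero.

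Your three-step geometric construction (translate-and-scale to $i$, rotate around $i$, dilate to height $y$) is cleaner conceptually and makes the transitivity of the isometry group on the unit tangent bundle transparent; it also avoids the somewhat ad hoc observation that condition (4) follows from (2). The paper's approach, by contrast, produces explicit closed formulas for $a,b,c,d$ in a single M\"obius transformation, which can be convenient if one later needs those coefficients. One small slip in your write-up: the derivative of the rotation $z\mapsto\frac{(\cos\phi)z+\sin\phi}{-(\sin\phi)z+\cos\phi}$ at $i$ is $(\cos\phi - i\sin\phi)^{-2}=e^{2i\phi}$, not $e^{-2i\phi}$; this does not affect the argument since you only need to \emph{choose} $\phi$ to rotate $v$ onto the positive real axis, but it is worth correcting in the final version.
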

\begin{proof}We tacitly identify $\iota\circ \Phi \equiv \Phi$ and $ \iota (z) \equiv z $. We can without loss of generality assume that  $z = i r $ for some $r > 0 $ since we can compose with a translational M\"obius transformation that translates $z$ to the imaginary axis and leaves the differential invariant.
 Note that $\Phi(w) = \frac{aw+b}{cw+ d}$ for some $a,b,c,d \in \mathbb{R}$ and we identify $d \Phi_w$ with $\Phi'(w)$ via complex multiplication. We obtain that $\Phi$ is the desired M\"obius transformation if and only if 
\begin{enumerate}
\item $ad-bc = 1$.
\item $y = \Phi'(z) v = v \frac{ad-bc}{(cz+d)^2} = \frac{v}{(cz+d)^2}$.
\item $0 = \mathrm{Re} \; \Phi(z) = \frac{ac|z|^2 + bd+ (ad+bc)\mathrm{Re}(z)}{|cz+d|^2}$.
\item $y = \mathrm{Im} \;  \Phi(z) = \frac{ad-bc}{|cz+d|^2} \mathrm{Im}(z)  = \frac{ad-bc}{|cz+d|^2} r$
.
\end{enumerate} 
Note that condition (2) makes (4) redundant since  $g_z(v,v) = 1 $ implies that $|v|^2_{\mathbb{C}} = r^2 $. Indeed, if $(2)$ is satisfied then
\begin{equation*}
y = |y|_\mathbb{C} = \left\vert \frac{ad-bc}{(cz+d)^2} v \right\vert  = \frac{ad-bc}{|cp+d|^2} |v| = \frac{ad-bc}{|cz+d|^2}r, 
\end{equation*}
whence $(4)$ holds true.
Plugging $(1)$ into $(2)$ gives $\frac{1}{(cir+d)^2} = \frac{y}{v}$. 
Note that (2) can easily be solved for $c$ and $d$. Indeed, if $\sqrt{\cdot}$ denotes some branch of the complex root we obtain that $icr + d = \sqrt{\frac{y}{v}}$ and therefore $c=\frac{1}{r} \mathrm{ Im } \sqrt{\frac{y}{v}} $ and $d = \mathrm{Re} \sqrt{\frac{y}{v}}$.  Using that $\mathrm{Re}(z) = 0$ equations $(1)$ and $(3)$ yield the following linear system 
\begin{equation*}
\begin{cases} da-cb &  = 1 \\ r^2ca + db & = 0, \end{cases} 
\end{equation*} 
which has a unique solution once $c,d$ are known since 
\begin{equation*}
\det \begin{pmatrix}
d & -c \\ r^2 c & d 
\end{pmatrix} = d^2 + r^2 c^2 = \left\vert \sqrt{\frac{y}{v}} \right\vert^2 \neq 0. 
\end{equation*}
Finally we have found $a,b,c,d$ such that $(1),(2),(3),(4)$ are satisfied. The claim follows.
\end{proof}

\subsection{Killing Fields}

Let $(M,g)$ be a Riemannian manifold and $X \in \mathcal{V}(M)$. We define the \emph{flow map} $\phi_X$  of $X$ the map that associates to a pair $(t,p) \in \mathbb{R} \times M$ the value $c_p(t)\in M$ where $c_p$ is the unique maximal solution of 
\begin{equation}\label{eq:floo}
\begin{cases}
c'(s) = X(c(s) ) \\
c(0) = p.
\end{cases} 
\end{equation}  
Since it is unclear whether $c_p(t)$ exists for given $(t,p) \in \mathbb{R} \times M$, the domain of definition need not be $\mathbb{R} \times M$. 

A vector field $J \in \mathcal{V}(M)$ is called \emph{Killing field} for $M$ if for each $p \in M$ the $c_p$ is defined on the whole of $\mathbb{R}$ and $ \phi_t := \phi_J(t, \cdot)\colon M \rightarrow M $ is an isometry for each $t \in \mathbb{R}$. 

The reason that we introduce Killing fields is that they one can associate a Killing field $\widetilde{J}_\gamma$ to each given elastica $\gamma$. Since however Killing fields in $\mathbb{H}^2$ can also be characterized explicitly one obtains a representation of $\widetilde{J}_\gamma$ with three parameters. This can be used to perform an order reduction of the (fourth order) elastica equation. Details will be given in the following
\begin{lemma}[{Killing Fields for Elastica, \cite[Proposition 2.1]{LangerSinger}}] \label{thm:killingext}
Let $\gamma\colon I \rightarrow \mathbb{H}^2$ be an elastic curve parametrized by hyperbolic arclength. Define 
\begin{equation*}
J_\gamma := (\kappa^2 - \lambda) T + 2 \kappa' N .
\end{equation*} 
Then $J_\gamma$ has a unique extension to a Killing field in $\mathcal{V}(\mathbb{H}^2)$, which we will denote by $ \widetilde{J}_\gamma$. 
\end{lemma}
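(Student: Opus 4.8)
The plan is to exhibit $J_\gamma$ explicitly as the restriction to $\gamma$ of a Killing field on all of $\mathbb{H}^2$, and then to argue that such a Killing extension is unique. For the existence part, I would use Proposition \ref{prop:killinghyp}, which says every Killing field on $\mathbb{H}^2$ has the three-parameter form $a(x^2-y^2,2xy)^T + b(x,y)^T + c(1,0)^T$. Since $J_\gamma(s) = (\kappa^2-\lambda)T(s) + 2\kappa' N(s)$ is a prescribed vector at each point $\gamma(s)$, and since a Killing field on $\mathbb{H}^2$ is uniquely determined by its value and covariant derivative at a single point (the isometry group of $\mathbb{H}^2$ being three-dimensional), the natural strategy is: fix $s_0 \in I$, choose the unique Killing field $\widetilde J_\gamma$ whose value at $\gamma(s_0)$ equals $J_\gamma(s_0)$ and whose covariant derivative along $\gamma$ at $s_0$ matches that of $J_\gamma$, and then show $\widetilde J_\gamma \circ \gamma \equiv J_\gamma$ on all of $I$.

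The heart of the matter is this last identification, which I would carry out by an ODE-uniqueness argument. Set $Y(s) := \widetilde J_\gamma(\gamma(s)) - J_\gamma(s)$, a vector field along $\gamma$. Restricting a Killing field to an immersed curve, $\widetilde J_\gamma\circ\gamma$ satisfies a second-order linear ODE along $\gamma$ (differentiate using $\nabla_T$ twice and use the Killing equation $\nabla_X \widetilde J_\gamma$ antisymmetric together with the constant sectional curvature $-1$ of $\mathbb{H}^2$; concretely, for a Jacobi-type relation one gets $\nabla_T\nabla_T(\widetilde J_\gamma\circ\gamma)$ expressed via curvature and $\nabla_T(\widetilde J_\gamma\circ\gamma)$). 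On the other hand, writing $J_\gamma = (\kappa^2-\lambda)T + 2\kappa' N$ and differentiating along $\gamma$ using the Frenet relations $\nabla_T T = \kappa N$, $\nabla_T N = -\kappa T$ together with the elastica equation \eqref{eq:elasticaeq}, one checks that $J_\gamma$ satisfies the \emph{same} linear second-order ODE — this computation, essentially the one in \cite[Proposition 2.1]{LangerSinger}, is where the specific coefficients $(\kappa^2-\lambda)$ and $2\kappa'$ are forced. Since $Y(s_0) = 0$ and $\nabla_T Y(s_0) = 0$ by our choice of $\widetilde J_\gamma$, uniqueness for linear ODEs gives $Y \equiv 0$, i.e. $\widetilde J_\gamma$ extends $J_\gamma$.

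For uniqueness of the extension: if $K$ is another Killing field on $\mathbb{H}^2$ with $K\circ\gamma = J_\gamma = \widetilde J_\gamma\circ\gamma$, then $K - \widetilde J_\gamma$ is a Killing field vanishing along the nondegenerate arc $\gamma(I)$; since a Killing field vanishing on an open set (or even along a curve together with its derivative there, which holds automatically here since both agree to all orders along $\gamma$) must vanish identically on a connected manifold, we conclude $K = \widetilde J_\gamma$.

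The main obstacle I anticipate is the bookkeeping in the second-order ODE verification: one must differentiate $J_\gamma$ twice along $\gamma$, substitute the Frenet formulas and the elastica equation \eqref{eq:elasticaeq} (and its first integral \eqref{eq:onceint}) to cancel the nonlinear terms, and match the result against the Jacobi-type equation satisfied by the restriction of an arbitrary Killing field in a space of constant curvature $-1$. The algebra is routine but unforgiving, and the role of the constant $\lambda$ in the coefficient $(\kappa^2-\lambda)$ — as opposed to $\kappa^2$ — is exactly what makes the cancellation work, so the computation must be done with care. A secondary point requiring a sentence of justification is that a Killing field on $\mathbb{H}^2$ is determined by one-jet of data at a point; this follows from Proposition \ref{prop:killinghyp} by linear algebra (three parameters, three conditions), or alternatively from standard Killing-field theory.
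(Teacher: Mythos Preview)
The paper does not give its own proof here; it simply cites \cite[Proposition 2.1]{LangerSinger}. Your strategy is correct and is one standard route to the result. Two small points are worth making explicit in an actual write-up. First, for the one-jet matching at $s_0$ to be possible you need $\nabla_T J_\gamma$ to be orthogonal to $T$ (since $\langle\nabla_T X,T\rangle=0$ for any Killing field $X$); a short Frenet computation together with the elastica equation \eqref{eq:elasticaeq} gives $\nabla_T J_\gamma = 2\kappa N$, so this holds. Second, in your uniqueness paragraph the reason a Killing field $K-\widetilde J_\gamma$ vanishing along $\gamma$ has vanishing full one-jet at a point of $\gamma$ is that $\nabla_T(K-\widetilde J_\gamma)=0$ by differentiating the equality along $\gamma$, and then the antisymmetry of $\nabla(K-\widetilde J_\gamma)$ forces $\nabla_N(K-\widetilde J_\gamma)=0$ as well; from there the three-parameter description in Proposition \ref{prop:killinghyp} finishes it. With those two sentences added your argument is complete.
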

\begin{remark}\label{rem:nontrivialk}
{ Since
\begin{equation*}
 g_{\gamma(t)}( J_\gamma, J_\gamma ) = (\kappa^2- \lambda)^2 + 4\kappa'^2 = \kappa^4 - 2 \lambda \kappa^2 + \lambda^2 +4 \kappa'^2 = \lambda^2 + 4C + 4 \kappa^2  
\end{equation*}
where $C$ is the constant in \eqref{eq:onceint}, we see that $\widetilde{J}_\gamma \equiv 0$ implies that $\kappa \equiv const.$ and this case is already covered by Proposition \ref{prop:curvconst}. We infer that in the cases of elastica with nonconstant curvature the Killing field $\widetilde{J}_\gamma$ is not identically zero. }
\end{remark} 
\begin{prop}[{Killing Fields in $\mathbb{H}^2$, \cite[Example 2.10]{EichmannPhD}}]\label{prop:killinghyp}A vector field  $J \in \mathcal{V}(\mathbb{H}^2)$ is a Killing  field if and only if there are $a,b, c \in \mathbb{R}$ such that 
\begin{equation*}
J(x,y) = a \begin{pmatrix}
x^2 - y^2 \\ 2xy 
\end{pmatrix}  + b \begin{pmatrix}
x \\ y 
\end{pmatrix} + c \begin{pmatrix}
1 \\ 0 
\end{pmatrix}
\end{equation*} 
with respect to the Euclidean chart $\psi\colon \mathbb{H}^2 \rightarrow \mathbb{R}^2$, $\psi(x,y) := (x,y)^T$.
\end{prop}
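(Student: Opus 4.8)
The plan is to reduce the statement to the infinitesimal Killing equation $\mathcal{L}_J g = 0$ and to solve that linear system explicitly in the Euclidean chart. Recall that for any Riemannian manifold the flow $\phi_t$ of a vector field $J$ satisfies $\phi_t^\ast g = g$ for small $t$ if and only if $\mathcal{L}_J g = 0$; the definition of Killing field used here requires in addition that the flow be defined for all $t\in\R$, and this completeness will be visible directly from the one-parameter subgroups of $\mathrm{SL}(2,\R)$ produced below (it is also automatic on the complete manifold $\Hyp$). So it suffices to show: writing $J = P\,\partial_x + Q\,\partial_y$ in the chart $\psi$ and $g = y^{-2}(\diff x\otimes\diff x + \diff y\otimes\diff y)$, one has $\mathcal{L}_J g = 0$ if and only if $(P,Q)$ is of the asserted form.

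For the forward implication I would expand $\mathcal{L}_J g = 0$ componentwise. Since $g$ is conformal to the Euclidean metric with factor $y^{-2}$ and $\partial_y(y^{-2}) = -2y^{-3}$, the $xx$-, $yy$- and $xy$-components become, respectively,
\begin{equation*}
  y\,\partial_x P = Q, \qquad y\,\partial_y Q = Q, \qquad \partial_y P + \partial_x Q = 0 .
\end{equation*}
The second equation says $\partial_y(Q/y) = 0$, hence $Q = y\,g(x)$ for a smooth function $g$; the first then gives $\partial_x P = g(x)$, and the third gives $\partial_y P = -y\,g'(x)$. Equality of mixed partials of $P$ now forces $y\,g''(x) = 0$, so $g$ is affine, $g(x) = -2ax + b$; integrating the two first-order relations $\partial_x P = -2ax+b$ and $\partial_y P = 2ay$ yields $P = -a(x^2-y^2) + bx + c$ for a constant $c$, while $Q = -2axy + by$. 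Renaming $-a$ as $a$, this is exactly the claimed expression, so every Killing field has the stated form.

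For the converse I would close the argument by exhibiting the flow, which simultaneously settles completeness. Given $a,b,c\in\R$, let $A := \left(\begin{smallmatrix} b/2 & c \\ -a & -b/2\end{smallmatrix}\right)$, which is trace-free, so $e^{tA}\in\mathrm{SL}(2,\R)$ for all $t$; writing $e^{tA} = \left(\begin{smallmatrix}\alpha(t)&\beta(t)\\\gamma(t)&\delta(t)\end{smallmatrix}\right)$, the M\"obius maps $\Phi_t$ determined by $\iota\circ\Phi_t\circ\iota^{-1}(z) = \frac{\alpha(t)z+\beta(t)}{\gamma(t)z+\delta(t)}$ are isometries of $\Hyp$ by the preceding characterization of isometries (note $\det e^{tA} = 1 > 0$, so $\Phi_t$ indeed preserves the upper half-plane). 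Because $e^{(t+s)A} = e^{tA}e^{sA}$ and matrix multiplication corresponds to composition of M\"obius maps, $(\Phi_t)_{t\in\R}$ is a one-parameter group with $\Phi_0 = \mathrm{id}$; hence $(\Phi_t)$ is the flow of its generator $X(p) := \frac{\diff}{\diff t}\big|_{t=0}\Phi_t(p)$. A quotient-rule computation at $t=0$, using $\alpha(0)=\delta(0)=1$, $\beta(0)=\gamma(0)=0$ and $\alpha'(0)=b/2$, $\beta'(0)=c$, $\gamma'(0)=-a$, $\delta'(0)=-b/2$, gives $\frac{\diff}{\diff t}\big|_{0}\Phi_t(z) = az^2 + bz + c$ in the complex coordinate $z = x+iy$, whose real and imaginary parts are $a(x^2-y^2)+bx+c$ and $2axy+by$. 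Thus $X = J$, so $J$ is a complete vector field whose flow consists of isometries, i.e.\ a Killing field.

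The computations here are elementary; the only place that deserves attention is matching the paper's definition of Killing field---a \emph{globally defined} flow by isometries---with the infinitesimal equation $\mathcal{L}_J g = 0$, which is precisely why I prefer to finish the ``if'' direction by producing the one-parameter subgroup of $\mathrm{SL}(2,\R)$ explicitly rather than only checking $\mathcal{L}_J g = 0$; one must also keep careful track of signs in the quotient-rule step.
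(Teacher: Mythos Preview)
Your proof is correct. The paper does not give its own argument for this proposition; it simply cites \cite[Example 2.10]{EichmannPhD} and uses the result as a black box, so there is no in-paper proof to compare against.

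Your approach---solving the Killing system $\mathcal{L}_J g = 0$ componentwise and then closing the converse by exhibiting the one-parameter subgroup $e^{tA}\subset\mathrm{SL}(2,\R)$ whose infinitesimal action is $z\mapsto az^2+bz+c$---is clean and self-contained. The extra care you take to produce the global flow explicitly (rather than merely verifying $\mathcal{L}_J g = 0$) is well placed given the paper's definition of a Killing field, and it dovetails nicely with the paper's subsequent use of the complex structure of the upper half-plane (e.g.\ in \eqref{eq:wichtig} and \eqref{eq:kilextcomp}, where exactly the identification $\widetilde J_\gamma(z)=az^2+c$ appears).
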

For a given elastica $\gamma$, our goal is now to find the parameters $a,b,c$ that are associated to $\widetilde{J}_\gamma$ in the sense of Proposition \ref{prop:killinghyp}. The rest of this section will be dedicated to the following order reduction result, which is a slight refinement of \cite[Remark 4.6]{EichmannPhD}.

\begin{prop}[{Order Reduction}] \label{prop:ordred}
Let $\gamma \colon I \rightarrow \mathbb{H}^2$ be an elastic curve parametrized by hyperbolic arclength and $y > 0 $ be such that $\gamma(0) = (0,y)^T $, $\gamma'(0) = (y,0)^T$, $\kappa[\gamma](0) = \kappa_0$ and $\kappa[\gamma]'(0) = 0 $. Then either $\kappa \equiv const.$ or {$\kappa = \kappa[\gamma]$ satisfies
\begin{equation}\label{eq:diffeq}
(\kappa^2 - \lambda)\begin{pmatrix}
\gamma_1' \\ \gamma_2' 
\end{pmatrix} + 2 \kappa' \begin{pmatrix}
-\gamma_2'\\ \gamma_1' 
\end{pmatrix} = a \begin{pmatrix}
\gamma_1^2 - \gamma_2^2 \\ 2 \gamma_1 \gamma_2
\end{pmatrix}
+ c \begin{pmatrix}
1 \\ 0 
\end{pmatrix}
\end{equation} 
with constants $a,c \in \R$, }$a \neq 0 $ and 
\[- a y^2 + c = (\kappa_0^2 - \lambda) y \quad \text{ and }\quad ac = -\frac{1}{4} ( \lambda^2 + 4C).\] 
\end{prop}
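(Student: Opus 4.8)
The plan is to feed the special initial data into the Killing field machinery of Lemma \ref{thm:killingext} and Proposition \ref{prop:killinghyp}. First I would recall that along the elastic curve $\gamma$ we have the field $J_\gamma = (\kappa^2-\lambda)T + 2\kappa' N$, which by Lemma \ref{thm:killingext} extends to a Killing field $\widetilde J_\gamma\in\mathcal V(\mathbb H^2)$. If $\kappa\equiv const.$ we are in the first alternative and there is nothing to prove, so assume $\kappa$ is nonconstant; then by Remark \ref{rem:nontrivialk} the Killing field $\widetilde J_\gamma$ is nontrivial. By Proposition \ref{prop:killinghyp} there are $a,b,c\in\R$, not all zero, with
\[
\widetilde J_\gamma(x,y) = a\begin{pmatrix} x^2-y^2\\ 2xy\end{pmatrix} + b\begin{pmatrix} x\\ y\end{pmatrix} + c\begin{pmatrix}1\\0\end{pmatrix}
\]
in the Euclidean chart. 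Evaluating $\widetilde J_\gamma$ along $\gamma$ and using $\iota\circ N = i\,\iota\circ T$ (Remark \ref{rem:4.4}), i.e. that in $\R^2\cong\mathbb C$ the normal is the tangent rotated by $\pi/2$, so that $2\kappa' N$ corresponds to $(-2\kappa'\gamma_2',\,2\kappa'\gamma_1')^T$, gives exactly the left-hand side of \eqref{eq:diffeq}; matching it with the Killing-field expression at the point $\gamma=(\gamma_1,\gamma_2)$ yields
\[
(\kappa^2-\lambda)\begin{pmatrix}\gamma_1'\\ \gamma_2'\end{pmatrix} + 2\kappa'\begin{pmatrix}-\gamma_2'\\ \gamma_1'\end{pmatrix} = a\begin{pmatrix}\gamma_1^2-\gamma_2^2\\ 2\gamma_1\gamma_2\end{pmatrix} + b\begin{pmatrix}\gamma_1\\\gamma_2\end{pmatrix} + c\begin{pmatrix}1\\0\end{pmatrix}.
\]

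Next I would show that the $b$-term vanishes, reducing to the claimed form with only $a,c$. Here the prescribed initial data does the work. At $s=0$ we have $\gamma(0)=(0,y)^T$, $\gamma'(0)=(y,0)^T$, $\kappa(0)=\kappa_0$, $\kappa'(0)=0$. Plugging $s=0$ into the second component gives $0 = a(2\cdot 0\cdot y) + b y + 0$, hence $b=0$ (since $y>0$). This also settles the first scalar relation: the first component at $s=0$ reads $(\kappa_0^2-\lambda)y = a(0 - y^2) + b\cdot 0 + c = -ay^2+c$, which is precisely $-ay^2+c=(\kappa_0^2-\lambda)y$. To see $a\neq 0$: if $a=0$ then (with $b=0$) $\widetilde J_\gamma$ is the constant field $c(1,0)^T$; but a nonzero constant horizontal field is a translation, its integral curves are horizontal lines, which are curves of constant curvature $1$ by Proposition \ref{prop:curvconst}, and the characteristic integral curve through the vertex would force (via Proposition \ref{prop:charcurve}) $\kappa$ to be locally constant — contradicting nonconstancy. (Alternatively: $a=0,b=0$ makes $\|\widetilde J_\gamma\|$ constant along $\gamma$, but $g_{\gamma}(J_\gamma,J_\gamma)=\lambda^2+4C+4\kappa^2$ from Remark \ref{rem:nontrivialk} is nonconstant since $\kappa$ is.) So $a\neq 0$ and $c\in\R$ are as claimed, and \eqref{eq:diffeq} holds.

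Finally I would derive the remaining algebraic identity $ac = -\tfrac14(\lambda^2+4C)$. The idea is to compute the squared Euclidean-type norm of the Killing field in two ways. On one hand, from Remark \ref{rem:nontrivialk}, $g_\gamma(J_\gamma,J_\gamma) = \lambda^2 + 4C + 4\kappa^2$. On the other hand, I would evaluate $g_{(x,y)}(\widetilde J_\gamma,\widetilde J_\gamma) = \tfrac{1}{y^2}|\widetilde J_\gamma|^2$ directly from the explicit expression with $b=0$: with $w=x+iy$ one has $a(x^2-y^2,2xy)^T + c(1,0)^T \leftrightarrow aw^2+c$, whose modulus squared is $|aw^2+c|^2 = a^2|w|^4 + 2ac\,\mathrm{Re}(w^2) + c^2 = a^2(x^2+y^2)^2 + 2ac(x^2-y^2)+c^2$. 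Dividing by $y^2$ and imposing that this matches $\lambda^2+4C+4\kappa^2$ — most cleanly by again specializing to $s=0$, where $x=0$, $y=\gamma_2(0)$, $\kappa=\kappa_0$, and combining with the already-established relation $-ay^2+c=(\kappa_0^2-\lambda)y$ — gives, after simplification, $4ac = -(\lambda^2+4C)$. The bookkeeping should be arranged so the $\kappa_0^2$ and $y$ terms cancel using the first relation; this somewhat delicate algebraic cancellation is the main obstacle, as one has to be careful about which $y$ (namely $\gamma_2(0)$) appears and to keep the sign conventions for $N$ and for $\kappa_0$ consistent throughout. Everything else is a direct translation between the intrinsic and the chart descriptions of the Killing field.
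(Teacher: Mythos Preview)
Your reduction to the form \eqref{eq:diffeq} with $b=0$ and the derivation of $-ay^2+c=(\kappa_0^2-\lambda)y$ are correct and match the paper. The real difficulty is the relation $ac=-\tfrac14(\lambda^2+4C)$, and here your proposal has a genuine gap.

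You suggest computing $g_\gamma(\widetilde J_\gamma,\widetilde J_\gamma)$ in two ways and specializing to $s=0$. But at $s=0$ one has $\gamma_1=0$, $\gamma_2=y$, so $|a\gamma^2+c|^2/\gamma_2^2=(c-ay^2)^2/y^2$, and using the already-established relation $c-ay^2=(\kappa_0^2-\lambda)y$ this collapses to $(\kappa_0^2-\lambda)^2$. On the other side, $\lambda^2+4C+4\kappa_0^2=(\kappa_0^2-\lambda)^2$ is precisely the identity defining $C$ (set $s=0$ in \eqref{eq:onceint}). So the evaluation at $s=0$ is a tautology and yields no information about $ac$ whatsoever. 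The ``delicate algebraic cancellation'' you anticipate is in fact total cancellation. One point on $\gamma$ cannot suffice: you have two unknowns $a,c$ and the single scalar relation at $s=0$ has already been used.

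The paper's route is substantially more involved. It passes to the characteristic integral curve $c_0$ through the vertex, which (once $a\neq 0$) is a Euclidean circle; along $c_0$ the Killing field has constant hyperbolic norm $(\kappa_0^2-\lambda)^2$ by Proposition \ref{prop:intcurve}. Writing this out with the circle parametrization produces a trigonometric identity in the circle parameter, and comparing coefficients of $1,\sin t,\cos 2t$ yields a system that determines $ac=\kappa_0^2\pm\tfrac14(\kappa_0^2-\lambda)^2$. Even then a sign ambiguity remains, and the paper resolves it by a separate geometric argument: the `$+$' sign would force the Killing field to have a zero in $\mathbb H^2$, and Proposition \ref{prop:killingzero} together with the circle geometry then leads to a contradiction. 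None of this is visible from evaluation at a single point of $\gamma$.

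A smaller issue: your two arguments for $a\neq 0$ are both shaky. The first misreads Proposition \ref{prop:charcurve}, which constrains the curvature of the \emph{integral curve} $c_0$, not of $\gamma$; getting $|\kappa[c_0]|=1$ does not make $\kappa[\gamma]$ locally constant. The second is wrong because for $\widetilde J_\gamma=c(1,0)^T$ the \emph{hyperbolic} norm is $c^2/\gamma_2^2$, which is not constant along $\gamma$. The paper's argument that $a=0$ forces $\lambda^2+4C=0$ (via $|\kappa[c_0]|=1$) is the correct line.
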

\begin{remark}
Recall that the prescribed initial datum in Proposition \ref{prop:ordred} does not restrict the generality of the classification, see Lemma \ref{lem:inired}. Using M\"obius transformations might however change the parameters of the Killing field, hence the order reduction is exclusively applicable for elastica with the given initial data. 
\end{remark} 

For the proof of Proposition \ref{prop:ordred} it is crucial to examine the so-called \emph{characteristic integral curves} of an elastica. These are defined to be the solutions $c_z$ of \eqref{eq:floo}, where $X = \widetilde{J}_\gamma$ and $z \in \mathbb{H}^2$ is a point of maximum curvature of $\gamma$. 
First observe by \cite[Theorem 2.11, Remark 2.8]{EichmannPhD} that for each Killing field $J \in \mathcal{V}(M)$ and each $p \in M$ the solution $c_p$ of \eqref{eq:floo} is parametrized with constant velocity and has constant curvature. Hence, the following proposition provides therefore a description of all $c_p$:  


\begin{prop}[{Curves of Constant Curvature in $\mathbb{H}^2$, \cite[Lemma 2.15] {EichmannPhD}}]\label{prop:curvconst}
Let $\gamma\colon I \rightarrow \mathbb{H}^2$ be a smooth immersed curve such that $\kappa[\gamma] \equiv const. $ Then one of the following holds true: 
\begin{enumerate}
\item If $|\kappa[\gamma]| > 1 $ then $\gamma$ is part of a (Euclidean) circle that does not intersect the $x$-axis.
\item If $|\kappa[\gamma]| = 1 $ then $\gamma$ is part of a (Euclidean) circle that touches the $x$-axis at exactly one point or part of a line parallel to the $x$-axis.
\item If $|\kappa[\gamma]| < 1 $ then $\gamma$  is part of a (Euclidean) circle intersecting the $x$-axis at exactly two points or part of a straight line that is not parallel to the $x$-axis. 
\end{enumerate}
\end{prop}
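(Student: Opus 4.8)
The statement to prove is Proposition \ref{prop:curvconst}, the classification of curves in $\mathbb{H}^2$ with constant scalar curvature. The plan is to combine Proposition \ref{prop:intcurve} with the explicit description of Killing fields in Proposition \ref{prop:killinghyp}. Since a curve of constant curvature is (up to reparametrization and up to an isometry of $\mathbb{H}^2$) an integral curve of a Killing field, it suffices to integrate the ODE $c' = J(c)$ for the three-parameter family from Proposition \ref{prop:killinghyp} and to read off the geometry of the resulting orbits, then relate the value of the constant curvature to which orbit type occurs.

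First I would reduce to the model case. Given $\gamma$ with $\kappa[\gamma]\equiv k$, one can apply an isometry $\Phi$ of $\mathbb{H}^2$ (a real M\"obius transformation) to move a chosen point and tangent direction of $\gamma$ into a convenient normal position; since isometries preserve $|\kappa|$ and map Euclidean circles/lines to Euclidean circles/lines, it is enough to classify the orbits of the Killing fields $J_{a,b,c}(x,y) = a(x^2-y^2,2xy)^T + b(x,y)^T + c(1,0)^T$. The key computation is to determine, for each admissible $(a,b,c)$, the image of the integral curve: one checks that these orbits are exactly the Euclidean circles and lines that, in the upper half plane, are either hyperbolic geodesics (circles meeting the $x$-axis orthogonally / vertical lines), hypercycles (circles or lines meeting the $x$-axis at a nonzero angle), horocycles (circles internally tangent to the $x$-axis / horizontal lines), or hyperbolic circles (circles not meeting the $x$-axis). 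The cleanest way to see this is to note that the one-parameter isometry group $\phi_J(t,\cdot)$ generated by $J$ fixes the orbit setwise, and a nontrivial one-parameter subgroup of $\mathrm{PSL}(2,\mathbb{R})$ is elliptic, parabolic, or hyperbolic, with the three orbit geometries listed above; alternatively one integrates the Riccati-type ODE directly after identifying $\mathbb{H}^2$ with $\{\operatorname{Im} z>0\}\subset\mathbb{C}$.

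Second I would pin down the curvature value in each case. For a Euclidean circle of radius $R$ whose center lies at Euclidean height $h$ above the $x$-axis, a direct computation from the curvature formula \eqref{eq:curvhyper} (or from the standard fact that in the upper half plane model the geodesic curvature of such a circle is $h/R$) gives $|\kappa| = h/R$; this is $<1$, $=1$, or $>1$ precisely according as the circle meets, is tangent to, or misses the $x$-axis. For the line cases: a line not parallel to the $x$-axis has $|\kappa|<1$ (limit of circles crossing the axis), a horizontal line has $|\kappa| = 1$, and vertical lines are geodesics with $\kappa = 0 < 1$. Matching these up yields exactly the trichotomy $|\kappa|>1$, $|\kappa|=1$, $|\kappa|<1$ of the statement.

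The main obstacle I expect is the bookkeeping in the middle step: one must verify that \emph{every} constant-curvature curve arises this way (this is where Proposition \ref{prop:intcurve}, together with the fact that any point-plus-unit-tangent datum is realized by some Killing field orbit, is essential — so no constant-curvature curves are missed), and one must be careful that the classification is stated for \emph{arcs} ("part of a circle/line"), so no global embeddedness or completeness claims are needed. Everything else is a short explicit ODE integration plus the elementary Euclidean-versus-hyperbolic curvature comparison; since the paper only needs the qualitative trichotomy, I would not belabor the constants beyond what is required to separate the three regimes.
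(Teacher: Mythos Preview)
The paper does not prove Proposition \ref{prop:curvconst} at all; it is quoted verbatim from \cite[Lemma 2.15]{EichmannPhD} and used as a black box. So there is no ``paper's own proof'' to compare against, and your outline stands on its own.

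Your strategy is sound and is in fact the standard one. Two remarks. First, the direction you actually need is the \emph{converse} of Proposition \ref{prop:intcurve}: that proposition only says Killing orbits have constant curvature, not that every constant-curvature curve is a Killing orbit. The way to close this is exactly what you indicate in your ``obstacle'' paragraph---invoke uniqueness for the Frenet system in $\mathbb{H}^2$ (a second-order ODE with prescribed $\kappa$) and check that through every $(p,T)$ and for every value $k\in\mathbb{R}$ there is a Euclidean circle or line in the upper half plane with hyperbolic curvature $k$ and the right tangency. The $h/R$ formula for circles and the $1/\sqrt{1+m^2}$ formula for non-vertical lines make this explicit, so nothing is missed. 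Second, you can bypass the Killing-field formalism entirely if you want: simply verify with \eqref{eq:curvhyper} that the candidate circles and lines have constant hyperbolic curvature with the stated values, and then appeal to Frenet uniqueness. The Killing-field route is elegant (and explains \emph{why} these are the only curves), but for the bare trichotomy it is not required.
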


In particular we know that each characteristic integral curve $c_z$ must be of one of the three kinds.

To detect which of the three cases in the previous section applies to solutions $c_p$ one can use a useful criterion, going back to  \cite[p.81, Exercise 5b]{doCarmo} and \cite[Theorem 7.3]{EichmannMaster}. It says that for each Killing field $J$ on a connected and geodesically complete Riemannian Manifold $(M,g)$  that has a zero $q$ one has that $\phi_J$ preserves the geodesic distance to $q$, i.e. 
\begin{equation}\label{eq:killingzero}
\mathrm{dist}(q,p) = \mathrm{dist} ( q, \phi_J(t,p) )\quad  \forall t \in \mathbb{R} \; \forall p \in M .
\end{equation}
With all the information provided, we can compute the curvature of characteristic integral curves explicitly. One can view these characteristic integral curves as `external circle' for the given elastica.



\begin{prop}[{\cite[Proposition 2.2] {LangerSinger}}]
Let\label{prop:charcurve} $\gamma \colon I \rightarrow \mathbb{H}^2$ be an elastic curve parametrized by hyperbolic arclength. If $z$ is a point of maximum curvature of $\gamma$ and $c_z$ is the characteristic integral curve of $\gamma$ at $z$ then for each $t_0 \in \mathbb{R}$ such that $\gamma(t_0) = z $, $c_z$ is tangential to $\gamma$ at $t = 0$ and 
\begin{equation*}
\kappa[c_z] \equiv \frac{2 \kappa[\gamma](t_0)}{(\kappa[\gamma](t_0)^2 - \lambda)},
\end{equation*}
 if we choose the normal field of $c_z$ such that the normal at $z$ coincides with the normal of $\gamma$. 
\end{prop}

  

\begin{proof}[{Proof of Proposition \ref{prop:ordred}}]
Let $\gamma$ be a non-circular elastic curve, i.e. $\kappa$ is nonconstant.
It follows from Proposition \ref{prop:killinghyp} and Lemma \ref{thm:killingext} that for some constants $a,b,c \in \mathbb{R}$ it holds
\begin{equation}\label{eq:diffeq'}
(\kappa^2 - \lambda)\begin{pmatrix}
\gamma_1' \\ \gamma_2' 
\end{pmatrix} + 2 \kappa' \begin{pmatrix}
-\gamma_2'\\ \gamma_1' 
\end{pmatrix} = a \begin{pmatrix}
\gamma_1^2 - \gamma_2^2 \\ 2 \gamma_1 \gamma_2
\end{pmatrix}+ b \begin{pmatrix}
\gamma_1 \\ \gamma_2 
\end{pmatrix}
+ c \begin{pmatrix}
1 \\ 0 
\end{pmatrix}.
\end{equation}
Evaluating \eqref{eq:diffeq'} at $t = 0 $ we find 
\begin{equation*}
(\kappa_0^2- \lambda)\begin{pmatrix}
 y \\ 0 
\end{pmatrix} = a \begin{pmatrix}
-y^2 \\ 0 
\end{pmatrix} + b \begin{pmatrix}
 0 \\ y 
\end{pmatrix} + c \begin{pmatrix}
1 \\ 0 
\end{pmatrix}.
\end{equation*}
The second line yields $b = 0 $ and the first line yields $ (\kappa_0^2 - \lambda) y = - ay^2 + c $.  Recall from Remark \ref{rem:nontrivialk} that $a$ and $c$ cannot be both zero at the same time.
Consider the characteristic integral curve of $\gamma$ at $z=(0,y)$, which we will call $c_0$ in the sequel. 
{The curvature of $c_0$ is by Proposition \ref{prop:charcurve} given by 
$\kappa [c_0] \equiv \frac{2 \kappa_0}{\kappa_0^2 - \lambda}$, %
in particular it is constant. {If $c_0$ is a line, then it must be parallel to the $x$-axis since $c_0$ is tangential at the vertex $\gamma(0)$ {by Proposition \ref{prop:charcurve}} and therefore  $c_0'(0) \parallel \gamma'(0) = (y,0)$.} Hence $c_{0,2}'(t) = 0$ for each $t$ and this implies, using $c_0'(t) = \widetilde{J}_\gamma(c_0(t))$ and Proposition \ref{prop:killinghyp}, that $a = 0 $. On the other hand, if $a = 0 $ one can deduce from the equation and Proposition \ref{prop:killinghyp} that $c_0'(t) = \widetilde{J}_\gamma(c_0(t)) = c (1,0)^T 
$ and therefore $c_0$ is } a line parallel to the $x$-axis. We infer that $a= 0 $ if and only if $c_0$ is a line parallel to the $x$-axis. In this case however, each integral curve to $\widetilde{J}_\gamma$ is a line parallel to the $x$-axis, as the Killing equation implies. Since lines parallel to the $x$-axis have curvature of $\pm 1$ we obtain 
\begin{equation}\label{eq:dingsdaa}
1 = |\kappa[c_0]|^2 = \frac{4 \kappa_0^2 }{(\kappa_0^2 - \lambda)^2}.
\end{equation}
Also note that 
$(\kappa_0^2- \lambda)^2 = \kappa_0^4 - 2 \lambda \kappa_0^2 + \lambda^2 = \lambda^2 + 4C + 4\kappa_0^2 $
by the definition of $C$, see \eqref{eq:onceint}. But this implies together with \eqref{eq:dingsdaa} that 
\begin{equation*}
4 \kappa_0^2 = (\kappa_0^2 - \lambda)^2 = \lambda^2 + 4C+ 4 \kappa_0^2. 
\end{equation*} 
Therefore $\lambda^2 + 4C = 0$. In particular we obtain that $0 = ac = - \frac{1}{4}(\lambda^2 + 4C)$. 
In the remaining case $c_0$ is not a line and we find that $a \neq 0$. According to Proposition \ref{prop:curvconst} and since $c_0$ cannot be a line, $c_0$ must be part of a Euclidean circle through $(0,y)$,  which we can reparametrize the Euclidean way:
\begin{equation}
c_1(t) = \begin{pmatrix}
          0\\m
         \end{pmatrix} + r
\begin{pmatrix}
\cos(t) \\ \sin(t) 
\end{pmatrix}, \quad t \in \left( \frac{\pi}{2}- \varepsilon_1 , \frac{\pi}{2} + \varepsilon_2 \right),  
\end{equation}
where $m \in \mathbb{R}$ and $r > 0 $ are such that $m + r = y $. A short computation shows  
\begin{equation}\label{eq:kmax}
- \frac{m}{r} = \kappa[c_1]  = \frac{2 \kappa_0}{\kappa_0^2 - \lambda}.
\end{equation}
Additionally, there exists a diffeomorphism $\phi \in C^1(\mathbb{R}; \mathbb{R})$  such that $c_1(t) = c_0(\phi(t))$. 
From \cite[Theorem 2.11, Remark 2.8]{EichmannPhD} we infer $g(\widetilde{J}_\gamma(c_1(t)), \widetilde{J}_\gamma(c_1(t) ))  = \mathrm{const}$. 
Plugging in $t= \phi^{-1}(0)$ we obtain  from Lemma \ref{thm:killingext}
\begin{align}\label{eq:fw}
g(\widetilde{J}_\gamma(c_1(t)), \widetilde{J}_\gamma(c_1(t) ))& = g(\widetilde{J}_\gamma(c_0(0)), \widetilde{J}_\gamma(c_0(0) )) \nonumber\\  & = g({J}_\gamma(\gamma(0)), {J}_\gamma((\gamma(0) )) 
=(\kappa_0^2 - \lambda)^2.
\end{align}
We can compute this quantity in a different way, namely using Proposition \ref{prop:killinghyp}
\begin{align*}
g( \widetilde{J}_\gamma(x,y) , \widetilde{J}_\gamma(x,y) ) & = \frac{1}{y^2} \left( a(x^2 - y^2) + c)^2 + 4a^2 x^2 y^2 \right) \\ & = \frac{a^2(x^2+y^2)^2 + 2ac (x^2 - y^2 ) + c^2}{y^2}.
\end{align*}
Plugging in $x= r \cos(t) $ and $y = m + r \sin(t) $ and using $\cos^2(t)-\sin^2(t) = \cos(2t)$ and $ \sin^2(t) =  \frac{1- \cos(2t)}{2}$  we find together with \eqref{eq:fw} 
\begin{align*}
 (\kappa_0^2 - \lambda)^2  & =\frac{1}{ (m+r \sin(t) )^2 } \left[ a^2(r^2 + m^2)^2 - 2acm^2 + c^2 + 2a^2m^2r^2\right. \\ &  \qquad \quad + \left. \sin(t) ( 4a^2 (r^2 + m^2) mr - 2ac m r )+  \cos(2t) ( 2ac r^2 - 2 a^2 m^2 r^2 ) \right].
\end{align*}
Multiplying with the denominator and using once again that $ \sin^2(t) =  \frac{1- \cos(2t)}{2}$ we obtain 
\begin{align*}
&  a^2(r^2 + m^2)^2 - 2acm^2 + c^2 + 2a^2m^2r^2 +\sin(t) ( 4a^2 (r^2 + m^2) mr - 2ac m r ) \\ & \qquad \qquad + \cos(2t) ( 2ac r^2 - 2 a^2 m^2 r^2 )\\  & \quad  = (\kappa_0^2 - \lambda)^2 ( m^2 + \frac{r^2}{2} ) + 2m r (\kappa_0^2 - \lambda)^2\sin(t) - \cos(2t) ( \kappa_0^2 - \lambda)^2\frac{r^2}{2} .
\end{align*}
Because of the identity theorem for holomorphic functions, the above identity holds true for any $t \in \mathbb{C}$. Using linear independence of trigonometric polynomials to compare coefficients we find
\begin{equation}\label{eq:koeffvgl}
\begin{cases}
\frac{-(\kappa_0^2 - \lambda)^2}{2} = 2 ac - 2 a^2m^2 \\
m(\kappa_0^2 - \lambda)^2 = 2 a^2 m  (m^2 + r^2) - 2ac m \\
a^2 (r^2+m^2)^2 - 2acm^2 + c^2 + 2 a^2m^2 r^2 = ( \kappa_0^2 - \lambda)^2 ( m^2 + \frac{r^2}{2} ). 
\end{cases}
\end{equation}
In case that $m \neq 0 $, dividing the second equation by $m$ and summing with the first we find
\begin{equation}\label{eq:radius}
r^2 = \frac{(\kappa_0^2 - \lambda)^2}{4a^2}.
\end{equation}
Using $\frac{m^2}{r^2} = \frac{4 \kappa_0^2}{( \kappa_0^2 - \lambda)^2 }$ we obtain $m^2 = \frac{\kappa_0^2}{a^2}$. Plugging this into the third identity in \eqref{eq:koeffvgl} we find
\begin{align*}
0 = & \frac{1}{16} [ (\kappa_0^2 - \lambda)^2 + 4 \kappa_0^2 ]^2 + \frac{1}{2} \kappa_0^2 ( \kappa_0^2 - \lambda)^2 - ( \kappa_0^2 - \lambda)^2 \left( \frac{(\kappa_0^2 - \lambda)^2}{8} + \kappa_0^2 \right) \\
 & - 2 (ac) \kappa_0^2 + (ac)^2  .
\end{align*}
Completing the square and factoring out the brackets we obtain
$(ac - \kappa_0^2)^2 = \frac{( \kappa_0^2 - \lambda)^4}{16}$,
and eventually 
\begin{equation*}
a c = \kappa_0^2 \pm \frac{(\kappa_0^2- \lambda)^2}{4}.
\end{equation*}
We will continue showing that the case `$-$' always applies. Suppose that `$+$' is true for some elastica $\gamma$. Then $ac> 0 $ which implies that $\widetilde{J}_\gamma$ has a zero on the $y$-axis since 
\begin{equation*}
g_{(0,s)}( \widetilde{J}_\gamma , \widetilde{J}_\gamma ) = \frac{a^2 s^4 - 2ac s^2 - c^2 }{s^2} = \frac{(as^2 -c)^2}{s^2}.
\end{equation*}
Thus, $\eqref{eq:killingzero}$ implies that the characteristic integral curve cannot reach the $x$-axis. In particular we obtain that $m> 0 $. Note also that
\begin{equation*}
ac = \kappa_0^2 + \frac{(\kappa_0^2 - \lambda)^2}{4} = (m^2 + r^2)a^2. 
\end{equation*}
Computing the absolute value of the Killing field at $\gamma(0) = c_1( \frac{\pi}{2})=(0,m+r)^T $ we obtain that 
\begin{equation*}
(\kappa_0^2 - \lambda)^2 = \frac{a^2(m+r) ^4 - 2 a c(m+r)^2 + c^2}{(m+r)^2} = \frac{1}{(m+r)^2} a^2 ( m^2 + r^2 - (m+r)^2)^2 .
\end{equation*}
Using $\eqref{eq:radius}$ we find 
\begin{equation*}
4r^2 = \frac{(m^2+ r^2 - (m+r)^2)^2}{(m+r)^2} = 4 m^2 r^2  \frac{1}{(m+r)^2} = 4r^2  \frac{1}{(1 + \frac{r}{m})^2}
\end{equation*}
which is impossible, hence `$-$' is always true and 
$ac =  \kappa_0^2 - \frac{(\kappa_0^2- \lambda)^2}{4} = - \frac{1}{4} ( \lambda^2 + 4C)$,
 where $C$ is the constant of \eqref{eq:onceint}.
 The last case to consider is $m = 0 $, but this would imply together with \eqref{eq:kmax} that $\kappa_0 = 0 $. Therefore, there exists some $t_0$ such that $\kappa(t_0) = \kappa'(t_0) = 0 $. Using that $\kappa'' = - \frac{1}{2} \kappa^3 - \frac{\lambda + 2}{2} \kappa $ we find that also  $\kappa''(t_0) = 0 $ and bootstrapping we find that every derivative of $\kappa$ attains the value $0$ at $t_0$. Since all the solutions for $u = \kappa^2$ extend to a holomorphic function on an open neighborhood of the real line, we infer that $\kappa \equiv 0 $, contradicting the non-circularity. 
\end{proof}

Having eliminated the parameter $b$ and expressed $a,c$ with quantities in Proposition \ref{prop:integr} we can now use the quantities to classify elastica by their Killing fields.

\begin{definition}[Classification of Elastica by their Killing Field]\label{def:classi} 
Let $\gamma\colon I \rightarrow \mathbb{H}^2$ be an elastic curve parametrized with hyperbolic arclength and the same initial data as in Proposition \ref{prop:ordred}. If the extended Killing field of $\gamma$ is given by
\begin{equation}\label{eq:kilextf}
\widetilde{J}_\gamma(x,y)  = a \begin{pmatrix}
x^2 - y^2 \\ 2xy 
\end{pmatrix} + c \begin{pmatrix}
1 \\ 0 
\end{pmatrix}
\end{equation}
 we say that $\gamma$ has a \emph{rotational Killing field} (or simply\emph{ $\gamma$ is rotational})  if $ac > 0 $, a \emph{translational Killing field} if $ac < 0 $ and a \emph{horocyclical Killing field} if $ac = 0 $. 
\end{definition}

\begin{remark} \label{rem:helpTable}
By now we have introduced some parameters that describe elastic curves, which we will use in the following. For the sake of the reader's convenience we include in Table \ref{table:parameters} the references that will be missing. We always consider $\lambda \in \mathbb{R}$, $\kappa_0 \in \mathbb{R}$, $y > 0 $ to be the `original' parameters from which we compute the following new parameters. We also include those which 
will be defined later.
\end{remark}

\begin{table}[ht]
\caption{Parameters describing elastica.}
\centering
{\footnotesize
\begin{tabular}{p{1.6cm}p{1.6cm}p{7.5cm}}
Parameter & Reference & Description 
\\ \toprule
$C$ & \eqref{eq:onceint} & Integration constant for the integrated elastica equation \\ 
\midrule
$r$ & Prop. \ref{prop:integr} & Periodicity determining parameter in solutions of elastica equation \\ 
\midrule
$p$ & Prop. \ref{prop:integr} & Shape determining parameter in solutions of elastica equation, so-called modulus.  \\ 
\midrule
$a,c$  &  \eqref{eq:kilextf} & Parameters determining the Killing field of an elastica \\  
\midrule
$n$ & Remark \ref{rem:n} & (Only for closed elastica) 
Periods of the curvature
\\ 
\midrule
$m$ & \eqref{eq:period} & (Only for closed rotational elastica)  The winding number of $\gamma$ 
w.r.t. the zero of the Killing field, see \eqref{eq:dingsk}, \eqref{eq:dingski} \\

\bottomrule

\end{tabular}
}
\label{table:parameters}

\end{table}

We conclude this section with some useful facts about elastica with rotational Killing field which will turn out to be the most relevant for the proof of \eqref{eq:2.1}. 

\begin{prop}[Similarity of Integral Curves]
Let $\gamma$ be as in Proposition \ref{prop:ordred}. 
If $\gamma$ has a rotational Killing field, then every characteristic integral curve $c$ of $\gamma$ has curvature $|\kappa[c]| > 1 $.
\end{prop}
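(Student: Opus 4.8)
The plan is to exploit the explicit structure of the characteristic integral curves combined with Proposition~\ref{prop:killingzero}. Since $\gamma$ has a rotational Killing field, Definition~\ref{def:classi} tells us that in \eqref{eq:kilextf} we have $a \neq 0$ and $ac > 0$. The first thing I would do is locate the (unique) zero of $\widetilde{J}_\gamma$. Solving $a(x^2 - y^2) + c = 0$ and $2axy = 0$ with $a \neq 0$ forces $x = 0$ (since $y > 0$ on $\mathbb{H}^2$), and then $y^2 = c/a > 0$ because $ac > 0$; so the Killing field has a unique zero $q = (0, \sqrt{c/a})^T$ lying in $\mathbb{H}^2$. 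Thus $\widetilde{J}_\gamma$ is a ``rotational'' Killing field in the literal sense, and by Proposition~\ref{prop:killingzero} every integral curve stays at constant geodesic distance from $q$, hence in particular cannot reach the $x$-axis.

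Next I would recall from Proposition~\ref{prop:charcurve} (applied as in the proof of Proposition~\ref{prop:ordred}) that any characteristic integral curve $c$ of $\gamma$ has constant curvature, namely $|\kappa[c]| = \frac{2|\kappa_0|}{|\kappa_0^2 - \lambda|}$ (after possibly relocating the vertex by an isometry, which does not change $|\kappa[c]|$). Now I invoke Proposition~\ref{prop:curvconst}: a curve of constant curvature in $\mathbb{H}^2$ with $|\kappa| \le 1$ is part of a straight line or of a Euclidean circle meeting or touching the $x$-axis. But $c$ is a nonconstant integral curve of $\widetilde{J}_\gamma$ (nonconstant because $a\neq 0$ means $\widetilde{J}_\gamma$ does not vanish along $c$, as $c$ passes through a vertex $\gamma(t_0)$ where $\widetilde J_\gamma$ equals $(\kappa_0^2-\lambda)\gamma'(t_0)\neq 0$ since $\kappa_0^2 \neq \lambda$ in the non-circular case), and we have just shown it cannot reach the $x$-axis; a full line in $\mathbb{H}^2$ that is not parallel to the $x$-axis does reach the $x$-axis in the closure, and a line parallel to the $x$-axis corresponds to the horocyclical case $a = 0$, which is excluded. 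Likewise a Euclidean circle tangent to or crossing the $x$-axis would reach (the closure of) the $x$-axis, contradicting the constant-distance-from-$q$ property. Hence the case $|\kappa[c]| \le 1$ of Proposition~\ref{prop:curvconst} is impossible, leaving only $|\kappa[c]| > 1$.

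The main obstacle I anticipate is making the phrase ``cannot reach the $x$-axis'' rigorous and reconciling it with Proposition~\ref{prop:killingzero}, which is stated for the intrinsic geodesic distance on the complete manifold $\mathbb{H}^2$: I need to argue that a sequence of points on a Euclidean circle approaching the $x$-axis has geodesic distance to $q$ tending to infinity (standard for $\mathbb{H}^2$), so a curve all of whose points lie at fixed finite distance from $q$ stays a positive Euclidean distance from $\{y = 0\}$. The cleanest route is probably to avoid the limiting argument entirely: directly compute, as in the proof of Proposition~\ref{prop:ordred}, that the characteristic integral curve is a Euclidean circle $c_1(t) = (0,m)^T + r(\cos t, \sin t)^T$ with $-m/r = \kappa[c_1] = \frac{2\kappa_0}{\kappa_0^2 - \lambda}$, and that the rotational case $ac > 0$ together with the relations $m^2 = \kappa_0^2/a^2$, $r^2 = (\kappa_0^2-\lambda)^2/(4a^2)$ derived there forces $m^2 > r^2$, i.e. $|\kappa[c_1]| = |m|/r > 1$; this turns the geometric statement into the algebraic identity $ac > 0 \iff m^2 > r^2$ and sidesteps the analytic subtlety. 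I would present the circle computation as the primary argument and mention the distance-to-$q$ picture as the conceptual reason.
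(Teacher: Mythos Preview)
Your proposal is correct and follows essentially the same approach as the paper: locate the unique zero of $\widetilde{J}_\gamma$ at $(0,\sqrt{c/a})$, invoke Proposition~\ref{prop:killingzero} to see that integral curves stay at fixed geodesic distance from this point (hence cannot approach the $x$-axis), rule out the horizontal-line case via $a\neq 0$, and conclude from Proposition~\ref{prop:curvconst} that the only remaining possibility is a Euclidean circle contained in $\mathbb{H}^2$, i.e.\ $|\kappa[c]|>1$. The paper's proof is somewhat terser on the ``cannot reach the $x$-axis'' step, and your proposed algebraic alternative (deriving $m^2>r^2$ directly from the relations in Proposition~\ref{prop:ordred}) is a nice backup that the paper does not spell out.
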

\begin{proof}
If $ac > 0 $ then $\widetilde{J}_\gamma$ has a unique zero on the positive $y$-axis in $\mathbb{H}^2$. Indeed, $\widetilde{J}_\gamma(0, y) = (-ay^2 + c,0)$ according to Proposition \ref{prop:ordred}.  Equation \eqref{eq:killingzero} implies that every Killing vector field must be a line parallel to the $x$-axis or a circle that lies completely in $\mathbb{H}^2$.  We have discussed in Proposition \ref{prop:ordred} that a line parallel to the $x$-axis is impossible unless $a = 0$, so it has to be a circle, resulting in $|\kappa[c]| > 1$, see Proposition \ref{prop:curvconst}. 
\end{proof}

\begin{prop}\label{prop:rotkil}
Assume that $\gamma$ is an elastica with a rotational Killing field. Then $\lambda > -1 $. 
\end{prop}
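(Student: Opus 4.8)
The plan is to exploit the relation between the parameters $a,c,\lambda,C,\kappa_0$ obtained in Proposition \ref{prop:ordred}, together with the curvature formula for the characteristic integral curve from Proposition \ref{prop:charcurve}. Recall that for a rotational Killing field we have $ac > 0$ and, by Proposition \ref{prop:ordred}, $ac = \kappa_0^2 - \tfrac14(\kappa_0^2-\lambda)^2$. Hence the sign condition $ac>0$ translates into
\begin{equation*}
\kappa_0^2 > \tfrac14(\kappa_0^2 - \lambda)^2,
\end{equation*}
equivalently $2|\kappa_0| > |\kappa_0^2 - \lambda|$, i.e. $-2|\kappa_0| < \kappa_0^2 - \lambda < 2|\kappa_0|$. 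The right inequality gives $\lambda > \kappa_0^2 - 2|\kappa_0| = (|\kappa_0| - 1)^2 - 1 \geq -1$, which is exactly the desired bound $\lambda > -1$. So the statement is essentially an algebraic consequence of the rotational condition plus the identity for $ac$.

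First I would record that $\gamma$ has nonconstant curvature: if $\kappa\equiv\mathrm{const.}$ then by Remark \ref{rem:nontrivialk} the extended Killing field $\widetilde J_\gamma$ vanishes identically, contradicting $a\neq 0$ in Definition \ref{def:classi}. So Proposition \ref{prop:ordred} genuinely applies and yields $ac = \kappa_0^2 - \tfrac14(\kappa_0^2-\lambda)^2$. Next I would invoke Definition \ref{def:classi}: ``rotational'' means $ac>0$, so $4\kappa_0^2 > (\kappa_0^2-\lambda)^2$. Taking square roots, $2|\kappa_0| > |\kappa_0^2 - \lambda| \geq \lambda - \kappa_0^2$, hence $\lambda < \kappa_0^2 + 2|\kappa_0|$ and also $\lambda > \kappa_0^2 - 2|\kappa_0|$. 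The latter gives $\lambda > (|\kappa_0|-1)^2 - 1 \geq -1$, completing the argument.

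An alternative (and perhaps cleaner) route is to use the curvature of the characteristic integral curve: by the previous proposition, a rotational Killing field forces every characteristic integral curve $c$ to satisfy $|\kappa[c]| > 1$, and by Proposition \ref{prop:charcurve} we have $\kappa[c] = \tfrac{2\kappa_0}{\kappa_0^2-\lambda}$. Thus $4\kappa_0^2 > (\kappa_0^2-\lambda)^2$, and one concludes as above. I would present this version since it reuses the geometric statement just proved rather than re-deriving the algebra, though both amount to the same inequality.

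The only mild subtlety — hardly an obstacle — is handling the sign of $\kappa_0$ and the degenerate possibility $\kappa_0 = 0$: but $\kappa_0=0$ would force $\kappa\equiv 0$ (as already observed in the proof of Proposition \ref{prop:ordred}), contradicting non-constancy, so $\kappa_0\neq 0$ and the division in $\kappa[c] = \tfrac{2\kappa_0}{\kappa_0^2-\lambda}$ is legitimate (note $\kappa_0^2-\lambda\neq 0$ as well, since otherwise $\kappa[c]$ would be undefined / the curve would be a horocycle, excluded in the rotational case). With these observations the inequality $\lambda > -1$ drops out immediately.
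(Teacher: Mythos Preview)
Your proposal is correct and essentially identical to the paper's proof: the paper uses exactly your second route, invoking the previous proposition to get $|\kappa[c]| = \tfrac{2|\kappa_0|}{\kappa_0^2-\lambda} > 1$ and then completing the square $\lambda + 1 > (|\kappa_0|-1)^2 \geq 0$. Your first route via $ac = \kappa_0^2 - \tfrac14(\kappa_0^2-\lambda)^2 > 0$ is an equivalent algebraic rephrasing of the same inequality.
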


\begin{proof}
If $\gamma$ has a rotational Killing field the integral curve $c$ starting at $\gamma(0)$ satisfies 
\begin{equation*}
 1 < |\kappa[c]| = \frac{2 |\kappa_0|}{\kappa_0^2 - \lambda} .
\end{equation*}
Therefore 
$\lambda > \kappa_0^2 - 2 |\kappa_0|$, and
adding $1$ to both sides we get 
\begin{equation*}
\lambda + 1 > |\kappa_0|^2 - 2 |\kappa_0| + 1 = ( |\kappa_0|  - 1) ^2 \geq 0 . \qedhere
\end{equation*}
\end{proof}

\subsection{Explicit Parametrization}
So far, we have reduced the elastica equation to a first order system, see Proposition \ref{prop:ordred}. To get an explicit parametrization we exploit the structure of this system further: Remarkably, the system becomes separable if we rewrite it as an equation in $\mathbb{C}$. The reason is, that the Killing fields come from isometries in $\mathbb{H}^2$, all of which are also isometries in the Riemann sphere $\mathbb{C}\mathbb{P}^1$. This being a Riemann surface, the Killing fields should have some holomorphic structure. The elastica equation has been examined 
in a more general setting using this structure in \cite{Heller}, where the author provides explicit parametrizations of elastica in arbitrary space forms. Unfortunately these parametrizations are not very useful examining the limiting cases, as we will do in the later sections. A slight disadvantage of our approach is that we can only parametrize globally defined elastic curves. Since our main focus lies on closed elastic curves, this is not restrictive for our application. From now on, we will assume unless not explicitly stated otherwise, that $\gamma \in C^\infty(\mathbb{R}, \mathbb{H}^2)$ is a globally defined smooth immersed elastic curve. 

\begin{prop}[Nonvanishing of the Killing Field]\label{prop:nonvan}
Let $\gamma\colon  \mathbb{R} \rightarrow \mathbb{H}^2$ be a globally defined elastic curve. Then  $\widetilde{J}_\gamma$ 
has no zeroes on $\gamma(\mathbb{R})$, and also
\begin{equation*}
\theta(s) := \kappa^2 - \lambda + 2 i \kappa' 
\end{equation*}
satisfies $\theta(s) \neq 0$ for all $s \in\mathbb{R}$.  
\end{prop}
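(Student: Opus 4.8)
The plan is to argue that $\widetilde J_\gamma$ can only vanish at points where $\gamma$ fails to be immersed, and then to translate the statement about $\widetilde J_\gamma$ along $\gamma$ into the statement about $\theta$. The second equivalence is essentially an algebraic identity: along $\gamma$ (parametrized by hyperbolic arclength), Lemma \ref{thm:killingext} gives $J_\gamma = (\kappa^2-\lambda)T + 2\kappa' N$ with $\{T,N\}$ an orthonormal frame, so $g_{\gamma(s)}(J_\gamma,J_\gamma) = (\kappa^2-\lambda)^2 + 4\kappa'^2 = |\theta(s)|^2$. Hence $\widetilde J_\gamma(\gamma(s)) = J_\gamma(\gamma(s)) = 0$ if and only if $\theta(s) = 0$, and it suffices to prove that $\widetilde J_\gamma$ has no zero on $\gamma(\R)$.

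The first step is to dispose of the constant-curvature case. If $\kappa \equiv \mathrm{const.}$, then by Remark \ref{rem:nontrivialk} we have $g_{\gamma(s)}(J_\gamma,J_\gamma) = \lambda^2 + 4C + 4\kappa^2$, which is the constant value $(\kappa^2-\lambda)^2$; if this were zero then $\kappa^2 = \lambda$ and $\kappa' = 0$, forcing $|\kappa[c_z]| = 2|\kappa_0|/(\kappa_0^2-\lambda)$ to be undefined — more carefully, $\kappa^2=\lambda$ together with $\kappa$ constant is exactly a curve of constant curvature, and one checks directly (e.g.\ via Proposition \ref{prop:curvconst} together with the defining ODE \eqref{eq:elasticaeq}, which gives $\kappa^3 = (\lambda+2)\kappa$, i.e.\ $\kappa^2 = \lambda+2 \ne \lambda$) that $\kappa^2 = \lambda$ is incompatible with being elastic unless $\kappa \equiv 0$, and $\kappa\equiv 0$ gives $J_\gamma = -\lambda T \ne 0$ unless $\lambda = 0$, in which case $\gamma$ is a geodesic and a separate (trivial) check applies. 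So in the constant-curvature case $\widetilde J_\gamma$ is nonvanishing along $\gamma$.

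The main case is $\kappa$ nonconstant, and here the key tool is Proposition \ref{prop:killingzero}. By Lemma \ref{thm:killingext}, $\widetilde J_\gamma$ is a Killing field on $\mathbb{H}^2$, and by Remark \ref{rem:nontrivialk} it is nontrivial. Suppose for contradiction that $\widetilde J_\gamma(\gamma(s_0)) = 0$ for some $s_0$. A nontrivial Killing field on $\mathbb{H}^2$ has at most one zero (this follows from the classification in Proposition \ref{prop:killinghyp}: writing $J = a(x^2-y^2, 2xy)^T + b(x,y)^T + c(1,0)^T$, the zero set in the upper half plane is cut out by two polynomial equations and is easily seen to be a single point or empty unless $J\equiv 0$), so $q := \gamma(s_0)$ is the unique zero. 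Then Proposition \ref{prop:killingzero} applies and gives $\mathrm{dist}(q, \phi_{\widetilde J_\gamma}(t, p)) = \mathrm{dist}(q,p)$ for all $p$ and $t$. Now take $p = \gamma(s)$ for $s$ near $s_0$ with $s \ne s_0$: the integral curve through $\gamma(s)$ is the characteristic integral curve $c_s$, which by Proposition \ref{prop:charcurve} is tangent to $\gamma$ at $s$. But $\gamma$ passes through $q$ at time $s_0$, and since $\gamma$ is immersed with $|\gamma'|_{\mathbb{H}^2} = 1$ (arclength), $\mathrm{dist}(q,\gamma(s)) \to 0$ as $s \to s_0$ while $\mathrm{dist}(q, \gamma(s)) > 0$ for $s$ near but not equal to $s_0$; this contradicts the claim that $\mathrm{dist}(q, c_s(t)) \equiv \mathrm{dist}(q,\gamma(s))$ is a positive constant in $t$, because the orbit of a point under a Killing field with a unique zero $q$ must stay at fixed distance from $q$ yet $\gamma$ itself is such an orbit segment only if $\gamma'(s_0)$ is proportional to $\widetilde J_\gamma(\gamma(s_0)) = 0$ — impossible for an immersion. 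More cleanly: since $\widetilde J_\gamma(q) = 0$, the point $q$ is fixed by the flow $\phi_t$, so $\phi_t(\gamma(s_0)) = q$ for all $t$, but also $\phi_t$ is an isometry, hence the orbit of any $p$ stays at distance $\mathrm{dist}(q,p)$ from $q$; taking $p=\gamma(s_0\pm\varepsilon)$ and letting $\varepsilon\to 0$ we would need the flow lines through points arbitrarily close to $q$ to be "circles" of arbitrarily small radius about $q$, and the flow-line through $\gamma(s_0)$ degenerates — one then derives a contradiction with $\gamma$ being a non-self-intersecting immersion near $s_0$ together with the elastica ODE forcing $\kappa$ to be nonconstant. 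I expect the cleanest route is in fact to avoid geometry entirely: evaluate \eqref{eq:diffeq'} at $s_0$ where $\widetilde J_\gamma(\gamma(s_0)) = 0$ to get $(\kappa(s_0)^2-\lambda)\gamma'(s_0) + 2\kappa'(s_0)(\mathrm{rot}\,\gamma'(s_0)) = 0$; since $\gamma'(s_0)$ and $\mathrm{rot}\,\gamma'(s_0) = (-\gamma_2', \gamma_1')$ are linearly independent (as $\gamma$ is an immersion), this forces $\kappa(s_0)^2 = \lambda$ and $\kappa'(s_0) = 0$, i.e.\ $\theta(s_0) = 0$ — and then, using $\kappa^2 = \lambda$, $\kappa' = 0$ at $s_0$ in \eqref{eq:onceint} gives $C = \tfrac14\lambda^2 - \tfrac{\lambda+2}{2}\lambda \cdot \tfrac{?}{}$, wait — cleaner: plug into \eqref{eq:squaredelas} with $u = \kappa^2 = \lambda$, $u' = 0$: we get $\lambda^3 - (2\lambda+4)\lambda^2 - 4C\lambda = 0$; combined with the elastica equation \eqref{eq:elasticaeq} at $s_0$, which reads $2\kappa'' + \kappa^3 - (\lambda+2)\kappa = 0$ so $\kappa''(s_0) = \tfrac12(\lambda+2)\kappa(s_0) - \tfrac12\kappa(s_0)^3 = \tfrac12\kappa(s_0)((\lambda+2) - \lambda) = \kappa(s_0)$, hence $\kappa''(s_0) = \kappa(s_0) \ne 0$ (as $\kappa(s_0)^2 = \lambda$, and $\lambda \ge 0$; if $\lambda = 0$ then $\kappa(s_0)=0$ and bootstrapping as at the end of the proof of Proposition \ref{prop:ordred} gives $\kappa\equiv 0$, contradicting nonconstancy; if $\lambda>0$ then $\kappa(s_0)\ne0$). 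So $s_0$ is a genuine nondegenerate extremum-type point of $u=\kappa^2$ with $u(s_0) = \lambda$. Now compare with Proposition \ref{prop:integr}: the global maximum $\kappa_0^2$ of $u$ satisfies $\kappa_0^2 > \lambda + 2$ in all non-constant cases, while at $s_0$ we have $u(s_0) = \lambda < \lambda+2$; moreover the explicit solutions show $u$ is bounded below by $\kappa_0^2 p'^2 > 0$-type quantities that one checks exceed $\lambda$ — i.e.\ $u(s) > \lambda$ everywhere — which contradicts $u(s_0) = \lambda$. This last inequality, $\min_s \kappa(s)^2 > \lambda$ for non-constant elastica, is the main obstacle: it must be extracted from the explicit formulas in Proposition \ref{prop:integr}, checking each of the orbitlike, asymptotically geodesic, and wavelike cases separately (for the asymptotically geodesic case $\inf u = 0$, so one needs the separate argument that $\lambda \le 0$ there forces $\lambda = 0$ and a direct contradiction, while in the orbitlike/wavelike cases $\inf u$ is a positive constant one computes and compares to $\lambda$).

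Thus in every case the assumption $\widetilde J_\gamma(\gamma(s_0)) = 0$ is contradicted, so $\widetilde J_\gamma$ is nonvanishing on $\gamma(\R)$, and by the identity $|\theta(s)|^2 = g_{\gamma(s)}(J_\gamma, J_\gamma)$ the function $\theta$ is nonvanishing as well. The step I anticipate needing the most care is the uniform lower bound $\kappa^2 > \lambda$ along a non-constant elastica (equivalently: ruling out $\theta(s_0)=0$ directly from the classification), since it requires going back into the Jacobi-elliptic formulas of Proposition \ref{prop:integr} and handling the asymptotically geodesic case, where $\inf \kappa^2 = 0$, by a separate bootstrapping argument of the kind used at the end of the proof of Proposition \ref{prop:ordred}.
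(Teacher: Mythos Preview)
Your proposal has a genuine gap at the crucial step. You correctly reduce the problem to showing that $\kappa(s_0)^2 = \lambda$ and $\kappa'(s_0) = 0$ cannot occur simultaneously for a non-constant elastic curvature, and you plan to verify this by checking that $\min_s \kappa(s)^2 > \lambda$ directly from the Jacobi-elliptic formulas in Proposition~\ref{prop:integr}. This check \emph{fails}: in the orbitlike case one has $\min_s u(s) = \kappa_0^2(1-p^2) = \tfrac{(2\lambda+4)(1-p^2)}{2-p^2}$, and for the specific parameter choice $\kappa_0^2 = \lambda + 4$ (equivalently $p^2 = \tfrac{4}{\lambda+4}$, equivalently $r^2p^2 = 1$, realizable for every $\lambda > 0$) this minimum equals $\lambda$ \emph{exactly}. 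So at this single parameter value the curvature formulas alone cannot rule out $\theta(s_0) = 0$; indeed the paper records in its Lemma~\ref{lem:8.13} that these identities are \emph{sufficient} for $\theta$ to vanish as a function built from $\kappa$ and $\lambda$.

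The paper's proof is accordingly much more delicate than a curvature computation. After isolating the borderline case $\kappa_0^2 = \lambda + 4$, it derives an explicit local parametrization of $\gamma$ on $(-K(p)/r,\,K(p)/r)$ from the reduced equation $\theta\,\gamma' = a\gamma^2 + c$, computes the limit of $\gamma'(t)$ as $t \to K(p)/r$ (where $\theta$ would vanish), and shows that the hyperbolic arclength condition $|\gamma'|_{\mathbb{C}}/\mathrm{Im}(\gamma) = 1$ at that point forces the auxiliary constant $z_0$ encoding the initial height to be real --- contradicting $\gamma(0) \in \mathbb{H}^2$. This argument genuinely uses the interplay between the global smoothness of $\gamma$, the singular first-order reduction, and the arclength normalization; it cannot be shortcut by the second-order curvature ODE alone. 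Your geometric attempt via Proposition~\ref{prop:killingzero} also does not close: the fact that flow lines of $\widetilde{J}_\gamma$ stay at fixed distance from its zero $q$ is compatible with $\gamma$ passing through $q$, since $\gamma$ is not itself a flow line of $\widetilde{J}_\gamma$.
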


The proof is a laborious extension of the same conceptual flavor of the next proof. For readability we postpone the proof to Appendix \ref{appendix_theta}.

\begin{theorem}[An Explicit Parametrization of Elastica in $\mathbb{H}^2$]\label{thm:explpara} Let $\gamma\colon  \mathbb{R} \rightarrow \mathbb{H}^2$ be an elastic curve parametrized by hyperbolic arclength with curvature  $\kappa$ so that
\begin{equation}
\kappa'^2 + \frac{1}{4} \kappa^4 - \frac{\lambda + 2}{2} \kappa^2 = C , \;  \kappa(0)^2 = \kappa_0,\; \gamma(0) = (0,y)^T \;\text{ and }\;\gamma'(0) = (y,0)^T
\end{equation}
for $y > 0$. 
 \\Then either $\kappa \equiv const. $ or there exist $a,c \in \mathbb{R}$ with {$|a|+ |c| \neq 0$} determined uniquely by $ac = - \frac{1}{4} (\lambda^2 +4C)$, $-a y^2 + c = (\kappa_0^2 - \lambda) y$ such that (if $\mathbb{H}^2$ is identified with $\{ z \in \mathbb{C} : \mathrm{Im}(z) > 0 \}$) 
\begin{equation} \label{eq:reduced}
\gamma' = (a \gamma^2 + c ) \frac{1}{\kappa^2 - \lambda + 2 i \kappa'}.
\end{equation}
Moreover, there exists $z_1 \in i \mathbb{R}_{\neq 0 } $ such that $\gamma$ is parametrized by
\begin{equation}\label{eq:explisoloi}
\gamma(t) =  f  \left(  \int_0^t \frac{1}{\theta(s)} \diff s   + z_1 \right) \quad \textrm{where} \; \; \; \theta(s) = \kappa^2(s) -  \lambda + 2 i \kappa'(s),
\end{equation}
and the meromorphic function $f$ is given by
\begin{equation}\label{eq:Fallunterscheidung}
f(z) = 
\begin{cases} \sqrt{\frac{c}{a}} \tan (\sqrt{ac} z)   & a,c > 0 \\
-  \sqrt{\frac{c}{a}}  \cot(\sqrt{ac} z) & a, c < 0 \\ 
\sqrt{\frac{-c}{a}} \tanh (\sqrt{-ac} z) & ac < 0 \\
\frac{1}{az} & c = 0, a \neq 0 \\ 
cz & c \neq 0 , a = 0.
\end{cases}
\end{equation} 
\end{theorem}

\begin{proof}
Rewrite \eqref{eq:diffeq} as equation of complex numbers to obtain 
\begin{equation}\label{eq:wichtig}
(\kappa^2 - \lambda) T + 2 \kappa' i T = a \gamma^2 + c,
\end{equation}
where $a,c$ are as in the statement. With the relations in the statement it is an easy computation that $y$ determines $a,c$ uniquely. 
Now parametrization by arclength implies that $T = \gamma'$. This immediately implies \eqref{eq:reduced} using Proposition \ref{prop:nonvan}. Note that \eqref{eq:reduced} is a first order ODE with locally Lipschitz right hand side, so its maximal solution is unique when we specify $\gamma(0)= iy$. Let now $y > 0 $ be given. We verify now that \eqref{eq:explisoloi} indeed yields a solution of \eqref{eq:reduced} with $\gamma(0) = iy$. For this we need to distinguish several cases depending on the value of $a,c$, which depend on $y$ according to the paramter identities in the statement.  \textbf{Case 1 $a,c > 0$.} We take $f$ as given in \eqref{eq:Fallunterscheidung} and differentiate the expression \eqref{eq:explisoloi} yields for $\gamma$. Defining $z_0 := \sqrt{ac} z_1$ we compute

\begin{align*}
\gamma'(t) &=  \frac{\diff}{\diff t} \sqrt{\frac{c}{a}} \tan \left( \sqrt{ac} \int_0^t \frac{1}{(\kappa^2 - \lambda) + 2i \kappa' } \diff s  + z_0 \right) \\ &   = \frac{1}{(\kappa^2 - \lambda) + 2i \kappa'} \sqrt{ac} \sqrt{\frac{c}{a}} \left( 1 + \tan^2 \left( \sqrt{ac} \int_0^t \frac{1}{(\kappa^2 - \lambda) + 2i \kappa' } \diff s  + z_0 \right) \right) \\
&  =  \frac{c}{(\kappa^2 - \lambda) + 2 i \kappa'} \left( 1 + \frac{a}{c}\gamma^2(t) \right) = \frac{a \gamma(t)^2 + c}{(\kappa^2 - \lambda) + 2 i \kappa'}.
\end{align*}
Hence, $\gamma$ indeed solves the equation. It remains to show that $z_1 [ = \frac{z_0}{\sqrt{ac}}] \in \mathbb{C}$ can be chosen such that 
\begin{equation}\label{eq:AW}
i y = \gamma(0) = \sqrt{\frac{c}{a}} \tan(z_0 ) = \sqrt{\frac{c}{a}} \frac{\tanh(iz_0)}{i}.
\end{equation}

To find such $z_0$ we need to invert the $\tanh$ in the expression. For this we first observe that $y < \sqrt{\frac{c}{a}}$, which can easily be obtained by the parameter identities in the statement and the fact that by Proposition \ref{prop:integr} $\kappa_0^2- \lambda > 0$.  
Choosing $ z_0 = - i \mathrm{Artanh} \big( - \frac{y}{\sqrt{\nicefrac{c}{a}}} \big)$ implies \eqref{eq:AW}. \textbf{Case 2}, $a,c <0$, works similarly. In this case one can observe that $y  > \sqrt{\frac{c}{a}}$.  
 {\textbf{Case 3,} $ac< 0 $. Defining $z_0 := \sqrt{-ac} z_1$ one obtains} 
\begin{align*}
& \frac{\diff}{\diff t} \sqrt{\frac{-c}{a}} \tanh \left( \sqrt{-ac} \int_0^t \frac{1}{(\kappa^2 - \lambda) + 2i \kappa' } \diff s  + z_0 \right) \\ & \quad  = \frac{1}{(\kappa^2 - \lambda) + 2i \kappa'} \sqrt{-ac} \sqrt{\frac{-c}{a}} \left( 1 - \tanh^2 \left( \sqrt{-ac} \int_0^t \frac{1}{(\kappa^2 - \lambda) + 2i \kappa' } \diff s  + z_0 \right) \right) \\
& \quad =  \frac{c}{(\kappa^2 - \lambda) + 2 i \kappa'} \left( 1 + \frac{a}{c}\gamma^2(t) \right) = \frac{a \gamma(t)^2 + c}{(\kappa^2 - \lambda) + 2 i \kappa'}.
\end{align*}
Note that 
$\sqrt{-\frac{c}{a}}\tanh(z_0) = i y$
can be solved using that $\tanh(z_0) = i \tan(-i z_0) $. In the end we obtain {$z_0 = i \arctan \frac{\sqrt{a}y}{\sqrt{-c}}$}. The other cases can be solved analogously. 
\end{proof}

\begin{remark}
 Note that not every curve given by \eqref{eq:explisoloi} for some $\kappa$ from Proposition \ref{prop:integr} is an elastica. The reason for that is that any such curves are not necessarily parametrized by hyperbolic arclength. {We shall see counterexamples in Appendix \ref{app:kleeblatt}.} This makes the analysis more complicated. 
\end{remark}

\section{Closing and Simplicity Conditions}
\label{sec:closing}
In this section we want to investigate whether the elastic curves parametrized in Theorem \ref{thm:explpara} are (smoothly) closed, i.e. whether there is some $L> 0 $  such that $\gamma(0) = \gamma(L)$ and all derivatives of $\gamma$ coincide at $0$ and $L$. Since we consider elastica parametrized by hyperbolic arclength the smallest such $L$ is given by the hyperbolic arclength of $\gamma$. The other property of our interest will be simplicity, i.e. whether the curve has no self-intersections in $(0,L)$. The following propositions will reveal why we are interested in these properties: They are related to the energy and to the number of periods the curvature completes in one period of the curve and will be useful. 


\begin{prop}\label{prop:liyau}
Let $\gamma$ be a closed curve. If $\gamma$ is not simple then $\mathcal{E}(\gamma) \geq 16 $. 
\end{prop}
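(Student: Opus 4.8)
The plan is to reduce the statement to the Li--Yau inequality for the Willmore energy of closed surfaces, exploiting the correspondence \eqref{eq:WillmoreElastic} between the elastic energy of a curve in $\mathbb{H}^2$ and the Willmore energy of the associated surface of revolution. First I would recall that if $\gamma \in C^\infty(\mathbb{S}^1, \mathbb{H}^2)$ is not simple, then there exist two distinct parameters $t_1 \neq t_2$ with $\gamma(t_1) = \gamma(t_2)$. Revolving $\gamma$ about the $x$-axis produces a closed surface $S(\gamma)$ of revolution, and a self-intersection point of $\gamma$ at height $\gamma_2(t_1) > 0$ gives rise to an entire circle of self-intersection points on $S(\gamma)$; in particular $S(\gamma)$ is an immersed closed surface that is \emph{not embedded}, so it has at least one point of multiplicity $\geq 2$.

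The key step is then to invoke \cite[Theorem 6]{LiYau}: for a closed immersed surface in $\mathbb{R}^3$ (or $\mathbb{S}^3$) that covers some point with multiplicity at least $k$, one has $\int_{S} H^2 \diff A \geq 4\pi k$. Applying this with $k = 2$ to $S = S(\gamma)$ yields $\int_{S(\gamma)} H^2 \diff A \geq 8\pi$. Combining with \eqref{eq:WillmoreElastic}, namely $\mathcal{E}(\gamma) = \frac{2}{\pi}\int_{S(\gamma)} H^2 \diff A$, gives $\mathcal{E}(\gamma) \geq \frac{2}{\pi} \cdot 8\pi = 16$, which is the claim.

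I expect the main obstacle to be justifying that the multiplicity-$2$ hypothesis of the Li--Yau inequality is genuinely met by $S(\gamma)$, and more precisely that the surface $S(\gamma)$ has enough regularity for \cite[Theorem 6]{LiYau} to apply. One has to be slightly careful: a self-intersection of $\gamma$ produces a circle of double points on $S(\gamma)$ rather than an isolated double point, but this is even more favorable, and any single such point has multiplicity at least $2$; one should also check that the identification $\gamma(t_1) = \gamma(t_2)$ with $t_1 \neq t_2$ in $\mathbb{S}^1$ cannot be an artifact of a non-injective but still ``geometrically simple'' parametrization — but the hypothesis that $\gamma$ is not simple is precisely the statement that the image curve has a genuine crossing, so this is not an issue. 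A second minor point is to confirm that \eqref{eq:WillmoreElastic} (cited from \cite{LangerSinger2} and \cite[Theorem 4.1]{DS18}) is valid for merely $C^\infty$ closed curves with possible self-intersections and not only for embedded profile curves; since the formula is a pointwise computation of the mean curvature of a surface of revolution in terms of the hyperbolic curvature of the profile, it holds verbatim for immersed $\gamma$.
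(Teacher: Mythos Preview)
Your proposal is correct and follows exactly the paper's own argument: pass to the surface of revolution $S(\gamma)$, observe it is not embedded, apply the Li--Yau inequality \cite[Theorem~6]{LiYau} to get $\int_{S(\gamma)} H^2\,\diff A \geq 8\pi$, and convert via \eqref{eq:WillmoreElastic}. Your write-up is in fact more careful than the paper's, which dispatches the proof in two lines without discussing the multiplicity or regularity points you raise.
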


\begin{proof}
A well-known relation  (e.g. \cite[p.532]{LangerSinger2}) shows \eqref{eq:WillmoreElastic}.
Now $S(\gamma)$ is not embedded and therefore \cite[Theorem 6]{LiYau} implies that
$\int_{S(\gamma)} H^2\diff A \geq 8 \pi $. All in all we obtain  
$\mathcal{E}(\gamma) \geq  \frac{2}{\pi}  \cdot 8 \pi =  16.$
\end{proof}
Since \label{rem:n}the curvature of $\gamma$ also periodic, we may denote with $n$ the number of periods the curvature completes within $[0,L]$, i.e. $n$ is given by 
\begin{equation}\label{eq:enn}
n := \operatorname{sup}\left\{m \in \N: \kappa\left(s + \frac{L}{m}\right) = \kappa(s) \text{ for all } s \in \R\right\}.
\end{equation}
 Note that $n$ is finite if $\kappa$ is non-constant.
\begin{prop}[Closed Simple Elastica]\label{prop:periodorb}
Let $\gamma$ be a closed simple elastica. Then either $\kappa \equiv const.$ or $n \geq 2 $.
\end{prop}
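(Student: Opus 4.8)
The plan is to suppose that $\gamma$ is a closed simple elastica with non-constant curvature and $n = 1$, and derive a contradiction. The key geometric idea is to pair up the total curvature of $\gamma$ with the behaviour of the characteristic integral curves (or, equivalently, with the Killing field $\widetilde{J}_\gamma$) over one period. Since $\kappa$ is non-constant, Remark \ref{rem:nontrivialk} guarantees $\widetilde{J}_\gamma \not\equiv 0$, and Proposition \ref{prop:nonvan} says it never vanishes along $\gamma$; moreover $\gamma$ is globally defined and $L$-periodic. The statement to contradict is that a single period of the curvature suffices to close the curve.

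First I would recall that $\kappa$ is an even function about each of its vertices and, in all the non-constant cases of Proposition \ref{prop:integr} (orbitlike, asymptotically geodesic, wavelike), the elementary building block is $\dn^2$, $\sech^2$, or $\cn^2$ of $rs$; in the orbitlike and asymptotically geodesic cases $\kappa$ does not change sign, while in the wavelike case it does. I would treat these by the symmetry of the curve under the reflection/involution that fixes a vertex: composing with the isometry of $\mathbb{H}^2$ that reverses the characteristic integral curve through a vertex $p$ sends $\gamma(s)$ to $\gamma(-s)$ (up to reparametrization), because the curvature profile is even about $p$ and the initial data at $p$ are determined up to this reflection. Consequently the arc from one vertex to the next, together with its mirror image, forms a closed loop only if the endpoints and tangents match, which forces a rigid relation between the ``holonomy'' of $\widetilde{J}_\gamma$ over half a period and the identity. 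If $n = 1$, i.e. the curve closes after exactly one curvature period, this holonomy relation becomes too restrictive: the closed curve must then be invariant under an order-two isometry with the vertex arc as a fundamental domain, and a simple closed curve in $\mathbb{H}^2$ cannot be a single vertex-to-vertex arc glued to its reflection without either failing to be $C^\infty$ at the second vertex or producing a self-intersection. I would make this precise by looking at the two vertices $p_0 = \gamma(0)$ and $p_1 = \gamma(L/2)$ of extremal curvature in one period: the arc $\gamma|_{[0,L/2]}$ is a graph-like piece, and the full curve is $\gamma|_{[0,L/2]}$ followed by its image under a single isometry $\Psi$ (the product of the two vertex-reflections), with $\Psi^2 = \mathrm{id}$ forcing $\Psi$ to be a rotation by $\pi$ about a point of $\mathbb{H}^2$; a simple closed curve invariant under a rotation by $\pi$ about an interior point, built from one half-period arc, is impossible because the arc and its $\pi$-rotate must then cross.

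Alternatively — and this may be the cleaner route — I would argue via the total signed Euclidean turning number or via an index/degree count of the Gauss map: for a simple closed curve the total (Euclidean) turning is $\pm 2\pi$, whereas the explicit parametrization of Theorem \ref{thm:explpara} lets one compute the turning contributed by one curvature period in terms of $a$, $c$, $p$, $r$, and the integral $\int_0^{L/n}\theta(s)^{-1}\,\diff s$; requiring this to equal $\pm 2\pi/n$ and combining with the closing conditions (that $f$ maps the increment $\int_0^{L/n}\theta^{-1}$ to the deck transformation closing $\gamma$) shows $n = 1$ is incompatible with simplicity. In practice I expect the paper uses the structure already set up: the characteristic integral curve through a vertex is a circle (rotational case) or a curve of constant curvature $\le 1$ (translational/horocyclical), and one period of $\gamma$ wraps a definite fraction of that integral curve; simplicity forbids the fraction corresponding to $n = 1$.

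\textbf{Main obstacle.} The hard part will be handling the wavelike case cleanly, where $\kappa$ changes sign and $\gamma$ has inflection points, so the ``reflection about a vertex'' argument must be set up carefully (the reflection is through the tangent geodesic at the vertex, and one must check it is an isometry of $\mathbb{H}^2$ sending $\gamma$ to itself with reversed orientation). Equally delicate is ruling out the degenerate possibility that the half-period arc is already symmetric enough that $n=1$ could close it up — this is exactly where simplicity, rather than mere closedness, must be invoked, and making ``a $\pi$-rotation-invariant curve built from one arc must self-intersect'' into a rigorous statement (rather than a picture) is the crux. I would isolate that as a short lemma about simple closed curves invariant under an involutive isometry of $\mathbb{H}^2$ and feed it the vertex data coming from Proposition \ref{prop:charcurve}.
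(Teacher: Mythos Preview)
Your approach is far more elaborate than necessary, and the sketch as it stands has genuine gaps. The paper dispatches this proposition in two lines using the \emph{four-vertex theorem} in hyperbolic space (see \cite[Lemma 2.1]{Ghomi}): a simple closed curve must have at least four vertices, i.e.\ at least four critical points of $\kappa$. But inspecting the explicit formulas in Proposition~\ref{prop:integr}, in each non-constant case (orbitlike $\kappa^2 = \kappa_0^2\dn^2(rs,p)$, wavelike $\kappa^2 = \kappa_0^2\cn^2(rs,p)$) the curvature has exactly two critical points per period. Hence $n=1$ gives at most two vertices, contradicting the four-vertex theorem. That is the entire argument.

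Your reflection-symmetry route, by contrast, is incomplete at the crucial step. The product $\Psi$ of the two vertex-reflections need not be a rotation by $\pi$: in $\mathbb{H}^2$ the product of two reflections is a rotation only if the axes intersect, and even then the angle is twice the angle between the axes, not necessarily $\pi$. More importantly, even if $\Psi$ \emph{were} an order-two rotation, your assertion that ``a simple closed curve invariant under a rotation by $\pi$\dots built from one half-period arc, is impossible'' is false as stated---any centrally symmetric convex curve (e.g.\ a Euclidean ellipse, or indeed a hyperbolic circle) is a counterexample. You would need to invoke something specific about the elastica arc to rule this out, and at that point you are essentially reproving the four-vertex theorem by hand. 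The alternative route via total turning that you sketch is not developed enough to evaluate; it may well work, but the closing conditions you would need are exactly those established \emph{later} in the paper (Propositions~\ref{prop:closedrot}--\ref{prop:orbclose}), so this would invert the logical order.

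The moral: before building machinery, check whether the statement is a direct corollary of a classical result. Here the count ``two critical points per period'' plus the four-vertex theorem is all that is required.
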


\begin{proof}
This is a direct consequence of the four-vertex theorem in hyperbolic space, see \cite[Lemma 2.1]{Ghomi}, and Proposition \ref{prop:integr}. Indeed, if $n = 1$ then 
 it can be inferred that in all cases of Proposition \ref{prop:integr} $\kappa$ attains at most two critical values in one period.
\end{proof}

The following closing conditions are closely related to the conditions in \cite{Steinberg}. We present a self-contained proof for the reader's convenience. First note that there are no closed asymptotically geodesic elastica as the curvature of such is not periodic (cf. Proposition \ref{prop:integr}). For wavelike and orbitlike elastica we can derive closing conditions also employing their Killing fields.

\begin{prop}[Closing Condition for Rotational Elastica]\label{prop:closedrot} 
Let  $\gamma$ be a rotational elastica (cf. Definition \ref{def:classi}) with hyperbolic length $L$ and curvature $\kappa$.
Then $\gamma$ is orbitlike, i.e. $C< 0$. Moreover, $\gamma$ is closed if and only if 
\begin{equation}\label{eq:period}
\int_0^L  \sqrt{\frac{- \lambda^2 - 4C}{4}}\frac{\kappa^2 - \lambda }{\lambda^2 + 4C + 4 \kappa^2 } \diff s   =  \pi m 
\end{equation} 
for some $m \in \mathbb{Z}$ and 
\begin{equation}\label{eq:L}
L = 2 n \frac{K(p) }{r},
\end{equation}
where $p, r $ are given in Proposition \ref{prop:integr}{, $n \in \mathbb{N} $ is given in \eqref{eq:enn} and $K$ denotes the complete elliptic integral of first kind, see Appendix \ref{appendix:Jacobi}}.  Moreover, if $m = 0$ then $n =1$. In case that $|m|>1$ and $n >1 $,  $|m|$ and $n$ are relatively prime.
\end{prop}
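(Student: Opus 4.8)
The plan is to work with the explicit parametrization of Theorem~\ref{thm:explpara} in the rotational case, where the meromorphic function $f$ is $f(z) = \sqrt{c/a}\,\tan(\sqrt{ac}\,z)$ (up to an isometry we may take $a,c>0$). First I would establish that a rotational elastica must be orbitlike: the Killing field $\widetilde{J}_\gamma$ has a unique zero $q$ on the positive $y$-axis, and by Proposition~\ref{prop:killingzero} every characteristic integral curve is a Euclidean circle lying in $\mathbb{H}^2$, so $|\kappa[c]|>1$; combining $|\kappa[c]| = \frac{2|\kappa_0|}{\kappa_0^2-\lambda}>1$ with the case analysis of Proposition~\ref{prop:integr} and the sign of $C$ (which must be $<0$ since $ac>0$ forces $\lambda^2+4C<0$) rules out the circular, asymptotically geodesic and wavelike cases, leaving only orbitlike. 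Since $C<0$, formula~\eqref{eq:modulus} gives a genuine modulus $p\in(0,1)$ and the curvature is $\kappa^2 = \kappa_0^2\,\dn^2(rs,p)$ with minimal period $2K(p)/r$ in $s$; by Remark~\ref{rem:n} the curvature completes $n$ periods on $[0,L]$, which is exactly \eqref{eq:L}.

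Next I would derive the closing condition~\eqref{eq:period}. The idea is that $f = \sqrt{c/a}\tan(\sqrt{ac}\,\cdot)$ conjugates the rotational isometry group to a genuine rotation: writing $\gamma(t) = f\!\left(\int_0^t \theta(s)^{-1}\diff s + z_1\right)$ with $\theta(s) = \kappa^2 - \lambda + 2i\kappa'$, the curve $\gamma$ closes up precisely when, after one full period of the curvature (i.e. $s$ running over $[0,L]$ with $L = 2nK(p)/r$), the argument of $f$ changes by a real multiple of the period $\pi/\sqrt{ac}$ of $\tan$, because $\tan$ is $\pi$-periodic and $f$ maps the vertical line $z_1 + \mathbb{R}$-shifted-argument onto a circle around the fixed point $q = i\sqrt{c/a}$. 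Concretely, $\gamma$ is closed iff $\mathrm{Re}\int_0^L \theta(s)^{-1}\diff s \in \frac{\pi}{\sqrt{ac}}\mathbb{Z}$ while the imaginary part contributes nothing new to closure (the flow along $\widetilde{J}_\gamma$ is periodic). Computing $\mathrm{Re}\,\theta(s)^{-1} = \frac{\kappa^2-\lambda}{(\kappa^2-\lambda)^2 + 4\kappa'^2} = \frac{\kappa^2-\lambda}{\lambda^2+4C+4\kappa^2}$ using \eqref{eq:onceint} as in Remark~\ref{rem:nontrivialk}, and $\sqrt{ac} = \sqrt{-(\lambda^2+4C)}/2$ by the order-reduction relation $ac = -\tfrac14(\lambda^2+4C)$, multiplying through by $\sqrt{ac}$ turns the condition into exactly~\eqref{eq:period} with $m$ the winding number. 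The integer $m$ is then the winding number of $\gamma$ about $q$, which matches the description in Table~\ref{table:parameters}.

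Finally I would handle the arithmetic constraints relating $m$ and $n$. If $m = 0$ the curve closes up after the argument of $f$ returns to its start having made zero net rotation about $q$; since $\mathrm{Re}\,\theta^{-1}$ has a fixed sign (namely that of $\kappa^2-\lambda>0$, as $\kappa_0^2-\lambda\ge 2$ by Proposition~\ref{prop:integr}), the integrand in \eqref{eq:period} is strictly positive, so a net change of $0$ over one curvature period already forces $L$ to be one curvature period, i.e. $n=1$. For the coprimality claim when $|m|>1$ and $n>1$: if $d = \gcd(|m|,n) > 1$, then running $s$ over $[0,L/d]$ — which is an integer number $n/d\ge 1$ of curvature periods, so $\kappa$ and $\theta$ return to their initial values — the argument of $f$ changes by $\pi m/(d\sqrt{ac})$, a multiple of $\pi/\sqrt{ac}$, hence $\gamma(L/d) = \gamma(0)$ with all derivatives matching; this contradicts the minimality of $L$ unless $d=1$. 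The main obstacle I anticipate is the first paragraph's rigorous justification that $f$ genuinely conjugates the Killing flow to a rotation and that the closure of $\gamma$ is controlled purely by the real part of $\int_0^L\theta^{-1}$ — i.e. carefully tracking how the prefactor $z_1\in i\mathbb{R}_{\neq 0}$ and the $\tan$-periodicity interact so that the imaginary part of the argument contributes no obstruction to closure, and checking that no spurious self-identifications occur. Everything after that is the elliptic-integral bookkeeping of Proposition~\ref{prop:integr} and elementary number theory.
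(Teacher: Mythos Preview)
Your overall strategy matches the paper's: use the $\pi/\sqrt{ac}$-periodicity of $f$ in Theorem~\ref{thm:explpara} to reduce closure to $\sqrt{ac}\int_0^L\theta^{-1}\diff s\in\pi\mathbb{Z}$, compute the real part using \eqref{eq:onceint}, and handle the arithmetic of $m$ and $n$ via periodicity of the integrand. The coprimality argument is essentially the paper's. However, your argument for $m=0\Rightarrow n=1$ is broken in two ways. First, the claim that $\kappa^2-\lambda>0$ everywhere does not follow from $\kappa_0^2-\lambda\ge 2$: here $\kappa_0^2$ is the \emph{maximum} of $\kappa^2$, and in the orbitlike case $\kappa^2$ ranges down to $\kappa_0^2(1-p^2)$, which can lie below $\lambda$ (take $p$ close to $1$ with $\lambda>0$). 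Second, even if the integrand were strictly positive, your conclusion would be that $m\ne 0$ for \emph{every} closed rotational elastica, not that $m=0$ forces $n=1$; the logical structure is inverted. The correct argument is the same device you use for coprimality: if $m=0$, then by the $2K(p)/r$-periodicity of $\kappa^2$ the integral in \eqref{eq:period} over a single curvature period also vanishes, so by sufficiency of \eqref{eq:period}--\eqref{eq:L} the curve already closes after $2K(p)/r$, and minimality of $L$ gives $n=1$.

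On the imaginary-part obstacle you flag: the resolution is simpler than you suggest and does not involve $z_1$ or the Killing flow. Since the only periods of the complex tangent are the real integers times $\pi$, closure requires $\sqrt{ac}\int_0^L\theta^{-1}\diff s$ to be \emph{real}, not merely to have real part in $\pi\mathbb{Z}$. The imaginary part equals $-2\sqrt{ac}\int_0^L\frac{\kappa'}{\lambda^2+4C+4\kappa^2}\diff s$, which vanishes by the substitution $u=\kappa(s)$ together with $\kappa(0)=\kappa(L)$.
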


\begin{proof}
Having a rotational Killing field implies that $ac > 0$ and therefore $-ac= \frac{1}{4} (\lambda^2 + 4C)  < 0$ by Theorem \ref{thm:explpara}. Thus, $C< 0 $ and therefore $\gamma$ is orbitlike. Since $\kappa(0) = \kappa(L) = \kappa_0$ the formula for $L$ follows from Proposition \ref{prop:integr} and Proposition \ref{prop:identities} (4). An easy computation shows that for two complex numbers $z, w \in \mathbb{C}$, $\tan(z) = \tan(w)$ holds if and only if $z = w + m \pi $ for some $m \in \mathbb{Z}$. The same periodicity holds true for the complex cotangent function.{ Therefore, in the relevant first two cases of \eqref{eq:Fallunterscheidung}, the function $f$ in Theorem \ref{thm:explpara} is $\frac{\pi}{\sqrt{ac}}$-periodic.
  Using Theorem \ref{thm:explpara}, we obtain another necessary condition, namely
  \begin{equation}\label{eq:aequiv}
   \gamma(0) = \gamma(L)  \Leftrightarrow   f \left( \int_0^L \frac{1}{\theta(s)} ds + z_1 \right) = f(z_1)   \Leftrightarrow   \int_0^L \frac{1}{\theta(s)} ds  = m \frac{\pi}{\sqrt{ac}}
  \end{equation}
  for some $m \in \mathbb{N}$. Rearranging we obtain}
 \begin{align}
 m \pi  & =   \sqrt{ac} \int_0^L \frac{1}{( \kappa^2 - \lambda) + 2i \kappa'} \diff s  
  \;=\;  \sqrt{ac} \int_0^L \frac{(\kappa^2- \lambda)- 2i \kappa' }{4 \kappa'^2 + \kappa^4 - 2\lambda \kappa^2 + \lambda ^2} \diff s  \nonumber \\
   & = \sqrt{ac} \int_0^L \frac{(\kappa^2- \lambda)- 2i \kappa'}{\lambda^2 + 4C + 4 \kappa^2 } \diff s  \label{eq:simi}\\ 
   & =  \sqrt{ac} \left( \int_0^L \frac{\kappa^2 - \lambda}{\lambda^2 + 4C + 4 \kappa^2} \diff s  - 2 i \int_0^L \frac{\kappa'}{\lambda^2 + 4C + 4 \kappa^2} \diff s   \right). \nonumber
 \end{align}
 Substituting $u = \kappa(s)$ and using $\kappa(0) = \kappa(L)$ we find that the last integral is zero. This substitution is justified when we use that $\lambda^2 + 4C + 4\kappa^2>0 $ because of Proposition \ref{prop:nonvan}. We obtain that \eqref{eq:period} and \eqref{eq:L} are necessary for the closedness of $\gamma$, and their sufficiency follows from Theorem \ref{thm:explpara}. Note that in the case of $m = 0$, the $\frac{2K(p)}{r}$-periodicity of  $\kappa^2=\kappa_0^2 \operatorname{dn}(r\cdot, p)$ (see \eqref{eq:PeriodDN}) implies that 
 \begin{equation*}
 0 = \int_0^L \sqrt{\frac{- \lambda^2 - 4C}{4}}\frac{\kappa^2 - \lambda }{\lambda^2 + 4C + 4 \kappa^2 } \diff s   = n \int_0^\frac{2K(p)}{r} \sqrt{\frac{- \lambda^2 - 4C}{4}}\frac{\kappa^2 - \lambda }{\lambda^2 + 4C + 4 \kappa^2 } \diff s  ,
 \end{equation*}
 and therefore 
\begin{equation*}
\int_0^\frac{2K(p)}{r} \sqrt{\frac{- \lambda^2 - 4C}{4}}\frac{\kappa^2 - \lambda }{\lambda^2 + 4C + 4 \kappa^2 } \diff s  = 0 .
\end{equation*} 
Due to the sufficiency of \eqref{eq:period} and \eqref{eq:L}, $\gamma$ closes smoothly after $\frac{2K(p)}{r}$. Since $n$ has to be minimal, we infer that $n =1$. Now assume that $|m|$ and $n$ are larger than 1. If they had a common prime factor $q> 1 $ then
 \begin{equation*}
q  \int_0^\frac{2 \frac{n}{q}
K(p)}{r} \frac{\kappa^2- \lambda}{\lambda^2 + 4C + 4 \kappa^2} \diff s = \int_0^\frac{2 n K(p)}{r} \frac{\kappa^2- \lambda}{\lambda^2 + 4C + 4 \kappa^2} \diff s  = \pi m 
 \end{equation*}
 and dividing by $q$ we obtain
 \begin{equation*}
  \int_0^\frac{2 \frac{n}{q}
K(p)}{r} \frac{\kappa^2- \lambda}{\lambda^2 + 4C + 4 \kappa^2} \diff s = \pi \frac{m}{q}.
 \end{equation*}
 Since $\frac{m}{q}$ and $\frac{n}{q}$ are integers, the sufficiency of \eqref{eq:period} and \eqref{eq:L} again implies that $\gamma$ already closes up smoothly after $\frac{n}{q}$ periods, contradicting the minimality of $n$. It follows that  $\mathrm{gcd}(|m|,n) = 1$. 
\end{proof}

\begin{prop}[Closing and Non-Simplicity of Wavelike Elastica] \label{prop:waveclose}\label{cor:noclwv}
Let $\gamma$ be a wavelike elastic curve and $p,r$ be as in Proposition \ref{prop:integr}. Then $\gamma$ is closed if and only if 
\begin{equation}\label{eq:clwv}
L = \frac{4K(p)}{r} \quad \text{ and }\quad \int_0^L \frac{\kappa^2- \lambda }{\lambda^2 + 4C + 4\kappa^2} \diff s  = 0.
\end{equation}%
Additionally, $\gamma$ is not simple. Furthermore, $\gamma$ can not be free, i.e. $\lambda = 0$ is not allowed.  
\end{prop} 

\begin{figure}[ht]
    \centering
    \begin{subfigure}[b]{0.48\textwidth}
        \includegraphics[width=\textwidth]{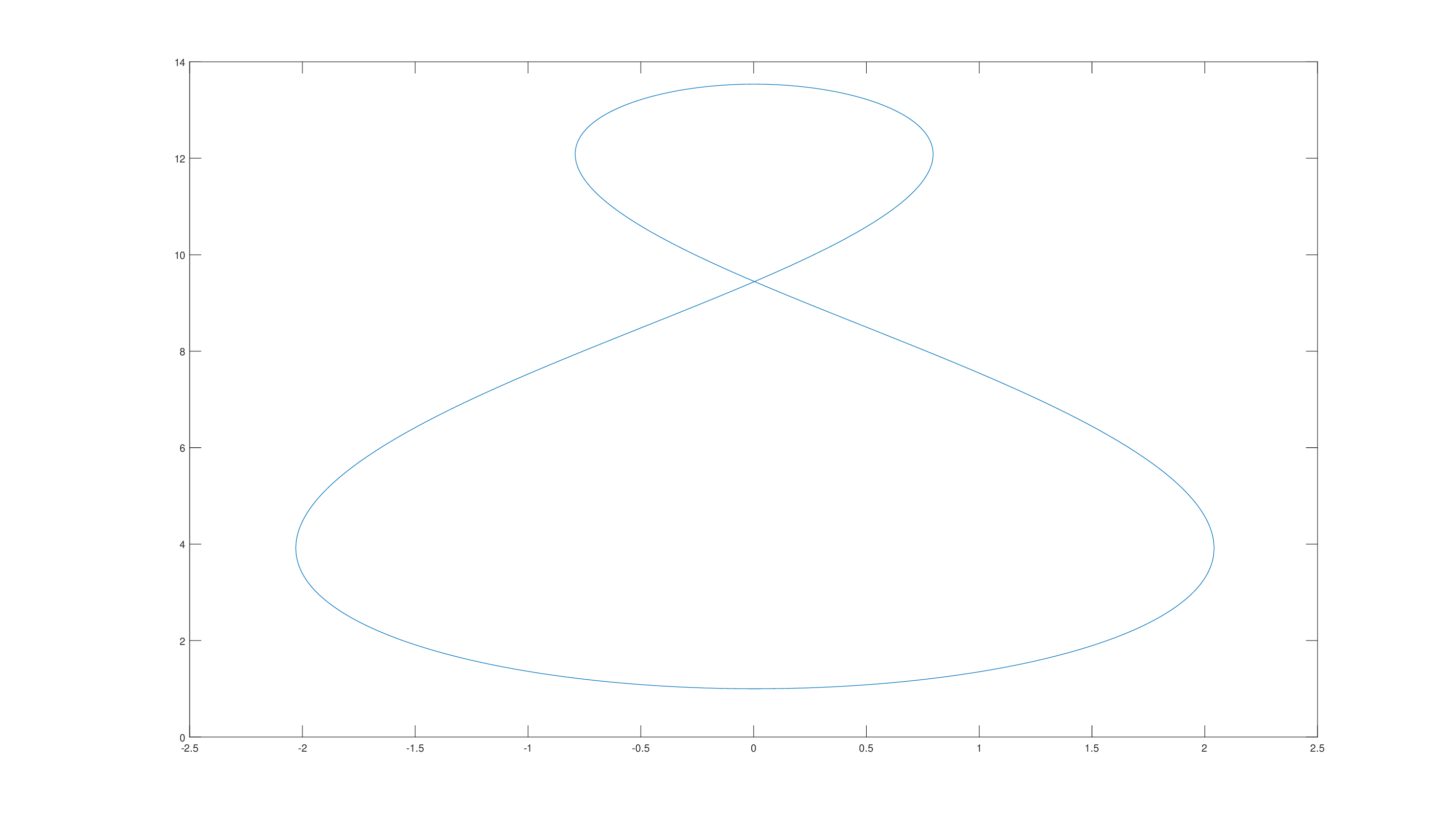}
        \caption{$\lambda=0.6, C= 0.36$}
    \end{subfigure}
    ~
    \begin{subfigure}[b]{0.48\textwidth}
        \includegraphics[width=\textwidth]{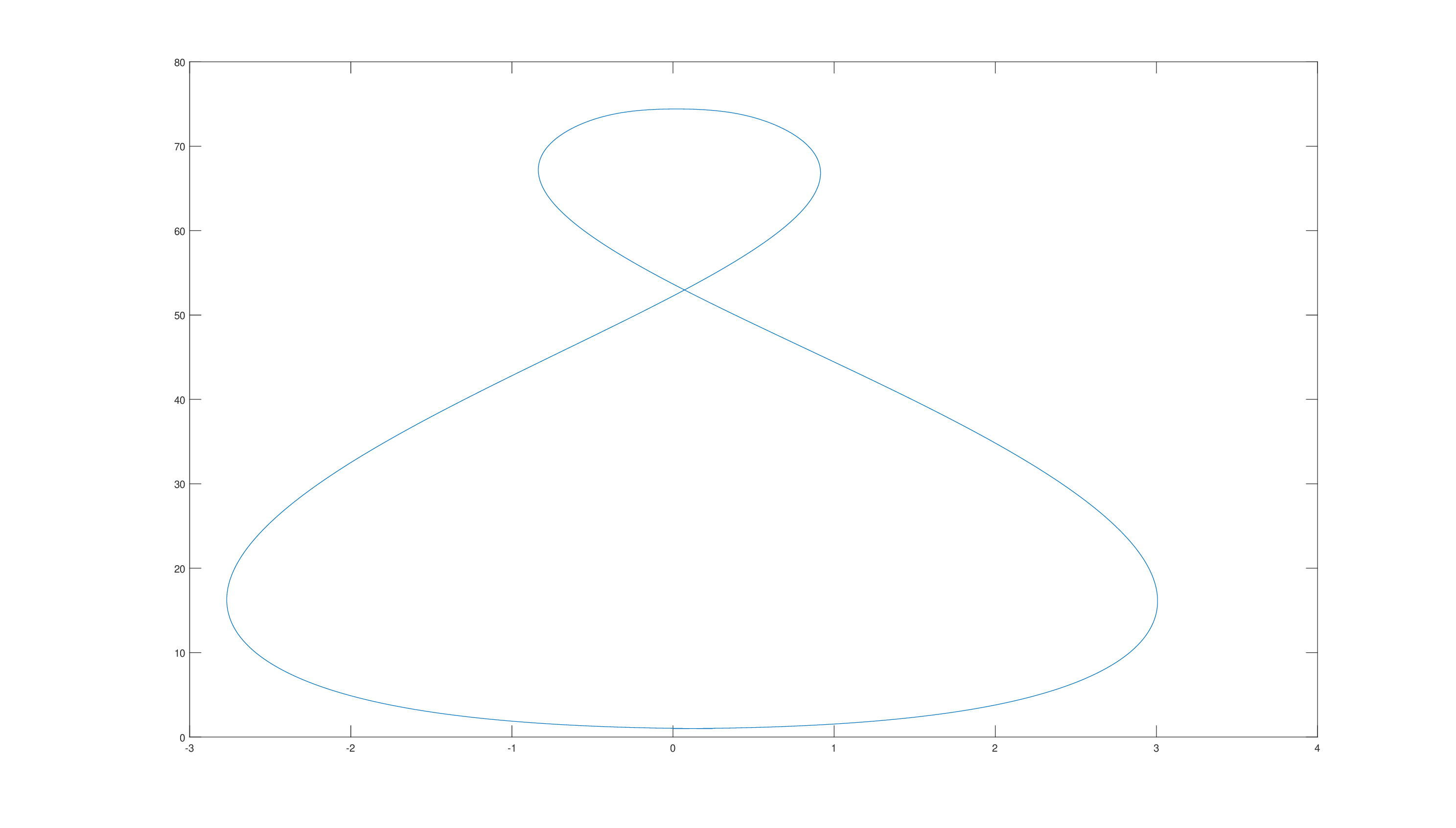}
        \caption{$\lambda =0.01, C= 0.00248$}
    \end{subfigure}
    \caption{Closed Wavelike Elastica. Note that both curves are scaled such that they pass through $(0,1)$, but for the smaller $\lambda$ the curve passes through $(0,\approx75)$ compared to $(0,\approx13.5)$ for the larger $\lambda$.}
\end{figure}

\begin{proof}
If $\gamma$ is wavelike, then $C > 0 $ and therefore the Killing field of $\gamma$ is translational, see Proposition \ref{prop:ordred} and Definition \ref{def:classi}. We claim that $\kappa(s) = \kappa_0 \cn(rs, p) $. Indeed, from Proposition \ref{prop:integr} it follows that $\kappa(s) = \pm \kappa_0 \cn(rs,p) $, where `$+$' or `$-$' could be chosen differently for each $s$. However, any nonconsistent choice of sign would contradict the smoothness of $\gamma$. It follows now from the periodicity of $\cn$ that $L = \frac{4n K(p)}{r}$. As an intermediate claim, we show that $n = 1$. Since $\tanh$ has no real period we find with Theorem \ref{thm:explpara}, proceeding as in \eqref{eq:aequiv} and \eqref{eq:simi} that $\gamma$ is not closed unless 
\begin{equation}\label{eq:ueb1}
0 = \sqrt{ac} \int_0^L \frac{1}{(\kappa^2 -\lambda) + 2i \kappa' } \diff s  = \sqrt{ac }\int_0^L \frac{\kappa^2 - \lambda}{\lambda^2 + 4C + 4 \kappa^2} \diff s , 
\end{equation}
 as a computation similar to \eqref{eq:simi} reveals. Note now that $\kappa^2 = \kappa_0^2 \cn^2(rs, p)$ is $2 \frac{K(p)}{r}$-periodic. Therefore 
 \begin{equation} \label{eq:ueb2}
 0 = \int_0^L \frac{\kappa^2 - \lambda}{\lambda^2 + 4C + 4 \kappa^2} \diff s   = n \int_0^{\frac{4K(p)}{r}} \frac{\kappa^2 - \lambda}{\lambda^2 + 4 C + 4 \kappa^2  }\diff s,
\end{equation} 
 which implies using Theorem \ref{thm:explpara} that $\gamma$ already closes up smoothly at $L = \frac{4 K(p)}{r}$. The intermediate claim follows.  Furthermore,
 \begin{equation*}
 0 = \int_0^L \frac{\kappa^2 - \lambda}{\lambda^2 + 4C + 4 \kappa^2} \diff s   = 2 \int_0^{\frac{2K(p)}{r}} \frac{\kappa^2 - \lambda}{\lambda^2 + 4 C + 4 \kappa^2 }\diff s.
\end{equation*}  
We conclude that $\gamma ( \frac{2 K(p)}{r} ) = \gamma(0)$ and therefore $\gamma$ has a self-intersection. Hence, $\gamma$ is not simple. To show that $\gamma$ is not free we look at \eqref{eq:clwv}  with $\lambda = 0$. In order for this equation to be satisfied, one would need $\kappa \equiv 0$, which is not possible. 
\end{proof}

%

\begin{prop}[Closing and Simplicity for Translational and Horocyclical Orbitlike Elastica]\label{prop:orbclose}
Let $\gamma$ be a non-rotational orbitlike elastica. Then $\gamma$ is closed if and only if $n= 1 $ and 
\begin{equation*}
\int_0^L \frac{\kappa^2- \lambda}{\lambda^2+ 4C + 4 \kappa^2 } \diff s = 0 .
\end{equation*}
Additionally, $\gamma$ is not simple. 
\end{prop}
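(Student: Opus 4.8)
The plan is to follow the template of the proofs of Propositions~\ref{prop:closedrot} and~\ref{prop:waveclose}: identify the curvature and the branch of the meromorphic function $f$ from Theorem~\ref{thm:explpara}, translate $\gamma(0)=\gamma(L)$ into an equation for $\int_0^L\theta(s)^{-1}\diff s$, pin down $n$, and then deduce non-simplicity from the four-vertex obstruction.

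First I would record the data. As $\gamma$ is orbitlike, $C<0$ and, by Proposition~\ref{prop:integr}, $\kappa^2=\kappa_0^2\dn^2(rs,p)$ with $p\in(0,1)$; since $\dn$ is strictly positive, $\kappa$ has constant sign, so $\kappa(s)=\kappa_0\dn(rs,p)$ with $\kappa_0=\kappa(0)$, and hence $\kappa$ is $\frac{2K(p)}{r}$-periodic. Thus $L=\frac{2nK(p)}{r}$ with $n$ as in Remark~\ref{rem:n}. Non-rotationality means $ac\le 0$ by Definition~\ref{def:classi}, so the relevant branch of~\eqref{eq:Fallunterscheidung} is one of $f(z)=\sqrt{-c/a}\,\tanh(\sqrt{-ac}\,z)$ (if $ac<0$), $f(z)=1/(az)$ (if $c=0\neq a$), or $f(z)=cz$ (if $a=0$). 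The key observation is that none of these has a nonzero \emph{real} period: the last two are injective, and $\tanh$ is injective modulo its period lattice $\frac{i\pi}{\sqrt{-ac}}\Z$, which meets $\R$ only in $0$.

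Next comes the closing condition. Exactly as in~\eqref{eq:aequiv}--\eqref{eq:simi}, Theorem~\ref{thm:explpara} gives that $\gamma(0)=\gamma(L)$ holds if and only if $\int_0^L\theta(s)^{-1}\diff s$ lies in the period lattice of $f$; splitting into real and imaginary parts as in~\eqref{eq:simi},
\[
\int_0^L\frac{1}{\theta(s)}\diff s=\int_0^L\frac{\kappa^2-\lambda}{\lambda^2+4C+4\kappa^2}\diff s-2i\int_0^L\frac{\kappa'}{\lambda^2+4C+4\kappa^2}\diff s,
\]
and the second integral vanishes after the substitution $u=\kappa(s)$ and using $\kappa(0)=\kappa(L)$ (legitimate because the denominator never vanishes, by Proposition~\ref{prop:nonvan}). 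So $\int_0^L\theta(s)^{-1}\diff s\in\R$, which by the previous paragraph forces it to be $0$; hence $\gamma$ closes if and only if $\int_0^L\frac{\kappa^2-\lambda}{\lambda^2+4C+4\kappa^2}\diff s=0$, and sufficiency, including smoothness at the join, follows from the $L$-periodicity of the coefficients of~\eqref{eq:reduced} together with uniqueness for that ODE, just as in the earlier proofs. To get $n=1$: since $\kappa^2$ is $\frac{2K(p)}{r}$-periodic, $0=\int_0^L\frac{\kappa^2-\lambda}{\lambda^2+4C+4\kappa^2}\diff s=n\int_0^{2K(p)/r}\frac{\kappa^2-\lambda}{\lambda^2+4C+4\kappa^2}\diff s$, so the one-period integral is $0$; combined with the (again automatic) vanishing of the imaginary part over one period, $\int_0^{2K(p)/r}\theta(s)^{-1}\diff s=0$, so $\gamma$ already closes smoothly after $\frac{2K(p)}{r}$, forcing $n=1$ and $L=\frac{2K(p)}{r}$.

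Finally, non-simplicity: with $n=1$ and $\kappa$ non-constant, $\gamma$ cannot be simple, directly by Proposition~\ref{prop:periodorb}; concretely, on $[0,L)$ the function $\kappa=\kappa_0\dn(rs,p)$ has exactly the two critical points $s=0$ and $s=\frac{K(p)}{r}$, so a simple closed $\gamma$ would contradict the hyperbolic four-vertex theorem \cite[Lemma~2.1]{Ghomi} (one may alternatively exhibit an explicit double point as in Proposition~\ref{prop:waveclose}). I expect the only step that needs genuine care to be the bookkeeping in the $\tanh$-case --- verifying that the period lattice of $f$ is purely imaginary, so that a real value of $\int_0^L\theta(s)^{-1}\diff s$ is forced to vanish, and checking that ``closed'' is correctly interpreted as ``smoothly closed''; apart from that the argument is essentially the wavelike one verbatim, once Theorem~\ref{thm:explpara} and Propositions~\ref{prop:nonvan} and~\ref{prop:periodorb} are in hand.
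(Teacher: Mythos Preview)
Your proposal is correct and follows essentially the same approach as the paper: the paper's own proof simply says to mimic the wavelike argument (Proposition~\ref{prop:waveclose}) with $\dn$ in place of $\cn$ to obtain the closing condition and $n=1$, and then invokes Proposition~\ref{prop:periodorb} for non-simplicity. Your write-up is in fact more detailed than the paper's (you make the period-lattice observation for $\tanh$ and the horocyclical branches explicit, and you spell out why the imaginary part vanishes over one period), but the underlying argument is identical.
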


\begin{proof} The claim that $n = 1$ can be shown following the lines of the corresponding part of the proof of Proposition \ref{prop:waveclose}, more precisely one proceeds similar to \eqref{eq:ueb1} and \eqref{eq:ueb2} with the minor difference 
of the periodicity of $\mathrm{cn}$ instead of $\mathrm{dn}$, see Proposition \ref{prop:ellipt}.
If $\gamma$ were simple, Proposition \ref{prop:periodorb} would imply that $n \geq 2$ which is a contradiction.
\end{proof}

 \begin{prop}[Closed orbitlike elastica] \label{prop:simcloorbit}
Let $\gamma$ be an orbitlike elastica with parameter $\lambda < \frac{64}{\pi^2} - 2\approx4.48$. If $\gamma$ is closed, then $\gamma$ is rotational and satisfies $m \neq 0$. 
 \end{prop}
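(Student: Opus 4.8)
I would argue by contradiction through the (unpenalized) elastic energy. Assume $\gamma$ is closed and orbitlike with $\lambda<\tfrac{64}{\pi^2}-2$, and suppose towards a contradiction that $\gamma$ is \emph{not} rotational, or else is rotational with $m=0$. The guiding observation is that in either of these two ``bad'' situations the curvature completes exactly one full period before the curve closes, i.e.\ $n=1$: if $\gamma$ is non-rotational (hence translational or horocyclical) orbitlike, this is part of Proposition~\ref{prop:orbclose}; if $\gamma$ is rotational with $m=0$, this is the last assertion of Proposition~\ref{prop:closedrot}. Since $\gamma$ is orbitlike, Proposition~\ref{prop:integr} gives $\kappa^2=\kappa_0^2\,\dn^2(r\,\cdot\,,p)$ with $p\in(0,1)$, so $\kappa$ is non-constant; hence the contrapositive of Proposition~\ref{prop:periodorb} shows that $\gamma$ cannot be simple, and Proposition~\ref{prop:liyau} then yields $\mathcal{E}(\gamma)\ge 16$.

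It remains to show that $n=1$ forces $\mathcal{E}(\gamma)<16$ once $\lambda<\tfrac{64}{\pi^2}-2$, which contradicts the previous paragraph. Since $n=1$, equation~\eqref{eq:L} gives $L=\tfrac{2K(p)}{r}$, one period of $\dn^2(r\,\cdot\,,p)$; plugging in $r=\tfrac12\sqrt{\tfrac{2\lambda+4}{2-p^2}}$ and $\kappa_0^2=\tfrac{2\lambda+4}{2-p^2}$ from Proposition~\ref{prop:integr} and using $\int_0^{2K(p)}\dn^2(u,p)\,\diff u=2E(p)$, one computes
\[
\mathcal{E}(\gamma)=\int_0^L\kappa^2\,\diff s=\frac{\kappa_0^2}{r}\int_0^{2K(p)}\dn^2(u,p)\,\diff u=4\,E(p)\sqrt{\frac{2(\lambda+2)}{2-p^2}}.
\]
Then I would estimate $E(p)$ by Jensen's inequality applied to the concave square root, $E(p)=\int_0^{\pi/2}\sqrt{1-p^2\sin^2\theta}\,\diff\theta\le\tfrac{\pi}{2}\sqrt{1-\tfrac{p^2}{2}}=\tfrac{\pi}{2\sqrt2}\sqrt{2-p^2}$, which gives $\mathcal{E}(\gamma)\le 2\pi\sqrt{\lambda+2}$. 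As $\lambda+2<\tfrac{64}{\pi^2}$ we have $\sqrt{\lambda+2}<\tfrac{8}{\pi}$, so $\mathcal{E}(\gamma)<2\pi\cdot\tfrac{8}{\pi}=16$, the contradiction sought. In particular one never needs to evaluate the remaining closing integral $\int_0^L\tfrac{\kappa^2-\lambda}{\lambda^2+4C+4\kappa^2}\,\diff s=0$ (which is \eqref{eq:period} with $m=0$ in the rotational case).

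I do not expect a substantive obstacle. The only points requiring care are checking that each of the two excluded cases genuinely reduces to $n=1$ --- which is exactly what Propositions~\ref{prop:orbclose} and~\ref{prop:closedrot} were set up to provide --- and the routine bookkeeping in the energy identity together with the Jensen bound for $E$. It is worth noting that the Jensen estimate is an equality precisely when $p=0$, matching the fact that the threshold $\tfrac{64}{\pi^2}-2$ is attained only in the circular limit $p\to0$, where an orbitlike curve degenerates into a Clifford-type circle.
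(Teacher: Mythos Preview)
Your proof is correct and follows the same approach as the paper: reduce to $n=1$ via Propositions~\ref{prop:orbclose} and~\ref{prop:closedrot}, invoke non-simplicity (Proposition~\ref{prop:periodorb}) and the Li--Yau bound (Proposition~\ref{prop:liyau}) to get $\mathcal{E}(\gamma)\ge16$, then compute the energy explicitly and bound it below $16$. The one small difference is that the paper cites Proposition~\ref{prop:ellipt} for the inequality $E(p)/\sqrt{2-p^2}\le \pi/(2\sqrt{2})$, whereas you derive it directly via Jensen's inequality---a slightly cleaner route to the upper bound, since the paper's proof of Proposition~\ref{prop:ellipt} establishes the stronger monotonicity statement via a derivative computation that you do not need here.
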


 \begin{figure}[ht]
    \centering
    \begin{subfigure}[b]{0.48\textwidth}
        \includegraphics[width=\textwidth]{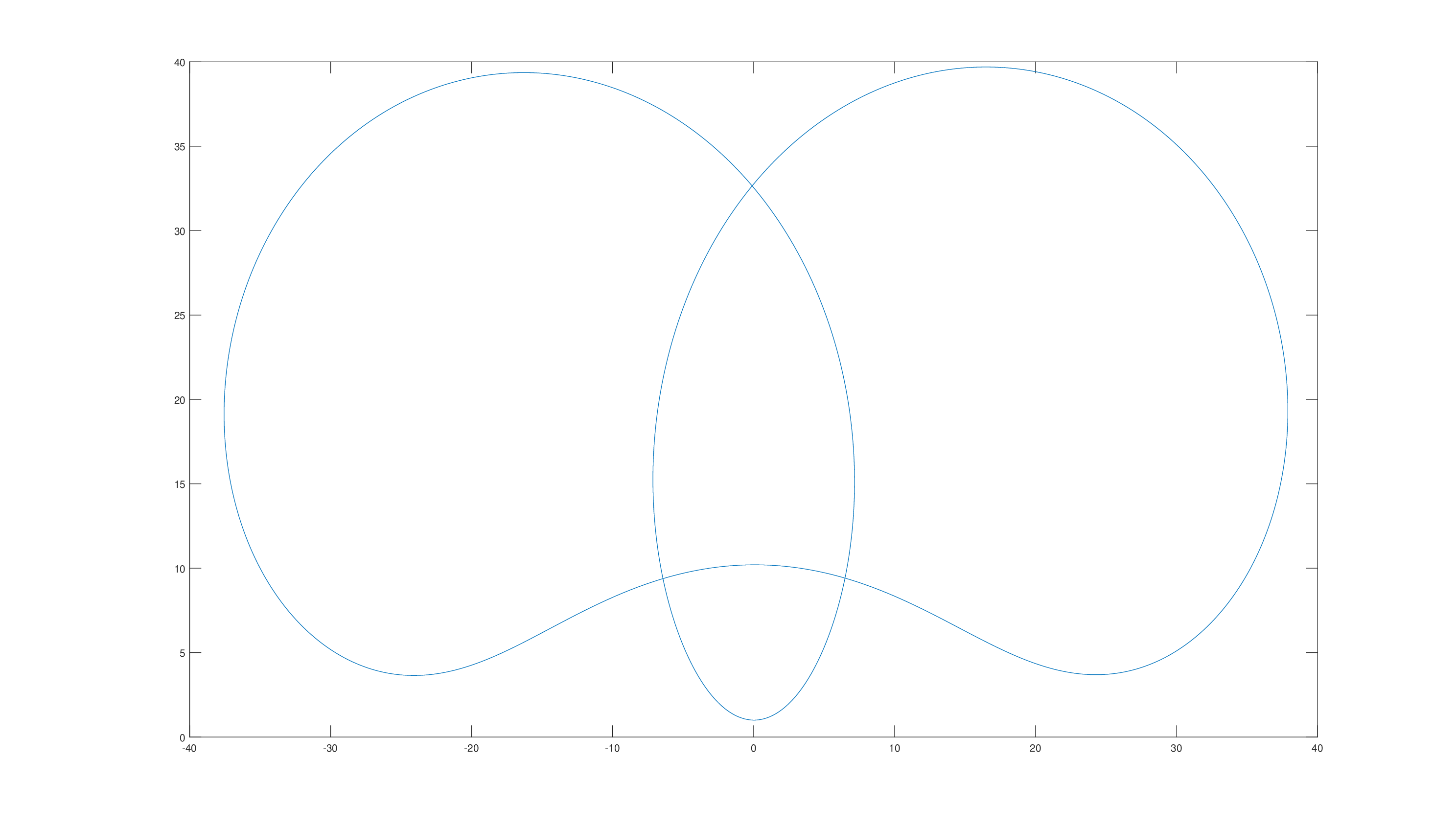}
        \caption{$\lambda = 0, C= -0.394317, n = 3$}
    \end{subfigure}
    ~
    \begin{subfigure}[b]{0.48\textwidth}
        \includegraphics[width=\textwidth]{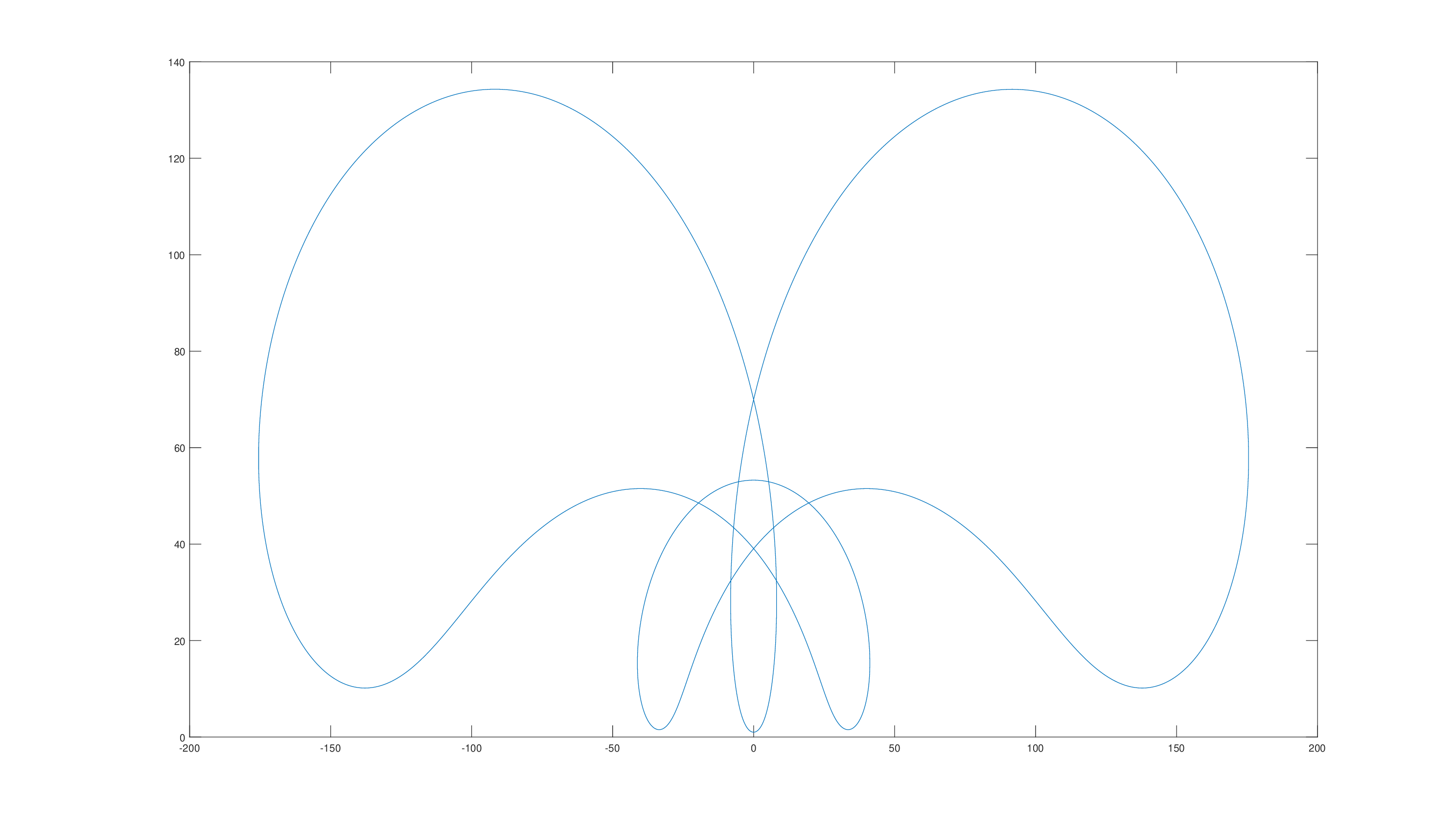}
        \caption{$\lambda =0, C= -0.0635, n = 5$}
    \end{subfigure}
    
    \begin{subfigure}[b]{0.48\textwidth}
        \includegraphics[width=\textwidth]{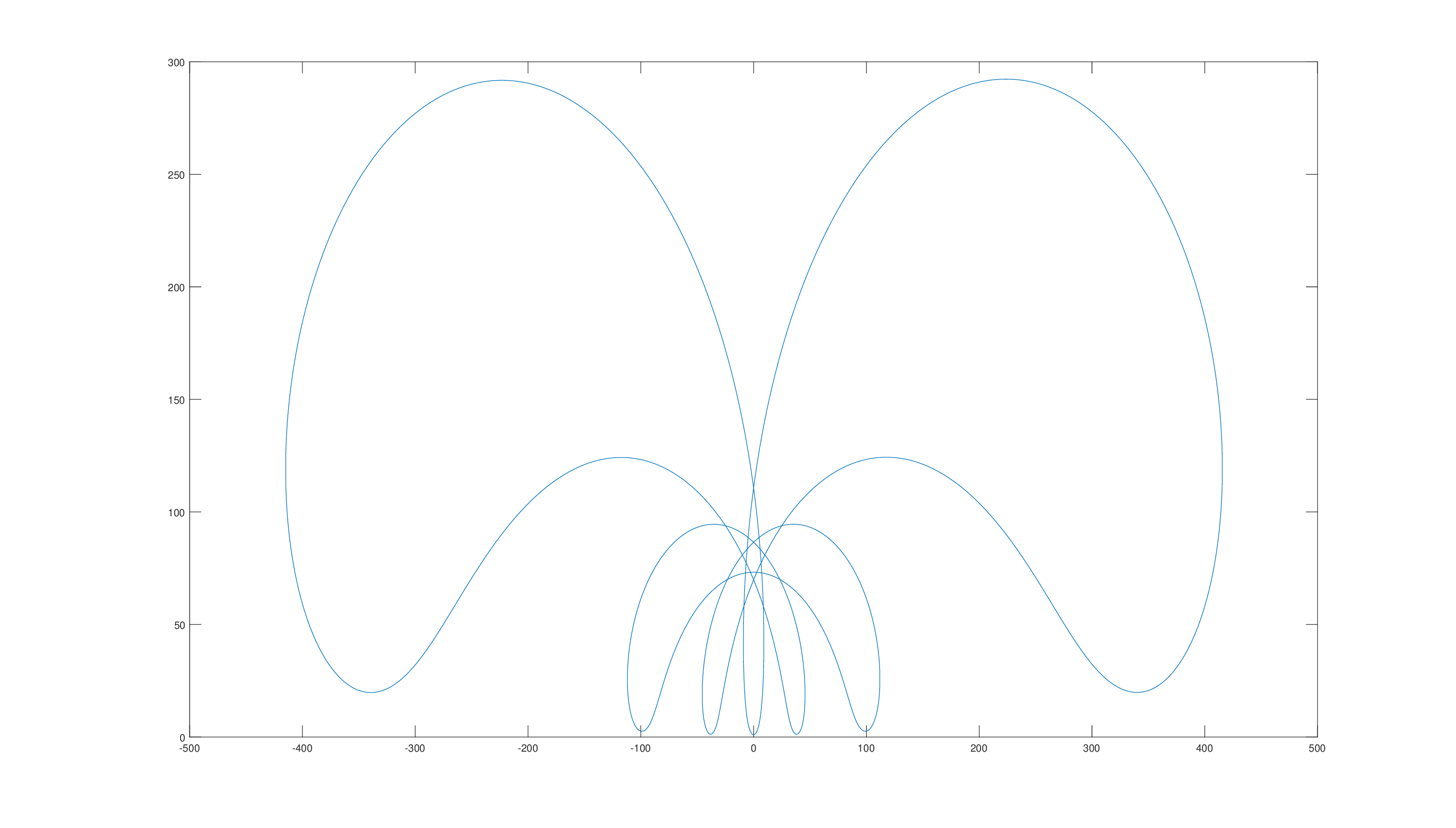}
        \caption{$\lambda = 0, C= -0.0225, n=7$}
    \end{subfigure}
    ~
    \begin{subfigure}[b]{0.48\textwidth}
        \includegraphics[width=\textwidth]{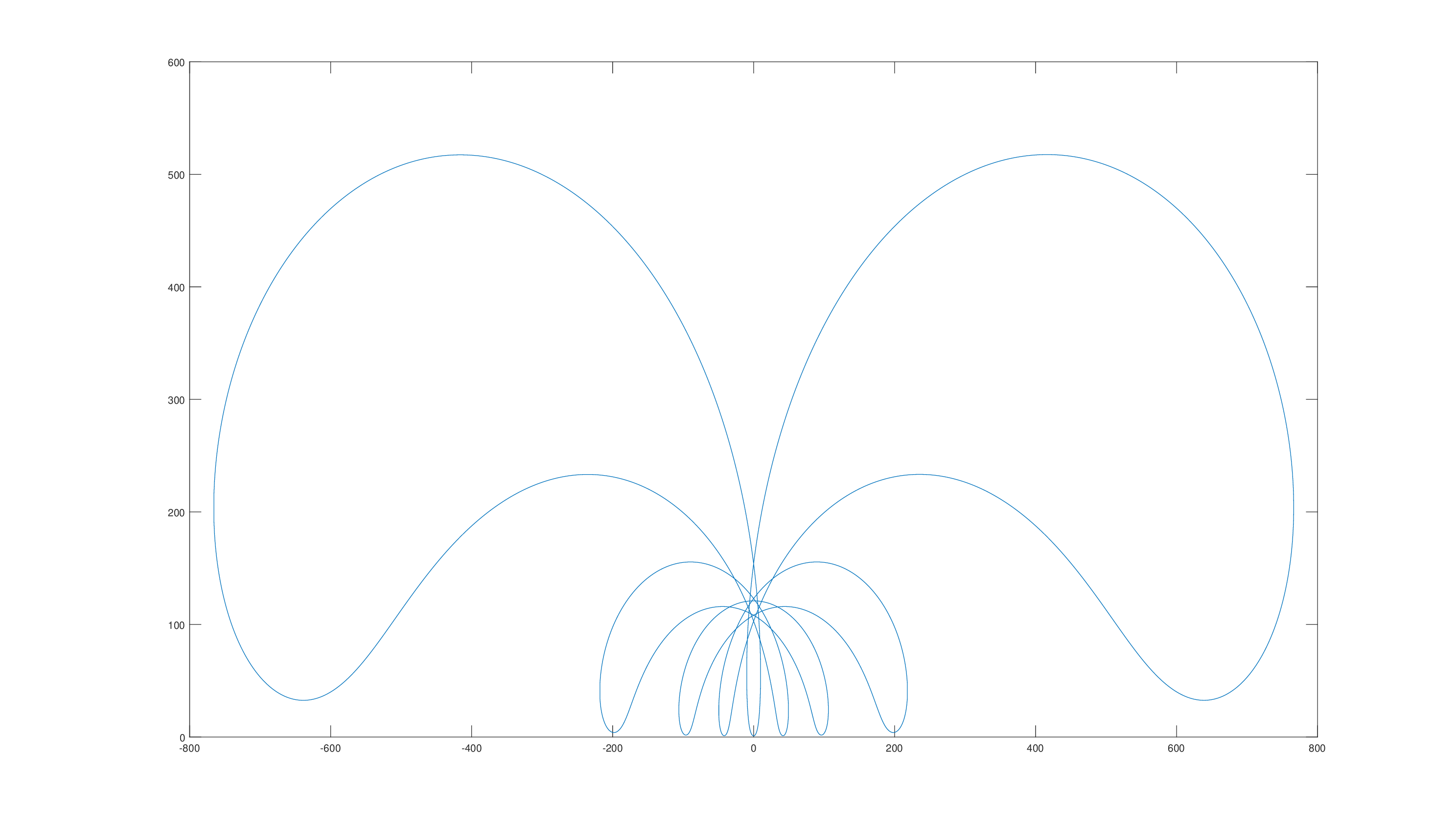}
        \caption{$\lambda = 0 , C= -0.01086, n = 9$}
    \end{subfigure}  
    \caption{Closed Free Orbitlike Elastica}\label{fig:OrbitlikeFreeElastica}
\end{figure}
%

 \begin{proof}
 If $\gamma$ is not rotational or $m = 0$, then either Proposition \ref{prop:orbclose} or Proposition \ref{prop:closedrot} imply that $n=1$. Therefore $\gamma$ can not be simple since otherwise Proposition \ref{prop:periodorb} would be violated. Notice that $\mathcal{E}(\gamma) \geq 16 $ according to Proposition \ref{prop:liyau}. Then, by Proposition \ref{prop:integr} and \eqref{eq:intdnsquared} we find
 \begin{align*}
 16 \leq \mathcal{E}(\gamma) &= \int_0^{\frac{2K(p)}{r}} \kappa_0^2 \dn^2(rs, p) \diff s = \frac{\kappa_0^2}{r} \int_0^{2 K(p)} \dn^2(s,p) \diff s  \\ 
 & = 4 \sqrt{2 \lambda + 4} \frac{E(p)}{\sqrt{2-p^2}}\leq 4 \sqrt{2\lambda + 4} \frac{\pi}{2\sqrt{2}} 
 \end{align*}
 where we used Proposition \ref{prop:ellipt} in the last step. Solving the inequality for $\lambda$ we obtain that $\lambda \geq \frac{64}{\pi^2}- 2 $.
 \end{proof}

 \begin{remark}\label{rem:strres}
A close examination of the proof of Proposition \ref{prop:simcloorbit} reveals that  we have actually shown a stronger result: Orbitlike elastica with $n =1$ exist only for $\lambda \geq \frac{64}{\pi^2} - 2 $.
 \end{remark}

 \section{The Ratio of Energy and Length}
 \label{sec:Ratio}
 In this section we show the announced Reilly type inequality \eqref{eq:2.1}. This inequality will account for the fact that for each evolution $(f_t)_{t \geq 0 }$ by elastic flow with $\mathcal{E}(f_0) < 16$, the hyperbolic length $(\mathcal{L}(f_t))_{t \geq 0 }$ is uniformly bounded in time. This already implies that $(f_t)_{t \geq 0 }$ is a convergent evolution as we shall see in Section \ref{sec:MainResultProofs}.

 \begin{lemma}[Energy of Simple Orbitlike Elastica]\label{energ:orbitl}
Let $\gamma$ be a noncircular simple closed orbitlike elastica. Then 
\begin{equation}
\mathcal{E}(\gamma) \geq 8 \sqrt{2 \lambda + 4  }.
\end{equation}
\end{lemma}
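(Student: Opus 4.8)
The plan is to use the explicit parametrization of the curvature in the orbitlike case together with the closing conditions established in Proposition~\ref{prop:closedrot}. For a noncircular simple closed orbitlike elastica $\gamma$, Proposition~\ref{prop:periodorb} forces $n \geq 2$, and Proposition~\ref{prop:simcloorbit} together with Remark~\ref{rem:strres} tells us we are in the rotational case. By Proposition~\ref{prop:integr} we have $\kappa^2(s) = \kappa_0^2 \dn^2(rs, p)$ with $r = \frac{1}{2}\sqrt{\frac{2\lambda+4}{2-p^2}}$ and $\kappa_0^2 = \frac{2\lambda+4}{2-p^2}$, and the length over one full period of the curve is $L = 2n\frac{K(p)}{r}$ by \eqref{eq:L}. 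First I would compute the energy directly:
\begin{equation*}
\mathcal{E}(\gamma) = \int_0^L \kappa^2 \diff s = n \int_0^{2K(p)/r} \kappa_0^2 \dn^2(rs,p) \diff s = \frac{n\kappa_0^2}{r}\int_0^{2K(p)} \dn^2(t,p)\diff t = \frac{2n\kappa_0^2}{r} E(p),
\end{equation*}
using the standard identity $\int_0^{2K(p)}\dn^2(t,p)\diff t = 2E(p)$ (cf.\ \eqref{eq:intdnsquared}, Proposition~\ref{prop:ellipt}). Substituting $\kappa_0^2 = \frac{2\lambda+4}{2-p^2}$ and $r = \frac{1}{2}\sqrt{\frac{2\lambda+4}{2-p^2}}$ gives $\frac{\kappa_0^2}{r} = 2\sqrt{\frac{2\lambda+4}{2-p^2}}$, hence $\mathcal{E}(\gamma) = 4n\sqrt{2\lambda+4}\,\frac{E(p)}{\sqrt{2-p^2}}$.

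It then remains to bound $\frac{E(p)}{\sqrt{2-p^2}}$ from below. The function $E(p)$ decreases from $\frac{\pi}{2}$ at $p=0$ to $1$ at $p=1$, while $\sqrt{2-p^2}$ decreases from $\sqrt 2$ to $1$; one checks that $E(p) \geq \sqrt{1-p^2}\cdot\frac{\pi}{2}$ is too weak, so instead I would use that $E(p)/\sqrt{2-p^2}$ is monotone on $(0,1)$ and attains its minimum. Actually the cleanest route: since $E(p)\geq 1$ for all $p\in[0,1)$ and $\sqrt{2-p^2}\leq\sqrt 2$, we get $\frac{E(p)}{\sqrt{2-p^2}}\geq\frac{1}{\sqrt 2}$, which combined with $n\geq 2$ yields
\begin{equation*}
\mathcal{E}(\gamma) = 4n\sqrt{2\lambda+4}\,\frac{E(p)}{\sqrt{2-p^2}} \geq 4\cdot 2\cdot\sqrt{2\lambda+4}\cdot\frac{1}{\sqrt 2} = \frac{8}{\sqrt 2}\sqrt{2\lambda+4} = 4\sqrt{2}\sqrt{2\lambda+4},
\end{equation*}
which falls short of the claimed $8\sqrt{2\lambda+4}$ by a factor $\sqrt 2$. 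So the factor $n\geq 2$ alone is not enough, and the main obstacle is to extract the missing $\sqrt 2$.

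The resolution, and the real content of the lemma, must come from combining $n\geq 2$ with a \emph{lower bound on $p$}: the closing condition \eqref{eq:period} in the rotational case relates $m$, $n$, and an integral in $p$, and simplicity (via $\gcd(|m|,n)=1$ and $m\neq 0$) forces the modulus $p$ to be large — intuitively, a simple closed rotational orbitlike elastica must wind enough that $p$ is bounded away from $0$. Concretely I expect one shows that for simple closed orbitlike elastica $p^2 \geq \tfrac12$ (or some explicit threshold), after which $\sqrt{2-p^2}\leq\sqrt{3/2}$ and a sharper lower bound on $E(p)$ near $p^2=\tfrac12$ combine with $n\geq 2$ to give exactly $8\sqrt{2\lambda+4}$. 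Thus the key steps in order are: (1) reduce to the rotational case via Proposition~\ref{prop:simcloorbit}/Remark~\ref{rem:strres}; (2) compute $\mathcal{E}(\gamma) = 4n\sqrt{2\lambda+4}\,E(p)/\sqrt{2-p^2}$ explicitly; (3) use $n\geq 2$ from Proposition~\ref{prop:periodorb}; (4) use simplicity and the closing condition \eqref{eq:period} to bound $p$ from below; (5) combine to get the stated inequality. The hard part will be step (4) — turning the number-theoretic closing constraint and the four-vertex-based simplicity bound into a quantitative lower bound on the elliptic modulus $p$ that is exactly strong enough.
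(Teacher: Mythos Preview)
Your computation through $\mathcal{E}(\gamma) = 4n\sqrt{2\lambda+4}\,\dfrac{E(p)}{\sqrt{2-p^2}}$ is correct and matches the paper exactly. The gap is in how you bound the elliptic factor. You estimate $E(p)\geq 1$ and $\sqrt{2-p^2}\leq\sqrt{2}$ \emph{separately}, which indeed only gives $\tfrac{1}{\sqrt{2}}$ and leaves you short by $\sqrt{2}$. But the ratio itself satisfies the sharper bound
\[
\frac{E(p)}{\sqrt{2-p^2}}\;>\;1\qquad\text{for all }p\in(0,1),
\]
which is precisely Proposition~\ref{prop:ellipt} in the paper: one shows that $f(p):=E(p)/\sqrt{2-p^2}$ is strictly decreasing on $(0,1)$ (a short calculus computation using $E'(p)=(E(p)-K(p))/p$) and that $\lim_{p\to 1}f(p)=1$. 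With this in hand, $n\geq 2$ from Proposition~\ref{prop:periodorb} immediately gives $\mathcal{E}(\gamma)\geq 4\cdot 2\cdot\sqrt{2\lambda+4}\cdot 1 = 8\sqrt{2\lambda+4}$.

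So your proposed step (4) --- extracting a quantitative lower bound on $p$ from the closing condition \eqref{eq:period} and the coprimality of $m,n$ --- is entirely unnecessary, and would in any case be much harder than what is actually needed. Likewise, step (1) (reducing to the rotational case via Proposition~\ref{prop:simcloorbit}) is not needed for this lemma: the energy computation uses only the orbitlike form $\kappa^2=\kappa_0^2\dn^2(rs,p)$ from Proposition~\ref{prop:integr} and the $\tfrac{2K(p)}{r}$-periodicity of $\dn^2$, which hold regardless of the Killing field type.
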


\begin{proof}
Note that $n \geq 2$ by Proposition \ref{prop:periodorb}. Furthermore, using Proposition \ref{prop:integr} and \eqref{eq:intdnsquared},
\begin{align*}
\int_\gamma \kappa^2 \diff s  
 &  =  \kappa_0^2 \int_0^{\frac{2n K(p)}{r}} \dn^2(rs,p) \diff s   = \frac{\kappa_0^2 n}{r} \int_0^{2 K(p)} \dn^2(z,p) \diff z \\ & = 4 \kappa_0 n E(p) = 4 \sqrt{2\lambda + 4}n \frac{E(p)}{\sqrt{2-p^2}}  \geq 8\sqrt{2\lambda + 4} \frac{E(p)}{\sqrt{2-p^2}} \\ &\geq 8 \sqrt{2\lambda + 4},
\end{align*}
where we used the inequality $\frac{E(p)}{\sqrt{2-p^2}} \geq 1 $ from Proposition \ref{prop:ellipt}. 
\end{proof}
{With all the preparations, we are finally ready to prove the first part of Theorem \ref{thm:main1}. As a first step, we examine the Reilly quotient for elastica in the subsequent lemma and then show in Theorem \ref{thm:reilly} that the infimum of the Reilly quotients coincides with the infimum of the Reilly quotients for elastica.}
For a similar inequality for non-closed curves in hyperbolic space c.f. \cite[Section 5]{Eichmann}.

\begin{lemma}[A Reilly-Type Inequality for Small-Energy-Elastica]\label{e/lelastica}Let $\varepsilon >0 $ be arbitrary. Then there is $c = c(\varepsilon) > 0 $ such that
\begin{equation*}
\frac{\mathcal{E}(\gamma)}{\mathcal{L}(\gamma)} \geq c(\varepsilon)\; \;  \textrm{for each closed elastica $\gamma$ that satisfies} \;  \mathcal{E}(\gamma) \leq 16 - \varepsilon. 
 \end{equation*}
 \end{lemma}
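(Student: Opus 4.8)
The plan is to case-split over the classification of closed elastica from the previous section and bound the Reilly quotient $\mathcal E(\gamma)/\mathcal L(\gamma)$ from below in each case, using that the energy constraint $\mathcal E(\gamma)<16-\varepsilon$ is very restrictive. First I would dispose of the circular case: by Proposition \ref{prop:ConvergentCircles} (or rather Proposition \ref{prop:curvconst} together with a direct computation) circular elastica are Euclidean circles, and for a circle of hyperbolic curvature $\kappa\equiv\kappa_0$ one has explicit formulas $\mathcal E=2\pi|\kappa_0|\coth(\text{something})$ type expressions; in any case the Clifford elastica minimizes $\mathcal E$ among circles with value $2\pi\sqrt2$ is not literally $16$, so one checks that the constraint $\mathcal E<16-\varepsilon$ confines the circle to a compact family of radii bounded away from the degenerate ones, on which the continuous positive function $\mathcal E/\mathcal L$ attains a positive minimum. (Here one must be a little careful: a large Euclidean circle sitting high above the $x$-axis has small hyperbolic curvature but also small energy, so one should verify the constraint really does force compactness; the energy of a horocycle-like circle tends to a positive constant, which is the real content.)

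For the non-circular cases I would invoke the closing-condition results just proved. Wavelike closed elastica and non-rotational orbitlike closed elastica are never simple, so by Proposition \ref{prop:liyau} they have $\mathcal E(\gamma)\ge16$ and are excluded by the hypothesis $\mathcal E<16-\varepsilon$. Thus the only surviving non-circular case is a \emph{rotational orbitlike} closed elastica. If such a $\gamma$ is simple, then by Lemma \ref{energ:orbitl} we have $\mathcal E(\gamma)\ge 8\sqrt{2\lambda+4}$, and combined with $\mathcal E(\gamma)<16-\varepsilon$ this bounds $\lambda$ from above, say $2\lambda+4\le (16-\varepsilon)^2/32$; if $\gamma$ is not simple it again has $\mathcal E\ge16$ and is excluded. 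So we are reduced to rotational orbitlike simple closed elastica with $\lambda$ in a bounded range, and we additionally know $\mathcal E(\gamma)=4\sqrt{2\lambda+4}\,n\,E(p)/\sqrt{2-p^2}$ with $n\ge2$, which forces $E(p)/\sqrt{2-p^2}$ (hence $p$) and $n$ to lie in a compact set as well; in particular $\lambda$ is also bounded below (via $\lambda\ge\frac{64}{\pi^2}-2$ from Remark \ref{rem:strres}, or directly from $\mathcal E>0$ needing $2\lambda+4>0$).

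It remains to bound $\mathcal L(\gamma)=L$ from above on this compact parameter set. Here I would use the explicit period formula $L=2nK(p)/r$ from \eqref{eq:L} with $r=\frac12\sqrt{(2\lambda+4)/(2-p^2)}$: since $n$, $p$ and $\lambda$ all range over compact sets (with $p$ bounded away from $1$, so $K(p)$ stays finite, and $2\lambda+4$ bounded away from $0$, so $r$ stays bounded below), $L$ is bounded above by a constant $L_0=L_0(\varepsilon)$. Then $\mathcal E(\gamma)/\mathcal L(\gamma)\ge \mathcal E(\gamma)/L_0$, and $\mathcal E(\gamma)$ is bounded below away from $0$ on the same compact set (e.g. $\mathcal E(\gamma)\ge 8\sqrt{2\lambda+4}\ge 8\sqrt{(64/\pi^2)-2+4}>0$, or simply by positivity and compactness). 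Taking $c(\varepsilon)$ to be the minimum of the positive lower bounds obtained in the circular case and in the rotational-orbitlike case finishes the proof.

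The main obstacle I anticipate is the bookkeeping in the rotational orbitlike case: one has to check that the energy bound $\mathcal E<16-\varepsilon$, together with the closing conditions and $n\ge2$, genuinely pins down \emph{all} the relevant parameters ($\lambda$, $p$, $n$, and the scale $y$) into a compact set on which $L$ cannot blow up — the danger being a sequence of such elastica with $p\to1$ (so $K(p)\to\infty$ and $L\to\infty$) while the energy stays below $16-\varepsilon$. Ruling this out is exactly where Lemma \ref{energ:orbitl} and the identity $E(p)/\sqrt{2-p^2}\ge1$ do the work: as $p\to1$, $E(p)\to1$ but $\sqrt{2-p^2}\to1$ too, so $\mathcal E\approx 4\sqrt{2\lambda+4}\,n$ stays bounded only if $n$ stays bounded, and then one needs a separate argument (the closing condition \eqref{eq:period} relating $m$ and $n$, or monotonicity of the relevant period integral) to prevent $p\to1$ with $n$ bounded — i.e. to show the constrained family of \emph{closed} rotational orbitlike elastica with bounded energy is compact. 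This compactness claim is the crux; everything else is assembling the pieces already established.
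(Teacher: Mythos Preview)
Your overall strategy matches the paper's, but there are two concrete errors and one genuine gap, and the gap is exactly the one you flag as ``the crux'' without resolving.

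First, the circular case is much simpler than you make it: a closed curve of constant curvature in $\mathbb{H}^2$ is a Euclidean circle not meeting the $x$-axis (Proposition~\ref{prop:curvconst}), hence $|\kappa|>1$, and then $\mathcal{E}(\gamma)=\int_\gamma\kappa^2\,\mathrm{d}s\ge\int_\gamma 1\,\mathrm{d}s=\mathcal{L}(\gamma)$ gives $\mathcal{E}/\mathcal{L}\ge 1$ directly. No compactness argument is needed.

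Second, your invocation of Remark~\ref{rem:strres} is wrong: that remark says orbitlike elastica with $n=1$ require $\lambda\ge\frac{64}{\pi^2}-2$, but here you are in the simple case with $n\ge 2$, so it gives nothing. The correct lower bound on $\lambda$ is $\lambda>-1$ from Proposition~\ref{prop:rotkil} (rotational Killing field).

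Third, and most importantly, your claim that the energy bound ``forces $E(p)/\sqrt{2-p^2}$ (hence $p$) \ldots\ to lie in a compact set'' is vacuous: by Proposition~\ref{prop:ellipt} the quantity $E(p)/\sqrt{2-p^2}$ already lies in $(1,\tfrac{\pi}{2\sqrt 2})$ for \emph{all} $p\in(0,1)$, so this does not keep $p$ away from $1$, and hence does not prevent $K(p)\to\infty$ and $L\to\infty$. You correctly identify this as the danger but then defer to the closing condition~\eqref{eq:period} without carrying out any argument. The paper avoids the closing condition entirely here. It computes
\[
\frac{\mathcal{E}(\gamma)}{\mathcal{L}(\gamma)}=\kappa_0^2\,\frac{E(p)}{K(p)}=(2\lambda+4)\,\frac{E(p)}{(2-p^2)K(p)}\ge\frac{1}{K(p)},
\]
using $\lambda\ge -1$. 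Then, assuming a sequence with $\mathcal{E}/\mathcal{L}\to 0$, one has $p_l\to 1$; boundedness of $(\lambda_l)$ (from Lemma~\ref{energ:orbitl}) and of $(C_l)$ (from the explicit relation $4|C_l|=\kappa_{0,l}^4-(2\lambda_l+4)\kappa_{0,l}^2$) allows passing to a subsequential limit $(\lambda,C)$. Feeding $p_l\to 1$ into the formula~\eqref{eq:modulus} for $p_l^2$ forces $C=0$, and the rotational constraint $\lambda_l^2+4C_l<0$ then forces $\lambda=0$. But now $\mathcal{E}(\gamma_{k_l})\ge 8\sqrt{2\lambda_{k_l}+4}\to 16$, contradicting $\mathcal{E}<16-\varepsilon$. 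This is the missing piece; once you have it, the proof is complete without ever touching~\eqref{eq:period}.
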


\begin{proof}
Fix $\varepsilon> 0$. Note that according to Proposition \ref{prop:waveclose} any wavelike elastica is nonsimple and therefore $\mathcal{E}(\gamma) \leq 16 - \varepsilon <16$ can only hold for simple orbitlike or for circular elastica, see Proposition \ref{prop:liyau}. In the case of a circular elastica, i.e. $\kappa \equiv const$ we obtain that $\gamma$ is a circle in $\mathbb{H}^2$ since according to Proposition \ref{prop:curvconst} this is the only closed curve with constant curvature. However then $|\kappa[\gamma]| >1 $ and this 
implies
\begin{equation*}
\mathcal{E}(\gamma) = \int_\gamma |\kappa|^2 \diff s  \geq \int_\gamma 1\;  \diff s  = \mathcal{L}(\gamma) .
\end{equation*} 
In the case of a simple orbitlike elastica with $n \geq 2 $ (see Proposition \ref{prop:periodorb}) we obtain using Proposition \ref{prop:integr} and \eqref{eq:intdnsquared}
\begin{align}\label{eq:elquot}
\frac{\mathcal{E(\gamma)}}{\mathcal{L}(\gamma) } & = \frac{1}{\frac{2nK(p)}{r}} \int_0^\frac{2nK(p)}{r} \kappa_0^2 \dn^2(rs, p ) \diff s  \nonumber \\ & = \kappa_0^2 \frac{E(p) }{K(p) } = (2 \lambda + 4 )\frac{E(p)}{(2-p^2) K(p)} \geq \frac{1}{K(p)},
\end{align}
where we used in the last step that $\lambda \geq - 1 $. This is true since according to Proposition \ref{prop:orbclose} $\gamma$ has to be rotational which 
allows us to apply the estimate in Proposition \ref{prop:rotkil}.

Now assume that the statement is false. Then there exists a sequence $(\lambda_l,C_l)_{l \in \mathbb{N}} $ such that for each $l \in \mathbb{N} $ there is an orbitlike elastica $\gamma_l$ with parameters $\lambda_l, C_l$ satisfying  $\mathcal{E}(\gamma_l) \leq 16 - \varepsilon$ and $ \frac{\mathcal{E}(\gamma_l)}{\mathcal{L}(\gamma_l)} \leq \frac{1}{l}$. According to \eqref{eq:elquot}, $\frac{1}{K(p_l)} \rightarrow 0 $ and hence because of Proposition \ref{prop:identities} (5), $p_l \rightarrow 1 $ as $l \rightarrow \infty$. Recall from Proposition \ref{prop:integr} that
\begin{equation}\label{eq:newmod}
p_l^2 = \frac{2 \sqrt{(2+\lambda_l)^2 + 4 C_l}}{2 + \lambda_l + \sqrt{(2 + \lambda_l)^2 + 4C_l  }}.
\end{equation}
Lemma \ref{energ:orbitl} implies that 
$16- \varepsilon \geq 8 \sqrt{2\lambda_l + 4 }$,
and hence $(\lambda_l)_{l \in \mathbb{N}}$ defines a bounded sequence. Since 
\begin{equation*}
4 |C_l| = \kappa_{0,l}^4 - (2 \lambda_l +4 ) \kappa_{0,l}^2 = \frac{(2\lambda_l+ 4)^2}{(2-p_l^2)^2} - \frac{(2\lambda_l + 4)^2}{(2- p_l^2)} \leq (2\lambda_l+4)^2 ,
\end{equation*}
the sequence $(C_l)_{l \in \mathbb{N}}$ is bounded as well. Hence, there is a subsequence $(k_l)\subset \mathbb{N}$ and $(\lambda, C)$  such that $\lambda_{k_l} \rightarrow \lambda$ and $C_{k_l} \rightarrow C$. Passing to the limit in \eqref{eq:newmod} we obtain 
$2 + \lambda + \sqrt{(2+ \lambda)^2 + 4C} = 2 \sqrt{(2+ \lambda)^2 + 4C}$,
which implies that $C = 0$. As we discussed above, $\gamma_l$ is rotational and hence $\lambda_l^2 + 4C_l < 0 $. Consequently,
$0 \geq \lim_{l \rightarrow \infty} ( \lambda_{k_l} ^2 + 4C_{k_l} ) = \lambda ^2 $
and thus $\lambda = 0 $. But then Lemma \ref{energ:orbitl} yields the contradiction
\begin{equation*}
16 - \varepsilon \geq \mathcal{E}(\gamma_{k_l} ) \geq 8 \sqrt{2 \lambda_{k_l} + 4} \rightarrow 16 \quad (l \rightarrow \infty).\qedhere 
\end{equation*}
\end{proof}

The following theorem is a precise formulation of Theorem \ref{thm:main1} in the space $W^{2,2}(\mathbb{S}^1, \mathbb{H}^2)$.

\begin{theorem}[A Reilly-Type Inequality] \label{thm:reilly}For each $\varepsilon > 0 $ there exists $c= c(\varepsilon) > 0$ such that 
\begin{equation*}
\frac{\mathcal{E}(\gamma)}{\mathcal{L}(\gamma)} \geq c(\varepsilon) \qquad \forall \gamma \in W^{2,2} (\mathbb{S}^1, \mathbb{H}^2) \; \textrm{ immersed s.t.\; }  \mathcal{E}(\gamma) \leq 16-\varepsilon. 
\end{equation*}
Furthermore, $c(\varepsilon)$ can be chosen as 
in Lemma \ref{e/lelastica}.
\end{theorem}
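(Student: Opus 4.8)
The plan is to reduce the Reilly-type inequality for general immersed $W^{2,2}$-curves to the already-established inequality for closed elastica in Lemma \ref{e/lelastica}. The key observation is the following: if the infimum in \eqref{eq:2.1} over all immersed $W^{2,2}$-curves with $\mathcal{E}(\gamma) < 16-\varepsilon$ were smaller than the constant $c(\varepsilon)$ from Lemma \ref{e/lelastica} (or even zero), then by a minimizing-sequence and compactness argument one should be able to extract a minimizer, and the minimizer must be a critical point of the Reilly quotient $\mathcal{E}/\mathcal{L}$ subject to the constraint $\mathcal{E} \le 16-\varepsilon$. Critical points of $\mathcal{E}/\mathcal{L}$ are exactly the critical points of $\mathcal{E}_\lambda$ for a suitable Lagrange multiplier $\lambda$ (coming from the scaling behaviour: $\mathcal{E}$ is scale invariant so the Euler–Lagrange equation of the quotient is $\nabla\mathcal{E} = \mu\,\nabla\mathcal{L}$, i.e. $\nabla\mathcal{E}_{-\mu}=0$), hence they are $\lambda$-constrained elastica. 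Since for such a minimizer we still have $\mathcal{E}(\gamma_\infty) \le 16-\varepsilon$, Lemma \ref{e/lelastica} applies and bounds its Reilly quotient below by $c(\varepsilon/2)$ say, which after choosing constants carefully gives a contradiction if the infimum were strictly below. So $c(\varepsilon)$ can indeed be taken as in Lemma \ref{e/lelastica}.

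Concretely I would proceed as follows. First, fix $\varepsilon>0$ and let $I_\varepsilon$ denote the infimum in the statement over immersed $W^{2,2}(\mathbb{S}^1,\mathbb{H}^2)$-curves with $\mathcal{E}(\gamma)<16-\varepsilon$; the goal is $I_\varepsilon \ge c(\varepsilon)$ with $c(\varepsilon)$ from Lemma \ref{e/lelastica}. Second, take a minimizing sequence $(\gamma_k)$; after reparametrizing by constant speed and applying the hyperbolic isometries furnished by Lemma \ref{lem:inired} (to normalize, say, a base point and tangent), one wants compactness. The natural obstruction is that minimizing sequences could degenerate: length could blow up, shrink to zero, or the curve could escape to the boundary of $\mathbb{H}^2$. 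One has to rule these out. If $\mathcal{L}(\gamma_k)\to\infty$ while $\mathcal{E}(\gamma_k)$ stays bounded below $16-\varepsilon$, the quotient stays bounded by $(16-\varepsilon)/\mathcal{L}(\gamma_k)\to 0$, which would force $I_\varepsilon = 0$; this case has to be excluded by showing it cannot actually be attained by elastica — but a priori $\gamma_k$ are not elastica. Third, assuming compactness, extract a $W^{2,2}$-weak (hence $C^1$-strong) limit $\gamma_\infty$; by lower semicontinuity $\mathcal{E}(\gamma_\infty) \le \liminf \mathcal{E}(\gamma_k) \le 16-\varepsilon$ and $\mathcal{L}(\gamma_\infty) = \lim \mathcal{L}(\gamma_k)$, so $\gamma_\infty$ realizes the infimum $I_\varepsilon$ (provided $\gamma_\infty$ is still immersed and non-degenerate). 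Fourth, derive the Euler–Lagrange equation: $\gamma_\infty$ is a $\lambda$-constrained elastica for some $\lambda\in\mathbb{R}$, hence smooth by elliptic regularity, hence covered by the classification. Fifth, apply Lemma \ref{e/lelastica}: since $\mathcal{E}(\gamma_\infty) \le 16-\varepsilon < 16-\varepsilon/2$, we get $I_\varepsilon = \mathcal{E}(\gamma_\infty)/\mathcal{L}(\gamma_\infty) \ge c(\varepsilon/2) > 0$, and tracking constants one obtains the claimed bound with the constant from Lemma \ref{e/lelastica}.

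The main obstacle is clearly the compactness step: ensuring that a minimizing sequence does not degenerate. In particular one must show that the length cannot go to $0$ (this would make the quotient large, so actually harmless for a lower bound — a sequence with $\mathcal{L}(\gamma_k)\to 0$ has $\mathcal{E}/\mathcal{L}\to\infty$ and is irrelevant) but that the length cannot go to $\infty$ along a minimizing sequence. To handle the latter, one argues by contradiction: if $\mathcal{L}(\gamma_k)\to\infty$ then $I_\varepsilon=0$, and then one needs a separate argument — perhaps a direct Gauss–Bonnet / total-curvature estimate on $\mathbb{H}^2$ combined with the Li–Yau type bound of Proposition \ref{prop:liyau} (simplicity below energy $16$), or an isoperimetric-type inequality in the hyperbolic plane bounding enclosed area and length. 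Another subtlety is that the constrained minimizer might a priori not be immersed (the constraint $\mathcal{E}\le 16-\varepsilon$ is an inequality, and the limit could have a cusp where $\gamma'=0$); one must show the $C^1$-limit remains an immersion, again using that finite elastic energy below $16$ prevents such degeneration (via the logarithmic lower bound on curvature integrals near a cusp, or simply that a $W^{2,2}$ limit with bounded energy of constant-speed curves has non-vanishing speed). I would expect the authors to instead run the contradiction argument entirely at the level of elastica by a clever a priori reduction, but the direct-method route sketched above is the conceptually cleanest and is what I would attempt first.
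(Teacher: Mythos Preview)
Your overall plan---reduce to Lemma \ref{e/lelastica} via the direct method and Lagrange multipliers---is exactly the paper's strategy, but your compactness step has a genuine circular gap. You correctly identify that along a minimizing sequence for the ratio $\mathcal{E}/\mathcal{L}$ the length could blow up, and you note that this would force $I_\varepsilon = 0$. But that is precisely what you are trying to rule out: the theorem \emph{is} the statement $I_\varepsilon > 0$. Your suggested fixes (Gauss--Bonnet, isoperimetric inequalities, Li--Yau) do not give a lower bound on $\mathcal{E}/\mathcal{L}$ without additional input---if they did, one would not need the elastica classification at all. So as written, the argument stalls at the point where it begs the question.

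The paper avoids this by \emph{not} minimizing the ratio directly. Instead, for an arbitrary admissible curve $\overline{\gamma}$ with length $L := \mathcal{L}(\overline{\gamma})$, it minimizes $\mathcal{E}$ over the class of curves with the \emph{same fixed length} $L$ (and same base point). Fixing the length is the whole trick: it yields the uniform two-sided bound $\overline{\gamma}_2(0)e^{-L} \le \gamma_2 \le \overline{\gamma}_2(0)e^{L}$ on the second component, which in turn gives coercivity of $\mathcal{E}$ in $W^{2,2}$ and hence compactness of minimizing sequences. The minimizer $\gamma^*$ of this length-constrained problem is then a $\lambda$-elastica by a standard Lagrange multiplier argument (the length functional is a submersion since there are no closed geodesics in $\mathbb{H}^2$), and still satisfies $\mathcal{E}(\gamma^*) \le \mathcal{E}(\overline{\gamma}) < 16 - \varepsilon$. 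Now Lemma \ref{e/lelastica} gives $\mathcal{E}(\gamma^*)/L \ge c(\varepsilon)$, and since $\mathcal{L}(\overline{\gamma}) = L$ and $\mathcal{E}(\overline{\gamma}) \ge \mathcal{E}(\gamma^*)$, the same bound holds for $\overline{\gamma}$. The point is that this argument is run separately for each $\overline{\gamma}$, so the length along the minimizing sequence is fixed from the start and no degeneration can occur.
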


\begin{proof}
Let $\varepsilon > 0 $ be arbitrary and fix $\overline{\gamma} \in W^{2,2}(\mathbb{S}^1, \mathbb{H}^2) $ immersed such that $\mathcal{E}(\overline{\gamma}) \leq 16 - \varepsilon$. Then define 
\begin{equation*}
\mathcal{A}(\overline{\gamma}) := \{ \gamma \in  W^{2,2}_{\overline{\gamma}(0)}(\mathbb{S}^1, \mathbb{H}^2) \; \textrm{immersed} : \mathcal{L}(\gamma) = \mathcal{L}(\overline{\gamma}) \}.
\end{equation*}
For simplicity of notation we define $L := \mathcal{L}(\overline{\gamma})$. 
We claim that $\inf_{\gamma \in \mathcal{A}(\overline{\gamma}) } \frac{\mathcal{E}(\gamma)}{\mathcal{L}(\gamma)}$ is attained by an elastica. Clearly, since $\overline \gamma \in \mathcal{A}(\overline \gamma) \neq \emptyset$ we have $\inf_{\mathcal A(\overline \gamma)} \mathcal{E}(\cdot) \leq \mathcal{E}(\overline \gamma) \leq 16 -\varepsilon$. Note that since the length is kept fixed $L$ is constant and it suffices to show that $\inf_{\mathcal{A}(\overline{\gamma})} \mathcal{E}(\gamma)$ is attained by an elastica. To prove this, we proceed in three steps.

\textbf{Step 1:} Applying the direct method of the caluculus of variations we prove that the infimum is attained.
To show compactness we first observe that  for each $\gamma \in \mathcal{A}(\overline{\gamma})$ it holds that 

\begin{equation}\label{eq:dingskirchen}
\overline{\gamma}_{2}(0) e^{-L} \leq \gamma_2(t) \leq \overline{\gamma}_{2}(0)e^{L} \quad \forall  t \in  [0,1] .
\end{equation}
Indeed, fix $t \in [0,1]$ and compute 
\begin{equation*}
|\log(\gamma_2(t)) - \log ( \gamma_2(0)) |  \leq \int_0^t \frac{|\gamma_2'(t)|}{\gamma_2(t)} \diff t  \leq \int_0^1 \frac{\sqrt{\gamma_1'^2 + \gamma_2'^2}}{\gamma_2} \diff t  = \mathcal{L}(\gamma) = L.
\end{equation*}
The inequality \eqref{eq:dingskirchen} follows using that $\gamma_2(0) = \overline{\gamma}_{2}(0)$. 
Also observe that for each $\gamma \in \mathcal{A}(\overline{\gamma})$ we can choose $\phi_\gamma \in W^{2,2} ((0,L), (0,1))$ increasing and bijective such that $\mu := \gamma \circ \phi_\gamma \in W^{2,2}((0,L), \mathbb{H}^2) $ and  $\frac{\sqrt{\mu_1'^2 + \mu_2'^2}}{\mu_2} = 1 $ on $(0,L)$, i.e $\mu$ is the reparametrization of $\gamma$ by arclength.
Then,
\begin{equation}\label{eq:length}
\int_0^L \mu_1'^2 + \mu_2'^2 \diff s = \int_0^L \mu_2^2 \diff s\leq \overline{\gamma}_2(0)^2 e^{2L} L ,
\end{equation}
as \eqref{eq:dingskirchen} holds also for $\mu$.
 Using Example \ref{ex:curvhyp} and expanding and rearranging the squares we obtain 
\begin{align}\label{eq:energumf}
&\int_\gamma |\kappa[\gamma]|^2 \diff s  
 = \int_\mu |\kappa[\mu]|^2 \diff s \nonumber \\ 
 &\quad = \int_0^L \frac{1}{\mu_2^2} \left( \mu_1'' - \frac{2}{\mu_2} \mu_1' \mu_2' \right)^2 + \frac{1}{\mu_2^2}\left( \mu_2'' + \frac{1}{\mu_2} (\mu_1'^2- \mu_2'^2)  \right)^2 \diff s  \nonumber \\
 &\quad  =  \int_0^L \frac{\mu_1''^2 + \mu_2''^2}{\mu_2^2}  - \frac{4}{\mu_2^3}\mu_1'' \mu_1' \mu_2' + \frac{4}{\mu_2^4} \mu_1'^2 \mu_2'^2 + \frac{2 \mu_2''}{\mu_2^3} (  \mu_1'^2 - \mu_2'^2 ) + \frac{(\mu_1'^2 - \mu_2'^2)^2}{\mu_2^4} \diff s  \nonumber  \\ 
 &\quad  =  \int_0^L \frac{\mu_1''^2 + \mu_2''^2}{\mu_2^2}  + \frac{(\mu_1'^2 + \mu_2'^2)^2}{\mu_2^4} - \frac{4 \mu_1'' \mu_1' \mu_2'}{\mu_2^3} + \frac{2\mu_2'' ( \mu_1'^2- \mu_2'^2) }{\mu_2^3}  \diff s \nonumber  \\
 &\quad  = \int_0^L \frac{\mu_1''^2 + \mu_2''^2}{\mu_2^2} \diff s   + L - \int_0^L \frac{4 \mu_1'' \mu_1' \mu_2'}{\mu_2^3} + \frac{2\mu_2'' ( \mu_1'^2- \mu_2'^2) }{\mu_2^3} \diff s  . \end{align}
Using the Peter-Paul inequality we find {
\begin{align*}
\int_\gamma |\kappa[\gamma]|^2 \diff s  & \geq \int_0^L \frac{\mu_1''^2 + \mu_2''^2}{\mu_2^2} \diff s   + L- \delta \int_0^L \frac{\mu_1''^2 + \mu_2''^2}{\mu_2^2}\diff s \\
&\quad - \frac{1}{\delta} \int_0^L \frac{4}{\mu_2^4} \mu_1'^2 \mu_2'^2 + \frac{(\mu_1'^2 - \mu_2'^2)^2}{ \mu_2^4}\diff s \\ 
 &
 \geq (1- \delta) \int_0^L \frac{\mu_1''^2 + \mu_2''^2}{\mu_2^2} \diff s  + \left( 1 - \frac{1}{\delta} \right) L
 \end{align*}}
for each $\delta > 0 $. Choosing $\delta = \frac{1}{2}$ we obtain 
\begin{equation}\label{eq:coerc}
\int_\gamma |\kappa[\gamma]|^2 \diff s  \geq  \frac{1}{2}\int_0^L \frac{\mu_1''^2 + \mu_2''^2}{\mu_2^2} \diff s   - L  \geq \frac{1}{2\overline{\gamma}_2(0) e^{2L}} \int_0^L (\mu_1''^2+ \mu_2''^2) \diff s  - L . 
\end{equation}
With these preliminary results we can now prove the subconvergence of the minimizing sequence. Let $(\gamma_n)_{n \in \mathbb{N}} \subset \mathcal{A}(\overline{\gamma})$ be a minimizing sequence for $\mathcal{E(\gamma)}$. Define $\mu_n = \gamma_n \circ \phi_{\gamma_n}$. Notice that $\mu_n \in W^{2,2} ( (0,L) , \mathbb{R}^2) $ for each $n \in \mathbb{N}$. Also note that $(\mu_n)$ is bounded in $W^{2,2}((0,L), \mathbb{R}^2)$ because of \eqref{eq:coerc}, \eqref{eq:length} and $\mu_n(0) = \gamma_{0,2}(0)$. Therefore $(\mu_n)_{n \in \mathbb{N}}$ possesses a weakly $W^{2,2}$ convergent subsequence, which we denote for the sake of simplicity by $(\mu_n)$ again. Let $\nu \in W^{2,2}((0,L), \mathbb{R}^2)$ denote the weak limit. Define $\gamma^*(t) := \nu(Lt)$ for $t \in [0,1]$. We show now that $\gamma^* \in \mathcal{A}(\overline{\gamma})$ and $\mathcal{E}(\gamma^*) \leq \liminf_{n \rightarrow \infty } \mathcal{E}(\gamma_n) = \inf_{\mathcal{A}(\overline{\gamma})} \mathcal{E}(\gamma)$. First note that $\mu_n$ converges to $\nu $ uniformly and together with \eqref{eq:dingskirchen} we infer that $\nu_2(t) \in [\overline{\gamma}_{2}(0) e^{-L} , \overline{\gamma}_{2}(0) e^L ] $ for each $t \in [0,L]$. Furthermore, since $\nu_n' \rightarrow \nu'$ uniformly in $[0,L]$ we find 
\begin{align*}
\mathcal{L}(\gamma^*) & = \int_0^1 \frac{\sqrt{(\gamma_1^*)'^2 + (\gamma_2^*)'^2 }}{\gamma_2^*} \diff t  = L \int_0^1 \frac{\sqrt{\nu_1'^2(Lt) + \nu_2'^2(Lt) }}{\nu_2 (Lt)  } \diff t  = \int_0^L \frac{\sqrt{\nu_1'^2 + \nu_2'^2}}{\nu_2} \diff t  \\ & = \lim_{n \rightarrow \infty} \int_0^L \frac{\sqrt{\mu_{n,1}'^2 +\mu_{n,2}'^2}}{\mu_{n,2}} \diff t   = L,
\end{align*} 
and we conclude that $\gamma^* \in \mathcal{A}(\overline \gamma)$. We proceed with the lower semicontinuity of $\mathcal{E}$. Since $\mathcal{E}(\gamma^*) = \mathcal{E}(\nu)$ 
 we find (see \eqref{eq:energumf}) 
\begin{equation}\label{eq:6.35}
\int_{\gamma^*} |\kappa[\gamma^*]|^2 \diff s  
 = \int_0^L \frac{\nu_1''^2 + \nu_2''^2}{\nu_2^2} \diff t   + L - \int_0^L \frac{4 \nu_1'' \nu_1' \nu_2'}{\nu_2^3} + \frac{2\nu_2'' ( \nu_1'^2- \nu_2'^2) }{\nu_2^3} \diff t  .
 \end{equation}%
 We will show that this expression falls below $\liminf_{n \rightarrow \infty} \mathcal{E}(\gamma_n)$. 
Since $\mu_{n,1}'' \rightharpoonup \nu_1'' $ in $L^2(0,L)$ and  $\mu_{n,1}' \rightarrow \nu_1',  \mu_{n,2}' \rightarrow \nu_2' , \mu_{n,2} \rightarrow \nu_2$ in $C[0,L]$ and $\mu_{n,2}$ is uniformly bounded from below we find
\begin{align*}
 & \lim_{n \rightarrow \infty}  \left\vert \int_0^L \frac{4 \mu_{n,1}'' \mu_{n,1}' \mu_{n,2}' }{\mu_{n,2}^3} \diff t - \int_0^L \frac{4 \nu_1'' \nu_1' \nu_2'}{\nu_2^3}\diff t \right\vert \\
  &  \quad \leq \liminf_{n \rightarrow \infty} \left\vert \int_0^L  \tfrac{4 \mu_{n,1}'' \mu_{n,1}' \mu_{n,2}' }{\mu_{n,2}^3} - \frac{4 \mu_{n,1}'' \nu_{1}' \nu_{2}' }{\nu_{2}^3} \diff t \right\vert
  + \left\vert \int_0^L \tfrac{4 \mu_{n,1}'' \nu_{1}' \nu_{2}' }{\nu_{2}^3} - \int_0^L  \tfrac{4 \nu_1'' \nu_1' \nu_2'}{\nu_2^3}\diff t\right\vert \\
   & \quad \leq 4 \sup_{ [0,L] } \left\vert \tfrac{ \mu_{n,1}' \mu_{n,2}' }{\mu_{n,2}^3} - \tfrac{ \nu_{1}' \nu_{2}' }{\nu_{2}^3} \right\vert
   \int_0^L |\mu_{n,1}''|\diff t  +  \left\vert \int_0^L \tfrac{4 \mu_{n,1}'' \nu_{1}' \nu_{2}' }{\nu_{2}^3}\diff t - \int_0^L  \tfrac{4 \nu_1'' \nu_1' \nu_2'}{\nu_2^3} \diff t\right\vert \\ 
 & \quad  \leq  4 \sup_{ [0,L] } \left\vert \tfrac{ \mu_{n,1}' \mu_{n,2}' }{\mu_{n,2}^3} - \tfrac{ \nu_{1}' \nu_{2}' }{\nu_{2}^3} \right\vert  \Big(\int_0^L |\mu_{n,1}''|^2 \diff t \Big)^\frac{1}{2} L^\frac{1}{2} +  \Big\vert \int_0^L \tfrac{4 \mu_{n,1}'' \nu_{1}' \nu_{2}' }{\nu_{2}^3} -   \tfrac{4 \nu_1'' \nu_1' \nu_2'}{\nu_2^3} \diff t\Big\vert
  \rightarrow 0 
\end{align*}    
where we used in the last step that weakly convergent sequences in Hilbert spaces are bounded. The last summand in \eqref{eq:6.35} can be treated similarly. For the first summand observe that 
\begin{equation*}
\int_0^L \tfrac{\nu_1''^2}{\nu_2^2}\diff t = \lim_{n \rightarrow \infty} \int_0^L \tfrac{\nu_1'' \mu_{n,1}''}{\nu_2^2} \diff t= \lim_{n \rightarrow \infty} \int_0^L \tfrac{\nu_1'' \mu_{n,1}'' }{\nu_2} \left( \tfrac{1}{\mu_{n,2}} - \tfrac{1}{\nu_2} \right)\diff t + \int_0^L\tfrac{\nu_1'' \mu_{n,1}''}{\mu_{n,2}\nu_2 }\diff t.
\end{equation*}
Note that 
\begin{align*}
& \limsup_{n \rightarrow \infty} \left\vert \int_0^L \frac{\nu_1'' \mu_{n,1}'' }{\nu_2} \left( \frac{1}{\mu_{n,2}} - \frac{1}{\nu_2} \right)\diff t \right\vert  \leq \limsup_{n \rightarrow \infty} \sup_{[0,L]} \left\vert  \frac{1}{\mu_{n,2}} - \frac{1}{\nu_2} \right\vert \int_0^L \frac{|\nu_1''| |\mu_{n,1}''|}{\nu_2 }\diff t  \\ 
& \quad \leq \limsup_{n \rightarrow \infty} \sup_{[0,L]} \left\vert  \frac{1}{\mu_{n,2}} - \frac{1}{\nu_2} \right\vert \frac{1}{\overline{\gamma}_{2}(0)}e^L \int_0^L |\nu_1''| |\mu_{n,1}''|^2 \diff t\\  & \quad  = \limsup_{n \rightarrow \infty} \sup_{[0,L]} \left\vert  \frac{1}{\mu_{n,2}} - \frac{1}{\nu_2} \right\vert \frac{1}{\overline{\gamma}_{2}(0)} e^L \left( \int_0^L |\nu_1''|^2 \diff t\right)^\frac{1}{2}  \left( \int_0^L |\mu_{n,1}''|^2 \diff t \right)^\frac{1}{2} = 0 ,
\end{align*}
where we used again that weakly convergent sequences are bounded. 
Together with the Cauchy-Schwarz inequality we find  
\begin{equation*}
\int_0^L \frac{\nu_1''^2}{\nu_2^2}\diff t  = \lim_{n \rightarrow \infty} \int_0^L\frac{\nu_1'' \mu_{n,1}''}{\mu_{n,2}\nu_2 \diff t} \leq \liminf_{n \rightarrow \infty} \left( \int_0^L \frac{\nu_1''^2}{\nu_2^2}\diff t \right)^\frac{1}{2} \left( \int_0^L \frac{\mu_{n,1}''^2}{\mu_{n,2}^2}\diff t \right)^\frac{1}{2}, 
\end{equation*}
and this implies 
\begin{equation*}
\int_0^L \frac{\nu_1''^2}{\nu_2^2}\diff t \leq \liminf_{n \rightarrow \infty}  \int_0^L \frac{\mu_{n,1}''^2}{\mu_{n,2}^2}\diff t.
\end{equation*}
Similarly, we can show that 
\begin{equation*}
\int_0^L \frac{\nu_2''^2}{\nu_2^2} \diff t\leq \liminf_{n \rightarrow \infty}  \int_0^L \frac{\mu_{n,2}''^2}{\mu_{n,2}^2}\diff t
\end{equation*}
and come to the conclusion
\begin{equation} 
  \int_{\gamma^*} |\kappa[\gamma^*]|^2 \diff s   \leq \liminf_{n \rightarrow \infty} \int_{\mu_n} |\kappa[\mu_n]|^2 \diff s  = \liminf_{n \rightarrow \infty} \int_{\gamma_n} |\kappa[\gamma_n]|^2  \diff s  .
\end{equation} 
 Thus $\gamma^* \in \mathcal{A}(\overline \gamma)$ with $\mathcal{E}(\gamma^*) = \inf_{\gamma \in \mathcal{A}(\overline{\gamma})} \mathcal{E}(\gamma)$, i.e. the minimum is attained {by a curve $\gamma^* \in W^{2,2}(\mathbb{S}^1,\mathbb{H}^2)$ that satisfies $\E(\gamma^*) \leq 16- \varepsilon$}. 

\textbf{Step 2:} Any minimizer in $\mathcal{A}(\overline{\gamma})$ is a critical point of $\mathcal{E}+ \lambda \mathcal{ L }$ for some $\lambda \in \mathbb{R}$. For this we use  \cite[Proposition 43.21]{Zeidler} with $X: = W^{2,2}(\mathbb{S}^1, \mathbb{R}^2)$, $Y := \mathbb{R}$ and
\begin{equation*}
M := \{\gamma \in W^{2,2} (\mathbb{S}^1,\mathbb{R}^2 ) \; \textrm{immersed} \; | \; \mathcal{E}(\gamma) \leq 16 - \varepsilon , \frac{1}{2}e^{-L} \overline{\gamma}_{2}(0) < \gamma_2 < \frac{3}{2}e^L \overline{\gamma}_{2}(0) \}
\end{equation*}   
{as well as $\mathcal{F} (\gamma) := \frac{1}{L} \mathcal{E}(\gamma) $ and $\mathcal{G}(\gamma) = \mathcal{L}(\gamma) - L$. Note that $M \subset W^{2,2}(\mathbb{S}^1, \mathbb{H}^2)$. To infer from \cite[Proposition 43.21]{Zeidler} that any critical point  $\gamma^*$ satisfies
\begin{equation} \label{eq:criti}
\mathcal{F}'(\gamma^*) + \lambda \mathcal{G}'(\gamma^*) = 0 , 
\end{equation}
for some $\lambda \in \mathbb{R}$ one has to show that $\mathcal{G}$ is a Frech\'et differentiable submersion  {(i.e. $\mathcal{G}'(\gamma)$ is surjective for all $\gamma \in M$)} and $\mathcal{F}$ is Frech\'et differentiable {on $M$}. }
 We only sketch the proof of the submersion property: It is standard to show that each critical point of $\mathcal{G}$ in $M$ satisfies $\kappa \equiv 0$. However, in $M$ there {exist} no closed curves with $\kappa \equiv 0$ since geodesics in $\mathbb{H}^2$ are never closed. This implies that $G$ is a submersion on $M$.

\textbf{Step 3:} It follows for instance from \cite[Section 5]{Eichmann} (with the same function spaces used in Step 2) that all solutions of $\eqref{eq:criti}$ are smooth and their arclength reparametrizations satisfy the {(possibly constained)} elastica equation.\\
To conclude the proof we use Lemma \ref{e/lelastica} to obtain that
\begin{equation*}
\frac{\mathcal{E}(\overline{\gamma})}{\mathcal{L}(\overline{\gamma})} \geq \frac{\mathcal{E}(\gamma^*)}{\mathcal{L}(\gamma^*) } \geq \inf_{ \sigma \; \textrm{elastica}, \; \mathcal{E}(\sigma) \leq 16- \varepsilon} \frac{\mathcal{E}(\sigma)}{\mathcal{L}(\sigma)} \geq c(\varepsilon).
\end{equation*}
since $\gamma^*$ is an {elastica} satisfying $\mathcal{E}(\gamma^*)  \leq 16- \varepsilon $. Since $\overline{\gamma}$ was arbitrary, the claim follows. 
\end{proof}

\section{A Flow Invariant} \label{sec:Inv}
In this section we describe the possible limit behavior of the flow by computing the Euclidean total curvature  
\begin{equation*}
T[\gamma] := \frac{1}{2\pi} \int_\gamma \kappa_{\mathbb{R}^2} \diff s  
\end{equation*}
for closed elastic curves $\gamma$, more precisely for $\iota\circ \gamma$, where $\iota\colon \mathbb{H}^2 \rightarrow \mathbb{C}$ is the canonical embedding into the upper half plane. Notice once more that $\kappa_{\mathbb{R}^2}$ denotes the $\mathbb{R}^2$-curvature of $\gamma$ and not the hyperbolic curvature of $\gamma$ and $\diff s $ denotes (only in this section) the Euclidean arclength parameter. The explicit parametrization given in Theorem \ref{thm:explpara} allows us to look at the curves `with Euclidean eyes' and therefore to compute $T[\gamma]$. 

The total curvature is such an important quantity since -- as it will turn out -- for subconvergent evolutions by elastic flow \eqref{eq:Flow} in $\mathbb{H}^2$, the initial curve and the limit curve will have the same total curvature  (at least, provided that the initial curve is smooth, see \cite[Theorem 1.1]{DS17}). Therefore, the total curvature allows us to classify subconvergent evolutions by elastic flow and to exclude their existence for certain initial data. Indeed, we will show in this section that there cannot be a subconvergent evolution with initial data of vanishing total curvature.

\begin{definition}[Regular Homotopy] Let $\mathrm{Imm}(\mathbb{S}^1, \mathbb{R}^2)$ be the set of all immersed curves in  $C^1(\mathbb{S}^1, \mathbb{R}^2)$. Together with the relative topology of $C^1(\mathbb{S}^1, \mathbb{R}^2) $, it becomes a topological space. We say that two curves $\gamma_1, \gamma_2 \in \mathrm{Imm}(\mathbb{S}^1, \mathbb{R}^2)$ are \textit{regularly homotopic}, if they lie in the same path-component of $\mathrm{Imm}(\mathbb{S}^1, \mathbb{R}^2)$. A path in $\mathrm{Imm}(\mathbb{S}^1, \mathbb{R}^2)$ is called a \textit{regular homotopy}. 
\end{definition}

\begin{remark}
Let $\gamma_0 \in C^{\infty}(\mathbb{S}^1, \mathbb{H}^2)$ be immersed and let $(\gamma_t)_{t \geq 0 } $ be the evolution of $\gamma_0$ under the elastic flow (see Theorem \ref{thm:LTE}) in $\mathbb{H}^2$. Note that for each $t \geq 0$, the canonical Euclidean inclusions of $\gamma_0$ and $\gamma_t$ are regularly homotopic in $\mathrm{Imm}(\mathbb{S}^1, \mathbb{R}^2)$, since $\mathbb{H}^2$ is diffeomorphic to $\mathbb{R}^2$. Here we also used that the flow is sufficiently smooth, see \cite[Theorem 1.1]{DS18}.
\end{remark}

\begin{prop}[Whitney-Graustein Theorem] \label{prop:whitney}
Fix $c \in \mathrm{Imm}(\mathbb{S}^1, \mathbb{R}^2)$. Then $T[c]$ is an integer. Additionally, two curve $c_1,c_2 \in \mathrm{Imm}(\mathbb{S}^1, \mathbb{R}^2)$ are regularly homotopic if and only if $T[c_1] = T[c_2]$. Additionally, if $\gamma_l \rightarrow \gamma$ in $C^1(\mathbb{S}^1, \mathbb{R}^2)$ then there is $N \in \mathbb{N}$ such that $T[\gamma_l] = T[\gamma]$ for all $l \geq N$. 
\end{prop}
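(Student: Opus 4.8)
The plan is to prove the three assertions -- (i) $T[c]\in\mathbb Z$, (ii) regular-homotopy classes are exactly the level sets of $T$, and (iii) $T$ is locally constant in the $C^1$ topology -- in the order (i), (iii), (ii), since (iii) immediately yields one direction of (ii). For (i): given a $C^1$ immersion $c$, the unit tangent $\tau_c:=c'/|c'|\colon\mathbb S^1\to\mathbb S^1$ is continuous, hence lifts to $\vartheta\colon\mathbb R\to\mathbb R$ with $\tau_c=(\cos\vartheta,\sin\vartheta)$; periodicity of $\tau_c$ forces $\vartheta(L)-\vartheta(0)\in 2\pi\mathbb Z$, and $T[c]=\tfrac1{2\pi}\bigl(\vartheta(L)-\vartheta(0)\bigr)=\deg\tau_c$ is this integer (for $C^2$ curves this equals $\tfrac1{2\pi}\int_\gamma\kappa_{\mathbb R^2}\,\diff s$ because $\vartheta'=\kappa_{\mathbb R^2}|c'|$; in the merely $C^1$ case one reads $T$ as this degree).

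Next I would establish (iii): if $\gamma_l\to\gamma$ in $C^1(\mathbb S^1,\mathbb R^2)$ then $\gamma_l'\to\gamma'$ uniformly, and since $\gamma$ is an immersion $|\gamma'|\ge\delta>0$, so $|\gamma_l'|\ge\delta/2$ for $l$ large and $\tau_{\gamma_l}\to\tau_\gamma$ uniformly on $\mathbb S^1$. Two uniformly close maps $\mathbb S^1\to\mathbb S^1$ are homotopic, hence have the same degree, so $T[\gamma_l]=T[\gamma]$ for $l$ large. Applying the same observation along a regular homotopy $H\colon[0,1]\to\mathrm{Imm}(\mathbb S^1,\mathbb R^2)$ shows $t\mapsto T[H(t)]$ is locally constant on the connected set $[0,1]$, hence constant: this is the ``only if'' direction of (ii).

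For the ``if'' direction I would run Whitney's construction. Given $c_1,c_2$ with $T[c_1]=T[c_2]=k$, linear homotopy of the parametrisations (a regular homotopy) lets us assume both are defined on $[0,2\pi]$ at constant speed; a rigid motion (the group $\mathrm{SE}(2)$ is connected) lets us further assume $c_i(0)=0$, $c_i'(0)=(1,0)$, $|c_i'|\equiv 1$. Write $c_i'(s)=e^{i\vartheta_i(s)}$ with $\vartheta_i$ continuous, $\vartheta_i(0)=0$, $\vartheta_i(2\pi)=2\pi k$ and $\int_0^{2\pi}e^{i\vartheta_i}=c_i(2\pi)-c_i(0)=0$. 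Set $\vartheta_\tau:=(1-\tau)\vartheta_1+\tau\vartheta_2$ and
\[
c_\tau(s):=\int_0^s\bigl(e^{i\vartheta_\tau(\sigma)}-v_\tau\bigr)\,\diff\sigma,\qquad v_\tau:=\frac1{2\pi}\int_0^{2\pi}e^{i\vartheta_\tau(\sigma)}\,\diff\sigma .
\]
Then $c_\tau$ is closed, $\tau\mapsto c_\tau$ is continuous in $C^1$, and $c_0=c_1$, $c_1=c_2$ since $v_0=v_1=0$. To see each $c_\tau$ is an immersion, note $c_\tau'(s)=e^{i\vartheta_\tau(s)}-v_\tau$ and $|v_\tau|\le1$ with equality only if $e^{i\vartheta_\tau}$ is constant on $[0,2\pi]$; as $\vartheta_\tau$ runs from $0$ to $2\pi k$ this is impossible for $k\ne0$, so $|v_\tau|<1$ and $c_\tau'$ never vanishes. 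For $k=0$ I would first apply a preliminary regular homotopy bringing $c_1$ and $c_2$ to a ``standard figure-eight'' normal form whose angle function is strictly increasing on $(0,\pi)$ and strictly decreasing on $(\pi,2\pi)$; then $\vartheta_\tau$ is again non-constant and the same estimate applies, and transitivity of regular homotopy closes the case. Together with (i) and (iii) this gives (ii).

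The step I expect to be the main obstacle is keeping every intermediate curve $c_\tau$ an immersion, and in particular the degenerate case $T[c]=0$: there the naive linear interpolation of angle functions can collapse to a constant velocity direction ($|v_\tau|=1$), and avoiding this requires the preliminary reduction to a figure-eight normal form before interpolating. Everything else -- integrality, continuity of the degree under $C^1$ convergence, and the closedness and smoothness of $c_\tau$ -- is routine.
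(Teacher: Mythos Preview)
Your proposal is substantially more detailed than the paper's own treatment: the paper does not prove this proposition at all but simply cites \cite[Theorem~6.11]{Abbena} for the integrality and $C^1$-continuity of $T$, and \cite{Whitney} for the regular-homotopy classification. So there is no ``paper's approach'' to compare against beyond the observation that the authors regard the result as classical.

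Your arguments for (i), (iii), and the ``only if'' half of (ii) are clean and correct; identifying $T[c]$ with the mapping degree of the unit tangent $\tau_c\colon\mathbb S^1\to\mathbb S^1$ is exactly the right viewpoint, and the local constancy of the degree under $C^1$-convergence is immediate once you note that $\tau_{\gamma_l}\to\tau_\gamma$ uniformly. For the ``if'' direction with $k\neq0$ your reproduction of Whitney's interpolation is fine: the key point $|v_\tau|<1$ follows because $\vartheta_\tau(2\pi)-\vartheta_\tau(0)=2\pi k\neq0$ forces $e^{i\vartheta_\tau}$ to be non-constant.

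The one place where your sketch is not yet a proof is the $k=0$ case. You propose to ``first apply a preliminary regular homotopy bringing $c_1$ and $c_2$ to a standard figure-eight normal form'', but constructing that preliminary homotopy is precisely an instance of the statement you are proving, so as written the step is circular. The standard resolutions (Whitney's original device of introducing and cancelling a small loop, or a direct argument that any $k=0$ angle function can be deformed through admissible angle functions to one with the monotone-up/monotone-down shape) require a separate, genuinely different construction; simply asserting the reduction does not suffice. You correctly flag this as the main obstacle, but be aware that it needs an independent argument rather than an appeal to the interpolation you have already set up.
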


\begin{proof}
The fact that $T[c]$ is an integer and that $T$ is continuous with respect to the $C^1(\mathbb{S}^1, \mathbb{R}^2)$-topology follows for instance from \cite[Theorem 6.11]{Abbena}. The remaining direction is known as the Whitney-Graustein Theorem and proved in \cite{Whitney}.
\end{proof}

\begin{remark}
The previous proposition actually shows that $T$ defines a flow invariant for all flows that define regular homotopies in $\mathrm{Imm}(\mathbb{S}^1, \mathbb{R}^2)$. 
\end{remark}

\begin{prop}[Total Curvature of Elastica in $\mathbb{H}^2$]\label{prop:RTC}
Let $\gamma$ be the canonical embedding of a closed hyperbolic elastic curve into $\mathbb{C}$, that is parametrized by hyperbolic arclength. Then
\begin{equation*}
T[\gamma] = \frac{1}{2\pi}\mathrm{Im} \left(  \int_\gamma \frac{2az}{az^2+ c}  \diff z - \int_\theta \frac{1}{z} \diff z       \right),
\end{equation*}
where $\diff z$ denotes a complex line integral and 
 $\theta(s) := (\kappa^2(s) - \lambda) +2 i \kappa'(s)$ for $s \in [0,L]$.
\end{prop}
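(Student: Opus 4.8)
The plan is to reduce the statement to the classical description of Euclidean total curvature as the total winding of the tangent vector, and then to insert the explicit first--order ODE for $\gamma$ from Theorem~\ref{thm:explpara}. Since $T[\gamma]$ is invariant under orientation--preserving reparametrization, I would keep the given hyperbolic--arclength parametrization $\gamma\colon[0,L]\to\mathbb{C}$ (with $L$ the hyperbolic length, which here equals the minimal period, so that $[0,L]$ traces the loop exactly once). Writing $\gamma'(t)=r(t)e^{i\phi(t)}$ with $r=|\gamma'|_{\mathbb R^2}>0$ and $\phi=\arg\gamma'$, one has $\kappa_{\mathbb R^2}\diff s=\diff\phi$ along the curve, and since $\frac{\gamma''}{\gamma'}=\frac{r'}{r}+i\phi'$, this gives $\int_\gamma\kappa_{\mathbb R^2}\diff s=\phi(L)-\phi(0)=\mathrm{Im}\int_0^L\frac{\gamma''(t)}{\gamma'(t)}\diff t$. (Equivalently, $\int_0^L\frac{\gamma''}{\gamma'}\diff t$ is purely imaginary and equals $2\pi i\,T[\gamma]$.) This identity, together with its $C^1$--continuity, is exactly the content used in Proposition~\ref{prop:whitney}.

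Next I would substitute the ODE. By Theorem~\ref{thm:explpara}, in the non--circular case $\gamma'=(a\gamma^2+c)/\theta$ with $\theta(s)=\kappa^2-\lambda+2i\kappa'$; moreover $a\gamma^2+c=\theta\gamma'$, which is nowhere zero because $\theta\neq0$ by Proposition~\ref{prop:nonvan} and $\gamma'\neq0$ since $\gamma$ is an immersion (as a complex function $a\gamma^2+c$ is precisely $\widetilde J_\gamma$ read in $\mathbb{C}$ via \eqref{eq:kilextf}, so Proposition~\ref{prop:nonvan} applies directly too). Hence a local branch of $\log\gamma'$ exists and differentiating $\log\gamma'=\log(a\gamma^2+c)-\log\theta$ in $t$ yields $\frac{\gamma''}{\gamma'}=\frac{2a\gamma\gamma'}{a\gamma^2+c}-\frac{\theta'}{\theta}$. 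Integrating over $[0,L]$ and using that smooth closedness of $\gamma$ makes both $t\mapsto\gamma(t)$ and $t\mapsto\theta(t)$ closed loops in $\mathbb{C}\setminus\{0\}$ (the latter because $\kappa$ and $\kappa'$ are $L$--periodic, so $\theta(L)=\theta(0)$), the two terms turn into the complex contour integrals $\int_\gamma\frac{2az}{az^2+c}\diff z$ and $\int_\theta\frac{\diff z}{z}$ under $\diff z=\gamma'\diff t$ and $\diff z=\theta'\diff t$ respectively. Combining with the first step and dividing by $2\pi$ gives the asserted formula.

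I do not expect a genuine obstacle here; the calculation is short once the right identity is in place. The points that need a little care are: the reparametrization invariance of $T[\gamma]$ and the turning--number identity of the first step (standard, cf.\ Proposition~\ref{prop:whitney}); the use of Proposition~\ref{prop:nonvan} to guarantee that neither $\theta$ nor $a\gamma^2+c$ vanishes, which is what legitimizes the logarithmic derivative and the two contour integrals; and the remark that the circular case $\kappa\equiv\mathrm{const}$ is outside the scope of the statement, since the constants $a,c$ exist only for non--circular elastica by Theorem~\ref{thm:explpara}. As a consistency check one may note that, by the residue theorem, $\int_\gamma\frac{2az}{az^2+c}\diff z$ and $\int_\theta\frac{\diff z}{z}$ are each $2\pi i$ times an integer (a sum of winding numbers around the simple poles $\pm\sqrt{-c/a}$, resp.\ around $0$), so the right--hand side is automatically an integer, in agreement with the Whitney--Graustein theorem.
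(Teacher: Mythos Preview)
Your proof is correct and follows essentially the same route as the paper: both use the ODE $\gamma'=(a\gamma^2+c)/\theta$ from Theorem~\ref{thm:explpara} together with Proposition~\ref{prop:nonvan} to compute the logarithmic derivative $\gamma''/\gamma'=\frac{2a\gamma}{\theta}-\frac{\theta'}{\theta}$ and then rewrite the two terms as contour integrals. The only cosmetic difference is that the paper starts from the pointwise formula $\kappa_{\mathbb R^2}=\mathrm{Re}\bigl(i\,\overline{\gamma''}\gamma'/|\gamma'|^3\bigr)=\frac{1}{|\gamma'|}\mathrm{Im}\bigl(\gamma''/\gamma'\bigr)$ rather than the integrated turning--number identity, but this amounts to the same computation.
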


\begin{proof}
Recall that for a smooth plane curve $c \colon (0,T) \rightarrow \mathbb{C} $ the normal is given by $N = \frac{i c'}{|c'|}$ and therefore

$\kappa_{\mathbb{R}^2}[c] =  \frac{\langle c'', N \rangle}{|c'|^2}  = \mathrm{Re} \left(  i \frac{\overline {c''}  c'}{|c'|^3 } \right)$.
 Recall from Theorem \ref{thm:explpara}  that 
$\gamma' 
= \frac{a\gamma^2 + c}{\theta}$, thus 
\begin{align*}
\gamma'' & = \frac{2 a \gamma \gamma' }{\theta} - \frac{\theta'(a\gamma^2 + c)}{\theta^2} =  \frac{2 a \gamma \gamma'}{\theta } - \gamma' \frac{\theta'}{\theta}.
\end{align*}
Plugging into the formula for $\kappa_{\mathbb{R}^2}$ we find that on $[0,L]$ 
\begin{equation*}
\kappa_{\mathbb{R}^2}[\gamma] = \mathrm{Re} \left( \frac{i}{|\gamma'|} \left( \overline{  \frac{2a \gamma }{\theta} - \frac{\theta'}{\theta}} \right) \right)  = \frac{1}{|\gamma'|} \mathrm{Im} \left( \frac{2a \gamma}{\theta} - \frac{\theta'}{\theta} \right) .
\end{equation*}
We obtain
\begin{align*}
\int_\gamma \kappa_{\mathbb{R}^2} \diff s   & = \int_0^L \frac{1}{|\gamma'|} \mathrm{Im} \left( \frac{2a \gamma}{\theta} - \frac{\theta'}{\theta} \right) |\gamma'| \diff t  
= \mathrm{Im} \left(  \int_0^L \frac{2 a \gamma}{\theta} \diff t  - \int_0^L \frac{\theta'}{\theta} \diff t  \right).
\end{align*}
The differential equation in Theorem \ref{thm:explpara} reads $\theta(s) \gamma'(s) = a \gamma(s)^2 + c$ and Proposition \ref{prop:nonvan} implies that $a \gamma(s)^2 + c \neq 0 $ for all $s$. Therefore 
\begin{equation*}
\int_0^L \frac{2 a \gamma}{\theta} \diff t  = \int_0^L \frac{2 a \gamma(t)\gamma'(t) }{a \gamma(t)^2 + c }  \diff t  = \int_\gamma \frac{2a z}{az^2 + c} \diff z . \qedhere
\end{equation*}
\end{proof}

\begin{cor}[Total Curvature for Wavelike Elastica] Let $\gamma$ be a closed wavelike elastica. Then $T[\gamma] = 0 $. 
\end{cor}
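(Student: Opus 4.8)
The plan is to read $T[\gamma]$ off Proposition~\ref{prop:RTC} and to show that each of its two contour integrals vanishes. Since $\gamma$ is closed and, by Proposition~\ref{prop:nonvan}, $a\gamma^2+c$ and $\theta$ have no zeros, one has $\int_\gamma\frac{2az}{az^2+c}\diff z=\int_0^L\frac{(a\gamma^2+c)'}{a\gamma^2+c}\diff t=2\pi i\,w_1$ and $\int_\theta\frac1z\diff z=\int_0^L\frac{\theta'}{\theta}\diff s=2\pi i\,w_2$, where $w_1,w_2\in\Z$ are the winding numbers about $0$ of the closed loops $t\mapsto a\gamma(t)^2+c$ and $s\mapsto\theta(s)$. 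Thus $T[\gamma]=w_1-w_2$, and it suffices to prove $w_1=w_2=0$.

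I would first dispose of $w_2$. By Proposition~\ref{prop:waveclose} one may write $\kappa(s)=\pm\kappa_0\cn(rs,p)$ with $L=4K(p)/r$, so that $\mathrm{Im}\,\theta(s)=2\kappa'(s)=\mp2\kappa_0 r\,\sn(rs,p)\dn(rs,p)$ keeps a constant sign on $(0,L/2)$, the opposite sign on $(L/2,L)$, and vanishes only at $s\in\{0,L/2,L\}$ (as $\dn>0$). Since $\cn(0)^2=\cn(2K(p))^2=1$ forces $\theta(0)=\theta(L/2)=\kappa_0^2-\lambda$, the restriction $\theta|_{[0,L/2]}$ is a closed loop contained in one closed half-plane $\{\pm\mathrm{Im}\ge0\}$, meeting its boundary line only in the single point $\kappa_0^2-\lambda$, which is strictly positive because wavelikeness forces $2\lambda+4>0$ and $\kappa_0^2>2\lambda+4$. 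A loop lying in a closed half-plane and avoiding $0$ cannot wind around $0$, and the same argument applies to $\theta|_{[L/2,L]}$, so $w_2=0$.

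The heart of the matter is $w_1$. A wavelike elastica has a translational Killing field, i.e. $ac<0$, so Theorem~\ref{thm:explpara} (third case) gives, with $\beta:=\sqrt{-ac}>0$ and $\zeta(t):=\beta\big(z_1+\int_0^t\theta(s)^{-1}\diff s\big)$, that $\gamma(t)=\sqrt{-c/a}\,\tanh\zeta(t)$, whence $a\gamma(t)^2+c=c\big(1-\tanh^2\zeta(t)\big)=c\,\cosh^{-2}\zeta(t)$ and so $w_1=-2\,w_3$, where $w_3$ is the winding number of $t\mapsto\cosh\zeta(t)$ about $0$. The closing condition~\eqref{eq:clwv} gives $\mathrm{Re}\int_0^L\theta^{-1}\diff s=0$, while $\mathrm{Im}\int_0^L\theta^{-1}\diff s=-2\int_0^L\frac{\kappa'}{\lambda^2+4C+4\kappa^2}\diff s=0$ by the fundamental theorem of calculus and $\kappa(L)=\kappa(0)$; hence $\zeta(L)=\zeta(0)$, so $\zeta$ is a closed loop. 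Since $\gamma$ takes values in $\mathbb{H}^2$ and $\sqrt{-c/a}>0$, one has $\mathrm{Im}\tanh\zeta(t)>0$ for every $t$, and a short computation gives $\mathrm{Im}\tanh(x+iy)=\frac{\sech^2 x\,\tan y}{1+\tanh^2x\tan^2y}$, whose sign equals that of $\tan y$; hence $\{\,\mathrm{Im}\tanh z>0\,\}$ is the disjoint union of the open strips $S_k=\{\,k\pi<\mathrm{Im}\,z<k\pi+\tfrac\pi2\,\}$, $k\in\Z$. Being connected, $\zeta([0,L])$ lies inside one $S_{k_0}$; this strip is convex and contains no zero of $\cosh$ (the zeros $i(k+\tfrac12)\pi$ all sit on strip boundaries), so $\zeta$ has winding number $0$ about each of them, and the argument principle yields $w_3=0$, hence $w_1=0$. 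Together with $w_2=0$ this gives $T[\gamma]=0$.

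The step I expect to be the real obstacle is the treatment of $w_1$: one has to spot the algebraic collapse $a\gamma^2+c=c\,\cosh^{-2}\zeta$ and then exploit the \emph{global} fact $\gamma(\R)\subset\mathbb{H}^2$ --- through the conformal map $\tanh$ --- to confine the lifted loop $\zeta$ to a single strip on which $\tanh$ is pole-free, the closing condition~\eqref{eq:clwv} being precisely what makes $\zeta$ a closed loop. By comparison $w_2=0$ is a routine half-plane argument.
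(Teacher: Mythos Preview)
Your proof is correct. The argument for $w_2=0$ is essentially the paper's: the paper observes directly that $\theta([0,L])\subset\mathbb{C}\setminus\mathbb{R}_{\le 0}$, since $\mathrm{Im}\,\theta(s)=0$ forces $\kappa'(s)=0$, hence $\kappa(s)^2=\kappa_0^2$ and $\mathrm{Re}\,\theta(s)=\kappa_0^2-\lambda\ge 2>0$; as $1/z$ has a primitive on this simply connected slit plane, the integral vanishes. Your half-plane splitting into $[0,L/2]$ and $[L/2,L]$ reaches the same conclusion with slightly more bookkeeping.

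For $w_1$, however, you take a genuinely harder route than necessary. The paper's argument is a one-liner: since $\gamma$ is wavelike, $C>0$ and hence $ac=-\tfrac14(\lambda^2+4C)<0$, so the roots $\pm\sqrt{-c/a}$ of $az^2+c$ are both real; the integrand $\frac{2az}{az^2+c}$ is therefore holomorphic on the upper half-plane, which contains $\gamma([0,L])$, and Cauchy's theorem gives $\int_\gamma\frac{2az}{az^2+c}\,\diff z=0$ immediately. Your detour through the explicit $\tanh$-parametrization, the verification that $\zeta$ is a closed loop via the closing condition~\eqref{eq:clwv}, and the strip decomposition of $\{\mathrm{Im}\tanh>0\}$ is valid, and it does illustrate nicely how the constraint $\gamma(\R)\subset\mathbb{H}^2$ is encoded in the parametrization of Theorem~\ref{thm:explpara}; but it reproves from scratch exactly the fact (``the poles are real, the curve is not'') that the sign of $ac$ hands you for free.
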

\begin{proof}
Using the notation from Proposition \ref{prop:RTC} we first show that 
$\int_\theta \frac{1}{z} \diff z = 0$.
Recall that $z \mapsto \frac{1}{z}$ has a complex antiderivative on the simply-connected domain $\mathbb{C}\setminus \mathbb{R}_{\leq 0 } $. We shall show that $\theta([0,L]) \subset \mathbb{C}\setminus \mathbb{R}_{\leq 0 }$. Indeed, if $\mathrm{Im}( \theta(s)) = 0$ then $\kappa'(s) = 0 $. Using that $\kappa(s) = \kappa_0 \cn(rs,p) $ for some $r,p$ this happens only {if} $\kappa(s) = \pm \kappa_0$.  
Furthermore, we find using Proposition \ref{prop:integr} that
$\mathrm{Re} (\theta(s)) =  \kappa(s)^2 - \lambda = \kappa_0^2 - \lambda \geq 2 >0$, 
which implies that $\theta(s) \not \in \mathbb{C}\setminus \mathbb{R}_{\leq 0}$.  
It remains to show that 
$\int_\gamma \frac{2az}{az^2 + c} \diff z = 0$,  
but this is clear since $\gamma$ lies entirely in the upper half plane and the roots of the integrand are both on the real axis, remember $ac<0$ since $C> 0$ and $ac = - \frac{1}{4}(\lambda^2 + 4C)$, see Proposition \ref{prop:ordred} and Proposition \ref{prop:integr}. The claim follows using Cauchy's Integral Theorem.
\end{proof}

\begin{cor}[Total Curvature for Orbitlike Elastica]\label{cor:totrwealk}
Let $\gamma$ be an orbitlike rotational closed elastica. Let $m \in \mathbb{Z}$ be the integer in Proposition \ref{prop:closedrot}. Then
\begin{equation*}
T[\gamma] = \begin{cases} m & \textrm{if } \kappa_0^2 < 4 + \lambda \\ m \pm n  & \textrm{if } \kappa_0^2 > 4+ \lambda, \end{cases} 
\end{equation*} 
and if $\kappa_0^2 = 4 + \lambda $, then there exists no closed orbitlike elastica. 
\end{cor}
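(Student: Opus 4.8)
The plan is to evaluate separately the two complex line integrals appearing in Proposition~\ref{prop:RTC}. Throughout, recall that $\gamma$ rotational means $ac>0$ (Definition~\ref{def:classi}), that $\gamma$ is automatically orbitlike with $\kappa=\kappa_{0}\dn(r\cdot,p)$ by Propositions~\ref{prop:closedrot} and~\ref{prop:integr} (the sign of $\kappa$ being constant since $\dn>0$ and $\gamma$ is smooth), and that $L=2nK(p)/r$, so $\kappa$, hence $\theta$, has period $T_{0}:=L/n=2K(p)/r$.

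\emph{First integral.} Writing $\frac{2az}{az^{2}+c}=\frac{\diff}{\diff z}\log(az^{2}+c)$ and factoring $az^{2}+c=a(z-i\sqrt{c/a})(z+i\sqrt{c/a})$ (possible since $ac>0$), the integrand has simple poles of residue $1$ at $\pm i\sqrt{c/a}$. As $\gamma$ lies in the upper half plane $\mathbb{H}^{2}$, which is convex and hence contracts in $\mathbb{C}\setminus\{-i\sqrt{c/a}\}$, only the pole $i\sqrt{c/a}$ matters, and
\[
\int_{\gamma}\frac{2az}{az^{2}+c}\diff z=2\pi i\,\mathrm{wind}_{\gamma}\!\Big(i\sqrt{\tfrac{c}{a}}\Big).
\]
By Proposition~\ref{prop:ordred} the point $i\sqrt{c/a}$ is the unique zero of $\widetilde{J}_{\gamma}$ in $\mathbb{H}^{2}$, and the winding number above is exactly the integer $m$ of Proposition~\ref{prop:closedrot} (this is the description of $m$ in Table~\ref{table:parameters}, cf.\ \eqref{eq:dingsk} and \eqref{eq:dingski}). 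Thus the first integral contributes $m$ to $T[\gamma]$.

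\emph{Second integral and case distinction.} The loop $\theta\colon[0,L]\to\mathbb{C}$ is closed and avoids $0$ by Proposition~\ref{prop:nonvan}, so $\int_{\theta}\frac{1}{z}\diff z=2\pi i\,\mathrm{wind}_{\theta}(0)$; by $T_{0}$-periodicity this equals $n$ times the winding of $\theta|_{[0,T_{0}]}$ about $0$. On one period, $\kappa^{2}=\kappa_{0}^{2}\dn^{2}(r\cdot,p)$ decreases strictly from $\kappa_{0}^{2}$ at $s=0$ to its minimum $\kappa_{0}^{2}(1-p^{2})$ at $s=T_{0}/2$ and back, while $\kappa'=-\kappa_{0}rp^{2}\sn(r\cdot,p)\cn(r\cdot,p)$ vanishes precisely at $0,T_{0}/2,T_{0}$ and has a fixed nonzero sign on each of $(0,T_{0}/2)$ and $(T_{0}/2,T_{0})$, the two signs being opposite; consequently $\mathrm{Im}\,\theta=2\kappa'$ has one sign on the first half-period and the opposite on the second. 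Using $\kappa_{0}^{2}(2-p^{2})=2\lambda+4$ (Proposition~\ref{prop:integr}), the value of $\mathrm{Re}\,\theta=\kappa^{2}-\lambda$ at $s=T_{0}/2$ equals $\kappa_{0}^{2}(1-p^{2})-\lambda=(4+\lambda)-\kappa_{0}^{2}$. Three cases arise. If $\kappa_{0}^{2}=4+\lambda$ then $\theta(T_{0}/2)=(\kappa_{0}^{2}(1-p^{2})-\lambda)+2i\kappa'(T_{0}/2)=0$, contradicting Proposition~\ref{prop:nonvan}: no such closed elastica exists. If $\kappa_{0}^{2}<4+\lambda$ then $\mathrm{Re}\,\theta\geq\kappa_{0}^{2}(1-p^{2})-\lambda>0$ everywhere, so $\theta$ stays in the right half plane, $\mathrm{wind}_{\theta}(0)=0$, and $T[\gamma]=m$. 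If $\kappa_{0}^{2}>4+\lambda$ then on $(0,T_{0}/2)$ the function $\mathrm{Re}\,\theta$ decreases strictly from a positive to a negative value, crossing $0$ exactly once (necessarily where $\mathrm{Im}\,\theta\neq0$), so $\theta$ runs from the positive real axis, through exactly one of the lower or upper half planes (according to $\sgn\kappa_{0}$), to the negative real axis: $\arg\theta$ changes by $\mp\pi$; by the reflection symmetry of $\kappa$ about $T_{0}/2$ it changes by another $\mp\pi$ over $(T_{0}/2,T_{0})$, so $\mathrm{wind}_{\theta}(0)=\mp n$. Substituting $\int_{\gamma}\frac{2az}{az^{2}+c}\diff z=2\pi im$ and $\int_{\theta}\frac{1}{z}\diff z=\mp2\pi in$ into Proposition~\ref{prop:RTC} yields $T[\gamma]=m\pm n$.

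\emph{Main obstacle.} The only genuinely non-formal point is the winding computation for $\theta$ in the case $\kappa_{0}^{2}>4+\lambda$: one must argue that $\arg\theta$ changes by exactly $\mp2\pi$ per period of $\kappa$. This rests on the strict monotonicity of $\dn$ on a quarter-period together with the sign bookkeeping for $\kappa'$ through the identity $\dn'=-p^{2}\sn\cn$ and the sign pattern of $\sn$ and $\cn$; the identification of the first integral with $2\pi im$ and the remaining two cases are routine.
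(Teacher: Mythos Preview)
Your proof is correct and handles the first integral exactly as the paper does (logarithmic derivative, residue theorem, and the identification of the winding number with $m$ via \eqref{eq:dingsk}--\eqref{eq:dingski}). The second integral is where you genuinely diverge. The paper rewrites $\theta$ using the elastica equation and the amplitude function to obtain
\[
\theta(s)=2\bigl(1+r^{2}p^{2}\cos(2\am(rs,p))\bigr)-2ir^{2}p^{2}\sin(2\am(rs,p)),
\]
so that $\theta$ is recognised as an $n$-fold parametrisation of the circle $\partial B_{2r^{2}p^{2}}(2)$; the case distinction then reduces to whether $r^{2}p^{2}\lessgtr1$, which is translated into $\kappa_{0}^{2}\lessgtr4+\lambda$ by appealing to Lemma~\ref{lem:8.13}, and the borderline case is ruled out via Remark~\ref{rem:paramrestr}. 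Your route is more elementary: you read off $\mathrm{Re}\,\theta=\kappa^{2}-\lambda$ and $\mathrm{Im}\,\theta=2\kappa'$ directly, compute the minimum of $\mathrm{Re}\,\theta$ as $\kappa_{0}^{2}(1-p^{2})-\lambda=(4+\lambda)-\kappa_{0}^{2}$ from $\kappa_{0}^{2}(2-p^{2})=2\lambda+4$, and determine the winding by the half-plane argument using strict monotonicity of $\dn$ on a half-period together with the conjugation symmetry $\theta(T_{0}/2+t)=\overline{\theta(T_{0}/2-t)}$. This avoids both the amplitude substitution and the auxiliary Lemma~\ref{lem:8.13}, and it dispatches the borderline case $\kappa_{0}^{2}=4+\lambda$ in one line by exhibiting $\theta(T_{0}/2)=0$ and invoking Proposition~\ref{prop:nonvan} directly. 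The paper's computation has the pleasant byproduct of showing that $\theta$ traces an exact Euclidean circle, but your argument is shorter and needs fewer identities.
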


\begin{figure}
    \centering
    \captionsetup[subfigure]{justification=centering}
    \begin{subfigure}[b]{0.48\textwidth}
        \includegraphics[width=\textwidth]{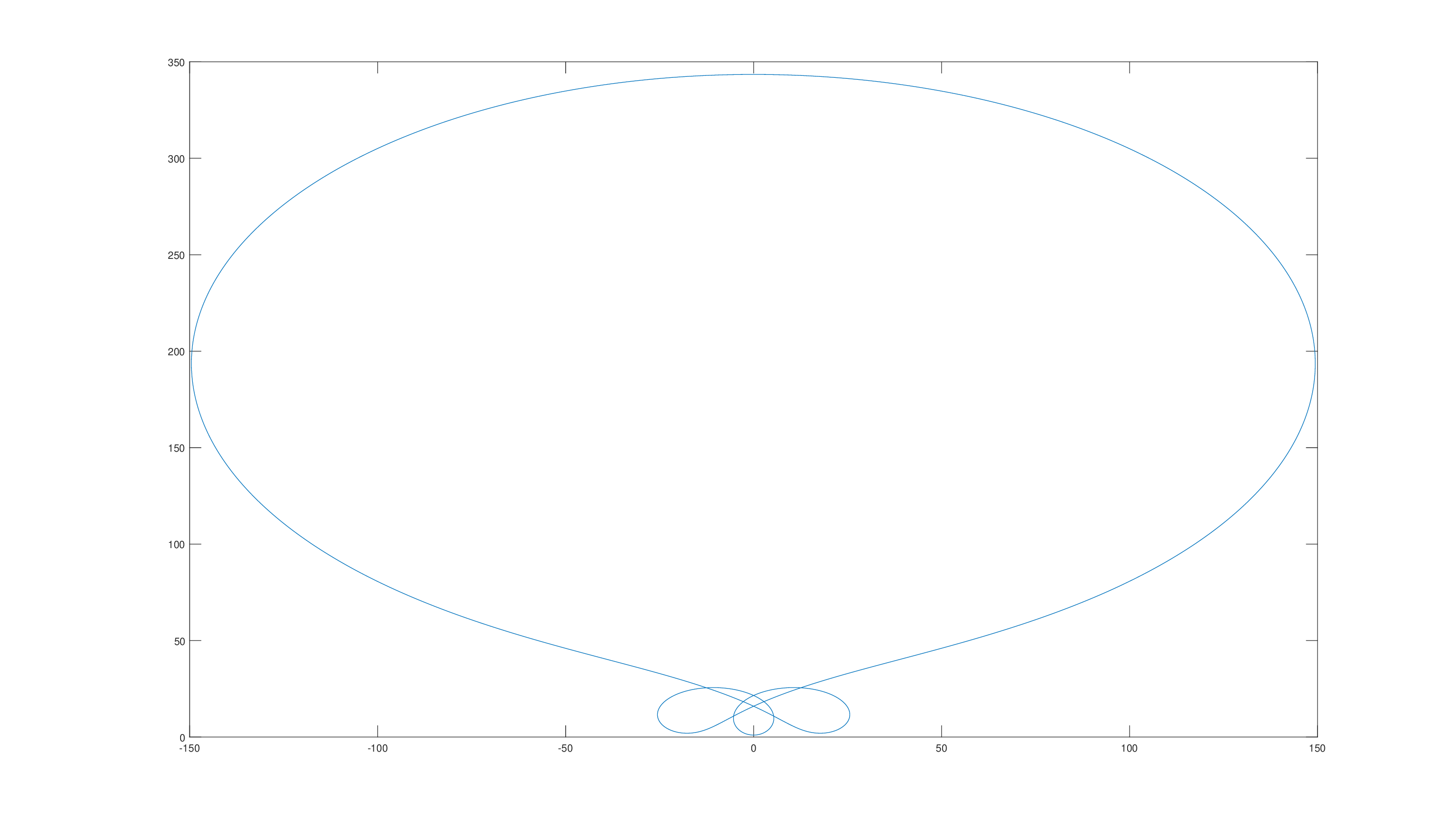}
        \caption{$T =m = 4$; $\lambda=0.39,$\newline$ C\approx - 0.54$}
    \end{subfigure}~
    \begin{subfigure}[b]{0.45\textwidth}  \includegraphics[trim={630 45 580 600},clip,width=\textwidth]{3a}\caption{$T =m = 4$; $\lambda=0.39,$\newline$ C\approx - 0.54$ (detail)}%
    \end{subfigure}
    
    \begin{subfigure}[b]{0.45\textwidth}
        \includegraphics[width=\textwidth]{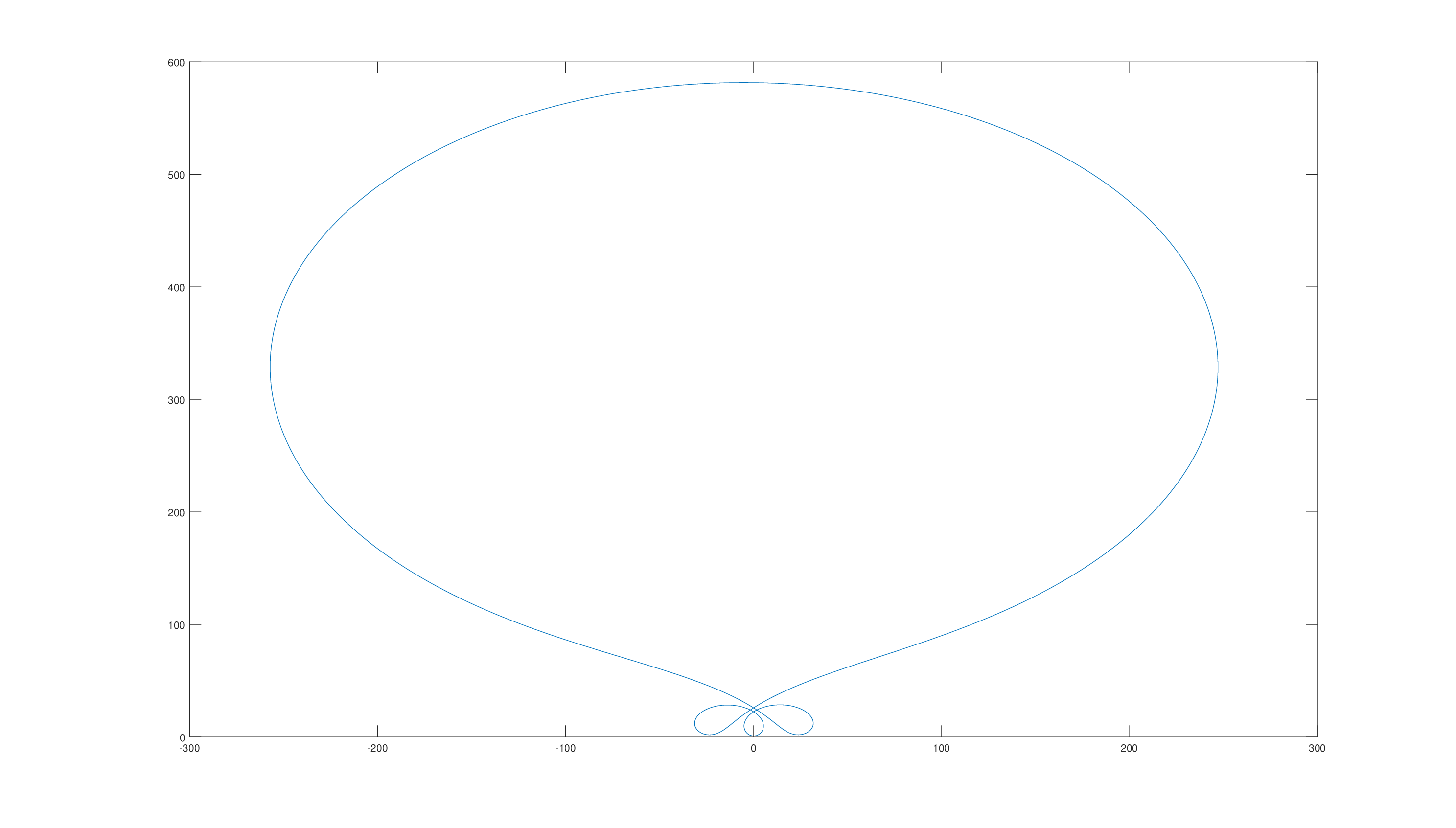}
       \caption{$T = m + n = 1 + 3 = 4$; \newline $\lambda=0.3 , C \approx -0.38$}
    \end{subfigure}~
 \begin{subfigure}[b]{0.45\textwidth}\includegraphics[trim={680 70 630 680},clip,width=\textwidth]{3b}\caption{$T = m + n = 1 + 3 = 4$; \newline  $\lambda=0.3 , C \approx -0.38$ (detail)} \end{subfigure}
    \caption{Examples for the Formula of the Total Curvature from Corollary \ref{cor:totrwealk} }%
\end{figure}

\begin{proof}
We show first that 
\begin{equation*}
\int_\theta \frac{1}{z} \diff z =  \begin{cases} 0 & \text{if }\kappa_0^2 < 4 + \lambda \\ \pm n & \text{if }\kappa_0^2 > 4 + \lambda. \end{cases}
\end{equation*}
Recall that a parametrization of $\theta$ is given by $\theta(s) = \kappa^2(s) - \lambda + 2i \kappa'$. We compute using the elastica equation \eqref{eq:elasticaeq} 
\begin{align*}
\theta(s) & = \kappa^2(s) - \lambda + 2i \kappa'(s)  = \frac{\kappa^3(s)- \lambda \kappa(s)}{\kappa(s)} + 2i \kappa'(s) \\
& = \frac{2 \kappa(s)- 2\kappa''(s)}{\kappa(s)}  + 2 i \kappa'(s) = 2 \left( 1 - \frac{\kappa''(s)}{\kappa(s)} \right) + 2 i \kappa'(s) .
\end{align*}
Now $\kappa(s) = \pm \kappa_0 \dn(rs,p)$, where the choice of sign has to be consistent again because of smoothness. We only treat the case `$+$' here but the other case can be shown similarly. The first and second derivatives can be simplified as follows using $\kappa_0 = 2 r$ according to the second case in Proposition \ref{prop:integr}, and Proposition \ref{prop:identities}:
\begin{align*}
\kappa'(s) & = - \kappa_0 r p^2 \sn(rs,p) \cn(rs,p) = - 2r^2p^2 \cos( \am(rs,p)) \sin(\am(rs,p) ) \\ &  = -  r^2 p^2 \sin(2 \am(rs,p) ) ,\\
\kappa''(s)&  = - \kappa_0 r^2 p^2 ( \cn^2(rs,p) \dn(rs,p) - \sn^2(rs,p) \dn(rs,p) ) \\ &  = - \kappa_0 r^2 p^2 \dn(rs,p) \cos(2 \am(rs,p)) = - \kappa(s) r^2 p^2 \cos(2 \am(rs,p) ) . 
\end{align*}
All in all 
$\theta(s) = 2  \left( 1 + r^2 p^2 \cos(2 \am(rs,p) ) \right) - 2 i r^2 p^2 \sin( 2 \am (rs, p) )$    
for each $s \in [0,L] = \left[ 0 , \frac{2n K(p)}{r} \right]$. Since  $ s \mapsto 2 \am(rs, p) $ is strictly monotone, and $2n \pi = 2 \am (2n K(p), p) $ we can instead integrate over the following reparametrization:
\begin{equation*}
\widetilde{\theta}(\ell) := 2 ( 1+ r^2 p^2 \cos(\ell) ) -2i r^2 p^2 \sin(\ell) \quad  \ell \in [0, 2n \pi].
\end{equation*}
It becomes obvious that $\theta$ is an $n$-fold cover of $\partial B_{2r^2p^2}(2)$. Therefore 
\begin{equation*}
\int_\theta \frac{1}{z} \diff z = \begin{cases} 0  & 0 \not \in B_{2r^2p^2}(2) \\ \pm n & 0 \in B_{2r^2p^2}(2)    \end{cases}  .
\end{equation*} 
We write $\pm$ since it 
is not important for our result in which direction the circle is parametrized. Indeed, if we had treated the `$-$' case in detail, the circle would be parametrized in the opposite direction. 
 Lemma \ref{lem:8.13} shows that $r^2p^2 < 1$ if and only if $\kappa_0^2 < 4+ \lambda$. Also, Remark \ref{rem:paramrestr} shows that $r^2p^2 = 1 $ and $\kappa_0^2 = 4 +\lambda$ can never occur, so the classification is indeed complete.\\
For the rest note that $z \mapsto \frac{2az}{az^2 + c}$ is a logarithmic derivative and therefore all the residues coincide with the orders of the roots of $z \mapsto az^2 + c$. However, since $ac > 0 $, all poles have order 1. Therefore 
\begin{equation*}
\int_\gamma \frac{2az}{az^2 + c} \diff z = 2 \pi i \left( \omega\left(\gamma, \sqrt{\frac{-c}{a}} \right)  + \omega\left(\gamma , - \sqrt{\frac{-c}{a}} \right) \right) 
\end{equation*}
where $\omega(\gamma, \cdot) $ denotes the winding number of $\gamma$ and $\sqrt{\cdot}$ denotes one branch of the complex square root. Note that exactly one of $\sqrt{\frac{-c}{a}}$ and $- \sqrt{\frac{-c}{a}}$ lies in $\mathbb{H}^2$. Therefore one of these winding number is zero. Let us assume that $ \omega(\gamma , - \sqrt{\frac{-c}{a}} ) = 0$. We look to determine $ \omega(\gamma , - \sqrt{\frac{c}{a}} )$.  On the one hand 
\begin{align}\label{eq:dingsk}
\int_\gamma \frac{1}{az^2 + c} \diff z & = \int_0^L \frac{\gamma'(s)}{a \gamma^2(s) + c} = \int_0^L \frac{1}{\theta(s) } \diff s  = \frac{\pi m}{\sqrt{ac}}
\end{align}
where we used \eqref{eq:simi}. On the other hand 
\begin{equation}\label{eq:dingski}
\int_\gamma \frac{1}{az^2 + c} \diff z = 2 \pi i\cdot \mathrm{Res}\Big( \frac{1}{az^2 + c} , \sqrt{\frac{-c}{a}} \Big) \omega\Big( \gamma, \sqrt{\frac{-c}{a}}\Big)  = \frac{\pi}{\sqrt{ac}}\omega\Big( \gamma, \sqrt{\frac{-c}{a}}\Big)
\end{equation}
where we used that the residue is $\frac{1}{2i\sqrt{ac}}$. If follows from the last two equations that $ \omega(\gamma , - \sqrt{\frac{c}{a}} ) = m $. The case of $\omega( \gamma, \sqrt{\frac{-c}{a}} ) = 0 $ can be checked similarly.   
\end{proof}

\begin{cor}\label{cor:tutfrer}
There is no closed free elastic curve such that $T[\gamma] = 0$.  Moreover, for each $\lambda < \frac{64}{\pi^2}- 2$, each $\lambda$-constrained elastic curve that satisfies $T[\gamma] = 0 $ is wavelike.
\end{cor}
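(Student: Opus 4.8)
The plan is to run through the classification of closed elastica from Proposition~\ref{prop:integr} and eliminate, under the stated hypotheses, every type with $T[\gamma]=0$ except the wavelike one. A closed elastica is circular, wavelike, orbitlike, or asymptotically geodesic; the asymptotically geodesic case contributes no closed curves at all, since there the curvature has the non-periodic form $\kappa_0^2\sech^2(rs)$ of Proposition~\ref{prop:integr}, and $\kappa_0=0$ yields a geodesic, which is never closed in $\Hyp$.

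First I would dispose of the circular case: if $\kappa\equiv\mathrm{const}$ and $\gamma$ is closed, then by Proposition~\ref{prop:curvconst} the curve $\gamma$ is a Euclidean circle with $|\kappa[\gamma]|>1$, traversed once over its minimal period, so $T[\gamma]=\pm1\neq0$. Next, for the orbitlike case I would use the hypothesis $\lambda<\frac{64}{\pi^2}-2$: by Proposition~\ref{prop:simcloorbit} a closed orbitlike elastica is then rotational with $m\neq0$ (with $m$ the integer of Proposition~\ref{prop:closedrot}), so Corollary~\ref{cor:totrwealk} applies. The value $\kappa_0^2=4+\lambda$ is excluded by that corollary; if $\kappa_0^2<4+\lambda$ then $T[\gamma]=m\neq0$; and if $\kappa_0^2>4+\lambda$ then $T[\gamma]=m\pm n$. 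In this last subcase Remark~\ref{rem:strres} forces $n\geq2$ (as $\lambda<\frac{64}{\pi^2}-2$), while $\gcd(|m|,n)=1$ by Proposition~\ref{prop:closedrot}, so $|m|\neq n$ and hence $m\pm n\neq0$. Thus no closed orbitlike elastica with $\lambda<\frac{64}{\pi^2}-2$ has $T[\gamma]=0$.

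Combining the cases yields the second assertion: a closed $\lambda$-constrained elastica with $\lambda<\frac{64}{\pi^2}-2$ and $T[\gamma]=0$ must be wavelike. For the first assertion I would specialize the same elimination to $\lambda=0$ (which satisfies $0<\frac{64}{\pi^2}-2$): a closed free elastica with $T[\gamma]=0$ would have to be wavelike, contradicting Corollary~\ref{cor:noclwv}, which states that no closed free wavelike elastica exist; hence there is none.

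The only genuinely delicate point — the main obstacle — is the orbitlike subcase $\kappa_0^2>4+\lambda$, where $T[\gamma]=m\pm n$ and one must rule out the coincidences $m=\pm n$. This is exactly where the coprimality of $|m|$ and $n$ from Proposition~\ref{prop:closedrot}, together with the bound $n\geq2$ from Remark~\ref{rem:strres}, is needed, and it is the reason the threshold $\frac{64}{\pi^2}-2$ appears in the statement; the remaining steps are routine case bookkeeping.
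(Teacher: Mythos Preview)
Your argument is correct and follows essentially the same route as the paper: eliminate the circular and asymptotically geodesic cases directly, invoke Proposition~\ref{prop:simcloorbit} (and Remark~\ref{rem:strres}) to force rotational with $m\neq0$ and $n\geq2$ in the orbitlike case, and then use Corollary~\ref{cor:totrwealk} together with the coprimality from Proposition~\ref{prop:closedrot} to rule out $T[\gamma]=m$ or $T[\gamma]=m\pm n$ being zero, finally specializing to $\lambda=0$ via Corollary~\ref{cor:noclwv}. The only cosmetic difference is that the paper allows for multi-fold circles (with $T=\pm k\neq0$), whereas you restrict to the minimal period; either reading gives nonzero total curvature.
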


\begin{proof}
Showing the second part of the statement implies the first part using Corollary \ref{cor:noclwv}. Closed curves of constant curvature do certainly not satisfy $T[\gamma] = 0 $ since they are (possibly multi-fold) circles. Assume that there is a closed free orbitlike elastica such that $T[\gamma]= 0$. Let $\kappa_0$ and $\lambda$ be the parameters for this elastica. Note that Proposition \ref{prop:simcloorbit} implies that $\gamma$ is rotational. If $\kappa_0^2 < 4 + \lambda$ then $T[\gamma] = m $, where $m$ is given in Proposition \ref{prop:closedrot}. However Proposition \ref{prop:simcloorbit} implies that $m \neq 0 $ if $\lambda < \frac{64}{\pi^2} - 2$
, a contradiction. If $\kappa_0^2 < \lambda + 4$ then $T[\gamma] = m \pm n $. Unless $m = n = 1$ or $m = -1$ and $n = 1$ this cannot equal zero since $m,n$ would be relatively prime according to Proposition \ref{prop:closedrot}. However $n = 1$ is not possible for the considered values of $\lambda$, see Remark \ref{rem:strres}. 
\end{proof}

The following Corollary gives a sufficient condition for the initial value  ensuring the non-convergence of the elastic flow. A natural question is then to find the minimal energy level on which such phenomena occur. In Corollary \ref{cor:8.4} we present smooth curves $\gamma_\varepsilon$ with energy below $16+\varepsilon$ satisfying $T[\gamma_\varepsilon] = 0$, $\varepsilon > 0$.

\begin{cor}[A Class of Bad Initial Data]\label{rem:unblength}
 Let $\gamma_0$ be a smoothly closed curve such that $T[\gamma_0] = 0 $. Let $(\gamma_t)_{ t \geq 0 } $ be the time evolution of the elastic flow with initial value $\gamma_0$. Then $(\mathcal{L}(\gamma_t))_{ t \geq 0 }$ is unbounded. In particular $(\gamma_t)_{t \geq 0 }$ is a nonconvergent evolution. 
 \end{cor}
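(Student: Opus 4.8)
The plan is to argue by contradiction. Suppose $(\mathcal{L}(\gamma_t))_{t\ge 0}$ were bounded. Then a uniform length bound holds along the flow, so Theorem \ref{thm:LTE} together with Remark \ref{rem:subconvergenceExplained1}~(1) applies: denoting by $\tilde\gamma_t$ the constant speed reparametrization of $\gamma_t$, there are smooth functions $p\colon[0,\infty)\to\R$ and $a\colon[0,\infty)\to\R_{>0}$ such that the curves
\[
\hat\gamma_t := a(t)\bigl(\tilde\gamma_t - (p(t),0)^T\bigr)
\]
subconverge smoothly to an elastic curve $\gamma_\infty$; that is, along some sequence $t_k\to\infty$ we have $\hat\gamma_{t_k}\to\gamma_\infty$ in $W^{m,2}(\Sph,\R^2)$ for every $m\in\N$, in particular in $C^1(\Sph,\R^2)$. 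Since the flow considered here is the unpenalized one ($\lambda=0$) and each normalizing map $z\mapsto a(t)(z-(p(t),0)^T)$ is a hyperbolic isometry (a dilation composed with a horizontal translation), $\gamma_\infty$ satisfies the $\lambda=0$ elastica equation, i.e. it is a \emph{free} elastica, and as a $W^{2,2}$-limit of closed immersed curves it is closed.

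Next I would track the Euclidean total curvature $T$ along this construction. Each normalization $z\mapsto a(t)(z-(p(t),0)^T)$ is an orientation-preserving Euclidean similarity, and constant speed reparametrization is an orientation-preserving change of parameter; none of these affects the turning number, so $T[\hat\gamma_t]=T[\gamma_t]$ for every $t\ge 0$. On the other hand the elastic flow is smooth and $\Hyp\cong\R^2$, so $t\mapsto\gamma_t$ (composed with the canonical inclusion) is a regular homotopy in $\mathrm{Imm}(\Sph,\R^2)$, as recorded in the remark before Proposition \ref{prop:whitney}; hence the Whitney--Graustein theorem gives $T[\gamma_t]=T[\gamma_0]=0$ for all $t\ge0$. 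Combining, $T[\hat\gamma_{t_k}]=0$ for all $k$, and since $\hat\gamma_{t_k}\to\gamma_\infty$ in $C^1$, the continuity statement in Proposition \ref{prop:whitney} forces $T[\gamma_\infty]=0$.

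Thus $\gamma_\infty$ would be a closed free elastic curve with $T[\gamma_\infty]=0$, which is impossible by Corollary \ref{cor:tutfrer}. This contradiction shows that $(\mathcal{L}(\gamma_t))_{t\ge0}$ must be unbounded, as claimed.

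The main obstacle is not a hard estimate but careful bookkeeping at two points: first, verifying that the specific normalizations appearing in Remark \ref{rem:subconvergenceExplained1}~(1) are genuinely hyperbolic isometries — so that the limit is a $\lambda=0$ elastica rather than a $\lambda$-constrained one, which is exactly what lets us invoke Corollary \ref{cor:tutfrer} — and are at the same time orientation-preserving Euclidean maps, so that $T$ is literally unchanged by passing from $\gamma_t$ to $\hat\gamma_t$; and second, making sure the mode of subconvergence supplied by Theorem \ref{thm:LTE} is strong enough (the $C^1$ topology already suffices) for the continuity of $T$ in Proposition \ref{prop:whitney} to transfer the identity $T=0$ to the limit curve.
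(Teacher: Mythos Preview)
Your proof is correct and follows essentially the same route as the paper: argue by contradiction, use the bounded-length subconvergence from Theorem \ref{thm:LTE} to obtain a free elastica $\gamma_\infty$, track $T$ through the isometric normalizations and the $C^1$-limit via Proposition \ref{prop:whitney}, and then invoke Corollary \ref{cor:tutfrer} to reach a contradiction. Your write-up is in fact more explicit than the paper's about why the normalizations preserve $T$ and why the limit is a \emph{free} elastica.
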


 \begin{proof}
 Assume on the contrary that $\mathcal{L}(\gamma_t)$ is bounded. Then there is a free elastic curve $\gamma_\infty$ and $t_n \rightarrow \infty$ such that the constant-hyperbolic-speed reparametrizations of $(a_n (\gamma_{t_n}- (p_n,0) ) )_{n \in \mathbb{N}}$ converge to $\gamma_\infty$ in $W^{m,2} ( \mathbb{S}^1, \mathbb{R}^2) $ for each $m \in \mathbb{N}$ and appropriately chosen $a_n, p_n$, see Theorem \ref{thm:LTE}.
 Therefore Proposition \ref{prop:whitney} yields that  
\begin{equation}\label{eq:totalCurvLimit}
T[\gamma_\infty] = \lim_{n \rightarrow \infty} T[a_n (\gamma_{t_n} - (p_n, 0) ) ]  = \lim_{n \rightarrow \infty} T[\gamma_{t_n} ] = 0 .
\end{equation}
The existence of such $\gamma_\infty$ however would contradict Corollary \ref{cor:tutfrer}.
 \end{proof}

\section{Optimality Discussion}\label{sec:Opt}

\subsection{Optimality of the Energy Bound}
So far, we have shown that the length along the elastic flow remains bounded, provided that the initial datum $\gamma_0\in C^\infty(\mathbb{S}^1, \mathbb{H}^2)$ has small elastic energy, more precisely $\mathcal{E}(\gamma_0) < 16$, see Theorem \ref{thm:reilly}. Additionally we have constructed a class of initial data for which the length along the flow is unbounded, namely the class of curves of vanishing Euclidean total curvature. To investigate optimality of the bound of $16$, we look for curves of small energy with vanishing total curvature. 

\begin{definition}
For each $\lambda > 0 $ we call a curve $\gamma$ a \emph{$\lambda$-figure-eight}, when $\gamma$ is a $\lambda$-constrained wavelike elastic curve of vanishing total curvature.
\end{definition}

\begin{prop}\label{prop:figeight}
For each $\lambda  \in (0,  \frac{64}{\pi^2}- 2)  $ there exists a $\lambda$-figure eight.
\end{prop}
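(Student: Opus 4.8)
The plan is to produce the figure-eight by a continuity (intermediate value) argument applied to the closing condition of Proposition~\ref{prop:waveclose}, using the modulus $p$ as the free parameter. First I would fix $\lambda\in(0,\tfrac{64}{\pi^2}-2)$ and, for each $p\in(\tfrac1{\sqrt2},1)$, let $r=r(\lambda,p)$, $\kappa_0=\kappa_0(\lambda,p)$ and $C=C(\lambda,p)$ be the wavelike parameters from Proposition~\ref{prop:integr}, so that $\kappa_p(s):=\kappa_0\cn(rs,p)$ is a smooth, globally defined solution of $2\kappa''+\kappa^3-(\lambda+2)\kappa=0$ with $\kappa_p(0)=\kappa_0$, $\kappa_p'(0)=0$ and $C>0$. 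By the fundamental theorem for curves in the space form $\Hyp$ (existence and uniqueness, up to isometry, of an arclength-parametrized curve with prescribed geodesic curvature) there is a smooth $\gamma_p\colon\R\to\Hyp$, parametrized by hyperbolic arclength, with $\kappa[\gamma_p]=\kappa_p$; by Definition~\ref{def:elastica} it is a $\lambda$-constrained elastica, and since $C>0$ it is wavelike.

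Next, by Proposition~\ref{prop:waveclose}, the curve $\gamma_p$ closes up smoothly (then necessarily with length $L=\tfrac{4K(p)}{r}$) if and only if
\[
\Phi(p):=\int_0^{4K(p)/r}\frac{\kappa_p^2-\lambda}{\lambda^2+4C+4\kappa_p^2}\,\diff s=0 .
\]
Using that $\kappa_p^2=\kappa_0^2\cn^2(rs,p)$ is $\tfrac{2K(p)}{r}$-periodic and symmetric about its midpoint, the substitution $u=rs$ gives
\[
\Phi(p)=\frac{4}{r}\int_0^{K(p)}\frac{\kappa_0^2\cn^2(u,p)-\lambda}{\lambda^2+4C+4\kappa_0^2\cn^2(u,p)}\,\diff u .
\]
The integrand is jointly continuous in $(u,p)$ with denominator $\ge\lambda^2>0$, so $\Phi$ is continuous on $(\tfrac1{\sqrt2},1)$, and it suffices to show that $\Phi$ changes sign. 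Once $\Phi(p^\ast)=0$, the curve $\gamma_{p^\ast}$ is a closed wavelike $\lambda$-constrained elastica, which has vanishing Euclidean total curvature by the corollary on the total curvature of closed wavelike elastica, and hence is a $\lambda$-figure-eight.

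The core of the argument is to detect the sign change, which I would do by analysing the two endpoints. As $p\to1^-$ one has $C\to0$, $r\to\tfrac12\sqrt{2\lambda+4}$, $\kappa_0^2\to2\lambda+4$, $K(p)\to\infty$ and $\cn(\cdot,p)\to\sech$ locally uniformly; hence on the range of large $u$ the integrand tends to $\tfrac{-\lambda}{\lambda^2}=-\tfrac1\lambda<0$, and splitting $\int_0^{K(p)}=\int_0^M+\int_M^{K(p)}$ (bounding $\cn^2(u,p)\le\cn^2(M,p)$ on $[M,K(p)]$ and using $\lambda>0$) yields $\Phi(p)\to-\infty$. As $p\to(\tfrac1{\sqrt2})^+$ one has $C,r,\kappa_0^2\to\infty$ while $K(p)\to K(\tfrac1{\sqrt2})$; here, writing $A:=\lambda^2+4C$, $B:=\kappa_0^2$, $w:=\cn^2(u,p)$ and using the decomposition $\tfrac{Bw-\lambda}{4Bw+A}=\tfrac14-\tfrac{A+4\lambda}{4(4Bw+A)}$ together with the expansion $\int_0^K(4Bw+A)^{-1}\,\diff u=A^{-1}\int_0^K(1+\tfrac{4B}{A}w)^{-1}\,\diff u$ (valid since $\tfrac{4B}{A}=\tfrac{4\kappa_0^2}{\lambda^2+4C}\to0$), the leading terms cancel and one is left with $\int_0^{K(p)}(\cdots)\,\diff u\sim\tfrac{\kappa_0^2}{\lambda^2+4C}\int_0^{K(1/\sqrt2)}\cn^2(u,\tfrac1{\sqrt2})\,\diff u>0$, so $\Phi(p)>0$ near $\tfrac1{\sqrt2}$. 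The intermediate value theorem then provides $p^\ast\in(\tfrac1{\sqrt2},1)$ with $\Phi(p^\ast)=0$, completing the construction.

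The hard part will be the sign determination as $p\to(\tfrac1{\sqrt2})^+$: every quantity entering $\Phi$ diverges, the naive leading term of the integral vanishes, and one must carry out the one-term asymptotic expansion above, which itself rests on the explicit dependence of $\kappa_0$ and $C$ on $p$ near $\tfrac1{\sqrt2}$ (so as to check $\tfrac{4\kappa_0^2}{\lambda^2+4C}\to0$ and to pin down the sign of the surviving term). A more routine point to record is the appeal to the fundamental theorem of curves in order to realise the curvature function $\kappa_p$ by an honest arclength-parametrized elastica, since the closed-form parametrization of Theorem~\ref{thm:explpara} need not be arclength-parametrized.
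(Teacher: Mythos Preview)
Your approach is correct and genuinely different from the paper's. The paper argues by flow: it takes any smooth closed curve $\sigma$ with $T[\sigma]=0$, runs the penalised elastic flow $\mathcal{E}_{\lambda_0}$ (which subconverges by \cite[Theorem~1.1]{DS17} since $\lambda_0>0$), and observes that the limit elastica inherits $T=0$; it then eliminates the circular and orbitlike cases via Proposition~\ref{prop:simcloorbit} and Corollary~\ref{cor:totrwealk}, forcing the limit to be wavelike. This is short because the heavy analysis has already been done in the convergence theorem and in the total-curvature computations.

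Your route instead solves the closing condition of Proposition~\ref{prop:waveclose} directly by an intermediate-value argument in the modulus $p$. This is more constructive and avoids appealing to the subconvergence machinery of \cite{DS17}; it also makes the dependence of the figure-eight on $\lambda$ more transparent. The cost is the endpoint analysis. Your treatment of $p\to1^-$ is fine. For $p\to(1/\sqrt2)^+$, your expansion is on the right track: with $\epsilon:=2p^2-1\to0^+$ one has $A=\lambda^2+4C\sim(\lambda+2)^2/\epsilon^2$ and $B=\kappa_0^2\sim(\lambda+2)/\epsilon$, so in the decomposition
\[
\int_0^{K(p)}\frac{Bw-\lambda}{4Bw+A}\,\diff u
=-\frac{\lambda K(p)}{A}+\frac{(1+4\lambda/A)B}{A}\int_0^{K(p)}\!w\,\diff u+O\!\left(\frac{B^2}{A^2}\right),
\]
the first term is $O(\epsilon^2)$ while the second is $\sim\epsilon/(\lambda+2)\cdot\int_0^{K(1/\sqrt2)}\cn^2>0$, hence dominates and fixes the sign; it is worth recording this order comparison explicitly, since the student reader will otherwise worry that both surviving terms are $o(1)$. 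One minor point: your lower bound ``denominator $\ge\lambda^2$'' is actually $\ge\lambda^2+4C$ (still positive since $C>0$), and the appeal to the fundamental theorem of curves in $\Hyp$ together with Lemma~\ref{lem:inired} does put $\gamma_p$ into the normalised initial data required by Theorem~\ref{thm:explpara} and Proposition~\ref{prop:waveclose}, so that step is unproblematic.
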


\begin{proof}
Fix some $\lambda_0 \in (0,  \frac{64}{\pi^2}- 2)$. Take an arbitrary curve  $\sigma \in C^\infty(\mathbb{S}^1, \mathbb{H}^2) $ such that $T[\sigma] = 0 $ and consider the 
flow for  $\mathcal{E}_{\lambda_0}$ with initial datum $\sigma$. Applying \cite[Theorem 1.1]{DS17} we find that the flow exists and subconverges to an elastic curve $\gamma$
that satisfies \eqref{eq:elasticaeq} with $\lambda = \lambda_0$. This elastic curve has to satisfy $T[\gamma] = 0$ (see \eqref{eq:totalCurvLimit}). We now claim that $\gamma$ cannot be circular or orbitlike, since circular and orbitlike elastic curves with $\lambda < \frac{64}{\pi^2}-2$ have nonvanishing total curvature. Indeed, for circular elastica one can easily compute the total curvature of an $k$-fold cover of a circle, which is exactly $k$, so nonzero. Now suppose $\gamma$ is an orbitlike elastica. Since $\lambda < \frac{64}{\pi^2}-2$, $\gamma$ is rotational with $m\neq 0$ by Proposition \ref{prop:simcloorbit}. Then there are two cases to distinguish: if $\kappa_0^2 < 4 + \lambda$ then Corollary \ref{cor:totrwealk} yields the contradiction $0=T[\gamma] =m$.
If $\kappa_0^2 > 4 + \lambda$, then according to Corollary \ref{cor:totrwealk},  $T[\gamma] = m \pm n $, which can be zero only in the case $m = n = 1$ since $m,n$ are relatively prime otherwise, see Proposition \ref{prop:closedrot}. However $n= 1$ is 
a contradiction to Remark \ref{rem:strres}. 
Hence, 
$\gamma$ must be wavelike which completes the proof. 
\end{proof}

We now derive a modified closing condition for wavelike elastic curves that is more stable to compute for small $\lambda$. This has the advantage that the new condition eliminates parameters that can hypothetically become large for small $\lambda$ and therefore lead to numerical difficulties. 

\begin{prop}[A modified closing condition]
Let $\gamma$ be a wavelike elastic curve. If $\gamma$ is closed then 
\begin{equation}\label{eq:8.1}
0 = \int_0^{2\pi} \frac{\cos^2(\theta) - \frac{\lambda}{\kappa_0^2}}{\left( 1 - \frac{4 \kappa_0^2}{(\kappa_0^2 - \lambda)^2} \sin^2(\theta) \right) \sqrt{1- p^2 \sin^2(\theta)}} d \theta. 
\end{equation}
\end{prop}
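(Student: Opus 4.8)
The plan is to start from the closing condition in Proposition \ref{prop:waveclose} and transform the integral by inserting the explicit curvature from Proposition \ref{prop:integr} and then changing variables to the Jacobi amplitude. Recall that if the wavelike elastica $\gamma$ is closed, then $L = \frac{4K(p)}{r}$ and $\int_0^L \frac{\kappa^2-\lambda}{\lambda^2+4C+4\kappa^2}\diff s = 0$, and that along $\gamma$ one has $\kappa(s) = \pm\kappa_0\cn(rs,p)$ with $r$, $p$, $\kappa_0$ as in the wavelike case of Proposition \ref{prop:integr}; the sign ambiguity is irrelevant since only $\kappa^2$ enters.

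First I would rewrite the denominator. Evaluating the identity $\lambda^2 + 4C + 4\kappa^2 = (\kappa^2-\lambda)^2 + 4\kappa'^2$ from Remark \ref{rem:nontrivialk} at a vertex (where $\kappa' = 0$ and $\kappa^2 = \kappa_0^2$) yields $\lambda^2 + 4C = (\kappa_0^2-\lambda)^2 - 4\kappa_0^2$. Since $\kappa_0^2 - \kappa^2 = \kappa_0^2\bigl(1-\cn^2(rs,p)\bigr) = \kappa_0^2\sn^2(rs,p)$, this gives
\[
\lambda^2 + 4C + 4\kappa^2 = (\kappa_0^2-\lambda)^2 - 4\kappa_0^2\sn^2(rs,p) = (\kappa_0^2-\lambda)^2\left(1 - \tfrac{4\kappa_0^2}{(\kappa_0^2-\lambda)^2}\sn^2(rs,p)\right),
\]
which is strictly positive on $\R$ by Proposition \ref{prop:nonvan}, so no cancellation is illegitimate. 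Likewise $\kappa^2 - \lambda = \kappa_0^2\bigl(\cn^2(rs,p) - \tfrac{\lambda}{\kappa_0^2}\bigr)$. Dividing out the constant factors $\kappa_0^2$ and $(\kappa_0^2-\lambda)^2$, the closing condition becomes equivalent to $\int_0^{4K(p)/r} \frac{\cn^2(rs,p) - \lambda/\kappa_0^2}{1 - \frac{4\kappa_0^2}{(\kappa_0^2-\lambda)^2}\sn^2(rs,p)}\diff s = 0$.

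Next I would substitute $t = rs$ and then $\theta = \am(t,p)$. Since $\frac{\diff}{\diff t}\am(t,p) = \dn(t,p) > 0$, the amplitude is a smooth increasing bijection of $[0,4K(p)]$ onto $[0,2\pi]$, and we have $\sn(t,p) = \sin\theta$, $\cn(t,p) = \cos\theta$, $\dn(t,p) = \sqrt{1-p^2\sin^2\theta}$, and $\diff t = \diff\theta/\dn(t,p) = \diff\theta/\sqrt{1-p^2\sin^2\theta}$ (see Appendix \ref{appendix:Jacobi}). Carrying out the two substitutions and multiplying through by $r$ turns the condition into exactly \eqref{eq:8.1}.

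The computation is essentially routine; the points that require genuine care are the vertex evaluation producing the identity $\lambda^2 + 4C = (\kappa_0^2-\lambda)^2 - 4\kappa_0^2$, the appeal to Proposition \ref{prop:nonvan} guaranteeing that the denominator never vanishes so the cancellations and the change of variables are valid, and the verification that $\am(\cdot,p)$ maps $[0,4K(p)]$ bijectively onto a full period $[0,2\pi]$ rather than a proper subinterval.
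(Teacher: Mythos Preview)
Your proposal is correct and follows essentially the same route as the paper: start from the closing condition of Proposition~\ref{prop:waveclose}, insert $\kappa^2=\kappa_0^2\cn^2(rs,p)$, use the vertex identity $\lambda^2+4C=(\kappa_0^2-\lambda)^2-4\kappa_0^2$ to rewrite the denominator, scale $s\mapsto rs$, and substitute $\theta=\am(s,p)$. The paper's proof is the same chain of equalities, only slightly terser about the justifications (positivity of the denominator, bijectivity of the amplitude) that you spell out.
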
  

\begin{proof}
If $\gamma$ is closed, we find using Proposition \ref{prop:waveclose} that 
\begin{align*}
0 & = \int_0^\frac{4 K(p)}{r}  \frac{\kappa^2 - \lambda }{ \lambda^2 + 4C + 4 \kappa^2}\diff s 
=  \int_0^{\frac{4 K(p)}{r}} \frac{\kappa_0^2 \cn^2(rs, p) - \lambda}{(\kappa_0^2- \lambda)^2 - 4 \kappa_0^2 + 4 \kappa_0^2 \cn(rs, p) }\diff s  \\
 & = \frac{\kappa_0^2}{r ( \kappa_0^2 - \lambda)^2 } \int_0^{4K(p)} \frac{\cn^2(s,p)- \frac{\lambda}{\kappa_0^2}}{1 - \frac{4 \kappa_0^2}{(\kappa_0^2 - \lambda)^2} \sn^2(s, p) } \diff s \\ 
 & = \frac{\kappa_0^2}{r ( \kappa_0^2 - \lambda)^2 } \int_0^{2\pi} \frac{\cos^2 \theta - \frac{\lambda}{\kappa_0^2}}{\left( 1 - \frac{4 \kappa_0^2 }{(\kappa_0^2- \lambda)^2} \sin^2 \theta \right) \sqrt{1- p^2 \sin^2\theta} } \diff \theta
\end{align*}
where we used the substitution $ \theta = \am(s,p)$ or equivalently $ s = \int_0^\theta \frac{1}{\sqrt{1- p^2\sin^2\beta}} d\beta $ in the last step. Dividing by the prefactors proves the claim. 
\end{proof}

\begin{cor}[Energy of $\lambda$-Figure-Eights]\label{cor:8.4}
For each $\varepsilon > 0 $ there exists a smooth curve $\gamma_\varepsilon$ such that $16 \leq \mathcal{E}(\gamma_\varepsilon) \leq 16+ \varepsilon$ and $T[\gamma_\varepsilon] = 0 $. 
\end{cor}  

\begin{proof}
Let $(\lambda_n)_{n \in \mathbb{N}}$ be a sequence of positive numbers smaller than $1$ and converging to zero. Denote by $\gamma_n$ a $\lambda_n$-figure eight constructed in Proposition   \ref{prop:figeight} and let $C_n, p_n, r_n$ be its canonical parameters. We show that $p_n \rightarrow 1$. Indeed, assume that there is a subsequence, which we will denote again by $(p_n)$ which converges to some other $\widetilde{p} \in [ \frac{1}{\sqrt{2}},1)$. We first show that $(\kappa_0^{(n)})_{n \in \mathbb{N}}$ (which denotes the maximum curvature of $\gamma_n$) is bounded. Indeed, if there were a subsequence (again denoted by $(\kappa_0^{n})_{n \in \mathbb{N}}$) that converges to $\infty$, then  $\frac{4(\kappa_0^{(n)})^2 }{( (\kappa_0^{(n)})^2 - \lambda_n)^2} $ would converge to zero. We can plug all the asymptotics in \eqref{eq:8.1} to obtain the contradiction
\begin{align*}
0 & = \lim_{n \rightarrow \infty} \int_0^{2\pi} \frac{\cos^2(\theta) - \frac{\lambda_n}{(\kappa_0^{(n)})^2}}{\left( 1 - \frac{4 (\kappa_0^{(n)})^2}{((\kappa_0^{(n)})^2 - \lambda_n)^2} \sin^2(\theta) \right) \sqrt{1- p_n^2 \sin^2(\theta)}} d \theta \\
 &=  \int_0^{2\pi} \frac{\cos^2(\theta)}{\sqrt{1- \widetilde{p}^2 \sin^2 (\theta)}} d\theta >0, 
\end{align*}
because the denominator can be uniformly bounded and the convergence of all quantities is uniform. Therefore $\kappa_0^{(n)}$ remains bounded. In particular, since $(\kappa_0^{(n)})^2 =  p_n^2 \frac{2\lambda_n + 4}{2 p_n^2 - 1 } $, it must hold that $\widetilde{p} \neq \frac{1}{\sqrt{2}} $. We can also show by a similar contradiction argument, again using \eqref{eq:8.1}, that given $\widetilde{p} \neq 1 $, $\frac{4 (\kappa_0^{(n)})^2}{( (\kappa_0^{(n)})^2 - \lambda_n )^2}$ must tend to $1$ as $n \rightarrow \infty$. Thus 
\begin{align*}
0 \leq 4 C_n & \leq \lambda_n^2 + 4 C_n = \left( (\kappa_0^{(n)})^2 - \lambda_n \right)^2 - 4 ( \kappa_0^{(n)} ) ^2  \\ & = \left( (\kappa_0^{(n)})^2 - \lambda_n \right)^2 \left(1 - \frac{4 ( \kappa_0^{(n)} ) ^2}{\left( (\kappa_0^{(n)})^2 - \lambda_n \right)^2} \right) \rightarrow 0,
\end{align*}
showing  $C_n \rightarrow 0 $.  We obtain with Proposition \ref{prop:integr}
\begin{align*}
\widetilde{p}^2 = \lim_{n \rightarrow \infty} p_n^2 =  \lim_{n \rightarrow \infty} \frac{2 + \lambda_n + \sqrt{(\lambda_n + 2)^2 + 4C_n}}{2 \sqrt{(2 + \lambda_n)^2 + 4 C_n }} = 1,
\end{align*}
a contradiction. Therefore $p_n \rightarrow 1$ as $n \rightarrow \infty$. Now observe that 
\begin{align*}
\mathcal{E}(\gamma_n) & = \int_0^{ \frac{4 K(p_n)}{r_n} } (\kappa_0^{(n)})^2 \cn^2(r_n s, p_n)  = \frac{(\kappa_0^{(n)})^2}{r_n} \int_0^{4K(p_n)} \cn^2(s, p_n)\diff s  \\
& = 2 p_n^2 \sqrt{\frac{2\lambda_n + 4}{2p_n^2- 1}} \int_0^{4 K(p_n) }  \cn^2(s,p_n)\diff s   \\ & = 2 p_n^2 \sqrt{\frac{2\lambda_n + 4}{2p_n^2- 1}} \int_0^{4 K(p_n)} \left[ \left( 1- \frac{1}{p_n^2} \right) + \frac{1}{p_n^2} \dn^2(s,p) \right] \diff s  
\\ & = 8 p_n^2 \sqrt{\frac{2 \lambda_n + 4}{2 p_n^2 - 1}} \left( \left( 1- \frac{1}{p_n^2} \right) K(p_n) + \frac{1}{p_n^2} E(p_n) \right)  \\ 
 & = 8 \sqrt{\frac{2 \lambda_n + 4}{2 p_n -1}} \left( (p_n^2- 1) K(p_n) + E(p_n) \right)  \rightarrow 16 \quad (n \rightarrow \infty) .
\end{align*}
In particular, since $\mathcal{E}(\gamma_n ) \geq 16 $ (see Proposition \ref{prop:liyau} and Proposition \ref{prop:waveclose}) we find that for each $\varepsilon > 0 $ there has to be $n \in \mathbb{N}$ such that $16 \leq \mathcal{E}(\gamma_n) \leq 16 +\varepsilon $. The claim follows.
\end{proof}
\subsection{Behavior at the Critical Energy Level}
We have discussed what happens if we start the flow with curves of energy below $16$ and we have also identified phenomena that occur for curves of energy just slightly above $16$. The only energy level that remains to be understood is the energy level of exactly $16$. Here we distinguish two cases:  If the elastic flow $(f_t)_{t \geq 0 }$ does not start at an elastic curve, the energy will instantaneously decrease from the energy level of $16$ to an energy level below, as in this case
\begin{equation}
\frac{d}{dt} \mathcal{E}(f_t) = -||\nabla_{L^2} \mathcal{E}(f_t)||_{L^2}^2 < 0 .
\end{equation}
This being so, we can bound $(\mathcal{L}(f_t))_{t \geq 0}$ by restarting the flow at a positive time where we reach an energy level below $16$. If the flow starts at an elastic curve of energy $16$, the flow will not change the curve at all. Hence $(\mathcal{L}(f_t))$ remains bounded in any case, which is - as we discussed - sufficient for the convergence. In this section we rule out the latter case by showing that there exists no closed free elastica of energy equal to $16$. We show even more: The only closed free elastica of energy less or equal to $16$ is -- up to reparametrization and isometries -- the Clifford elastica. This leaves it as the only possible limit curve for evolutions with small energy. 
 \begin{prop}\label{prop:simfreel}
Let $\gamma \in C^\infty(\mathbb{S}^1, \mathbb{H}^2)$ be a free elastica such that $\mathcal{E}(\gamma) \leq 16$. Then $\kappa[\gamma] \equiv const.$ 
 \end{prop}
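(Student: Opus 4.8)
The plan is to argue by contradiction. Suppose $\kappa := \kappa[\gamma]$ is nonconstant; I will show $\mathcal{E}(\gamma) > 16$. Since a free elastica has $\lambda = 0$, the first step is to place $\gamma$ in the classification of Proposition \ref{prop:integr}. A closed elastica cannot be asymptotically geodesic, and, being free, cannot be wavelike either: the former is immediate from Proposition \ref{prop:integr} (the profile $\kappa_0^2\sech^2(r\,\cdot\,)$ is not periodic), the latter is Corollary \ref{cor:noclwv}. Hence $\gamma$ is orbitlike, so $\kappa^2 = \kappa_0^2\dn^2(r\,\cdot\,,p)$ with $p\in(0,1)$, and for $\lambda = 0$ the formulae of Proposition \ref{prop:integr} give $\kappa_0^2 = \tfrac{4}{2-p^2}$ and $r = \tfrac{1}{\sqrt{2-p^2}}$.

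Next I compute the energy. The minimal period of $\kappa^2 = \kappa_0^2\dn^2(r\,\cdot\,,p)$ is $\tfrac{2K(p)}{r}$, so by the definition of $n$ in Remark \ref{rem:n} and closedness the hyperbolic length is $L = \tfrac{2nK(p)}{r}$. Using $\int_0^{2K(p)}\dn^2(z,p)\,\diff z = 2E(p)$ (see \eqref{eq:intdnsquared}),
\[
\mathcal{E}(\gamma) = \int_0^{2nK(p)/r}\kappa_0^2\dn^2(rs,p)\,\diff s = \frac{n\kappa_0^2}{r}\int_0^{2K(p)}\dn^2(z,p)\,\diff z = \frac{2n\kappa_0^2E(p)}{r} = \frac{8nE(p)}{\sqrt{2-p^2}}.
\]
Two facts about closed orbitlike free elastica now finish the proof. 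First, $n \geq 2$: since $0 < \tfrac{64}{\pi^2} - 2$, Remark \ref{rem:strres} rules out $n = 1$. Second, $\tfrac{E(p)}{\sqrt{2-p^2}} > 1$ for all $p\in(0,1)$, the strict form of the inequality in Proposition \ref{prop:ellipt}. Therefore $\mathcal{E}(\gamma) > 8\cdot 2\cdot 1 = 16$, contradicting $\mathcal{E}(\gamma)\leq 16$, and so $\kappa$ must be constant.

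The classification lookup and the elliptic bookkeeping are routine; the one genuinely delicate point is the strictness $\tfrac{E(p)}{\sqrt{2-p^2}} > 1$ on $(0,1)$. The non-strict bound alone is insufficient, since a priori $n = 2$ is possible and would only give $\mathcal{E}(\gamma)\geq 16$; the degenerate equality case is precisely the limit $p\to 1$, which no genuine orbitlike elastica attains. If Proposition \ref{prop:ellipt} is stated only with ``$\geq$'', strictness is easy to supply: with $F(p) := E(p)^2 - (2-p^2)$ one has $F(1) = 0$, and from $E'(p) = \tfrac{E(p)-K(p)}{p}$ together with Cauchy-Schwarz in the form $E(p)\bigl(K(p) - E(p)\bigr) \geq \bigl(\int_0^{\pi/2} p\sin\theta\,\diff\theta\bigr)^2 = p^2$, strict for $p\in(0,1)$, one finds $F'(p) < 0$ on $(0,1)$, hence $F > 0$ there. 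An alternative packaging first forces $\mathcal{E}(\gamma) = 16$ by splitting into the non-simple case (Proposition \ref{prop:liyau}) and the simple case (Proposition \ref{prop:periodorb} gives $n\geq 2$, then Lemma \ref{energ:orbitl}), and then excludes equality via the displayed energy formula.
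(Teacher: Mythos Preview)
Your proof is correct and in fact cleaner than the paper's. The paper splits into two cases. For $\mathcal{E}(\gamma) < 16$ it invokes simplicity via Li--Yau, then Hopf's Umlaufsatz to get $T[\gamma]=\pm1$, then the total-curvature formula of Corollary~\ref{cor:totrwealk} to identify $m=\pm1$, and finally a Langer--Singer integral computation showing the closing condition \eqref{eq:period} forces $n<2$, a contradiction. For $\mathcal{E}(\gamma)=16$ it argues, essentially as you do, that $n = 2\sqrt{2-p^2}/E(p) \in (4\sqrt{2}/\pi,\,2)$ cannot be an integer.

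You bypass the total-curvature machinery entirely: Remark~\ref{rem:strres} already gives $n\geq 2$ for free closed orbitlike elastica, and the strict lower bound $E(p)/\sqrt{2-p^2}>1$ from Proposition~\ref{prop:ellipt} (which the paper states with strict inequality) then forces $\mathcal{E}(\gamma)=8nE(p)/\sqrt{2-p^2}>16$ in a single stroke, covering both cases at once. What your argument gains is economy---no Umlaufsatz, no Corollary~\ref{cor:totrwealk}, no Langer--Singer integral; what the paper's approach buys is that its first case does not lean on Remark~\ref{rem:strres}, which itself rests on Li--Yau, so the dependency structure is slightly different though not circular in either version.
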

 
\begin{proof}
{Let us distinguish two cases.} If $\mathcal{E}(\gamma) < 16$ then $\gamma$ has to be simple, see Proposition  \ref{prop:liyau}. From {Hopf's Umlaufsatz (see e.g. \cite[Theorem 2.2.10]{BaerDiffgeo}) it} can be inferred that $T[\gamma] \in \{-1 , 1\}$. Also, Proposition \ref{prop:periodorb} implies that $n \geq 2 $ or $\kappa \equiv const$. For a contradiction suppose that $n \geq 2$. Note that $\gamma$ is orbitlike, see Corollary \ref{cor:noclwv}. Additionally, 
\begin{equation*}
\kappa_0^2 = \frac{2 \lambda + 4}{2-p^2} = \frac{4}{2-p^2} < 4 
\end{equation*}
and Corollary \ref{cor:totrwealk} implies that 
$m =  T[\gamma] = \pm 1$. Notice that $\gamma$ is rotational because of Proposition \ref{prop:simcloorbit}. By Proposition \ref{prop:closedrot} and $4C = \kappa_0^4 - 4 \kappa_0^2$ (see \eqref{eq:onceint}) we obtain
\begin{align*}
\pi & = |\pi m | = \left\vert \int_0^\frac{2nK(p)}{r} \sqrt{-C} \frac{\kappa^2}{4C +  4 \kappa^2} \diff s \right\vert
\\ & = \frac{\sqrt{\kappa_0^2 - \frac{1}{4}\kappa_0^4 }}{r} \int_0^{2nK(p)} \frac{\dn^2(s,p)}{\kappa_0^2 - 4 +4 \dn^2(s,p)  } \diff s
\\ & = 2n \sqrt{1- \frac{\kappa_0^2}{4}} \frac{|\kappa_0|}{r} \int_0^{K(p)} \frac{\dn^2(s,p)}{\frac{4}{2-p^2} - 4p^2 \sn^2(s,p) } \diff s 
\\ & = n \sqrt{1 - \frac{1}{2-p^2}} (2-p^2) \int_0^{K(p)} \frac{\dn^2(s,p)}{1- p^2(2-p^2)\sn^2(s,p) } \diff s 
 \\ & = \frac{n}{2} \left( 2 \sqrt{1-p^2} \sqrt{2-p^2} \int_0^{K(p)} \frac{\dn^2(s,p)}{1 - p^2(2-p^2)\sn^2(s,p)} \diff s\right) .
\end{align*}
According to \cite[Proof of Proposition 5.3, p.21]{LangerSinger} the expression in parentheses is always {strictly} larger than $\pi$. Since $n \geq 2$ this leads to {the desired} contradiction.  
{ Now suppose that $\mathcal{E}(\gamma) = 16$. Again because of Corollary \ref{cor:noclwv} and Proposition \ref{prop:integr}, $\gamma$ is either orbitlike or circular. Suppose now that $\gamma$ is orbitlike. Similar to the Proof of Lemma \ref{energ:orbitl} one computes using $\lambda = 0$ that 
$16 = \mathcal{E}(\gamma) = 8 n \frac{E(p)}{\sqrt{2-p^2}}$, 
in particular 
\begin{equation*}
2 \frac{\sqrt{2-p^2}}{E(p)} = n \in \mathbb{N}.
\end{equation*}
However, according to Proposition \ref{prop:ellipt}, the {number} on the left hand side is stricly between $\frac{4\sqrt{2}}{\pi} \approx 1.80063$ and $2$, and hence cannot be natural. We conclude that $\gamma$ has to be circular, i.e. $\kappa[\gamma] \equiv const$.}
\end{proof}

\begin{cor}\label{cor:only_the_lonely}
 Let $\gamma$ be a closed free elastica with $\mathcal{E}(\gamma) \leq 16$. Then $\gamma$ is the Clifford elastica \eqref{eq:CliffordElastica} up to translation, rescaling and reparametrization.
\end{cor}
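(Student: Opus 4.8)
The plan is to reduce the statement to Proposition \ref{prop:simfreel} together with the classification of closed constant-curvature curves in Proposition \ref{prop:curvconst}, and then to pin down the resulting circle by an explicit normalization. First I would apply Proposition \ref{prop:simfreel}: since $\gamma$ is a closed free elastica with $\mathcal{E}(\gamma)\le 16$, it has $\kappa[\gamma]\equiv\mathrm{const}$. This constant cannot be $0$, because geodesics of $\mathbb{H}^2$ are never closed, so $\kappa[\gamma]\equiv\kappa_0$ with $\kappa_0\ne 0$. Feeding a constant into the elastica equation \eqref{eq:elasticaeq} with $\lambda=0$ gives $\kappa_0^3-2\kappa_0=0$, hence $\kappa_0^2=2$; in particular $|\kappa_0|=\sqrt2>1$.

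Next, since $|\kappa_0|>1$ and $\gamma$ is closed, Proposition \ref{prop:curvconst} forces $\gamma$ to parametrize a Euclidean circle lying entirely in $\mathbb{H}^2$ (the remaining alternatives in that proposition --- circles touching or crossing the $x$-axis, and straight lines --- are either not contained in $\mathbb{H}^2$ or not closed). Write its Euclidean centre as $(x_0,y_0)$ and radius as $R$, with $0<R<y_0$. A horizontal translation $z\mapsto z-x_0$, which is an isometry of $\mathbb{H}^2$, moves the centre onto the $y$-axis without changing $y_0$ or $R$; the computation in \eqref{eq:kmax} then shows that the hyperbolic curvature of the circle $t\mapsto (0,y_0)+R(\cos t,\sin t)$ has absolute value $y_0/R$, so $|\kappa_0|=\sqrt2$ yields $y_0=\sqrt2\,R$. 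Applying the rescaling $z\mapsto z/(\sqrt2\,R)$, again an isometry of $\mathbb{H}^2$, transforms the circle into the one centred at $(0,1)$ of radius $1/\sqrt2$, which is exactly the image of the Clifford elastica \eqref{eq:CliffordElastica}. As $\gamma$ and $\tau$ now have the same image, they differ by a reparametrization, and precomposing with $t\mapsto-t$ if needed takes care of the sign of $\kappa_0$.

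The substantive content has already been extracted in Proposition \ref{prop:simfreel}; what remains is routine. The only points needing a little care are bookkeeping the isometries so that exactly a translation and a rescaling (not a general M\"obius map) are used, as the statement requires, and checking that the cases discarded from Proposition \ref{prop:curvconst} really cannot yield a closed curve inside $\mathbb{H}^2$. I do not expect a genuine obstacle.
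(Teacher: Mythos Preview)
Your proposal is correct and follows essentially the same approach as the paper: invoke Proposition~\ref{prop:simfreel} to get constant curvature, use the free elastica equation to obtain $\kappa_0^2=2$, apply Proposition~\ref{prop:curvconst} to identify the curve as a Euclidean circle in $\mathbb{H}^2$, and then normalize by a translation and a rescaling. Your version is in fact a bit more careful than the paper's, since you explicitly rule out $\kappa_0=0$, compute the centre-to-radius ratio, and handle the sign of $\kappa_0$ via orientation reversal (which the paper glosses over by writing $\kappa[\gamma]\equiv\sqrt{2}$).
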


\begin{proof}
 Since $\kappa[\gamma] \equiv const.$ by Proposition \ref{prop:simfreel}, it follows that $\kappa[\gamma] \equiv \sqrt{2}$ by Definition \ref{def:elastica}. Denote the Clifford elastica \eqref{eq:CliffordElastica} by $\tau$, then one finds $\kappa[\tau] \equiv \sqrt{2}$, thus 
 {$\gamma \equiv \tau$} up to isometries of $\mathbb{H}^2$ and reparametrization. Note that inversions are not needed, since, by  Proposition \ref{prop:curvconst}, $\gamma$ is given as a Euclidean circle in $\mathbb{H}^2$, which can be mapped to $\tau$ using translations and rescalings only.
\end{proof}

\section{Proof of the Main Results}
\label{sec:MainResultProofs}
In this section we show the proofs of the main results. We start with the fundamental result of \cite{DS17} that settles question of long time existence and identifies the uniform-in-time boundedness of the hyperbolic length as sufficient for the convergence 
 \begin{theorem}[{Slight variation of \cite[Theorem 1.1 (i)]{DS17}}]
  \label{thm:LTE}
  Let $f_0\colon  \Sph \to \Hyp$ be a smooth immersion and $\lambda \geq 0$. Then there exists a unique, smooth, global solution $f\colon \Sph \times [0,\infty) \to \mathbb{H}^2$ to the initial value problem 
  \begin{equation}
 \left\{ \begin{array}{rll}\partial_t f &= - \nabla_{\!L^2}\E_\lambda(f) , & \mbox{ on }\mathbb{S}^1 \times (0,\infty), \\
f(\cdot, 0) &= f_0, & \mbox{ on } \mathbb{S}^1.
\end{array}\right. 
 \end{equation}
 Moreover, if the length $\mathcal{L}(f(\cdot, t))$ of $f$ is uniformly bounded on $[0,\infty)$, then the solution subconverges smoothly after appropriate scaling, translation in the $x$-direction and reparametrization to an elastic curve, {which is a free elastica in the case of $\lambda = 0$ (see Definition \ref{def:elastica})}.
 \end{theorem}
  \begin{remark}
 \vspace{-.01cm}
 \label{rem:subconvergenceExplained1}\leavevmode
\begin{enumerate}
 \item {The precise formulation of the subconvergence result is as follows: Denote the constant speed reparametrization of $f$ by $\tilde f$, then there exists smooth functions $p \colon [0,\infty) \to \R$, $a \colon [0,\infty) \to \R_{>0}$ such that the isometric image $\hat f(t,\cdot) := a(t)(\tilde f(t,\cdot) - (p(t), 0)^T)$ of $f$ subconverges smoothly to an elastic curve, i.e. for any $t_n \to \infty$ there exist some subsequence $t_{n_k}$ and some elastica $f_\infty$ with $\|\hat f(t_{n_k}, \cdot) - f_\infty\|_{W^{m,2}} \to 0$ for all $m \in \N$ (c.f. \cite[p. 22]{DS17}).}
 \item Note that scaling and translation in the $x$-direction are isometries in $\mathbb{H}^2$. {Hence $\widehat{f}(t,\cdot)$ is an isometric image of $f(t, \cdot)$.}
 \item The uniform bound of the length is immediate if $\lambda > 0$, as this implies
  \[
   \mathcal{L}(f(\cdot, t)) \leq \frac{1}{\lambda}\E_\lambda(f(\cdot, t)) \leq \frac{1}{\lambda}\E_\lambda(f(\cdot, 0)) < \infty,
  \]
  since the energy is monotonically decreasing during the flow. This observation was used in {\cite[Theorem 1.1 (i)]{DS17}}, which states the above subconvergence result only for $\lambda > 0$, but the proof of {\cite[Theorem 1.1 (i)]{DS17}} shows that any bound on the length is sufficient for the subconvergence.
\end{enumerate}
\end{remark}
With a \emph{Lojasiewicz-Simon gradient inequality} we can actually improve the subconvergence to convergence: 
\begin{remark}\label{rem:convergence}
If the elastic flow $f$ subconverges to an elastic curve $f_\infty$ {in the sense of  Remark \ref{rem:subconvergenceExplained1} (1)}, then it converges smoothly to $f_\infty$.\end{remark}
Since a proof of this result is beyond the scope of this article we only give a sketch here and refer the reader to \cite{Loja} for details
\label{proof:convergence}
\begin{proof}[{Sketch of Proof of Remark \ref{rem:convergence}}] The convergence is usually shown with a \L{}ojasievicz-Simon inequality  (c.f. \cite{chill2009} and  \cite[Theorem 1.2]{Loja}). It is enough to show convergence in $L^2$, as a subsequence argument proves convergence in all higher Sobolev norms. By \cite[Corollary 3.11]{CHILL2003572} (see also \cite[p. 355]{chill2009}) it is sufficient for the 
 \L{}ojasievicz-Simon inequality to hold if one shows that there exists a neighborhood $U \subset H^{4,\bot}$ of the sublimit $f_\infty$ (where $H^{4,\bot}$ is defined analogously to \cite{Loja})  such that $\E\colon  U \to \R$ and $\nabla \E \colon  U \to L^{2,\bot}$ are analytic and the Frech\'e{}t derivative $(\nabla \E)'(f_\infty)$ is Fredholm of index zero. Identifying the tangent space of $\Hyp \subset \R^2$ with $\R^2$ and choosing $U$ small enough such that $u+\phi$ is still immersed and the second component satisfies $(u+\phi)_2 > 0$ for all $\phi \in U$ we find similar to \cite[Theorem 3.5]{Loja} that the two mappings are analytic (the existence of such an $U$ is guaranteed by Sobolev embeddings). Moreover, since for any normal vector field $N$ along $f_\infty$ we have
 \[
  \nabla \E_\lambda(f_\infty+u N) = \left( \frac{ \left((f_\infty+u N)_2\right)^4}{|\partial_x (f_\infty+ u N)|^4} \partial_x^4 u + \text{lower order terms}\right)N
 \]
by \cite[p. 11]{DS17}, one finds that 
\[
  (\nabla \E_\lambda)'(f_\infty)(u N) =  \frac{ \left((f_\infty)_2\right)^4}{|\partial_x f_\infty|^4} (\partial_x^4 u)N + B(u)N.  \]
  By the Sobolev embedding theorem we see that $B \colon  H^{4,\bot} \to L^{2,\bot}$ is a compact mapping, thus $ (\nabla \E_\lambda)'(f_\infty) \colon  H^{4,\bot} \to L ^{2,\bot}$ is Fredholm of index zero. This shows that a \L{}ojasievicz-Simon inequality holds on $U$, from which one can deduce the claim similarly to \cite[Theorem 1.2]{Loja}.
 \end{proof}
 
 We now show Theorem \ref{thm:main1}.
 
 \begin{proof}[Proof of Theorem \ref{thm:main1}]
Equation \eqref{eq:2.1} follows {immediately} from Theorem \ref{thm:reilly}. The second part, i.e. \eqref{eq:2.2}, can be inferred from Corollary \ref{cor:8.4} as follows:  

Let $\delta > 0$ and consider a smooth curve $\gamma_0$ such that $16 < \mathcal{E}(\gamma_0) \leq 16 + {\delta}$ and $T[\gamma_0] = 0 $, whose existence is provided by Corollary \ref{cor:8.4}. From Theorem \ref{thm:LTE} we obtain the evolution $(\gamma_t)_{t \geq 0 }$ of $\gamma_0$ by the elastic flow with $\lambda = 0$. Then $\mathcal{E}(\gamma_t) \leq  16 + {\delta}$ but according to Corollary \ref{rem:unblength} we have $\mathcal{L}(\gamma_t) \rightarrow \infty $, at least up to a subsequence. This subsequence produces arbitrarily small values of $\frac{\mathcal{E}}{\mathcal{L}}$.
\end{proof}

\begin{proof}[Proof of Theorem \ref{thm:main2}]
Let $f_0$ be a smooth immersion with $\mathcal{E} (f_0) \leq 16$. First, we assume that $\delta := 16-\mathcal{E}(f_0) > 0$. Since \begin{equation}   \label{eq:FlowDecreasing}                                                                                       
\frac{\diff}{\diff t} \mathcal{E}(f_t) = \langle \nabla \E(f_t), \partial_t f_t\rangle _{L^2} =  -\|\nabla \E(f_t)\|_{L^2}^2 \leq 0                                                                                                        \end{equation} we find that $\E(f_t) \leq \E(f_0)  \leq 16 - \delta$, thus 
$\mathcal{L}(f_t) \leq c_\delta \mathcal{E}(f_t) \leq c_\delta \mathcal{E}(f_0)$ for all $t$ by \eqref{eq:2.1}. 
Hence, by 
Theorem \ref{thm:LTE} and Remark \ref{rem:convergence}, the flow converges in the sense of  Remark \ref{rem:subconvergenceExplained1} (1) to some free elastica with energy {below} $16$.
In Corollary {\ref{cor:only_the_lonely}} we show that the only free elastica with energy below 16 is the Clifford Elastica, which finishes the proof in this case. If $\mathcal{E}(f_0) = 16$, then 
{$f_0$ is not elastic by Corollary \ref{cor:only_the_lonely}, thus}
$\mathcal{E}(f_t) < 16$ for all $t > 0$ by \eqref{eq:FlowDecreasing}, from which we can deduce the claim as above.
\end{proof}

Similarly to the proof of \eqref{eq:2.2} we show Theorem \ref{thm:nonconvergence}.

\begin{proof}[{Proof of Theorem \ref{thm:nonconvergence}}]
Theorem \ref{thm:nonconvergence} is immediate from Corollary \ref{rem:unblength} and Corollary \ref{cor:8.4}.
\end{proof}

\appendix

\section{Minor Proofs}

\subsection{Proof of Proposition \ref{prop:integr}}\label{app:A}
\begin{proof}[Proof of Proposition \ref{prop:integr}]
Remember that $u = \kappa^2 \geq 0 $. Therefore we aim to classify nonnegative solutions of 
$u'^2 + u^3 - (2 \lambda + 4 ) u^2 - 4C u = 0 
$, see \eqref{eq:squaredelas}. 
This equation is of the form 
$u'^2 = P(u)
$ 
for the polynomial $P$ given by 
$P(x) = - (x-\alpha) (x- \beta)(x- \gamma),
$ 
where
\begin{equation*}
\{\alpha,\beta,\gamma\} = \{ 0 , (\lambda + 2) + \sqrt{(\lambda+2)^2 + 4C}, (\lambda + 2) - \sqrt{(\lambda + 2)^2 + 4C } \}.
\end{equation*}
Note that $\alpha, \beta, \gamma$ have to be real-valued since otherwise $P(u) $ can only have one real  root, which is zero. However then $P_{\mid_{(0,\infty)}}$ is negative, which contradicts the existence of positive real-valued solutions of $u'^2 = P(u)$. Note also that one root of $P$ has to be strictly positive for the very same reason. 
From now on we adhere to the convention $\alpha \leq \beta \leq \gamma$ as in \cite{Davis}. Note that $\alpha \neq \gamma$ because otherwise $\alpha = \beta = \gamma = 0$ and the equation reads $(u')^2 = -u^3 $.  This however has no nonnegative solution except for the trivial one.  Observe also that $ \beta \leq u(s) \leq \gamma$ for all $s$, since otherwise nonnegativity is violated again (since $\alpha \leq 0 $). In particular we find $\beta \neq \gamma$. We distinguish between two cases: $\alpha \neq \beta$ and $\alpha = \beta$. Note that 
\begin{equation*}
\beta = \alpha \; \Leftrightarrow  \;  0 = \lambda + 2 - \sqrt{(\lambda + 2)^2 +4C} \; \Rightarrow C = 0 .
\end{equation*} 
 Conversely, note that if there exists a solution $u$ with $C= 0$ then $\lambda + 2 \geq 0 $ since otherwise all roots are nonpositive and $u'^2 = P(u)$ cannot be true. Therefore 
$\alpha = \lambda + 2 - \sqrt{(\lambda+ 2)^2+ 4C} = 0 = \beta. 
$ 
 As a conclusion, $\alpha = \beta$ holds if and only if $C = 0$. 

\textbf{Case 1:} $\alpha \neq \beta$ or $C \neq 0 $. 
In this case we find $\alpha < \beta < \gamma$. 
 We substitute $v= - u(2 \cdot) $ to obtain 
$v'^2  = 4 (v+ \alpha) (v+ \beta) (v + \gamma)$. 
We infer from \cite[p.157, Eq.(10,11)]{Davis} that the general solution is given by
\begin{equation*}
v(x) = - ( \gamma + (\beta - \gamma) \sn^2(\theta (x-x_0) , p))\end{equation*}
where $x_0 \in \mathbb{R}$ is some constant and $\theta^2 =- \alpha + \gamma$ as well as $p^2 =   \frac{\gamma- \beta}{\gamma - \alpha}$. Using that $u'(0) = 0 $ we can choose $x_0 = 0 $.   Resubstituting we obtain 
$u(x)= \gamma (1 - q^2 \sn^2(rs, p))$, 
where $q^2 = \frac{\gamma- \beta}{\gamma}$ and $ r^2= \frac{1}{4} (\gamma- \alpha)$. Notice that any such solution $u$ is global and  attains its global maximum $ \kappa_0^2= \gamma$. \\
First, for the wavelike case, $C>0$, $\alpha, \beta$ and $\gamma$ have to be ordered in the following way: $\alpha = \lambda + 2 - \sqrt{(\lambda + 2)^2 + 4 C }$, $\beta = 0 $ and $\gamma = \lambda + 2 + \sqrt{(\lambda + 2)^2 + 4C}$. Hence $ q = 1 $ and $u(s) = \gamma \cn^2(rs, p) = \kappa_0^2 \cn^2(rs,p)$.
Note that  
\begin{equation*}
 p^2 = \frac{\gamma- \beta}{\gamma- \alpha } =  \frac{\lambda + 2 + \sqrt{(\lambda+ 2)^2 + 4C}}{2\sqrt{(\lambda + 2)^2 + 4C }}
\end{equation*}  
and therefore $p \in ( \frac{1}{\sqrt{2}}, 1 ) $. Moreover,
\begin{equation*}
\kappa_0^2 = \lambda + 2 + \sqrt{(\lambda + 2)^2+ 4C }=2p^2 \sqrt{(\lambda + 2)^2 + 4C} = 2p^2 ( \kappa_0^2 - ( \lambda + 2) ).
\end{equation*}
Solving for $\kappa_0^2$ we obtain $\kappa_0^2 = \frac{p^2(2\lambda + 4)}{2p^2 - 1}$. 
Rewriting 
$\kappa_0^2 = \frac{1}{2} \left( 1 + \frac{1}{2p^2- 1}\right) (2 \lambda + 4) $
we infer that $\kappa_0^2 > 2 \lambda + 4$. 
 Additionally, 
\begin{equation*}
r^2 = \frac{1}{4} ( \gamma - \alpha ) = \frac{1}{4} \left( 2 \sqrt{(\lambda+ 2)^2 + 4C} \right) = \frac{1}{4} \frac{\kappa_0^2}{p^2}.
\end{equation*}
 In the orbitlike case, $C< 0$, we set $\alpha = 0 $ , $\beta = \lambda + 2 -  \sqrt{(\lambda + 2)^2 + 4C}$ and   $\gamma = \lambda + 2 + \sqrt{(\lambda + 2)^2 + 4C}$. 
Whence $q^2 = p^2 $ and thus  $u(x) = \gamma \dn^2(rs, p) $, where 
 \begin{equation*}
 p^2  = \frac{2 \sqrt{(\lambda + 2)^2 + 4C}}{\lambda + 2 + \sqrt{(\lambda + 2)^2 + 4C}}. 
 \end{equation*}
 Note that 
 \begin{equation*}
 \kappa_0^2 = \gamma = \lambda +2 + \sqrt{(\lambda + 2)^2 + 4C} = \lambda + 2 + \frac{p^2 \kappa_0^2}{2}.
 \end{equation*}
 Solving for $\kappa_0^2$ we obtain $\kappa_0^2 = \frac{2 \lambda + 4}{2-p^2}$. In particular we infer that $\lambda + 2 < \kappa_0^2 < 2 \lambda + 4$.

 \textbf{Case 2:} $\alpha = \beta$ or $C= 0 $.  If $C = 0 $, the differential equation reads 
 \begin{equation}
 u'^2 + u^3 - (2 \lambda + 4 ) u^2 = 0. 
 \end{equation}
 We infer that either $u \equiv 0 $ or $u \equiv 2 \lambda + 4 $ or there is $x_0 \in \mathbb{R}$ such that $0 < x_0 < 2 \lambda + 4 $. In the last case note that in a neighborhood of $x_0$ we find 
 $\frac{u'^2}{u^2} = 2\lambda + 4  - u 
$. Whence, 
substituting $ v= \log(u) $ yields 
$ v'^2 = 2 \lambda + 4 - e^v .
$ 
 Defining $\theta := e^{\frac{-v}{2}} $ we obtain 
$ 4 \theta'^2 = (2 \lambda + 4) \theta^2- 1 .
$

Substituting  $\widetilde \theta = \sqrt{2 \lambda + 4} \theta ( \frac{2}{\sqrt{2\lambda + 4}} \cdot ) $ we obtain $1 + \widetilde{\theta}'^2 = \theta^2$. Setting $w := \mathrm{Arcosh}( \theta ) $ we obtain that $w = \pm ( t + x_1)$ for some $x_1 \in \mathbb{R}$ and therefore, tracing all the substitutions back we obtain that $u(x) = (2 \lambda + 4)\cosh^{-2} \left(\frac{\sqrt{2\lambda + 4 }}{2} (x- x_1) \right)$.  The claim follows using that $u'(0) = 0 $. 
\end{proof}


\subsection{Proof of Proposition \ref{prop:nonvan}}\label{app:kleeblatt}
\label{appendix_theta}
Let $\gamma$ be a globally defined elastic curve parametrized by hyperbolic arclength. We will need several lemmas to prove the claim. Recall from the proof of Theorem \ref{thm:explpara} (see \eqref{eq:wichtig} and use $T= \gamma'$) that we have a differential equation for $\gamma$ in $\mathbb{C}$, namely 
$\theta(s) \gamma'(s) = a \gamma(s)^2 + c$ for $s \in \mathbb{R}$, where $\theta(s) := \kappa^2(s) - \lambda + 2 i \kappa'$. 
We can not divide by $\theta$ a priori and hence the Picard-Lindel\"of Theorem is not applicable.{ Recall also that by \eqref{eq:kilextf} 
\begin{equation}\label{eq:kilextcomp}
\widetilde{J}_{\widetilde{\gamma}} (z) = az^2 + c \quad \textrm{for all } z \in \mathbb{C} \textrm{ such that }  \mathrm{Im}(z) > 0 .
\end{equation}
 If the Killing field has a zero in $\mathbb{H}^2$, then one can infer from \eqref{eq:kilextcomp} that $ac> 0$. Therefore $\gamma$ is rotational and hence orbitlike, see Definition \ref{def:classi} and Proposition \ref{prop:closedrot}. 
 }\\
Since $\gamma$ is an immersion, the following lemma is immediate.

\begin{lemma}[Rephrasing the Problem in Terms of $\theta$]\label{lem:vanequ}
 The function $\theta$ vanishes nowhere if and only if $ i \sqrt{\frac{c}{a}} \not \in \gamma(\mathbb{R})$
 , i.e. $\theta(s) $ vanishes if and only if $a\gamma(s)^2+ c = 0$. 
\end{lemma}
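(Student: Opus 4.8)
The plan is to extract the equivalence directly from the structural identity $\theta(s)\gamma'(s) = a\gamma(s)^2 + c$ recalled at the beginning of this appendix, which follows from \eqref{eq:wichtig} together with the fact that arclength parametrization gives $T = \gamma'$. The only input beyond this identity is the immersion hypothesis.

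First I would record that, since $\gamma$ is an immersion parametrized by hyperbolic arclength, its Euclidean velocity never vanishes: $1 = g_{\gamma(s)}(\gamma'(s),\gamma'(s)) = |\gamma'(s)|_{\mathbb{C}}^2/\gamma_2(s)^2$ forces $|\gamma'(s)|_{\mathbb{C}} = \gamma_2(s) > 0$ for every $s \in \mathbb{R}$, because $\gamma_2 > 0$ on $\mathbb{H}^2$. Consequently one may divide the identity $\theta(s)\gamma'(s) = a\gamma(s)^2 + c$ by $\gamma'(s)$, and $\theta(s) = 0$ holds if and only if $a\gamma(s)^2 + c = 0$. This is exactly the ``i.e.'' reformulation in the statement, and it already makes the lemma immediate.

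To pass to the $i\sqrt{c/a}$ formulation I would split into cases on the Killing-field parameters. If $a \neq 0$, then $a\gamma(s)^2 + c = 0 \iff \gamma(s)^2 = -c/a$; since $\gamma(s)$ lies in the open upper half plane, at most one complex square root of $-c/a$ can occur as a value of $\gamma$. When $ac \le 0$ the roots are real or zero, hence none lies in $\mathbb{H}^2$, so the equation has no solution and $\theta$ vanishes nowhere (matching the vacuous ``$i\sqrt{c/a}\notin\gamma(\mathbb{R})$'', as $i\sqrt{c/a}$ is then real); when $ac > 0$ the two roots are $\pm i\sqrt{c/a}$ with $\sqrt{c/a} > 0$, and exactly $i\sqrt{c/a}$ lies in $\mathbb{H}^2$. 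Finally, in the degenerate case $a = 0$ one has $c \neq 0$ (since $|a|+|c|\neq 0$), so $\theta(s)\gamma'(s) = c \neq 0$ again forces $\theta$ to vanish nowhere, consistent with the statement. Combining the cases gives that $\theta$ vanishes nowhere if and only if $a\gamma(s)^2 + c \neq 0$ for all $s$, if and only if $i\sqrt{c/a} \notin \gamma(\mathbb{R})$.

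There is essentially no obstacle: as the surrounding text notes, the lemma is immediate once $\gamma'$ is known to be nonvanishing. The only point needing a line of care is the bookkeeping that translates between the two formulations of the conclusion, i.e. deciding which square root of $-c/a$ can be a value of a curve confined to the upper half plane, and checking the degenerate parameter configurations $a = 0$ and $ac \le 0$ separately.
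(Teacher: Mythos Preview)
Your proposal is correct and follows exactly the approach the paper indicates: the paper's entire proof is the single sentence ``Since $\gamma$ is an immersion, the following lemma is immediate,'' and you have simply spelled out that immediacy via the identity $\theta(s)\gamma'(s)=a\gamma(s)^2+c$ together with $\gamma'(s)\neq 0$. Your additional case analysis translating between the two formulations of the conclusion is more than the paper supplies, but it is correct and in the same spirit.
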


\begin{remark}
Because of the previous lemma it suffices to show that $\theta$ vanishes nowhere for each globally defined elastic curve $\gamma$.  
\end{remark}

\begin{lemma}[Parameter Discussion]\label{lem:8.13} 
Let $\gamma$ be as above. If $\theta$ vanishes at some $t_1 \in \mathbb{R}$, then $\kappa(t_1)$ is a point of minimum curvature of $\gamma$ and the following parameter identities hold 
\begin{align*}
 &(1)\quad r^2p^2 = 1 && (3) \quad \kappa_0^2 = \lambda + 4 \\
 &(2)\quad (\lambda + 2)^2 + 4C = 4 &&(4) \quad
\lambda > 0. 
\end{align*}
Furthermore, 
$(1),(2)$ and $(3)$ are 
also sufficient for $\theta$ having a zero. Moreover, $(1)$,$(2)$ and $(3)$ are all equivalent for orbitlike elastica.
\end{lemma}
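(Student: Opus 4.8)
The idea is that the single equation $\theta(t_1)=0$ unpacks, by the definition $\theta=\kappa^2-\lambda+2i\kappa'$, into the two scalar conditions $\kappa(t_1)^2=\lambda$ and $\kappa'(t_1)=0$. In particular $\lambda=\kappa(t_1)^2\ge 0$ is a critical value of $u:=\kappa^2$, and Proposition~\ref{prop:integr} gives $\kappa_0^2=\sup u\ge\lambda+2>\lambda$, so $u(t_1)=\lambda$ is not the maximal value of $u$. The first step is to inspect the normal forms for $u$ from Proposition~\ref{prop:integr}: differentiating $\dn^2,\sech^2,\cn^2$ one checks that the only critical value of $u$ strictly below $\kappa_0^2$ is its minimum, which is attained precisely in the orbitlike case (value $\kappa_0^2(1-p^2)$, at $\cn=0$) and the wavelike case (value $0$, at $\cn=0$). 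Hence $u(t_1)=\lambda$ equals that minimum; since $\kappa$ has constant sign along $\gamma$, this already shows that $t_1$ is a point at which $\kappa^2$ attains its minimum, i.e. a point of minimal curvature. (Throughout we assume $\kappa$ is non-constant, as elsewhere in this section; for constant curvature $\theta$ is constant, equal to $2$ or to $-\lambda$, and the assertion is degenerate.)

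The next step is to pin down the case. The circular case is impossible since there $u\equiv\lambda+2\ne\lambda$ (indeed $\theta\equiv2$); the asymptotically geodesic case is impossible since there $\kappa'$ vanishes only at the single vertex, where $u=\kappa_0^2=2\lambda+4>\lambda$; and in the wavelike case $u(t_1)=\lambda$ forces $\lambda=0$, whence $\kappa(t_1)=\kappa'(t_1)=0$ and, by uniqueness for the second-order ODE \eqref{eq:elasticaeq}, $\kappa\equiv0$, contradicting $\kappa_0^2>2\lambda+4>0$. So $\gamma$ is orbitlike and $\lambda=\kappa_0^2(1-p^2)$. Combining this with the orbitlike relations $\kappa_0^2=\tfrac{2\lambda+4}{2-p^2}$ and $r=\tfrac12\sqrt{\tfrac{2\lambda+4}{2-p^2}}$ (so $r^2=\tfrac14\kappa_0^2$) from Proposition~\ref{prop:integr} is an elementary elimination: it yields $p^2=\tfrac{4}{\lambda+4}$ and $\kappa_0^2=\lambda+4$, which is $(3)$; then $r^2p^2=\tfrac14\kappa_0^2p^2=1$, which is $(1)$; using $4C=\kappa_0^4-(2\lambda+4)\kappa_0^2$ (this is \eqref{eq:onceint} at a vertex) one gets $(\lambda+2)^2+4C=(\lambda+2)^2+(\lambda+4)(\lambda+4-2\lambda-4)=4$, which is $(2)$; and orbitlikeness $\kappa_0^2<2\lambda+4$ reads $\lambda+4<2\lambda+4$, i.e. $\lambda>0$, which is $(4)$.

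For the converse and the stated equivalences I would run the same algebra backwards. Writing $4C=x^2-(2\lambda+4)x$ with $x=\kappa_0^2$, condition $(2)$ becomes the quadratic $x^2-(2\lambda+4)x+(\lambda+2)^2-4=0$, whose roots are $x=\lambda+4$ and $x=\lambda$; since always $\kappa_0^2\ge\lambda+2$, only $\kappa_0^2=\lambda+4$ survives, so $(2)\Leftrightarrow(3)$ for every elastica. For an orbitlike curve $r^2p^2=\tfrac{\kappa_0^2-\lambda-2}{2}$, so there $(1)\Leftrightarrow\kappa_0^2=\lambda+4\Leftrightarrow(3)$ as well; moreover a wavelike curve satisfies $r^2p^2=\tfrac14\kappa_0^2$, so $(1)$ together with $(3)$ would force $\kappa_0^2=4=\lambda+4$, i.e. $\lambda=0$, incompatible with the wavelike inequality $\kappa_0^2>2\lambda+4=4$. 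Thus $(1)$--$(3)$ force $\gamma$ to be orbitlike with $\kappa_0^2=\lambda+4$ (so $p^2=\tfrac{4}{\lambda+4}$ by the orbitlike relation), and then $\kappa^2$ attains the value $\kappa_0^2(1-p^2)=(\lambda+4)\bigl(1-\tfrac{4}{\lambda+4}\bigr)=\lambda$ at a point where $\kappa'=0$; there $\theta=0$. The only real work is the bookkeeping with the elliptic-function parameters; the one place where care is needed is this converse direction, where the wavelike case must be excluded by a short computation rather than assumed away.
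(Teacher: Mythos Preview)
Your proof is correct and takes a genuinely different route from the paper's. The paper first invokes the Killing-field classification (the discussion preceding Lemma~\ref{lem:vanequ}): if $\theta$ vanishes then $\widetilde J_\gamma$ has a zero in $\mathbb H^2$, which forces $ac>0$, hence $\gamma$ is rotational and therefore orbitlike. With orbitlike in hand, the paper then runs the inequality chain
\[
0=|\theta(t_1)|^2=\lambda^2+4C+4\kappa(t_1)^2\ge\lambda^2+4C+4\kappa_0^2(1-p^2)=\bigl(\sqrt{(\lambda+2)^2+4C}-2\bigr)^2\ge0,
\]
so all inequalities collapse and the identities drop out simultaneously. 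You instead read off the two scalar equations $\kappa(t_1)^2=\lambda$, $\kappa'(t_1)=0$ directly from $\theta=0$, observe that this forces $u(t_1)$ to be a critical value strictly below $\kappa_0^2$, and then eliminate the circular, asymptotically geodesic and wavelike cases one by one (the wavelike elimination via uniqueness for \eqref{eq:elasticaeq} is a nice touch). Your approach is more elementary in that it never appeals to the Killing-field structure; the paper's approach is slicker once that structure is already on the table, since the single inequality chain replaces your case analysis and algebraic elimination.

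One cosmetic slip: in your first paragraph you write ``since $\kappa$ has constant sign along $\gamma$'' before you have established that $\gamma$ is orbitlike. For a wavelike curve $\kappa=\kappa_0\cn(r\cdot,p)$ changes sign, so this assertion is not yet justified at that point. Fortunately you do not actually need it: the conclusion that $t_1$ is a point of minimum curvature follows immediately from $u(t_1)=\min u$, and the phrase ``minimum curvature'' in the lemma refers to minimal $|\kappa|$, i.e.\ minimal $\kappa^2$. So simply delete that clause.
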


 \begin{proof}
{As we discussed in the introduction of this subsection, $\theta$ can vanish only provided that the Killing field has a zero in $\mathbb{H}^2$, which implies that $\gamma$ is rotational and orbitlike, see the arguments in the aforementioned introduction.} 
 Observe that for each orbitlike elastica $\gamma$ it holds that $\kappa^2 \geq \kappa_0^2 (1- p^2)$, see Proposition \ref{prop:integr} and Definition \ref{def:B2}. We can compute using the definition of $\theta$ in Proposition \ref{prop:nonvan}, \eqref{eq:onceint} and Proposition \ref{prop:integr} multiple times
 \begin{align}\label{eq:param} 
 0 & = |\theta(t_1)|^2 =  ( \kappa(t_1)^2 - \lambda)^2 + 4 \kappa'(t_1)^2 = \lambda^2 + 4C + 4 \kappa(t_1)^2 \\
  & \geq \lambda^2 + 4C  + 4 \kappa_0^2 (1- p^2) = \lambda^2 + 4C + 4 (2\lambda + 4) \frac{1-p^2}{2-p^2} \nonumber
  \\ & = \lambda^2 + 4C + 4 \left( 2 + \lambda - \sqrt{(\lambda + 2)^2 +4C} \right) \nonumber
  \\ & =  4 + (\lambda + 2)^2 +4C  - 4 \sqrt{ (\lambda + 2)^2 + 4C} \nonumber\\
   & = \left( \sqrt{(\lambda + 2)^2 + 4C } - 2 \right)^2 \geq 0 .\nonumber
 \end{align}
 We infer that all inequalities in the above chain have to be equalities. From this follows that $\kappa(t_1)^2 = \kappa_0^2 (1- p^2)$ which is the minimum possible curvature (see Definition \ref{def:B2}) and parameter identity no. $(2)$ using that equality holds in the last step. For parameter identity no. (1) observe using \ref{prop:integr} that 
 \begin{equation*}
 r^2 p^2 = \frac{2\lambda  + 4}{4} \frac{p^2}{2-p^2} = \frac{\sqrt{(\lambda + 2)^2 + 4C}}{2}.
\end{equation*}  
For no. (3) observe that 
\begin{equation*}
4 = ( \lambda + 2)^2 + 4C = \lambda^2 + 4C + 4\lambda + 4 = (\kappa_0^2- \lambda)^2 - 4 \kappa_0^2 + 4 \lambda + 4, 
\end{equation*}
and thus
$(\kappa_0^2 - \lambda )( \kappa_0^2 - \lambda - 4 ) = 0 $
which is equivalent to $\kappa_0^2 - \lambda - 4 = 0 $ since $\kappa_0^2 - \lambda \geq 2 > 0 $. Therefore, as an easy computation shows, parameter identity $(1), (2), (3)$ are all equivalent. For parameter identity no. (4) note that orbitlike elastica satisfy $ \kappa_0^2 < 2 \lambda + 4$, as Proposition \ref{prop:integr} 
implies. However $\lambda + 4 < 2 \lambda + 4$ holds true if and only if $\lambda > 0 $. The sufficiency of $(2)$ is clear, when we compute similar to \eqref{eq:param}: 
\begin{equation*}
  0 =   \left( \sqrt{(\lambda + 2)^2 + 4C } - 2 \right)^2 = \lambda^2 + 4C + 4 \kappa_0^2 (1-p^2) = \left\vert \theta\left( \tfrac{K(p)}{r} \right) \right\vert^2 
\end{equation*}
and $(1)$ and $(3)$ are sufficient as well since they are equivalent to $(2)$. 
 \end{proof}
 
\begin{cor}\label{lem:nonnonvan}
If $\theta$ 
has any real zeros, then they are given by $s_l = (2l+1) \frac{K(p)}{r} = (2l+1) K(p) p  $.
\end{cor}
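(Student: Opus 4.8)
The plan is to read the statement off from Lemma \ref{lem:8.13} together with the explicit orbitlike curvature formula, so the argument is short. First I would invoke Lemma \ref{lem:8.13}: if $\theta$ vanishes at some $t_1 \in \R$, then $\gamma$ is necessarily orbitlike, and moreover the chain of (in)equalities used in its proof (cf.\ \eqref{eq:param}) actually forces $\kappa(t_1)^2 = \kappa_0^2(1-p^2)$, i.e.\ $t_1$ is a point at which the squared curvature attains its global minimum. So the task reduces to locating the minima of $s \mapsto \kappa^2(s)$.

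Next I would use Proposition \ref{prop:integr} in the orbitlike case, which gives $\kappa^2(s) = \kappa_0^2\,\dn^2(rs,p)$ with $r,p$ as specified there. It then remains to recall the elementary behaviour of the Jacobi function $\dn$ (Appendix \ref{appendix:Jacobi}): $\dn(\cdot,p)$ is $2K(p)$-periodic, equals $1$ at $0$ and $\sqrt{1-p^2}$ at $K(p)$, and is strictly monotone on each half-period; hence $\dn^2(\cdot,p)$ attains its minimal value $1-p^2$ exactly on the set $(2\Z+1)K(p)$. Therefore $rt_1 \in (2\Z+1)K(p)$, i.e.\ $t_1 = (2l+1)\tfrac{K(p)}{r}$ for some $l \in \Z$. (Note that $\kappa'(t_1)=0$ automatically, since these points are critical points of $\dn$, consistent with $|\theta(t_1)|^2 = (\kappa^2-\lambda)^2 + 4\kappa'^2 = 0$.)

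Finally I would rewrite $1/r$ as $p$ using parameter identity $(1)$ of Lemma \ref{lem:8.13}, namely $r^2p^2 = 1$: since $r,p > 0$ this gives $rp = 1$, so $1/r = p$ and the real zeros of $\theta$ are precisely the points $s_l = (2l+1)\tfrac{K(p)}{r} = (2l+1)K(p)p$, as claimed. I do not expect any genuine obstacle here; the only mildly non-routine point is verifying that $\dn^2$ attains its minimum \emph{only} at the odd multiples of $K(p)$, but this is immediate from the monotonicity and periodicity properties of the Jacobi elliptic functions collected in the appendix.
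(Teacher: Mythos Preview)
Your proposal is correct and follows essentially the same approach as the paper: the paper's proof is a one-liner pointing to Definition \ref{def:B2} and Proposition \ref{prop:integr} for the fact that the $s_l$ are precisely the points of minimal curvature, while you spell out the details (the behaviour of $\dn^2$ on half-periods and the use of $r^2p^2=1$ from Lemma \ref{lem:8.13} to rewrite $1/r$ as $p$). There is no substantive difference in strategy.
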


\begin{proof}
The points $s_l$ are exactly the points of minimal curvature, see Definition \ref{def:B2} and Proposition \ref{prop:integr}. 
\end{proof}

\begin{cor}
On $( -\frac{K(p)}{r}, \frac{K(p)}{r} ) $ the reciprocal of $\theta$ satisfies
\begin{equation*}
\frac{1}{\theta(s) } = \frac{1}{4}- \frac{i}{2} \frac{\kappa'}{\kappa^2 - \lambda}.
\end{equation*}
\end{cor}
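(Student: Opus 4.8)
The plan is a short computation that cashes in the parameter identities already extracted in Lemma \ref{lem:8.13}. Throughout we remain under the standing assumption of this subsection, namely that $\theta$ does have a real zero; by Corollary \ref{lem:nonnonvan} those zeros are exactly the points $s_l = (2l+1)\frac{K(p)}{r}$, so $\theta$ is zero-free on the open interval $\left(-\frac{K(p)}{r},\frac{K(p)}{r}\right)$ and the reciprocal $\frac{1}{\theta}$ is well defined there.

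First I would record the modulus of $\theta$. Since $\theta(s) = \kappa^2 - \lambda + 2i\kappa'$, expanding the square and inserting the first integral \eqref{eq:onceint} (this is precisely the computation already carried out in Remark \ref{rem:nontrivialk}) gives
\[
|\theta(s)|^2 = (\kappa^2 - \lambda)^2 + 4\kappa'^2 = \lambda^2 + 4C + 4\kappa^2 .
\]
Next I would invoke part (2) of Lemma \ref{lem:8.13}, which states $(\lambda+2)^2 + 4C = 4$, i.e.\ $\lambda^2 + 4C = -4\lambda$; substituting this into the previous line yields $|\theta(s)|^2 = 4\kappa^2 - 4\lambda = 4(\kappa^2-\lambda)$. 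In particular, on the open interval where $\theta\neq 0$ one gets $\kappa^2 - \lambda = \tfrac14|\theta(s)|^2 > 0$, so dividing by $\kappa^2-\lambda$ is legitimate there.

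Finally I would conclude by writing $\frac{1}{\theta} = \overline{\theta}/|\theta|^2$:
\[
\frac{1}{\theta(s)} = \frac{(\kappa^2-\lambda) - 2i\kappa'}{|\theta(s)|^2} = \frac{(\kappa^2-\lambda) - 2i\kappa'}{4(\kappa^2-\lambda)} = \frac14 - \frac{i}{2}\,\frac{\kappa'}{\kappa^2-\lambda},
\]
which is exactly the asserted identity. I do not expect any real obstacle: the only point deserving a word is the division by $\kappa^2-\lambda$, which is valid on the stated interval because there $\theta\neq 0$ forces $\kappa^2-\lambda = \tfrac14|\theta(s)|^2 > 0$ (equivalently, by the equality chain in the proof of Lemma \ref{lem:8.13}, $\lambda$ is the minimum value of $\kappa^2$ for such an orbitlike elastica, attained only at the zeros $s_l$ of $\theta$).
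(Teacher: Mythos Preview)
Your proof is correct and follows essentially the same route as the paper: both use the parameter identity $(\lambda+2)^2+4C=4$ from Lemma~\ref{lem:8.13} to rewrite $|\theta|^2=\lambda^2+4C+4\kappa^2$ as $4(\kappa^2-\lambda)$, then compute $\frac{1}{\theta}=\overline{\theta}/|\theta|^2$. You are in fact a bit more careful than the paper in explicitly justifying why $\kappa^2-\lambda>0$ on the stated interval.
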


\begin{proof}
Observe that parameter identity $(2)$ in Lemma \ref{lem:8.13} implies $\lambda^2 + 4C = - 4 \lambda$. The rest is a short computation using \eqref{eq:onceint}:  
\begin{align*}
 \frac{1}{\theta(s) } & = \frac{1}{(\kappa^2 - \lambda) + 2i \kappa'(s) }
  = \frac{\kappa^2 - \lambda - 2 i \kappa'}{\lambda^2 + 4 C + 4 \kappa^2} 
 = \frac{\kappa^2 - \lambda - 2i \kappa'}{4 \kappa^2 - 4 \lambda } = \frac{1}{4}- \frac{i}{2} \frac{\kappa'}{\kappa^2 - \lambda}. \qedhere
\end{align*}
\end{proof}

\begin{lemma}[Explicit Parametrization near $s = 0$]\label{lem:rep}
 Let $\gamma$ be a globally defined elastic curve with $\theta$ 
 vanishing somewhere. Then there exists $z_0 \in \mathbb{C} \setminus \mathbb{R}$ such that
 \begin{equation*}
 \gamma(t) = \sqrt{\frac{c}{a}} \frac{x(t) - iy(t)}{1 + i x(t) y(t)}  \quad \forall t \in \left( - \frac{K(p)}{r}, \frac{K(p)}{r} \right),
 \end{equation*}
 where 
$ x(t) =  \tan \left( \frac{\sqrt{\lambda}}{4} t + z_0 \right)$ 
and 
$y(t) = \tanh \left( \frac{1}{4} \log \left\vert \frac{\kappa + \sqrt{\lambda}}{\kappa- \sqrt{\lambda}} \right\vert \right)$.
\end{lemma}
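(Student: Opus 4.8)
The plan is to combine the already-established complex ODE $\gamma' = (a\gamma^2 + c)/\theta$ from Theorem \ref{thm:explpara} with the special parameter identities forced by the vanishing of $\theta$ (Lemma \ref{lem:8.13}). First I would recall that, by Lemma \ref{lem:8.13}, if $\theta$ vanishes somewhere then $\gamma$ is rotational and orbitlike with $ac>0$, $\lambda>0$, and $\lambda^2 + 4C = -4\lambda$; moreover $c/a > 0$, so $\sqrt{c/a}$ is a positive real number. The key observation is that on the interval $\left(-\tfrac{K(p)}{r}, \tfrac{K(p)}{r}\right)$ — the maximal interval around $s=0$ not containing a zero of $\theta$ (by Corollary \ref{lem:nonnonvan}) — the preceding corollary gives the clean formula $\tfrac{1}{\theta(s)} = \tfrac14 - \tfrac{i}{2}\tfrac{\kappa'}{\kappa^2-\lambda}$. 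Hence
\[
\int_0^t \frac{1}{\theta(s)}\,\diff s = \frac{t}{4} - \frac{i}{2}\int_0^t \frac{\kappa'(s)}{\kappa(s)^2 - \lambda}\,\diff s = \frac{t}{4} - \frac{i}{4}\log\left\lvert\frac{\kappa(t)+\sqrt\lambda}{\kappa(t)-\sqrt\lambda}\right\rvert + (\text{const}),
\]
using partial fractions $\tfrac{\kappa'}{\kappa^2-\lambda} = \tfrac{1}{2\sqrt\lambda}\left(\tfrac{\kappa'}{\kappa-\sqrt\lambda} - \tfrac{\kappa'}{\kappa+\sqrt\lambda}\right)$ and integrating; one must check that $\kappa^2 - \lambda$ does not change sign on this interval (it stays $\geq 2 - \lambda$ only where $\kappa^2\geq\lambda$, but more robustly $\kappa_0^2 - \lambda = 4$ and the minimum of $\kappa^2$ is $\kappa_0^2(1-p^2)$, so one verifies $\kappa^2 > \lambda$ fails in general — instead one uses the absolute value, which is exactly why the $\log|\cdot|$ appears).

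Next I would feed this into the parametrization formula $\gamma(t) = f\left(\int_0^t \tfrac{1}{\theta(s)}\diff s + z_1\right)$ from Theorem \ref{thm:explpara}, with $f(z) = \sqrt{c/a}\tan(\sqrt{ac}\,z)$ since $a,c>0$. Absorbing constants into a single $z_0 \in \mathbb{C}\setminus\mathbb{R}$ and writing $\sqrt{ac}\left(\tfrac{t}{4} - \tfrac{i}{4}\log|\cdots|\right)$, I would check that $\sqrt{ac} = \tfrac{\sqrt\lambda}{?}$ — indeed from $\lambda^2 + 4C = -4\lambda$ one gets $ac = -\tfrac14(\lambda^2+4C) = \lambda$, so $\sqrt{ac} = \sqrt\lambda$, and the argument of $\tan$ becomes $\tfrac{\sqrt\lambda}{4}t - \tfrac{i\sqrt\lambda}{4}\log|\cdots| + \sqrt\lambda z_1$. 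Hmm — this gives $\tan$ of a complex combination; to split it into the stated form with separate real functions $x(t)$ and $y(t)$, I would use the addition formula $\tan(u + v) = \tfrac{\tan u + \tan v}{1 - \tan u \tan v}$ with $u$ real (the $x$-part) and $v = -\tfrac{i}{4}\log|\cdots|$ purely imaginary so that $\tan v = i\tanh(\tfrac14\log|\cdots|) = i\,y(t)$. This yields $\gamma(t) = \sqrt{c/a}\,\tfrac{x(t) + i y(t)}{1 - i x(t) y(t)}$; reconciling the sign with the claimed $\tfrac{x(t) - iy(t)}{1 + i x(t) y(t)}$ is a matter of choosing the branch of the logarithm / the sign of $z_0$ consistently, which I would pin down by evaluating at $t=0$ against $\gamma(0) = iy$.

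The main obstacle I anticipate is bookkeeping the branch and sign conventions: making sure the constant $z_0$ lies off the real axis (so that $\tan$ is well-defined and $\gamma$ stays in $\mathbb{H}^2$), that the partial-fraction integration of $\kappa'/(\kappa^2-\lambda)$ is legitimate on the whole interval (in particular that $\kappa^2 \neq \lambda$ there — which follows because $\theta$ has no zero on $\left(-\tfrac{K(p)}{r},\tfrac{K(p)}{r}\right)$ and one can show $|\theta|^2 = \lambda^2 + 4C + 4\kappa^2 = 4\kappa^2 - 4\lambda$ vanishes exactly when $\kappa^2 = \lambda$), and that the hyperbolic-tangent form of $y(t)$ comes out with the correct normalization. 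Everything else is the algebra of the tangent addition formula and tracking $ac = \lambda$; I would present the computation of $\int_0^t 1/\theta$, then the substitution into $f$, then the split via the addition formula, then the $t=0$ normalization, as the four explicit steps.
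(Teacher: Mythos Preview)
Your plan is essentially identical to the paper's proof: it too invokes the tangent-form parametrization from Theorem \ref{thm:explpara}, uses $\sqrt{ac}=\sqrt{\lambda}$ from $\lambda^2+4C=-4\lambda$, plugs in the corollary $\tfrac{1}{\theta}=\tfrac14-\tfrac{i}{2}\tfrac{\kappa'}{\kappa^2-\lambda}$, integrates via partial fractions, and splits $\tan(u+iv)$ with the addition formula and $\tan(iz)=i\tanh(z)$, while the extension to the full interval and the fact $z_0\notin\mathbb{R}$ are argued exactly as you outline. The one wrinkle is the factor of $1/\sqrt{\lambda}$ you dropped in the intermediate line for $\int_0^t 1/\theta$ (the antiderivative of $\kappa'/(\kappa^2-\lambda)$ carries a $\tfrac{1}{2\sqrt\lambda}$, which is what cancels against $\sqrt{ac}$), but you already flag this normalization as bookkeeping to be checked, and once corrected the sign discrepancy you noticed resolves itself.
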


\begin{proof}
First note that $\gamma(0) \neq i \sqrt{\frac{c}{a}}$ since $\theta(0) \neq 0 $, see Lemma \ref{lem:vanequ}. Therefore we can use similar arguments as 
in the proof of Theorem $\ref{thm:explpara}$ to obtain that
\begin{equation*}
\gamma(t) = \sqrt{\frac{c}{a}} \tan \left( \int_0^t \frac{\sqrt{ac}}{\theta(s)}\diff s  + z_0 \right) 
\end{equation*} 
 in a neighborhood of $t = 0 $ for some $z_0 \in \mathbb{C}$ such that $\gamma(0) = \sqrt{\frac{c}{a}} \tan(z_0)$. Such a $z_0$ exists since $\gamma(0) \in i \mathbb{R} \setminus \left\lbrace i \sqrt{\frac{c}{a}} \right\rbrace$ and $\tan$ is surjective on $\mathbb{C} \setminus \{ i, -1 \}$. Observe also that $z_0 \not \in \mathbb{R}$ since otherwise $\gamma(0) \in \mathbb{R}$, a contradiction. 
Since $\sqrt{ac} = \sqrt{\frac{-\lambda^2 - 4 C}{4}} = \sqrt{\lambda}$ we find
\begin{align*}
\gamma(t) 
 =   \sqrt{\tfrac{c}{a}} \tan \big( \sqrt{ \lambda } \int_0^t \tfrac{1}{4} - \tfrac{i}{2} \tfrac{\kappa'}{\kappa^2 - \lambda} \diff s  + z_0 \big) 
 = \tan \big( \sqrt{ \lambda }   \tfrac{t}{4} + z_0 - \tfrac{i}{4}\log \left\vert \tfrac{\kappa- \sqrt{\lambda}}{\kappa + \sqrt{\lambda}}\right\vert \diff s  \big)
\end{align*}
in a neighborhood of zero. 
Using $\tan ( z+w ) = \frac{\tan(z) + \tan(w)}{1- \tan(z) \tan(w) } $ and $\tan(iz) = i \tanh(z) $  the desired formula follows in a neighborhood of $t= 0 $. Observe now that $  4 ( \kappa^2 - \lambda)  = \lambda^2 + 4C + 4 \kappa^2 > 0 $ on $( - \frac{K(p)}{r}, \frac{K(p)}{r} ) $ and therefore the solution from above exists on $(- \frac{K(p)}{r}, \frac{K(p)}{r} )$, since otherwise this would contradict maximality of the existence interval as $\frac{1}{\theta}$ is locally Lipschitz continuous.   
\end{proof}

\begin{lemma}\label{lem:cloana}
Let $x(t),y(t)$ be defined as in Lemma \ref{lem:rep}.

\begin{enumerate}
\item For all $ t \in ( - \frac{K(p)}{r}, \frac{K(p)}{r} ) $ it holds $\displaystyle
y(t)  = \frac{|\kappa- \sqrt{\lambda}|^\frac{1}{2} - |\kappa + \sqrt{\lambda}|^\frac{1}{2}}{|\kappa - \sqrt{\lambda}|^\frac{1}{2} + |\kappa + \sqrt{\lambda} |^\frac{1}{2}}$.
\item  For all $ t \in ( - \frac{K(p)}{r}, \frac{K(p)}{r} )$
it holds $1-y^2(t) = \frac{4 \sqrt{\kappa^2 - \lambda }}{|\kappa- \sqrt{\lambda}| + | \kappa + \sqrt{\lambda }|  + 2 \sqrt{\kappa^2- \lambda} }$,
and in particular $\lim_{t \rightarrow \frac{K(p)}{r}} (1- y^2(t)) = 0 $. 
\item $x, x'$ are bounded on  $( - \frac{K(p)}{r}, \frac{K(p)}{r} ) $.
\item 
$\displaystyle y'(t) \to -1$ as $t \rightarrow \frac{K(p)}{r}$.
\end{enumerate}
\end{lemma}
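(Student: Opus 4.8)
The plan is to establish all four assertions by direct computation, using only the explicit formulas
\[
x(t)=\tan\!\left(\tfrac{\sqrt\lambda}{4}t+z_0\right),\qquad
y(t)=\tanh\!\left(\tfrac14\log\left\lvert\tfrac{\kappa+\sqrt\lambda}{\kappa-\sqrt\lambda}\right\rvert\right)
\]
from Lemma \ref{lem:rep}, together with the parameter identities from Lemma \ref{lem:8.13} (notably $\kappa_0^2=\lambda+4$, $\lambda>0$, and hence $\kappa^2-\lambda\ge\kappa_0^2(1-p^2)-\lambda=0$ on the relevant interval, with equality exactly at $t=\pm\frac{K(p)}{r}$). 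Throughout I abbreviate $\kappa=\kappa(t)$ and keep in mind that $\kappa^2\in[\lambda,\kappa_0^2]$ on $(-\frac{K(p)}{r},\frac{K(p)}{r})$, so $|\kappa-\sqrt\lambda|$ and $|\kappa+\sqrt\lambda|$ make sense and $\sqrt{\kappa^2-\lambda}$ is real.

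For (1), I would write $a:=\tfrac14\log\bigl\lvert\tfrac{\kappa+\sqrt\lambda}{\kappa-\sqrt\lambda}\bigr\rvert$, so that
$y=\tanh a=\frac{e^{a}-e^{-a}}{e^{a}+e^{-a}}$, and observe that $e^{2a}=\bigl\lvert\tfrac{\kappa+\sqrt\lambda}{\kappa-\sqrt\lambda}\bigr\rvert^{1/2}$, hence $e^{a}=\lvert\kappa+\sqrt\lambda\rvert^{1/4}/\lvert\kappa-\sqrt\lambda\rvert^{1/4}$; multiplying numerator and denominator by $\lvert\kappa-\sqrt\lambda\rvert^{1/4}\lvert\kappa+\sqrt\lambda\rvert^{1/4}$ gives precisely the stated expression (after taking into account that a convenient rationalization produces the half-powers). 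For (2), from $1-\tanh^2 a=\operatorname{sech}^2 a=\frac{4}{(e^a+e^{-a})^2}$ and the same substitution, one gets
\[
1-y^2=\frac{4\,\lvert\kappa-\sqrt\lambda\rvert^{1/2}\lvert\kappa+\sqrt\lambda\rvert^{1/2}}
{\bigl(\lvert\kappa-\sqrt\lambda\rvert^{1/2}+\lvert\kappa+\sqrt\lambda\rvert^{1/2}\bigr)^2}
=\frac{4\sqrt{\lvert\kappa^2-\lambda\rvert}}{\lvert\kappa-\sqrt\lambda\rvert+\lvert\kappa+\sqrt\lambda\rvert+2\sqrt{\lvert\kappa^2-\lambda\rvert}},
\]
and since $\kappa^2\ge\lambda$ on the interval the absolute value inside the root may be dropped, giving the claimed identity. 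The limit $1-y^2(t)\to0$ as $t\to\frac{K(p)}{r}$ is then immediate because $\kappa(t)^2\to\kappa_0^2(1-p^2)=\lambda$ (the minimum-curvature point, by Corollary \ref{lem:nonnonvan}), so $\sqrt{\kappa^2-\lambda}\to0$ while the denominator stays bounded below by $\lvert\kappa-\sqrt\lambda\rvert+\lvert\kappa+\sqrt\lambda\rvert\to 2\sqrt\lambda>0$.

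For (3), $x(t)=\tan\bigl(\tfrac{\sqrt\lambda}{4}t+z_0\bigr)$ is an analytic function of $t$ on the compact-closure interval $[-\frac{K(p)}{r},\frac{K(p)}{r}]$ provided $\tan$ has no pole there; since $\gamma(t)=\sqrt{c/a}\,\frac{x-iy}{1+ixy}$ remains in $\mathbb H^2$ for all $t$ in the interval and the formula in Lemma \ref{lem:rep} is valid on the whole closed interval (the existence interval of the ODE solution extends there, as shown in Lemma \ref{lem:rep}), the argument $\tfrac{\sqrt\lambda}{4}t+z_0$ stays away from the poles $\tfrac\pi2+\pi\mathbb Z$ of $\tan$; hence $x$ and $x'=\tfrac{\sqrt\lambda}{4}(1+x^2)$ are continuous on a compact interval, so bounded. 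For (4), differentiate $y=\tanh a$: $y'=(1-y^2)a'$, with $a'=\tfrac14\frac{d}{dt}\log\lvert\tfrac{\kappa+\sqrt\lambda}{\kappa-\sqrt\lambda}\rvert=\tfrac14\kappa'\bigl(\tfrac1{\kappa+\sqrt\lambda}-\tfrac1{\kappa-\sqrt\lambda}\bigr)=-\tfrac{\sqrt\lambda\,\kappa'}{2(\kappa^2-\lambda)}$. Using (2), $y'=-\frac{4\sqrt{\kappa^2-\lambda}}{\lvert\kappa-\sqrt\lambda\rvert+\lvert\kappa+\sqrt\lambda\rvert+2\sqrt{\kappa^2-\lambda}}\cdot\frac{\sqrt\lambda\,\kappa'}{2(\kappa^2-\lambda)}=-\frac{2\sqrt\lambda\,\kappa'}{(\lvert\kappa-\sqrt\lambda\rvert+\lvert\kappa+\sqrt\lambda\rvert+2\sqrt{\kappa^2-\lambda})\sqrt{\kappa^2-\lambda}}$, so both $\kappa^2-\lambda\to0$ and $\kappa'\to0$ as $t\to\frac{K(p)}{r}$, and the limit is a $0/0$ indeterminate form.

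The one genuinely delicate step is (4): I must show $y'\to-1$ despite both $\kappa^2-\lambda$ and $\kappa'$ vanishing at the endpoint. The plan is to resolve this by a local expansion near $s_*:=\frac{K(p)}{r}$. Writing $\kappa(t)=\kappa_0\,\dn(rt,p)$ (or with consistent sign), at $t=s_*$ one has $\dn(K(p),p)=\sqrt{1-p^2}$ and $\kappa(s_*)^2=\kappa_0^2(1-p^2)=\lambda$, while $\kappa'(s_*)=0$; Taylor-expanding, $\kappa(t)=\sqrt\lambda+\tfrac12\kappa''(s_*)(t-s_*)^2+O((t-s_*)^4)$, so $\kappa^2-\lambda=\sqrt\lambda\,\kappa''(s_*)(t-s_*)^2+O((t-s_*)^4)$ and $\kappa'=\kappa''(s_*)(t-s_*)+O((t-s_*)^3)$. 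Hence $\sqrt{\kappa^2-\lambda}\sim\sqrt{\sqrt\lambda\,\kappa''(s_*)}\,\lvert t-s_*\rvert$ and $\kappa'\sim\kappa''(s_*)(t-s_*)$, so
\[
y'\sim-\frac{2\sqrt\lambda\,\kappa''(s_*)(t-s_*)}{(2\sqrt\lambda+2\sqrt{\sqrt\lambda\,\kappa''(s_*)}\lvert t-s_*\rvert)\sqrt{\sqrt\lambda\,\kappa''(s_*)}\,\lvert t-s_*\rvert}\longrightarrow-\frac{2\sqrt\lambda\,\kappa''(s_*)}{2\sqrt\lambda\cdot\sqrt{\sqrt\lambda\,\kappa''(s_*)}}\cdot\operatorname{sign}(t-s_*)^{-1}\!,
\]
and one checks, using $\kappa''=-\tfrac12\kappa^3-\tfrac{\lambda+2}{2}\kappa$ evaluated at $\kappa=\sqrt\lambda$ (which gives $\kappa''(s_*)=-\tfrac12\sqrt\lambda(\lambda+\lambda+2)=-\sqrt\lambda(\lambda+1)<0$—so actually the sign bookkeeping of the square roots needs care, and approaching $s_*$ from the left is the relevant side), that the prefactors collapse to exactly $-1$. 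Alternatively, and more cleanly, I would parametrize by $v:=\sqrt{\kappa^2-\lambda}$ near the endpoint and use $(v^2)'=2\kappa\kappa'$, i.e. $\kappa'=vv'/\kappa$, so that $y'=-\frac{2\sqrt\lambda\,(vv'/\kappa)}{(\cdots+2v)v}=-\frac{2\sqrt\lambda\,v'}{\kappa(\cdots+2v)}$; as $t\to s_*$, $v\to0$, $\kappa\to\sqrt\lambda$, $\lvert\kappa-\sqrt\lambda\rvert+\lvert\kappa+\sqrt\lambda\rvert\to2\sqrt\lambda$, and $v'\to$ some limit which, combined with $v\to 0^+$ monotonically, forces $y'\to-\frac{2\sqrt\lambda}{\sqrt\lambda\cdot2\sqrt\lambda}\cdot(\sqrt\lambda)=-1$ after substituting the asymptotics of $v'$. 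I expect this endpoint asymptotic analysis — getting the constant to come out to exactly $-1$ rather than some other value — to be the main obstacle, and I would double-check it against the geometric meaning: $y(t)\to1$ with $\gamma$ approaching the real axis tangentially, consistent with the characteristic integral curve being a Euclidean circle touching $\partial\mathbb H^2$.
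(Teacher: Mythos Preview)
Your treatment of (1) and (2) is essentially the paper's argument. The differences arise in (3) and (4).

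For (3), your argument is circular: you invoke that $\gamma$ remains in $\mathbb H^2$ and that Lemma~\ref{lem:rep} extends to the closed interval in order to conclude that $\tan$ has no pole, but neither of those facts directly rules out a pole of $x$ (if $x\to\infty$ then $\gamma=\sqrt{c/a}\,\tfrac{x-iy}{1+ixy}\to -i\sqrt{c/a}/y$, a finite point, so nothing obviously breaks). The paper's argument is one line: Lemma~\ref{lem:rep} records $z_0\notin\mathbb R$, so $\mathrm{Im}\bigl(\tfrac{\sqrt\lambda}{4}t+z_0\bigr)=\mathrm{Im}(z_0)\neq0$ for every real $t$, and all poles of the complex tangent lie on the real axis. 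You should just use that.

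For (4), your plan via Taylor expansion or the substitution $v=\sqrt{\kappa^2-\lambda}$ can be made to work, but as written it is incomplete and contains a sign slip (the elastica equation gives $\kappa''=-\tfrac12\kappa^3+\tfrac{\lambda+2}{2}\kappa$, not $-\tfrac{\lambda+2}{2}\kappa$; at $\kappa=\sqrt\lambda$ this yields $\kappa''(s_*)=\sqrt\lambda>0$, not negative). The paper sidesteps all of this by first rationalizing (in the case $\kappa>0$) to the closed form $y=(\sqrt{\kappa^2-\lambda}-\kappa)/\sqrt\lambda$, and then exploiting the parameter identity $r^2p^2=1$ from Lemma~\ref{lem:8.13} to obtain the exact relations
\[
\kappa^2-\lambda=4\,\cn^2(rs,p),\qquad \kappa'=-2\,\sn(rs,p)\,\cn(rs,p),
\]
so that $\kappa'/\sqrt{\kappa^2-\lambda}=-\sn(rs,p)$ has the manifest limit $-\sn(K(p),p)=-1$. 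Plugging this into $y'=\tfrac{1}{\sqrt\lambda}\,\tfrac{\kappa'}{\sqrt{\kappa^2-\lambda}}\bigl(\kappa-\sqrt{\kappa^2-\lambda}\bigr)$ gives $y'\to-1$ immediately. The elliptic-function identity resolves the indeterminate form algebraically, which is both shorter and avoids the sign bookkeeping you flagged as the main obstacle. If you prefer your $v$-substitution, the clean way to finish is to use the integrated elastica equation \eqref{eq:onceint} together with $C=1-\tfrac{(\lambda+2)^2}{4}$ (from Lemma~\ref{lem:8.13}(2)) to get $\kappa'^2=v^2(1-v^2/4)$, whence $\kappa'/v\to-1$; this is equivalent to the paper's computation.
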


\begin{proof}
For part $(1)$ use $\tanh(z) = \frac{e^{z}- e^{-z}}{e^{z} + e^{-z}}$. Part $(2)$ follows easily from part $(1)$.  Part $(3)$ is a standard observation using that $z_0 \in \mathbb{R}$, thus the tangent expression stays away from all its poles at $\{ (2m+ 1) \tfrac{\pi}{2} | m \in \mathbb{N} \} $. 
For $(4)$ we distinguish between two cases, the first one being $\kappa > 0 $. In this case it we find $\kappa \geq \sqrt{\lambda}$.  Here we can derive a more explicit expression for $y$ from $(1)$, namely 
\begin{equation*}
y(t) = \frac{\sqrt{\kappa - \sqrt{\lambda}} - \sqrt{\kappa + \sqrt{\lambda}}}{\sqrt{\kappa - \sqrt{\lambda}} + \sqrt{\kappa + \sqrt{\lambda}}} .
\end{equation*}
Multiplying numerator and denominator by $ \sqrt{\kappa - \sqrt{\lambda}} - \sqrt{\kappa+ \sqrt{\lambda}}$ we find 
$y(t) = \frac{\sqrt{\kappa^2- \lambda}- \kappa}{\lambda}$, whence%
%
\begin{equation*}
y'(t) = \frac{1}{\sqrt{\lambda}} \frac{\kappa'}{\sqrt{\kappa^2- \lambda}} \left( \kappa- \sqrt{\kappa^2 - \lambda} \right) .
\end{equation*}
Now the limit of $y'(t)$ as $t \rightarrow \frac{K(p)}{r}$ is of indeterminate form. We solve this as follows: In our case the curvature is given by $\kappa(s) = \kappa_0 \dn(rs, p)$. Thus 
\begin{equation*}
\kappa'(s) = - \kappa_0 r p^2 \sn(rs,p) \cn(rs,p) = - 2 r^2 p^2 \sn(rs,p) \cn(rs, p) = - 2 \sn(rs,p) \cn(rs,p) 
\end{equation*}
where we used the first parameter identity in Lemma \ref{lem:8.13}. Using the third and first parameter identity in the very same lemma we find 
\begin{align*}
\kappa^2- \lambda &  =  \kappa_0^2 \dn^2(rs,p) - \lambda = \kappa_0^2 \dn^2(rs, p) - \kappa_0^2 + 4 
\\ & = - \kappa_0^2 p^2 \sn^2(rs,p) + 4 = 4 - 4r^2p^2 \sn^2(rs,p) = 4 \cn^2(rs,p) . 
\end{align*}
and therefore $\frac{\kappa'}{\sqrt{\kappa^2- \lambda}} = - \sn(rs,p) $ for each $s \in ( -\frac{K(p)}{r}, \frac{K(p)}{r} )$.
Using this we obtain 
\begin{equation*}
\lim_{t \rightarrow \frac{K(p)}{r}} y'(t) =\lim_{t \rightarrow \frac{K(p)}{r}}- \frac{ \kappa - \sqrt{\kappa^2- \lambda}}{\sqrt{\lambda}} \sn(rs,p) = - 1 .
\end{equation*}
The case that $\kappa< 0 $ can be treated analogously.
\end{proof}

\begin{lemma}\label{lem:limit}
Let $\widehat{x} := \tan ( z_0 + \frac{1}{4} \sqrt{\lambda} pK(p) )$, where $z_0$ is as in Lemma \ref{lem:rep}. Then 
\begin{equation*}
\lim_{ t \rightarrow \frac{K(p)}{r} }\gamma'(t) = i \sqrt{\frac{c}{a}} \frac{1 \mp i \widehat{x}}{1 \pm i \widehat{x}}.
\end{equation*} 
\end{lemma}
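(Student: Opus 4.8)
The plan is to differentiate the explicit representation of $\gamma$ from Lemma \ref{lem:rep} and then pass to the limit using the asymptotics collected in Lemma \ref{lem:cloana}.

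First I would apply the quotient rule to
\[
\gamma(t) = \sqrt{\tfrac{c}{a}}\,\frac{x(t) - i y(t)}{1 + i x(t) y(t)}, \qquad t \in \left(-\tfrac{K(p)}{r}, \tfrac{K(p)}{r}\right).
\]
Expanding the numerator and collecting terms, the mixed products $x x' y$ and $x y y'$ cancel, leaving the clean identity
\[
\gamma'(t) = \sqrt{\tfrac{c}{a}}\,\frac{x'(t)\bigl(1 - y(t)^2\bigr) - i\, y'(t)\bigl(1 + x(t)^2\bigr)}{\bigl(1 + i x(t) y(t)\bigr)^2}.
\]
Since $\gamma \in C^\infty(\mathbb{R},\Hyp)$, the limit $\lim_{t \to K(p)/r} \gamma'(t)$ exists and may be computed from this formula as a one-sided limit within the interval of validity of Lemma \ref{lem:rep}.

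Next I would evaluate each factor as $t \uparrow \tfrac{K(p)}{r}$. Recall from parameter identity $(1)$ in Lemma \ref{lem:8.13} that $r^2 p^2 = 1$, hence $\tfrac{K(p)}{r} = p K(p)$, and by continuity of $\tan$ away from its (real) poles, $x(t) \to \tan\bigl(z_0 + \tfrac{1}{4}\sqrt{\lambda}\, p K(p)\bigr) = \widehat{x}$, while $x'$ remains bounded by Lemma \ref{lem:cloana}(3). By Lemma \ref{lem:cloana}(2) we have $1 - y(t)^2 \to 0$, so the first summand in the numerator disappears; by Lemma \ref{lem:cloana}(4), $y'(t) \to -1$; and $y(t) \to \mp 1$, the sign being that of $\kappa$ at $\tfrac{K(p)}{r}$, which is a point of minimal curvature where $\kappa^2 = \lambda$ (this follows from the parameter identities in Lemma \ref{lem:8.13} together with Proposition \ref{prop:integr}). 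Substituting, the denominator tends to $(1 \mp i\widehat{x})^2$ and one obtains
\[
\lim_{t \to K(p)/r} \gamma'(t) = \sqrt{\tfrac{c}{a}}\;\frac{i\,(1 + \widehat{x}^2)}{(1 \mp i \widehat{x})^2}.
\]

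Finally, the algebraic simplification $1 + \widehat{x}^2 = (1 - i\widehat{x})(1 + i\widehat{x})$ cancels one factor of $(1 \mp i\widehat{x})$ and yields $i\sqrt{c/a}\,\frac{1 \mp i\widehat{x}}{1 \pm i\widehat{x}}$, the asserted identity (with the $\pm/\mp$ matched to the consistent choice of root in $\kappa = \pm \kappa_0 \dn(r\,\cdot\,,p)$ from Proposition \ref{prop:integr}). The only genuinely delicate point is the sign bookkeeping: one must carry the sign of $\kappa$ at the endpoint consistently through the limit of $y(t)$ and through the square of $1 + i x(t) y(t)$; everything else is routine differentiation and substitution. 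One should also note in passing that $1 \mp i\widehat{x} \neq 0$, which is automatic because $z_0 \notin \mathbb{R}$ forces $z_0 + \tfrac14\sqrt{\lambda}\,pK(p)$ to have nonzero imaginary part and $\tan$ never takes the values $\pm i$.
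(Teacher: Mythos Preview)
Your argument is exactly the paper's: differentiate the representation from Lemma~\ref{lem:rep} via the quotient rule, feed in the limits from Lemma~\ref{lem:cloana}, and factor $1+\widehat{x}^2=(1-i\widehat{x})(1+i\widehat{x})$. The only blemish is the very sign bookkeeping you flag as delicate: from your intermediate denominator $(1\mp i\widehat{x})^2$, cancelling one factor of $(1\mp i\widehat{x})$ leaves $\dfrac{1\pm i\widehat{x}}{1\mp i\widehat{x}}$, not $\dfrac{1\mp i\widehat{x}}{1\pm i\widehat{x}}$; this is harmless since the $\pm/\mp$ merely label the two branches of $\kappa$, but you should make your convention internally consistent.
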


\begin{proof}
We use Lemma \ref{lem:rep} and take the derivative of 
$ \gamma(t) = \sqrt{\frac{c}{a}} \frac{x(t) - iy(t) }{1+ ix(t)y(t) }$
to obtain  
\begin{equation*}
\gamma'(t) = \sqrt{\frac{c}{a}} \frac{ x'(t) (1- y(t)^2 ) - i y'(t) (1+ x(t)^2 ) }{(1 + ix(t) y(t) )^2 }.
\end{equation*}
Using the identities derived in Lemma \ref{lem:cloana} we find
\begin{equation*}
\lim_{ t \rightarrow \frac{K(p)}{r} } \gamma'(t) =   i\sqrt{\frac{c}{a}} \frac{1+ x(\frac{K(p)}{r})^2}{(1 \pm i x(\frac{K(p)}{r}) )^2 } = i \sqrt{\frac{c}{a}} \frac{1+ \widehat{x}^2}{(1 \pm i \widehat{x})^2}
\end{equation*} 
the claim follows when we write $ 1 + \widehat{x}^2 = (1+ i\widehat{x})(1- i \widehat{x})$. 
\end{proof}

\begin{proof}[Proof of Proposition \ref{prop:nonvan}]
Assume that there exists a globally defined curve $\gamma$ such that a zero of $\widetilde{J}_\gamma$ lies in $\gamma(\mathbb{R})$ and $\gamma$ is parametrized with hyperbolic arclength. Therefore, if we look at $\gamma$ as a curve in $\mathbb{C}$ it satisfies
\begin{equation}\label{eq:8.100}
1 = \lim_{ t \rightarrow \frac{K(p)}{r}} \frac{|\gamma'(t)|}{\mathrm{Im}(\gamma(t))}.
\end{equation}
Note that $\gamma( \frac{K(p)}{r} ) = i \sqrt{\frac{c}{a}} $ because of Corollary \ref{lem:nonnonvan} and Lemma \ref{lem:vanequ}. We infer from this and Lemma \ref{lem:limit} that 
\begin{equation}\label{eq:8.101}
\lim_{ t \rightarrow \frac{K(p)}{r}} \frac{|\gamma'(t)|}{\mathrm{Im}(\gamma(t))} = \left\vert \frac{1-i\widehat{x}}{1+i \widehat{x} }\right\vert,
\end{equation}
where $\widehat{x} = \tan ( z_0 + \frac{1}{4} \sqrt{\lambda} pK(p) )$ and $z_0$ is chosen as in  Lemma \ref{lem:rep}. We infer from \eqref{eq:8.100} and \eqref{eq:8.101} that 
$|1- i \widehat x| = |1+i \widehat{x}|$. 
Squaring both sides and using $|z+w|^2 = |z|^2+ |w|^2 + 2 \mathrm{Re}(\overline{z}w)$ we infer that $\mathrm{Re}(i\widehat{x}) = 0 $ and therefore $\widehat{x} \in \mathbb{R}$.  We proceed showing that this cannot be true. 
We distinguish between 3 cases.

\textbf{Case 1:} $ \frac{1}{4} \sqrt{\lambda} K(p) p  = \frac{\pi}{2} + l \pi$ for some $l \in \mathbb{Z}$. In this case 
$\widehat{x} = \tan ( z_0 + \nicefrac{\pi}{2} ) = - \cot(z_0).$ 
 Assume that $\cot(z_0) = \beta \in \mathbb{R}$. An easy computation shows that 
\begin{equation*}
e^{2i z_0} = \frac{i\beta - 1}{i \beta+ 1}. 
\end{equation*}
Taking absolute values on both sides we find 
$e^{2 \mathrm{Re}(iz_0) } = |e^{2 i z_0 } | = \left\vert \frac{i\beta - 1}{i\beta  + 1} \right\vert = 1$ 
which implies $z_0 \in \mathbb{R}$ and contradicts the statement of Lemma \ref{lem:rep}.

\textbf{Case 2:} $\frac{1}{4} \sqrt{\lambda}  K(p) p $ is not as in Case 1 and $\tan( \frac{1}{4} \sqrt{\lambda} K(p) p) \neq \frac{1}{\widehat{x}} $ . 
In this case we can use $\tan(z+w ) = \frac{\tan(z) + \tan(w)}{1 + \tan(z) \tan(w) }$ to find 
\begin{equation*}
\widehat{x} = \frac{\tan ( \frac{1}{4}\sqrt{\lambda} K(p) p ) + \tan(z_0) }{1 + \tan( \frac{1}{4}\sqrt{\lambda} K(p) p ) \tan(z_0 ) }.
\end{equation*}
We can indeed solve for $\tan(z_0)$ to obtain 
\begin{equation}\label{eq:znought}
\tan(z_0) = \frac{\tan( \frac{1}{4}  \sqrt{\lambda} K(p) p ) }{\widehat{x} \tan( \frac{1}{4}\sqrt{\lambda} K(p) p ) -1  }.
\end{equation}
Notice that we used here that $\tan( \frac{1}{4}\sqrt{\lambda} K(p) p)\widehat{x} \neq 1 $. Observe  that the right hand side of \eqref{eq:znought} is real-valued by assumption. With similar arguments as in case 1, it can be shown that $\tan(z_0) \not \in \mathbb{R}$ if $z_0 \not \in \mathbb{R}$. Again, this leads to  a contradiction to Lemma \ref{lem:rep}.  

\textbf{Case 3:} $\tan( \frac{1}{4} \sqrt{\lambda} K(p) p ) = \frac{1}{\widehat{x}}$.
As in Case 2 we can use the addition formula to find 
\begin{equation*}
\tan \frac{\sqrt{\lambda}}{4} K(p) p = \frac{1}{\widehat{x}} = \frac{1+ \tan ( \frac{1}{4} \sqrt{\lambda } K(p) p ) \tan(z_0) }{\tan ( \frac{1}{4} \sqrt{\lambda } K(p) p) + \tan(z_0)},
\end{equation*}
which implies that $\tan^2 \left( \frac{1}{4} \sqrt{\lambda} K(p) p \right) = 1$ 
and therefore $\frac{1}{4} \sqrt{\lambda} K(p) p = \frac{\pi}{4} + l \pi $ for some $l \in \mathbb{Z}$ since $\frac{1}{4} \sqrt{\lambda} K(p) p $ is positive, we find that in particular $\sqrt{\lambda} K(p) p \geq \pi$.  Using Lemma \ref{lem:8.13} and Proposition \ref{prop:integr} we infer that
\begin{equation*}
p^2 = \frac{2 \sqrt{(\lambda + 2)^2 + 4C}}{2 + \lambda + \sqrt{(\lambda + 2)^2 + 4C} } = \frac{4}{4+ \lambda}. 
\end{equation*} 
Hence the contradiction
\begin{align*}
\pi \leq \sqrt{\lambda} K(p) p  & = \frac{2 \sqrt{\lambda}}{\sqrt{4+ \lambda}} K \left( \frac{2}{\sqrt{4+\lambda}} \right) = 2 \sqrt{\lambda} \int_0^\frac{\pi}{2} \frac{1}{\sqrt{4 + \lambda - 4 \sin^2(\theta)}} d\theta \\ 
& = 2 \int_0^\frac{\pi}{2} \sqrt{\frac{\lambda}{\lambda + 4 \cos^2(\theta)}} d \theta < \pi . \qedhere
 \end{align*}
\end{proof}

\begin{remark}\label{rem:paramrestr}
Recalling Lemma \ref{lem:8.13}, we get also some new parameter restrictions on elastica, for example nonexistence of elastica if $\kappa_0^2 = \lambda + 4$ or, equivalently, if $r^2p^2 = 1$. 
\end{remark}

\section{Jacobi Elliptic Functions}\label{appendix:Jacobi}
We provide some elementary properties of Jacobian elliptic functions, which 
can be found for example in \cite[Chapter 16]{Stegun}.
\begin{definition}[Amplitude Function, Complete Elliptic Integrals]
Fix $p \in [0,1) $. We define the \textit{Jacobi-amplitude function} $ \am(\,\cdot\,,p) \colon \mathbb{R} \rightarrow \mathbb{R} $ with \textit{modulus} $p$ to be the inverse function of 
\begin{equation*}
\mathbb{R} \ni z \mapsto \int_0^z \frac{1}{\sqrt{1- p^2\sin^2(\theta)}} \diff\theta \in \mathbb{R}
\end{equation*}
We define the \textit{complete elliptic integral of first} and \textit{second kind} as
\begin{align*}
K(p) &:= \int_0^\frac{\pi}{2} \frac{1}{\sqrt{1- p^2 \sin^2(\theta)}} \diff\theta,& E(p) &:= \int_0^\frac{\pi}{2} \sqrt{1- p^2 \sin^2(\theta)} \diff \theta.
\end{align*}
\end{definition}

\begin{definition}[Elliptic Functions]\label{def:B2}
For $p \in [0,1)$ 
the \textit{Jacobi Elliptic Functions} are given by
\begin{align*}
\cn(\cdot,p)\colon \mathbb{R} \rightarrow \mathbb{R}, \;\; &\cn(x,m) := \cos(\am(x,p)), \\ \sn(\cdot,p)\colon \mathbb{R} \rightarrow \mathbb{R}, \;\; &\sn(x,m) := \sin(\am(x,p)), \\ \dn(\cdot,p)\colon \mathbb{R} \rightarrow \mathbb{R}, \;\; &\dn(x,m) := \sqrt{1- p^2 \sin^2(\am(x,p))}.
\end{align*}
\end{definition}

\begin{remark}
A lot of literature on elliptic functions defines the elliptic functions using another parameter $m$ to describe the modulus. Most of the times the relation between $m$ and $p$ is $m = p^2$. 
\end{remark}

\begin{prop}[Some identities]\label{prop:identities}
\leavevmode\begin{enumerate}
\item (Derivatives and Integrals of Jacobi Elliptic Functions)  For each $x \in \mathbb{R}$ and $p \in (0,1)$ 
\begin{align*}
\frac{\partial}{\partial x} \cn(x,p) & = - \sn(x,p) \dn(x,p), &
\frac{\partial}{\partial x} \sn(x,p) & =  \cn(x,p) \dn(x,p) , \\
\frac{\partial}{\partial x} \dn(x,p) & = - p^2 \cn(x,p) \sn(x,p), &
\frac{\partial}{\partial x} \am(x,p) & = \dn(x,p),
\end{align*}
from which one can deduce
\begin{equation}
 \label{eq:intdnsquared} \int_0^{K(p)} \dn^2(s,p) \diff s = E(p).
\end{equation}
\item (Derivatives of Complete Elliptic Integrals) For $p \in (0,1)$ $E$ is smooth and 
\begin{align*}
\frac{\diff}{\diff p}E(p)  & = \frac{E(p) - K(p)}{p}. 
\end{align*}
\item (Trigonometric Identities) For each $p \in [0,1)$ and $x \in \mathbb{R}$ the Jacobi Elliptic functions satisfy
\begin{align*} 
  \cn^2(x,p) + \sn^2(x,p) & = 1, &  \dn^2(x,p) + p^2 \sn^2(x,p) &= 1 .
  \end{align*}
%
\item (Periodicity) All periods of the elliptic functions are given as follows, where $l \in \Z$ and $x \in \R$:
\begin{align} 
\am(lK(p),p) & = l \frac{\pi}{2}, & \nonumber
\cn(x+ 4 l K(p), p ) &  = \cn(x,p)  ,\\
\sn(x+ 4 l K(p), p ) &  = \sn(x,p) , &
\dn(x+ 2 l K(p), p ) &  = \dn(x,p) ,\label{eq:PeriodDN}
\end{align}\vspace{-.5cm}
\begin{equation*}
 \am(x+ 2lK(p),p)  =  l \pi + \am(x,m).
\end{equation*}
\item (Asymptotics of the complete Elliptic integral)
\begin{equation*}
\lim_{p \rightarrow 1 } K (p) = \infty, \quad \quad \quad \quad \lim_{p \rightarrow 0 } K(p) = \frac{\pi}{2}
\end{equation*}
 \end{enumerate} 
\end{prop}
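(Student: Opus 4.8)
The plan is to derive all five groups of formulae directly from the definition of the amplitude function $\am(\cdot,p)$ as the inverse of $\phi_p(z):=\int_0^z(1-p^2\sin^2\theta)^{-1/2}\diff\theta$, together with the chain rule, differentiation under the integral sign, and the elementary symmetries of $\sin^2$. Part (3) is immediate: since $\cn=\cos\circ\am$, $\sn=\sin\circ\am$ and $\dn=\sqrt{1-p^2\sin^2\circ\am}$, the Pythagorean identity gives $\cn^2+\sn^2=1$, while $\dn^2+p^2\sn^2=(1-p^2\sin^2\circ\am)+p^2\sin^2\circ\am=1$. For part (1) I would differentiate the identity $\phi_p(\am(x,p))=x$ with respect to $x$, which yields $\partial_x\am(x,p)=\sqrt{1-p^2\sin^2(\am(x,p))}=\dn(x,p)$; the chain rule then produces $\partial_x\cn=-\sn\dn$, $\partial_x\sn=\cn\dn$ and $\partial_x\dn=-p^2\sn\cn$. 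The integral identity $\int_0^{K(p)}\dn^2(s,p)\diff s=E(p)$ follows from the substitution $\theta=\am(s,p)$, so that $\diff\theta=\dn(s,p)\diff s$ and $\am(0,p)=0$, $\am(K(p),p)=\tfrac\pi2$ by definition of $K$; this turns the left side into $\int_0^{\pi/2}\sqrt{1-p^2\sin^2\theta}\,\diff\theta=E(p)$.

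For part (2) I would differentiate $E(p)=\int_0^{\pi/2}\sqrt{1-p^2\sin^2\theta}\,\diff\theta$ under the integral sign, which is justified since the integrand and its $p$-derivative are bounded uniformly on compact subsets of $[0,1)$, obtaining $E'(p)=-\int_0^{\pi/2}p\sin^2\theta\,(1-p^2\sin^2\theta)^{-1/2}\diff\theta$, and then rewrite $-p\sin^2\theta=\tfrac1p\big[(1-p^2\sin^2\theta)-1\big]$ to split this into $\tfrac1p(E(p)-K(p))$. For part (4) the key point is that $\sin^2$ has period $\pi$ and satisfies $\sin^2(\pi-\theta)=\sin^2\theta$; hence $\phi_p(\pi)=2K(p)$ and $\phi_p(\pi+z)=2K(p)+\phi_p(z)$, and inverting gives $\am(x+2lK(p),p)=l\pi+\am(x,p)$ for all $l\in\Z$. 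Specializing $x=0$ and combining with $\am(K(p),p)=\tfrac\pi2$ yields $\am(lK(p),p)=l\tfrac\pi2$, and applying $\cos$, $\sin$, respectively $\sqrt{1-p^2\sin^2(\cdot)}$, then gives the $4K(p)$-periodicity of $\cn,\sn$ and the $2K(p)$-periodicity of $\dn$. Finally, for part (5): $\lim_{p\to0}K(p)=\int_0^{\pi/2}\diff\theta=\tfrac\pi2$ by dominated convergence, while as $p\uparrow1$ the integrand of $K(p)$ increases monotonically to $1/\cos\theta$, which is not integrable near $\tfrac\pi2$, so monotone convergence gives $\lim_{p\to1}K(p)=\infty$.

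None of these steps presents a genuine obstacle; the only places needing a little care are the bookkeeping for the period relations in (4) and the (standard) justifications of differentiating under the integral sign in (2) and of passing to the limit in (5). All of this is classical and can also be found in \cite[Chapter 16]{Stegun}; the argument above is merely the most direct route starting from the definitions used in this paper.
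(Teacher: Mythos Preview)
Your derivations are correct and complete. The paper itself does not give a proof of this proposition at all: it simply collects these standard identities and refers the reader to \cite[Chapter 16]{Stegun}, so your self-contained argument from the definition of $\am$ is strictly more than what the paper provides, while remaining entirely in line with its intent.
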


\begin{prop}\label{prop:ellipt}
For $p \in (0,1)$ let $f(p) :=\frac{E(p)}{\sqrt{2-p^2}}$. Then, $f$ is decreasing and 
\begin{equation*}
1 < f(p) < \frac{\pi}{2\sqrt{2}} \quad \forall p \in (0,1).
\end{equation*}
\end{prop}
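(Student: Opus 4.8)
The plan is to show that $f$ is strictly decreasing on $(0,1)$ by a sign analysis of $f'$, and then to read off the two bounds from the limits of $f$ at $p=0$ and $p=1$.

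First I would differentiate $f(p) = E(p)(2-p^2)^{-1/2}$ and insert the identity $E'(p) = \frac{E(p)-K(p)}{p}$ from Proposition \ref{prop:identities} (2). Putting everything over the positive common denominator $p(2-p^2)^{3/2}$ gives
\[
 f'(p) = \frac{2E(p) - (2-p^2)\,K(p)}{p\,(2-p^2)^{3/2}},
\]
so the monotonicity claim amounts to the inequality $2E(p) < (2-p^2)K(p)$ on $(0,1)$. Writing $2E - (2-p^2)K = p^2 K - 2(K-E)$ and using the integral representations $K(p)-E(p) = \int_0^{\pi/2}\frac{p^2\sin^2\theta}{\sqrt{1-p^2\sin^2\theta}}\diff\theta$ and $p^2K(p) = \int_0^{\pi/2}\frac{p^2}{\sqrt{1-p^2\sin^2\theta}}\diff\theta$ (both immediate from the definitions of $K$ and $E$), this reduces, via $1-2\sin^2\theta = \cos(2\theta)$, to
\[
 \int_0^{\pi/2}\frac{\cos(2\theta)}{\sqrt{1-p^2\sin^2\theta}}\diff\theta < 0 .
\]

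To prove this last inequality I would split the integral at $\theta=\pi/4$ and apply the substitution $\theta \mapsto \frac{\pi}{2}-\theta$ on $[\pi/4,\pi/2]$ (so that $\cos(2\theta)$ changes sign and $\sin^2\theta$ is replaced by $\cos^2\theta$), obtaining
\[
 \int_0^{\pi/4}\cos(2\theta)\Bigl((1-p^2\sin^2\theta)^{-1/2}-(1-p^2\cos^2\theta)^{-1/2}\Bigr)\diff\theta .
\]
On $(0,\pi/4)$ one has $\cos(2\theta)>0$ and $\sin^2\theta<\cos^2\theta$, hence the parenthesis is strictly negative, so the integrand is negative almost everywhere and the integral is negative. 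This gives $f'(p)<0$ for all $p\in(0,1)$, so $f$ is strictly decreasing there.

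For the bounds, note that $2-p^2\ge 1$ and $E$ is continuous on $[0,1]$, so $f$ extends continuously to $[0,1]$; from the definition of $E$ one has $E(0)=\pi/2$ and $E(1)=\int_0^{\pi/2}\cos\theta\,\diff\theta=1$, hence $f(0)=\frac{\pi}{2\sqrt2}$ and $f(1)=1$. Since $f$ is continuous on $[0,1]$ and strictly decreasing, it maps $(0,1)$ onto the open interval $(f(1),f(0))=\bigl(1,\tfrac{\pi}{2\sqrt2}\bigr)$, which is exactly the claimed estimate. The only nontrivial point is the sign of the trigonometric integral above, and the symmetrization $\theta\mapsto\frac{\pi}{2}-\theta$ disposes of it; everything else is routine manipulation of the derivative formula for $E$.
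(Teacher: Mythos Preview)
Your proof is correct and follows essentially the same approach as the paper: both compute $f'$ via the derivative formula for $E$, reduce to the sign of $\int_0^{\pi/2}\frac{\cos(2\theta)}{\sqrt{1-p^2\sin^2\theta}}\diff\theta$, and settle that sign by the symmetrization $\theta\mapsto\frac{\pi}{2}-\theta$ comparing $\sin^2\theta$ with $\cos^2\theta$. The only cosmetic difference is that the paper writes the folded integral over $[\pi/4,\pi/2]$ whereas you write it over $[0,\pi/4]$.
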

\begin{proof}
To show that $f'(p)< 0$ on $(0,1)$ we use 
Proposition \ref{prop:identities} to compute
\begin{equation*}
 f'(p)  = \frac{E(p) - K(p)}{p \sqrt{2-p^2}} + \frac{pE(p)}{(2-p^2)^\frac{3}{2}} = \frac{p^2 K(p) + 2 (E(p) - K(p))}{(2-p^2)^\frac{3}{2} p }.  
\end{equation*}
Using the definitions and $1-2 \sin^2(\theta) =  \cos(2\theta)$ we obtain 
\begin{align*}
&p (2-p^2)^\frac{3}{2} f'(p) \\
&\quad = \int_0^\frac{\pi}{2} \left( \frac{p^2}{\sqrt{1-p^2 \sin^2\theta}} + 2 \left( \sqrt{1- p^2 \sin^2\theta } - \frac{1}{\sqrt{1-p^2\sin^2 \theta }}\right)  \right) \diff \theta \\ 
&\quad = \int_0^\frac{\pi}{2} \frac{p^2}{\sqrt{1-p^2 \sin^2\theta }} ( 1- 2 \sin^2\theta) \diff \theta = \int_0^\frac{\pi}{2} \frac{p^2 \cos(2\theta)}{\sqrt{1-p^2 \sin^2\theta}} \diff\theta.
\end{align*}
Using that the integrand is even and $\cos (d+ \pi) = - \cos(d) $ for each $d \in \mathbb{R}$ we obtain
\begin{align*}
p (2-p^2)^\frac{3}{2} f'(p) & =   \int_0^\frac{\pi}{4} \frac{p^2 \cos(2\theta)}{\sqrt{1-p^2 \sin^2\theta}} \diff\theta +  \int_\frac{\pi}{4}^\frac{\pi}{2} \frac{p^2 \cos(2\theta)}{\sqrt{1-p^2 \sin^2\theta}} \diff\theta\\
&= \int_\frac{\pi}{4}^\frac{\pi}{2} p^2 \cos( 2 \theta ) \Big( \frac{1}{\sqrt{1-p^2 \sin^2 \theta}} - \frac{1}{\sqrt{1-p^2\sin^2( \theta - \tfrac{\pi}{2})}} \Big) \diff\theta. 
\end{align*}
On $(\frac{\pi}{4}, \frac{\pi}{2})$ the $\cos(2(\cdot))$-function is negative and $\sin^2(\cdot)$ attains values strictly between $\frac{1}{2}$ and $1$, whereas $\sin^2(\cdot- \nicefrac{\pi}{2})$ lies strictly between $0$ and $\frac{1}{2}$. Therefore the expression in parentheses is positive, which implies that the whole integrand is negative. The claim follows since 
$\lim_{p \rightarrow 0 } f(p) = \frac{\pi}{2 \sqrt{2}}$ and $\lim_{p \rightarrow 1} f(p) = 1$. \end{proof}

\subsection*{Acknowledgement} Marius M\"uller is supported by an
{LGFG grant (number 1705 LGFG-E)}. Adrian Spener is supported by the DFG (project number 355354916). Both authors would like to thank Anna Dall'Acqua and Fabian Rupp for helpful discussions.

\bibliographystyle{amsalpha}

\end{document}